\title{$V$-filtrations in positive characteristic and test modules}
\author{Axel St\"abler}
\DeclareMathOperator{\Supp}{Supp}
\DeclareMathOperator{\Hom}{Hom}
\DeclareMathOperator{\id}{id}
\DeclareMathOperator{\coker}{coker}
\DeclareMathOperator{\colim}{colim}
\DeclareMathOperator{\Ass}{Ass}
\DeclareMathOperator{\Ann}{Ann}
\DeclareMathOperator{\Spec}{Spec}
\DeclareMathOperator{\depth}{depth}
\DeclareMathOperator{\nil}{nil}
\newcommand{\eps}{\varepsilon}
\begin{document}
\swapnumbers
\theoremstyle{definition}
\newtheorem{Le}{Lemma}[section]
\newtheorem{Def}[Le]{Definition}
\newtheorem*{DefB}{Definition}
\newtheorem{Bem}[Le]{Remark}
\newtheorem{Ko}[Le]{Corollary}
\newtheorem{Theo}[Le]{Theorem}
\newtheorem*{TheoB}{Theorem}
\newtheorem{Bsp}[Le]{Example}
\newtheorem{Be}[Le]{Observation}
\newtheorem{Prop}[Le]{Proposition}
\newtheorem*{PropB}{Proposition}
\newtheorem{Sit}[Le]{Situation}
\newtheorem{Que}[Le]{Question}
\newtheorem{Con}[Le]{Conjecture}
\newtheorem{Dis}[Le]{Discussion}
\newtheorem{Prob}[Le]{Problem}
\newtheorem{Konv}[Le]{Convention}
\def\cocoa{{\hbox{\rm C\kern-.13em o\kern-.07em C\kern-.13em o\kern-.15em
A}}}

\address{Axel St\"abler\\
Johannes Gutenberg-Universit\"at Mainz\\ Fachbereich 08\\
Staudingerweg 9\\
55099 Mainz\\
Germany}
\email{staebler@uni-mainz.de}

\date{\today}
\subjclass[2010]{Primary 13A35; Secondary 14B05}
\begin{abstract}
Let $R$ be a ring essentially of finite type over an $F$-finite field.
Given an ideal $\mathfrak{a}$ and a principal Cartier module $M$ we introduce the notion of a $V$-filtration of $M$ along $\mathfrak{a}$. If $M$ is $F$-regular then this coincides with the test module filtration. We also show that the associated graded induces a functor $Gr^{[0,1]}$ from Cartier crystals to Cartier crystals supported on $V(\mathfrak{a})$. This functor commutes with finite pushforwards for principal ideals and with pullbacks along essentially \'etale morphisms. We also derive corresponding transformation rules for test modules generalizing previous results by Schwede and Tucker in the \'etale case (cf.\ \cite{schwedetuckertestidealsfinitemaps}).

If $\mathfrak{a} = (f)$ defines a smooth hypersurface and $R$ is in addition smooth then for a Cartier crystal corresponding to a locally constant sheaf on $\Spec R_{\acute{e}t}$ the functor $Gr^{[0,1]}$ corresponds, up to a shift, to $i^!$, where $i: V(\mathfrak{a}) \to \Spec R$ is the closed immersion.
\end{abstract}
\maketitle

\section*{Introduction}
In the study of (say) hypersurface singularities $f: X \to \mathbb{C}$ the theory of nearby and vanishing cycles due to Deligne \cite[Expos\'e XIII]{SGA7II} is a very important tool. Nearby and vanishing cycles are derived functors from the derived category of constructible sheaves on $X$ (or $X_{\acute{e}t}$) to the derived category of constructible sheaves on the fiber $X_0$. These come equipped with an action of the (\'etale) fundamental group.

Over the complex numbers the famous Riemann-Hilbert correspondence states that for $X$ a complex manifold there is a functor from the bounded derived category of regular holonomic $\mathcal{D}_X$-modules to the bounded derived category of constructible sheaves inducing an equivalence of category which respects the ``six operations'' $f_\ast, f^\ast, f^!, f_!, \otimes, RHom$.

By work of Malgrange (\cite{malgrangevanishingdmodule}) and Kashiwara (\cite{kashiwaravfiltration}) one has a direct construction of nearby and vanishing cycles for regular holonomic $\mathcal{D}_X$-modules without passing through the Riemann-Hilbert correspondence. The key ingredient in this construction is the so-called $V$-filtration of a (regular holonomic) $\mathcal{D}_X$-module whose associated graded pieces recover nearby and vanishing cycles. The construction of the $V$-filtration itself uses so-called Bernstein-Sato polynomials.

Due to work of Budur and Saito (cf.\ \cite{budurvfiltrationdmodules}, \cite{budursaitomultiplieridealvfiltration}) it is known that the $V$-filtration with respect to $f: X \to \mathbb{A}^1_{\mathbb{C}}$ of $\mathcal{O}_X$ considered as a $\mathcal{D}_X$-module is intimately related to the multiplier ideal filtration of $f$ from birational geometry. To be precise, if $i: X \to X \times \mathbb{A}^1_{\mathbb{C}}$ denotes the graph embedding, then the associated graded in the range $[0,1]$ of the $V$-filtration of $i_+ \mathcal{O}_X$ recovers nearby cycles, while $V \mathcal{O}_X$ is up to a renormalization the multiplier ideal filtration.

This article is concerned with a partial characteristic $p$ analog of $V$-filtrations. The characteristic $p$-version of multiplier ideals are so-called test ideals. These test ideals $\tau$ naturally carry additional structure, namely they are endowed with an algebra of maps $F_\ast^e \tau \to \tau$ for varying $e$, a so-called Cartier algebra, where $F$ denotes the Frobenius morphism. In work of Blickle (\cite{blicklep-etestideale}) this point of view is further emphasized and the construction of test ideals is extended to modules. This will be our point of departure -- we will construct a $V$-filtration for Cartier modules and show that this coincides with the test module filtration under some conditions. In particular, this means that the test ideal filtration (say in a polynomial ring) is uniquely determined by discreteness, rationality, the Brian\c{c}on-Skoda property and how the Cartier operator acts on the filtration (mapping the $pt$th graded piece to the $t$th graded piece). This also holds for modules, provided one restricts to a suitable class of modules.

Our analog of a Riemann-Hilbert correspondence is due to work of Blickle and B\"ockle (\cite{blickleboecklecartiercrystals}, \cite{blickleboecklecartierfiniteness}, \cite{blickleboecklecartierekequivalence}) and of Emerton and Kisin (\cite{emertonkisinrhunitfcrys}). Given a separated $F$-finite smooth scheme $X$ over a field of characteristic $p > 0$ the bounded derived category of $\kappa$-crystals, that is, the derived category of modules endowed with a single homomorphism $F_\ast M \to M$  up to a Serre-localization of nilpotence, is equivalent to the bounded derived category of \'etale constructible sheaves of $\mathbb{F}_p$-vector spaces. This equivalence interchanges $f^!, f_\ast$ on crystals with $f^\ast, f_!$ on constructible sheaves. Moreover, the abelian category of $\kappa$-crystals corresponds under this equivalence to perverse constructible $\mathbb{F}_p$-sheaves in the sense of \cite{gabbertstructures}. An intermediate step of this equivalence is the category of locally finitely generated unit $F$-modules by Emerton and Kisin (which are special $D$-modules). In fact, already the abelian categories of crystals and locally finitely generated unit $F$-modules are equivalent.

The formalism of nearby and vanishing cycles still exists in the context of $\mathbb{F}_p$-constructible sheaves on a scheme over $\mathbb{F}_p$ (cf.\ \cite[Expos\'e XIII]{SGA7II}), however, it is not very well-behaved. For instance, the nearby cycles of a locally constant sheaf are in general not constructible\footnote{See \cite{MO124960} for an explicit example. I learned this from Brian Conrad.}. Using the equivalence outlined above we are still able to construct a functor $Gr$ (namely the associated graded of the $V$-filtration above) on a (full) subcategory of $\kappa$-crystals that has some of the properties of nearby cycles and even preserves some of the desirable properties of nearby cycles that hold for $\mathbb{F}_\ell$-coefficients (with $\ell \neq p$ a prime) but fail if $\ell = p$.

Given an ideal $\mathfrak{a}$ in a ring $R$ and a module $M$ we say that $M$ is non-degenerate if $\mathfrak{a}$ contains an $M$-regular element. We will prove

\begin{TheoB}
Let $R$ be essentially of finite type over an $F$-finite field and $\mathfrak{a}$ a principal ideal. Let $\mathcal{A}_R$ be the category of $F$-regular coherent non-degenerate $\kappa$-crystals.
\begin{enumerate}[(i)]
\item{The associated graded of the test module filtration (in the range $[0,1]$) of $M$ induces a functor $Gr_R: \mathcal{A}_R \to \{\kappa-$crystals supported on $V(\mathfrak{a})\}$.}
 \item{If $f: \Spec S \to \Spec R$ is essentially \'etale, then $f^! Gr_R = Gr_S f^!$ and if $f$ is finite then $f_\ast Gr_S = Gr_R f_\ast$.}
\item{If in addition $R$ is regular, $\mathfrak{a}$ cuts out a smooth hypersurface and $M$ is a crystal corresponding to a locally constant sheaf on the \'etale site $(\Spec R)_{\acute{e}t}$ then $Gr M \cong i^! M[1]$, where $i:V(\mathfrak{a}) \to \Spec R$ denotes the closed immersion.}
\end{enumerate}
\end{TheoB}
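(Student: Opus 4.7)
The theorem is a compilation of three results of rather different flavors, so I plan to treat each part separately. Parts (i) and (ii) are structural consequences of the $V$-filtration theory for Cartier crystals developed earlier in the paper, while (iii) is the main geometric comparison.

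For (i), the key observation is that the $V$-filtration $\{V^t M\}_{t \in \mathbb{Q}_{\geq 0}}$ is $F$-compatible by construction: the Cartier operator $\kappa: F_\ast M \to M$ sends $F_\ast V^{pt} M$ into $V^t M$. This compatibility descends to the associated graded, so that $\kappa$ on the $pt$-th piece lands in the $t$-th piece, endowing $Gr^{[0,1]} M$ with a Cartier structure; functoriality in $M$ is automatic. Each graded piece in positive degree $t > 0$ is nil-annihilated by a power of $f$ (with $\mathfrak{a} = (f)$), since multiplication by $f$ shifts the filtration by one, so as a Cartier crystal $Gr^{[0,1]} M$ is supported on $V(\mathfrak{a})$.

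For (ii), the plan is to establish strict compatibility of the $V$-filtration itself with the two operations, and then pass to associated gradeds. For an essentially \'etale $f: \Spec S \to \Spec R$, the pullback $f^!$ is, up to a unit twist, extension of scalars along $R \to S$, and the defining approximation property of test modules transfers verbatim, giving $V^t f^! M = f^! V^t M$. For a finite $f$ and principal $\mathfrak{a}$, the transformation rule for test modules under finite pushforward (the main new transformation rule proved in the paper, generalizing Schwede-Tucker) yields the analogous $f_\ast V^t M = V^t f_\ast M$. Passing to graded pieces concludes.

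For (iii), the plan is to reduce to the case $M = R$ via \'etale descent and then compute both sides directly. A crystal $M$ corresponding to a locally constant \'etale sheaf becomes, on some essentially \'etale trivializing cover $g: \Spec S \to \Spec R$, isomorphic to $S^{\oplus n}$. Both $Gr$ (by (ii)) and $i^!$ (by standard base change) commute with $g^!$, so the claim for $M$ reduces to the claim for the structure crystal. For $M = R$, the $F$-regularity of $R$ and smoothness of $V(f)$ force the test ideal filtration to jump only at $t = 1$, with $\tau(R, f^t) = R$ for $t < 1$ and $\tau(R, f) = (f)$, so $Gr^{[0,1]} R \cong R/(f)$ with its natural Cartier structure. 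On the other side, for the regular codimension-one closed immersion $i$, I would identify $i^! R \simeq R/(f)[-1]$ in the Cartier crystal derived category, either by direct computation via the local cohomology triangle $i^! R \to R \to Rj_\ast j^\ast R$ (with $j: D(f) \hookrightarrow \Spec R$) or by transfer through the Blickle-B\"ockle correspondence to the \'etale side, where $i^!$ of the constant sheaf on a smooth codimension-one subscheme is the constant sheaf shifted by $-1$. Combining yields $Gr\, R \cong R/(f) \cong i^! R[1]$.

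The main obstacle is the identification $i^! R \simeq R/(f)[-1]$ with the correct Cartier structure (matching the one on $R/(f)$ inherited from the quotient), together with ensuring the resulting isomorphism is canonical enough to descend under \'etale pullback along $g$. Both are ultimately questions of conventions aligning the Cartier crystal, unit $F$-module, and \'etale sheaf pictures, and I expect them to be manageable via careful bookkeeping once the natural transformation $Gr \Rightarrow i^![1]$ has been constructed functorially on the subcategory generated by locally constant crystals.
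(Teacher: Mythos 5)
Your overall architecture matches the paper's (associated graded of the test module filtration, compatibility of the filtration with $f^!$ and $f_\ast$, then an \'etale-local computation for (iii)), but at each stage you pass over exactly the points where the real work lies. In (i), ``functoriality in $M$ is automatic'' is false as stated: with the definition of test modules used here, $M \mapsto \tau(M,\mathcal{C})$ is \emph{not} functorial in general (see the remark following Theorem \ref{GrModuleFunctor} and \cite[Remark 3.3]{blicklep-etestideale}); one needs $F$-regularity and non-degeneracy plus a test-element/prime-avoidance argument as in Proposition \ref{TauIsFunctorial}, and to get a functor on \emph{crystals} one must further show that nil-isomorphisms are sent to nil-isomorphisms (Lemma \ref{TauSurjectiveForNiliso}, Theorem \ref{GrFunctoronCrys}) -- none of which appears in your sketch. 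Also, the Cartier structure making $Gr^t$ a $\kappa$-crystal on $V(\mathfrak{a})$ is $\kappa f^{\lceil t(p-1)\rceil}: F_\ast Gr^t \to Gr^t$, not the shift map $Gr^{tp}\to Gr^t$ you describe (the latter does not even stay inside the range $[0,1]$). In (ii), the assertion that the test-module description ``transfers verbatim'' along an essentially \'etale $f$ hides the key theorem: one must prove that $F$-regularity ascends under $f^!$ (Theorem \ref{FRegularEtalePullback}), which in the paper requires the Blickle--B\"ockle generic trivialization by a finite separable extension and is by far the hardest input; without it a test element for $M$ need not be one for $f^!M$ and the identification of $V^\cdot f^!M$ with the test module filtration of $f^!M$ fails. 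For finite pushforward you cite the wrong tool: the Schwede--Tucker-type rule $f_\ast\tau(f^\ast M)\cap M=\tau(M)$ concerns pushing forward a \emph{pullback} along a finite \'etale map and does not give $f_\ast\tau(N)=\tau(f_\ast N)$ for $N$ on $S$; the paper instead verifies the $V$-filtration axioms for $f_\ast V^\cdot$ (Lemma \ref{VFiltrationFinitePushforward}) and shows $f_\ast$ preserves $F$-regularity (Lemma \ref{FRegularPushforward}) so that uniqueness of the $V$-filtration applies.

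For (iii) the reduction ``both $Gr$ and $i^!$ commute with $g^!$, so the claim reduces to the structure crystal'' is not a proof: knowing $Gr_S(g^!M)\cong i_S^!(g^!M)[1]$ on a trivializing cover does not by itself descend to an isomorphism over $R$ unless you first have a natural transformation $Gr^{[0,1]}\Rightarrow i^![1]$ (or descent data), and constructing such a map essentially presupposes the vanishing of the graded pieces in $(0,1)$ that you are trying to prove -- you flag this yourself as the main obstacle and then defer it. The paper avoids any descent of isomorphisms by proving an equality of \emph{submodules}, $\tau(M,\kappa^{(f)^t})=\tau(R,\mathcal{C}_R^{(f)^t})M=f^{\lfloor t\rfloor}M$ (Theorem \ref{TestIdealLocallyConstant}, Corollary \ref{TestIdealLocallyConstantSmooth}), using the finite \'etale trace transformation rule (Proposition \ref{TestModuleEtaleTransformation}) together with the Hochster--Huneke tight-closure/plus-closure input; after that, $Gr^{[0,1]}M=M/fM$ and $i^!M[1]=M/fM$ carry literally the same Cartier structure $\kappa f^{p-1}$ (Lemma \ref{CartierStructureShriek}), so Corollary \ref{MainResult?} is immediate. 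A version of your \'etale-local strategy could be rescued by descending the submodule identity $\tau(M,\kappa^{(f)^t})=f^{\lfloor t\rfloor}M$ via faithful flatness of a surjective finite \'etale trivializing cover (using Corollary \ref{TestIdealEtalePullback}), but that is a different and more careful argument than descending an isomorphism of crystals, and as written your proposal leaves precisely this step, as well as the identification of the Cartier structure on $i^!M[1]$, unproven.
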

\begin{proof}
(i) is obtained in Theorem \ref{GrFunctoronCrys}. Part (ii) is Theorems \ref{GrPushforwardCommute} and \ref{GrEssEtalePullbackCommute}. Finally, (iii) is Corollary \ref{MainResult?}.
\end{proof}

In order to accomplish this we will, in particular, prove a transformation rule for test modules under essentially \'etale morphisms (note that in the essentially \'etale case one has $\bullet^! = \bullet^\ast$). We call a Cartier algebra on $\Spec R$ locally principal if there is a open covering $(U_i)_i$ of $\Spec R$ such that $\mathcal{C}$ restricted to each $U_i$ is principally generated. With this definition we prove:

\begin{TheoB}
Let $R$ be a essentially of finite type over an $F$-finite field, let $\mathcal{C}$ be a locally principal Cartier algebra and assume that $f: \Spec S \to \Spec R$ is an essentially \'etale morphism. Then for $\mathfrak{a}$ an ideal and $t \in \mathbb{Q}$ we have \[f^\ast \tau(M, \mathcal{C}^{\mathfrak{a}^t}) = \tau(f^\ast M, \mathcal{C}'^{(\mathfrak{a}S)^t}),\] where  $\mathcal{C}'$ is the Cartier algebra obtained by base change from $\mathcal{C}$. If $R$ and $S$ are integral normal domains, $\Supp M = \Spec R$ and $f$ is finite \'etale then \[f_\ast \tau(f^\ast M, \mathcal{C}'^{(\mathfrak{a}S)^t}) \cap M = \tau(M, \mathcal{C}^{\mathfrak{a}^t}).\]
\end{TheoB}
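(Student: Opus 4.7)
The plan is to first reduce the pullback statement to the case of a principally generated Cartier algebra, then exploit the explicit presentation of test modules as a sum of iterates of the generator applied to power-of-$\mathfrak{a}$-twisted submodules, and finally deduce the finite \'etale pushforward identity from the pullback identity via faithful flatness.

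For the pullback formula, I would use that the test module construction commutes with localization: choose an affine open cover $(U_i)$ of $\Spec R$ on which $\mathcal{C}$ is principally generated, cover $\Spec S$ by the pullbacks $f^{-1}(U_i)$, and observe that it therefore suffices to treat the principally generated case. So assume $\mathcal{C}$ is generated by a single $\phi \colon F^e_\ast M \to M$. For an appropriate Cartier-stable submodule $N_0 \subseteq M$ (e.g.\ one arising from a test element, existing by coherence and $F$-finiteness) one then has the concrete description
\[\tau(M, \mathcal{C}^{\mathfrak{a}^t}) \;=\; \sum_{n \geq 0} \phi^n\bigl(F^{en}_\ast\bigl(\mathfrak{a}^{k_n} N_0\bigr)\bigr),\]
with $k_n$ a numerical quantity depending only on $t$, $e$, $n$. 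Since essentially \'etale morphisms commute with Frobenius, there is a canonical isomorphism $f^\ast F^{en}_\ast \cong F^{en}_\ast f^\ast$; since $f^\ast$ is exact, it commutes with formation of sums and with ideal multiplication, namely $f^\ast(\mathfrak{a}^{k_n} N_0) = (\mathfrak{a} S)^{k_n} f^\ast N_0$; and $f^\ast \phi$ generates $\mathcal{C}'$ at the same level. Applying $f^\ast$ term-by-term to the displayed equation identifies its left-hand side with the analogous sum computing $\tau(f^\ast M, \mathcal{C}'^{(\mathfrak{a}S)^t})$, provided $f^\ast N_0$ is an admissible starting module downstairs; this is automatic because $f^\ast$ preserves both Cartier-stability and the generic fibre.

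For the pushforward formula, I would apply part 1 to rewrite $\tau(f^\ast M, \mathcal{C}'^{(\mathfrak{a}S)^t}) = \tau(M, \mathcal{C}^{\mathfrak{a}^t}) \otimes_R S$ as a submodule of $f^\ast M = M \otimes_R S$. Under the hypothesis that $R$ and $S$ are integral normal domains and $f$ is finite \'etale, the ring map $R \to S$ is faithfully flat, since the induced morphism on spectra is both open (\'etale) and closed (finite) and hence surjective by connectedness of $\Spec R$. Tensoring the short exact sequence $0 \to \tau(M, \mathcal{C}^{\mathfrak{a}^t}) \to M \to M/\tau(M, \mathcal{C}^{\mathfrak{a}^t}) \to 0$ by the flat module $S$ and applying faithful flatness shows that for $m \in M$ one has $m \otimes 1 \in \tau(M, \mathcal{C}^{\mathfrak{a}^t}) \otimes_R S$ if and only if $m \in \tau(M, \mathcal{C}^{\mathfrak{a}^t})$; both inclusions of the claimed identity follow.

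The main obstacle I expect is the verification that the sum presentation of $\tau$ used in the pullback step genuinely transforms under $f^\ast$: one must select $N_0$ compatibly with restriction to $U_i$ and its pullback to $f^{-1}(U_i)$, and know that the (a priori infinite) sum stabilizes in a Noetherian sense so that $f^\ast$ exchanges freely with it. The ancillary point that $\mathcal{C}'$ is again locally principal when $\mathcal{C}$ is should be comparatively mild, as being principally generated is local on the base and preserved under flat base change, so patching the local pullback formulas back to a global statement presents no further difficulty.
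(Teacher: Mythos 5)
Your reduction to the principally generated case and your use of the sum presentation of the test module are fine in outline, but the step you label ``automatic'' is precisely where the real content lies, and it is not automatic. The presentation $\tau(M,\mathcal{C}^{\mathfrak{a}^t})=\sum_{e\geq 1}\mathcal{C}_e\mathfrak{a}^{\lceil t(p^e-1)\rceil}c\,\underline{M}$ is only valid when $c$ is a \emph{test element}, i.e.\ when the localized pair $(\underline{M}_c,\mathcal{C}^{\mathfrak{a}^t}_c)$ is $F$-regular. To identify the term-by-term pullback of this sum with $\tau(f^\ast M,\mathcal{C}'^{(\mathfrak{a}S)^t})$ you must know that the image of $c$ in $S$ is again a test element for $(f^\ast M,\mathcal{C}'^{(\mathfrak{a}S)^t})$ --- equivalently, that $F$-regularity is preserved under essentially \'etale pullback. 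This does not follow from exactness of $f^\ast$, compatibility with Frobenius, or ``preservation of the generic fibre'': the generic fibre of $f$ is a nontrivial product of finite separable field extensions, and downstairs one must handle test elements in $S^\circ$ that do not come from $R$. This is exactly Theorem \ref{FRegularEtalePullback}, the technical heart of the paper, whose proof reduces to the generic point, uses the Blickle--B\"ockle trivialization of a Cartier module after a finite separable extension, lifts that to a finite \'etale cover, and then combines Lemma \ref{RegularimpliesFRegular} with the surjective case. What minimality gives for free is only the inclusion $\tau(f^\ast M,\mathcal{C}'^{(\mathfrak{a}S)^t})\subseteq f^\ast\tau(M,\mathcal{C}^{\mathfrak{a}^t})$; the reverse inclusion is the substance, and that some genuinely \'etale input is needed is illustrated by Example \ref{ShriekNotFpure}, where for a finite flat map $f^!$ fails to preserve even $F$-purity. (Your appeal to ``Cartier-stability and the generic fibre'' also quietly assumes $\underline{f^\ast M}=f^\ast\underline{M}$ and non-degeneracy of $M$ with respect to $\mathfrak{a}$, which the paper's Corollaries \ref{TestIdealEtalePullback} and \ref{TestModuleLocallyPrincipal} require explicitly.)

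Your second step is, by contrast, correct as stated and is a genuinely simpler route than the paper's: granting the pullback formula, a finite \'etale map onto the irreducible $\Spec R$ is surjective, hence $R\to S$ is faithfully flat, so $M/\tau(M,\mathcal{C}^{\mathfrak{a}^t})\to (M/\tau(M,\mathcal{C}^{\mathfrak{a}^t}))\otimes_R S$ is injective and $(\tau(M,\mathcal{C}^{\mathfrak{a}^t})\otimes_R S)\cap M=\tau(M,\mathcal{C}^{\mathfrak{a}^t})$. The paper instead proves the containment via the trace map (Lemmas \ref{TraceCartierCommute} and \ref{TestModuleEtaleTrace}, Proposition \ref{TestModuleEtaleTransformation}), which yields the additional information that $Tr$ carries $f_\ast\tau(f^\ast M,\mathcal{C}'^{(\mathfrak{a}S)^t})$ into $\tau(M,\mathcal{C}^{\mathfrak{a}^t})$ and is formulated under slightly different hypotheses on $M$. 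But as written your second half inherits the gap above, since it presupposes the unproved pullback identity.
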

\begin{proof}
This is a combination of Corollaries \ref{TestModuleLocallyPrincipal} and \ref{TestModuleEtaleTransformationNonPrincipal}. 
\end{proof}

This is a partial generalization of results obtained by Schwede and Tucker (cf. \cite{schwedetuckertestidealsfinitemaps}) for $M = R$.

Similar questions concerning the construction of $V$-filtrations have been investigated before using the category of locally finitely generated unit $F$-modules. More precisely, Musta\c{t}\u{a} constructed a sequence of Bernstein-Sato-like polynomials associated to a hypersurface and showed that the roots of these polynomials are related to the $F$-jumping numbers of the associated test ideal filtration (\cite[Theorem on p.\,130]{mustatabernsteinsatopolynomialspositivechar}). See also \cite{blicklestaeblerbernsteinsatocartier} for a generalization to Cartier modules. There is also recent work of Stadnik \cite{stadnikvfiltrationfcrystal} where he defined a $V$-filtration for a certain class of unit $F$-crystals in the setting of Emerton and Kisin and showed that the zeroth graded part of this filtration corresponds to the unipotent underived tame nearby cycles functor. In fact, our definition of $V$-filtration is inspired by that of Stadnik. However, we construct a filtration for a different class of objects and do not know how these filtrations relate.

We now describe the contents of this paper in more detail.
After recalling basic constructions for Cartier modules in Section \ref{SectionCartierModules} we give an axiomatic description of a $V$-filtration and show that it coincides, under some conditions, with the test module filtration (Theorem \ref{VfiltrationTestidealfiltration}). This is the content of sections \ref{SectionVFiltration} and \ref{SectionExistence}. However, the Cartier structure on test modules is never principle in any interesting case. In order to have an equivalence with constructible sheaves when passing to crystals we would like to have a principal Cartier structure. We will bypass this issue by considering the associated graded filtration for which we naturally obtain a principal Cartier structure (in the hypersurface case). We also show that passing to the associated graded $Gr$ is functorial. This is accomplished in section \ref{SectionAssocGraded}. In section \ref{SectionCrystals} we show that the functor $M \mapsto Gr M$ induces a corresponding functor on $\kappa$-crystals (Theorem \ref{GrFunctoronCrys}). Next we show that $Gr$ commutes with essentially \'etale pullbacks and finite pushforwards in Section \ref{SectionFunctorialProperties}. We also obtain that the test module is well-behaved along pull backs of essentially \'etale morphisms. Section \ref{SectionFUnitModules} describes the correspondence of $\kappa$-crystals with constructible sheaves in more detail. In particuar, we analyze which $\kappa$-crystals correspond to locally constant sheaves and show that our notion of $V$-filtration applies for these crystals. Finally, in section \ref{SectionMainResult} we prove that for a smooth hypersurface $f$ in regular ring $R$ essentially of finite type over an $F$-finite field the functor $Gr$ restricted to $F$-regular crystals that correspond to locally constant sheaves is, up to shift, $i^!$, where $i: \Spec R/(f) \to \Spec(R)$ (Corollary \ref{MainResult?}). This will be accomplished by proving some structural results for test module filtrations.

\subsection*{Conventions}
Commutative rings considered in this article are assumed to be noetherian and to contain a field of characteristic $p > 0$. We denote by $F: R \to R$ the Frobenius homomorphism. For a ring $R$ and an $R$-module $M$ we denote by $F^e_\ast M$ the $R$-module whose underlying abelian group is $M$ but with multiplication given by $r \cdot m = r^{p^e} m$. Also recall that a ring is called \emph{$F$-finite} if $F_\ast R$ is a finite $R$-module (in other words, the Frobenius morphism on $\Spec R$ is a finite map).
For a ring $R$ the symbol $R^\circ$ denotes elements of $R$ not contained in any minimal prime. For an ideal $I$ in a ring $R$ we denote by $I^{[p^e]}$ the ideal given by $(f^{p^e} \, \vert \, f \in I)$.

\subsection*{Acknowledgements}
I thank Manuel Blickle, Brian Conrad, Mircea Musta\c{t}\u{a}, Karen Smith and Florian Strunk for useful discussions and their interest. Moreover, I thank Manuel Blickle for a careful reading of an earlier draft and for sending me his preprints \cite{blickleboecklecartiercrystals}, \cite{blickleboecklecartierduality}. Finally, I thank the referee for corrections and useful suggestions. The author was partially supported by SFB/Transregio 45 Bonn-Essen-Mainz financed by Deutsche Forschungsgemeinschaft.

\section{Cartier modules}
\label{SectionCartierModules}
This section is concerned with basic definitions of Cartier modules and Cartier crystals. Our main reference is \cite{blicklep-etestideale}.

\begin{Def}
Let $R$ be a ring. An \emph{$R$-Cartier algebra} is an $\mathbb{N}$-graded $R$-algebra $\mathcal{C} = \bigoplus_{n \in \mathbb{N}} \mathcal{C}_n$ such that the structural morphism $R \to \mathcal{C}$ surjects onto $\mathcal{C}_0$ and such that we have the relation $r \varphi_n = \varphi_n r^{p^n}$ for all $\varphi_n \in \mathcal{C}_n$ and $r \in R$. A $\mathcal{C}$-module is also called a Cartier module. A Cartier module is called \emph{coherent} if it is finitely generated as an $R$-module. We denote $\bigoplus_{n \geq 1} \mathcal{C}_n$ by $\mathcal{C}_+$.% An $R$-linear homomorphism $\varphi: M \to N$ of Cartier modules is called \emph{Cartier linear} if  
\end{Def}

We will only consider left $\mathcal{C}$-modules.
The key example the reader should have in mind is the following
\begin{Bsp}
Let $R$ be an $F$-finite Ring, $M$ an $R$-module and $\kappa: F_\ast M \to M$ an $R$-linear map. Fix an ideal $\mathfrak{a}$ in $R$ and $t \in \mathbb{Q}$. Then $\bigoplus_n \kappa^n \mathfrak{a}^{\lceil t (p^n -1)\rceil}$ defines a Cartier algebra. Slightly more general, if $\mathcal{C}$ is a Cartier algebra and $M$ is a $\mathcal{C}$-module then we denote the Cartier algebra $\bigoplus_n \mathcal{C}_n \mathfrak{a}^{\lceil t (p^n-1)\rceil}$ by $\mathcal{C}^{\mathfrak{a}^t}$. If $\mathcal{C}$ is principally generated with generator $\kappa$ then we also denote this algebra by $\kappa^{\mathfrak{a}^t}$.
\end{Bsp}

Cartier modules are well-behaved with respect to localization at a multiplicative subset $S \subseteq R$ -- cf.\ \cite[Lemma 2.11]{blicklep-etestideale}.

A Cartier module $M$ is called \emph{$F$-pure} if $\mathcal{C}_+ M = M$. In the case of a principal algebra generated by $\kappa: F_\ast M \to M$ this just means that $\kappa$ is surjective. If $M$ is any coherent Cartier module then $\underline{M} := \mathcal{C}_+^e M$ is $F$-pure for $e \gg 0$ (\cite[Corollary 2.14]{blicklep-etestideale}). Moreover, a coherent $\mathcal{C}$-module $M$ is called \emph{$F$-regular} if $M$ is $F$-pure and if there is no proper non-zero $\mathcal{C}$-submodule $N$ of $M$ that agrees with $M$ in each generic point of $\Supp M$ (where $\Supp M$ is the support of the underlying $R$-module). We note that if $M = R$ and $\mathcal{C}$ is the Cartier algebra generated by $\Hom(F_\ast R, R)$ then $F$-regularity for $R$ in this terminology is often called \emph{strong} $F$-regularity in the literature. We define the support of a Cartier module as the support of the underlying $R$-module.

We recall the definition of test modules as in \cite[Definition 3.1]{blicklep-etestideale}.

\begin{Def}
\label{DefTestModule}
The test module $\tau(M, \mathcal{C})$ of a pair $(M, \mathcal{C})$ is the smallest submodule of $M$ such that for every generic point $\eta$ of $\Supp \underline{M}_\mathcal{C}$ one has $\tau(M, \mathcal{C})_\eta = (\underline{M}_{\mathcal{C}})_\eta$.
\end{Def}

An immediate consequence of this definition is that $\tau(M, \mathcal{C}) = M$ if and only if $M$ is $F$-regular. We will also occasionally omit the algebra $\mathcal{C}$ from the notation and simply write $\tau(M)$ for $\tau(M, \mathcal{C})$.
Typically we have a fixed Cartier algebra $\mathcal{C}$ and an ideal $\mathfrak{a}$. We then may consider for $t \in \mathbb{Q}_{\geq 0}$ the test module associated to each of the Cartier algebras $\mathcal{C}^{\mathfrak{a}^t} = \bigoplus \mathcal{C} \mathfrak{a}^{\lceil t(p^e -1) \rceil}$ we denote this test module by $\tau(M, \mathcal{C}, \mathfrak{a}^t)$ or $\tau(M, \mathcal{C}^{\mathfrak{a}^t})$. By \cite[Proposition 4.16]{blicklep-etestideale} this filtration is right-continuous\footnote{Meaning we get the same filtration for $t$ and $t + \eps$ for $0 < \eps \ll 1$.} (provided $R$ is $F$-finite and assuming the existence of the filtration). For a given Cartier module $M$ and ideal $\mathfrak{a}$ any $t$ such the test module filtrations for $t$ and $t - \eps$ do not coincide for all $\eps > 0$ is called an \emph{$F$-jumping number}.

Very important for applications is the following
\begin{Theo}[{\cite[Theorem 3.11]{blicklep-etestideale}}]
Let $\mathcal{C}$ be an $R$-Cartier algebra and $M$ coherent $\mathcal{C}$-module. Then the test module $\tau(M, \mathcal{C})$ exists if and only if on some open subset $\Spec R_c$ for $c \in R$ whose intersection with $\Supp \underline{M}_c$ is dense in $\Supp \underline{M}_{c}$ the pair $(\underline{M_c}, \mathcal{C}_C)$ is $F$-regular.

In this setting, the test module $\tau(M, \mathcal{C})$ is given by $\sum_{e \geq 1} \mathcal{C}_e c^d \underline{M}$, where $d \in \mathbb{N}$ may be chosen arbitrarily.
\end{Theo}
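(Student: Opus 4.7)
The plan is to first reduce to the $F$-pure case. One checks $\tau(M,\mathcal{C}) = \tau(\underline{M},\mathcal{C})$: if $N$ is any $\mathcal{C}$-submodule of $M$ agreeing with $\underline{M}$ at the generic points of $\Supp \underline{M}$, then so is $N \cap \underline{M}$, so minimality forces $\tau \subseteq \underline{M}$. Hence we may assume $M = \underline{M}$, so in particular $\mathcal{C}_+ M = M$.

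For the ``if'' direction together with the explicit formula, given $c$ as in the hypothesis, I would set $N_d := \sum_{e \geq 1} \mathcal{C}_e c^d M = \mathcal{C}_+(c^d M)$; this is clearly a $\mathcal{C}$-submodule of $M$. Localizing at $c$ gives $(N_d)_c = (\mathcal{C}_+ M)_c = M_c$ by $F$-purity and the fact that $c$ becomes a unit in $R_c$. Since the density hypothesis places every generic point of $\Supp M$ in $\Spec R_c$, the module $N_d$ agrees with $M$ at each such point. For minimality, let $N' \subseteq M$ be an arbitrary $\mathcal{C}$-submodule agreeing with $M$ at its generic points: then $N'_c$ agrees with $M_c$ generically, so $F$-regularity of $(M_c, \mathcal{C}_c)$ forces $N'_c = M_c$, whence $\Supp(M/N') \subseteq V(c)$, and finite generation of $M/N'$ yields $c^n M \subseteq N'$ for some $n$; consequently $N_d \subseteq N'$ whenever $d \geq n$, establishing $N_d = \tau$ for $d \gg 0$. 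To upgrade to arbitrary $d \geq 1$, I would use the commutation rule $c^d \varphi(m) = \varphi(c^{d p^e} m)$ for $\varphi \in \mathcal{C}_e$ combined with the iterated $F$-purity identity $M = \sum_{e \geq k} \mathcal{C}_e M$ valid for every $k$: this gives $c^d M \subseteq \sum_{e \geq k} \mathcal{C}_e c^{d p^e} M \subseteq N_{d'}$ as soon as $d p^k \geq d'$, hence $N_d = \mathcal{C}_+(c^d M) \subseteq \mathcal{C}_+ N_{d'} \subseteq N_{d'}$. Combined with the obvious reverse inclusion $N_{d'} \subseteq N_d$ for $d \leq d'$, we obtain $N_d = N_{d'} = \tau$ for all $d, d' \geq 1$.

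For the ``only if'' direction, suppose $\tau := \tau(M, \mathcal{C})$ exists. Then $\Supp(M/\tau)$ is a proper closed subset of $\Supp M$, so I can choose $c \in R$ that vanishes on $\Supp(M/\tau)$ but on no generic point of $\Supp M$, which supplies the density condition and also $\tau_c = M_c$. To verify $F$-regularity of $(M_c, \mathcal{C}_c)$, let $N \subsetneq M_c$ be a proper non-zero $\mathcal{C}_c$-submodule agreeing generically with $M_c$; the contraction $N' := N \cap M$ is a $\mathcal{C}$-submodule of $M$ satisfying $N'_c = N$ by exactness of localization, and it still agrees with $M$ at its generic points, so minimality of $\tau$ gives $\tau \subseteq N'$, and localizing yields the contradiction $M_c = \tau_c \subseteq N'_c = N$. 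The main obstacle I anticipate is the independence of $d$: the easy containment $N_{d+1} \subseteq N_d$ goes one way, while the reverse direction is exactly where the commutation identity combined with iterated $F$-purity is needed.
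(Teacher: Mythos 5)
This statement is quoted from Blickle's paper (\cite[Theorem 3.11]{blicklep-etestideale}) and the present paper gives no proof of it, so there is nothing internal to compare against; judged on its own, your proof is correct and follows essentially the same strategy as the cited source. The reduction to $M=\underline{M}$ via $N\cap\underline{M}$ is fine; $N_d=\mathcal{C}_+(c^dM)$ is indeed a $\mathcal{C}$-submodule agreeing with $M$ at every generic point of $\Supp M$ (density forces all generic points into $D(c)$); the independence of $d$ via $c^dM\subseteq\sum_{e\geq k}\mathcal{C}_e c^{dp^e}M$ combined with the iterated $F$-purity identity $M=\sum_{e\geq k}\mathcal{C}_eM$ is exactly the right trick, and it is what turns the competitor-dependent bound $c^nM\subseteq N'$ into genuine minimality of each $N_d$; the converse via contraction of a hypothetical proper $\mathcal{C}_c$-submodule is also the standard argument. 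Two points you use tacitly and should make explicit: compatibility of the Cartier algebra and of $F$-purity with localization (needed both for $(N_d)_c=M_c$ and, in the ``only if'' direction, for $M_c$ to be $F$-pure, which is part of the definition of $F$-regularity), which is \cite[Lemmas 2.11, 2.18]{blicklep-etestideale}; and prime avoidance to produce $c$ vanishing on $\Supp(M/\tau)$ but at no generic point of $\Supp M$ (there are only finitely many such points, so this is harmless). Finally, your restriction to $d\geq 1$ is the correct reading of ``$d\in\mathbb{N}$ arbitrary'': for $d=0$ the formula returns $\underline{M}$, which is the test module only when $\underline{M}$ is $F$-regular.
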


We call any $c$ as in the theorem a \emph{test element} (with respect to the pair $(M, \mathcal{C})$).

\begin{Bem}
\begin{enumerate}[(a)]
\item{For the existence of test modules see \cite{blicklep-etestideale}. In particular, test modules are known to exist if $M$ is a coherent $\mathcal{C}$-module and $R$ is essentially of finite type over an $F$-finite field. Moreover, it is known in this case that the test module filtration is discrete (cf.\ \cite[Corollary 4.19]{blicklep-etestideale}).}
 %\item{The above construction coincides in the case of an ideal with the so-called \emph{big test ideal} of tight closure theory (see e.\,g.\ \cite[]{}). Conjecturally, the big test ideal and the ordinary test ideal (in the sense of tight closure theory) coincide. We hope that this possibly abusive notation will not cause any confusion to the reader.}
\item{There is also a slightly modified version of the definition of test ideal which only requires that $\tau(M, \mathcal{C})_\eta = (\underline{M}_{\mathcal{C}})_\eta$ for each generic point $\eta$ of $\Spec R$. One distinct advantage of this definition is that the operation of taking test ideals is then functorial. However, in this case the test ideal is always zero if the module is supported on a proper closed subset.
The reason for choosing the above definition is that one automatically has a Kashiwara type equivalence (cf.\ \cite[Proposition 3.2 (f)]{blicklep-etestideale}).}% Moreover, this way it is modeled after the characteristic zero situation (cf. Proposition \ref{VFiltrationGraph} below).}
\end{enumerate}
\end{Bem}

The main references for the following discussion are \cite{blickleboecklecartierfiniteness} and \cite{blickleboecklecartiercrystals}. See also \cite[Chapter 2]{boecklepinktaucrystals} for a concise discussion of localization at subcategories.
From now on we restrict ourselves to $\kappa$-modules, that is, coherent $R$-modules $M$ endowed with a single Cartier linear operator $\kappa: F_\ast M \to M$.

We call a $\kappa$-module $M$ \emph{nilpotent} if $\kappa^e M = 0$ for some $e \geq 0$. Given a morphism $\varphi: M \to N$ of $\kappa$-modules we say that $\varphi$ is a \emph{nil-isomorphism} if both $\coker \varphi$ and $\ker \varphi$ are nilpotent. The category of \emph{$\kappa$-crystals} (or \emph{Cartier crystals}) is the category obtained from coherent $\kappa$-modules by localization at nil-isomorphisms. Since nilpotent objects form a Serre-subcategory the resulting category of crystals is again abelian.

Given a $\kappa$-module $M$ there exists a unique $\kappa$-submodule $\underline{M}$ with $M/\underline{M}$ nilpotent and for which the structural map $\kappa: F_\ast \underline{M} \to \underline{M}$ is surjective (cf.\ \cite[Corollary 2.15]{blickleboecklecartierfiniteness}). In fact, $\underline{M} = \kappa^e(M)$ for $e \gg 0$. Similarly, there exists a unique nilpotent $\kappa$-submodule $M_{\nil}$ of $M$ such that $\overline{M} = M/M_{\nil}$ contains no non-zero nilpotent Cartier submodule [ibid., Proposition 2.12]. One has $\underline{(\overline{M})} = \overline{(\underline{M})}$ which we also denote by $M_{\min}$ and call the \emph{minimal} Cartier module associated to $M$. The category of minimal Cartier modules is equivalent to the category of Cartier crystals (cf.\ \cite[Theorem 3.12]{blickleboecklecartierfiniteness}). We define the \emph{support} of a $\kappa$-crystal $M$ as follows. Let $N$ be any $\kappa$-module whose associated crystal is $M$. Then we set $\Supp(M) = \Supp(\underline{N})$. Equivalently, it is the set of $P \in \Spec R$ such that $N_P$ is not nilpotent. We refer to \cite[Section 3.3]{blickleboecklecartierfiniteness} for further discussion. In particular, the authors show that this definition does not depend on the chosen representative $N$.

If $f: \Spec S \to \Spec R$ denotes a finite morphism of affine schemes then one has a functor $f_\ast$ from the category of $\kappa$-modules over $S$ to the category of $\kappa$-modules over $R$ and, for $f$ essentially \'etale or finite, a functor $f^!$ from $\kappa$-modules over $R$ to $\kappa$-modules over $S$. Concretely, if $f$ is essentially \'etale then $f^! = f^\ast$ on the underlying module categories and if $f$ is finite flat then $f^!$ is given by $\Hom(f_\ast S, -)$ -- see Lemma \ref{CartierStructureEtalePullback} and the discussion before Lemma \ref{SupportShriekFlat} for the induced Cartier structures. In general, $f^!$ is a pseudofunctor on a suitable derived category of crystals (cf.\ \cite[Section 3.3]{blickleboecklecartiercrystals} for a detailed exposition) and in this case $f^!$ will be given by $RHom(f_\ast S, -)$ for finite $f$. The functor $f_\ast$ is just $f_\ast$ on the underlying module categories and the Cartier structure is induced by $f_\ast(\kappa)$ and the fact that $f_\ast F_\ast$ is naturally isomorphic to $F_\ast f_\ast$.

The equivalence with locally finitely generated unit $F$-modules (and hence with constructible sheaves) is compatible with these functors. The functor $f^!$ corresponds to $f^\ast$ on constructible sheaves while $f_\ast$ corresponds to $f_+$. We will describe this in more detail in section \ref{SectionFUnitModules}.

\begin{Bem}
In \cite[Lemma 2.10]{blicklep-etestideale} it is pointed out that one has a similar definitions of nil-isomorphisms for modules over a Cartier algebra and that these still form a Serre-subcategory. Hence, one can still consider crystals in this case. However, this theory has not been worked out yet. Moreover, it seems unlikely that such a category is equivalent to constructible sheaves on the \'etale site. Most important for us is that in our considerations the Cartier algebra will actually vary for $t \in \mathbb{Q}_{\geq 0}$ and so we would have to deal with crystals in different categories. This can all be avoided by restricting to the hypersurface case as we will see.
\end{Bem}

\section{The $V$-filtration}
\label{SectionVFiltration}
In this section we give our definition of a $V$-filtration. This definition mimics recent work of Stadnik (\cite{stadnikvfiltrationfcrystal}), where he constructs a $V$-filtration for a certain class of locally finitely generated unit $F$-modules in the sense of Emerton and Kisin (cf.\ \cite{emertonkisinrhunitfcrys}).

\begin{Def}
\label{DefVfiltration}
Let $(M, \kappa)$ be a principal Cartier module and $\mathfrak{a}$ an ideal in $R$. Then a $V$-filtration on $M$ (along $\mathfrak{a}$) is a decreasing, \emph{right}-continuous\footnote{meaning that given $t$ for all sufficiently small $\eps > 0$ one has $V^{t} = V^{t+ \eps}$} $\mathbb{Q}$-indexed exhaustive filtration $V^\cdot$ of $M$ such that
\begin{enumerate}[(i)]
\item{$V^{0}$ is coherent and the filtration is continuous in zero.}
\item{There is an element $0 \neq x \in \mathfrak{a}$ such that multiplication with $x^j$ induces an \emph{injective} map $V^{t} \to V^{t+j}$.}
\item{If $\mathfrak{a}$ is generated by $n$ elements then $V^t = \mathfrak{a}^{n} V^{t-n}$ for $t > n$.}
\item{$\kappa(V^{tp}) = V^{t}$.}
\end{enumerate}
\end{Def}

\begin{Bem}
In characteristic zero the $V$-filtration is left-continuous and not right-continuous. We decided to reverse this here since our main example of $V$-filtrations (and the language we will use throughout this paper) are test module filtrations which are right-continuous. Obviously this is only a cosmetic choice.
\end{Bem}

\begin{Le}
\label{LeZQvalued}
Let $V^{\cdot}$ and $W^\cdot$ be two $V$-filtrations for a principal Cartier module $(M, \kappa)$. Assume that there is $k \in \mathbb{Z}$ such that for all $n \in \mathbb{Z}$ we have $V^{kn} = W^{kn}$. Then $V^\alpha = W^\alpha$ for all $\alpha \in \mathbb{Q}$.
\end{Le}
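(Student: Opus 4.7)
The plan is to use axiom (iv) to propagate the agreement from $k\mathbb{Z}$ to a dense subset of $\mathbb{Q}$, and then to invoke right-continuity to finish.

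The key observation is that axiom (iv), $\kappa(V^{tp}) = V^t$, depends only on the fixed operator $\kappa$ (common to the Cartier module $(M,\kappa)$) and on the single submodule $V^{tp}$. Hence, if $V^\beta = W^\beta$ for some $\beta \in \mathbb{Q}$, applying $\kappa$ to both sides yields
\[V^{\beta/p} = \kappa(V^\beta) = \kappa(W^\beta) = W^{\beta/p}.\]
In other words, the agreement set $S := \{\alpha \in \mathbb{Q} : V^\alpha = W^\alpha\}$ is closed under $\alpha \mapsto \alpha/p$. Starting from $k\mathbb{Z} \subseteq S$ and iterating, we obtain $S \supseteq \bigcup_{e \geq 0} k\mathbb{Z}/p^e = k \cdot \mathbb{Z}[1/p]$. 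Provided $k \neq 0$ -- otherwise the hypothesis reduces to $V^0 = W^0$ and the conclusion can fail -- this subset is dense in $\mathbb{Q}$: for any $\alpha \in \mathbb{Q}$ and $\delta > 0$, pick $e$ with $|k|/p^e < \delta$ and then $n \in \mathbb{Z}$ with $kn/p^e \in (\alpha, \alpha + \delta)$.

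To finish, I would use right-continuity. For any $\alpha \in \mathbb{Q}$, both filtrations provide some $\epsilon > 0$ with $V^{\alpha + \epsilon'} = V^\alpha$ and $W^{\alpha + \epsilon'} = W^\alpha$ for all $0 < \epsilon' < \epsilon$. By density, pick $\alpha' \in S$ with $\alpha < \alpha' < \alpha + \epsilon$; then $V^\alpha = V^{\alpha'} = W^{\alpha'} = W^\alpha$.

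There is no serious obstacle; the argument uses only (iv) and right-continuity, and axioms (i)--(iii) play no role. The one point deserving a second glance is that (iv) is stated unconditionally for all $t$ in Definition~\ref{DefVfiltration}, so the propagation works equally for $\beta$ of either sign and no separate argument is needed to cover negative rationals.
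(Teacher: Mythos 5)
Your argument is correct: since both filtrations live in the same module and axiom (iv) of Definition \ref{DefVfiltration} uses the one fixed operator $\kappa$, the equality $V^\beta = W^\beta$ does push down to $V^{\beta/p} = \kappa(V^\beta) = \kappa(W^\beta) = W^{\beta/p}$, so the agreement set contains $k\,\mathbb{Z}[1/p]$, which is dense (for $k \neq 0$), and right-continuity then gives equality at every rational index. This is a genuinely different organization from the paper's proof, which argues element-wise: for $m \in V^\alpha$ and $\beta$ with $m \notin W^\beta$, it reduces to $\alpha > \beta$ by right-continuity, lifts $m$ to some $x \in V^{\alpha p^n}$ with $\kappa^n(x) = m$, squeezes a lattice point $l \in k\mathbb{Z}$ between $\beta p^n$ and $\alpha p^n$, and uses that the filtrations are \emph{decreasing} to get $x \in V^l = W^l \subseteq W^{\beta p^n}$, hence a contradiction. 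Your set-level version is shorter and does not invoke the decreasing property at all (only (iv) and right-continuity), which is a small gain in economy and in clarity about which axioms are used; the paper's pointwise squeeze is essentially the same scaling-by-$p^n$ mechanism packaged as a proof by contradiction. Your remark that $k \neq 0$ is needed is apt: the paper's proof also tacitly requires it (a nonzero lattice spacing is needed to fit $l$ between $\beta p^n$ and $\alpha p^n$, and in the application in Proposition \ref{VfiltrationUnique} the lemma is invoked with such $k$), so flagging it does not weaken your proof relative to the paper's.
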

\begin{proof}
Let $\alpha \in \mathbb{Q}$ and $m \in V^\alpha$. Let $\beta \in \mathbb{Q}$ such that $m \notin W^\beta$. We have to show that $\beta > \alpha$. Assume that $\alpha \geq \beta$. If $\alpha = \beta$ then we may replace $\alpha$ by $\alpha + \varepsilon$ for $\varepsilon$ small enough by right continuity. Hence, we may assume that $\alpha > \beta$.

Then there is $n \gg 0$ and $x \in V^{\alpha p^n}$ such that $\kappa^n(x) = m$ by (iv) of Definition \ref{DefVfiltration}. By choosing $n$ sufficiently large we may assume that there is an $l \in k\mathbb{Z}$ with $\alpha p^n > l > \beta p^n$. Since $V^\cdot$ and $W^\cdot$ are decreasing we obtain $x \in V^{\alpha p^n} \subseteq V^{l} = W^{l} \subseteq W^{\beta p^n}$. In particular $\kappa^n(x) = m \in W^\beta$. This is a contradiction.
\end{proof}

\begin{Prop}
\label{VfiltrationUnique}
The $V$-filtration of a principal Cartier module is unique.
\end{Prop}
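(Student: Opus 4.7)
The plan is to invoke Lemma \ref{LeZQvalued} with $k$ equal to the number of generators $r$ of $\mathfrak{a}$, so that axiom (iii) becomes a recursion with step $r$; it then suffices to verify $V^{rn} = W^{rn}$ for every $n \in \mathbb{Z}$.

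The crucial preliminary step, which I expect to be the main obstacle, is to identify the ``left tail'' of any $V$-filtration with the whole module, i.e.\ to show that $V^t = M$ for all $t \leq 0$. Since the (coherent) module $M$ is noetherian, the ascending chain $(V^t)$ as $t \to -\infty$ stabilizes, and by exhaustiveness the stable value is $M$; pick $t_0 < 0$ with $V^{t_0} = M$. Because $p > 1$ we have $t_0 p < t_0$ and hence $V^{t_0 p} = M$, so axiom (iv) gives
\[
M = V^{t_0} = \kappa(V^{t_0 p}) = \kappa(M),
\]
forcing $M$ to be $F$-pure. Iterating (iv) yields $V^{t_0/p^e} = \kappa^e(M) = M$ for all $e \geq 1$, and monotonicity fills in the intervals $[t_0/p^e, t_0/p^{e+1}]$ to produce $V^t = M$ on $[t_0, 0)$, hence on $(-\infty, 0)$; the continuity at zero from axiom (i) extends this to $V^0 = M$, and right-continuity then yields some $\varepsilon > 0$ with $V^\varepsilon = M$. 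The same reasoning applied to $W$ gives $W^t = M$ for $t \leq 0$ and some $\varepsilon' > 0$ with $W^{\varepsilon'} = M$.

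Comparing the two filtrations on $r\mathbb{Z}$ is now straightforward. For $n \leq 0$ both $V^{rn}$ and $W^{rn}$ equal $M$. For the base case $n = 1$, right-continuity at $r$ combined with axiom (iii) applied at $r + \delta$, for $\delta > 0$ sufficiently small that $V^\delta = W^\delta = M$, gives $V^r = V^{r+\delta} = \mathfrak{a}^r V^\delta = \mathfrak{a}^r M$, and analogously $W^r = \mathfrak{a}^r M$. For the inductive step $n \geq 2$, axiom (iii) at $t = rn > r$ produces $V^{rn} = \mathfrak{a}^r V^{r(n-1)}$ and likewise for $W$, so equality on multiples of $r$ propagates upward by induction. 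Combining both ranges, $V^{rn} = W^{rn}$ holds for all $n \in \mathbb{Z}$, and Lemma \ref{LeZQvalued} concludes that $V^\alpha = W^\alpha$ for every $\alpha \in \mathbb{Q}$.
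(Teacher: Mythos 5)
There is a genuine gap, and it sits exactly where you predicted the main obstacle would be: your identification of the ``left tail'' with $M$ rests on $M$ being coherent, which is not a hypothesis of Proposition \ref{VfiltrationUnique}. Definition \ref{DefVfiltration} is stated for an arbitrary principal Cartier module and only requires in axiom (i) that $V^0$ be coherent -- a condition that would be automatic if $M$ itself were finitely generated over the noetherian ring $R$, so the definition deliberately allows non-coherent $M$ (as in the characteristic-zero and Stadnik-type settings, where the object carrying the $V$-filtration is a non-coherent pushforward and exhaustiveness is achieved precisely by an ever-growing negative part). Without finite generation of $M$ the increasing family $(V^t)$ as $t\to-\infty$ need not stabilize, so there is no $t_0$ with $V^{t_0}=M$, and everything downstream collapses: you cannot conclude $V^t=M$ for $t\le 0$, nor $V^0=W^0=M$, nor the base case $V^r=\mathfrak{a}^r M$. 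It is telling that your argument never uses axiom (ii); that axiom is exactly what the paper uses to control the negative range, which your proposal instead disposes of by an assumption that is not available.

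For comparison, the paper's proof uses only the coherence of $V^0$: since $V^0$ is finitely generated and $W^\cdot$ is exhaustive and decreasing, there is a maximal $l$ with $V^0\subseteq W^l$; axiom (iv) writes any $m\in V^0$ as $\kappa^n(m_n)$ with $m_n\in V^0\subseteq W^l$, hence $m\in W^{l/p^n}$ for all $n$, and continuity in zero gives $m\in W^0$, so $V^0=W^0$ by symmetry. Negative indices are then handled via axiom (ii): for $m\in V^{-j}$ one multiplies by $x^j$ into $V^0=W^0$ and uses injectivity to bound the jumping index of $m$ in $W^\cdot$, giving $V^{-j}=W^{-j}$; finally axiom (iii) and Lemma \ref{LeZQvalued} finish, much as in your last paragraph. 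If you add the hypothesis that $M$ is coherent, your argument is correct and even proves something the paper does not state explicitly (any $V$-filtration of a coherent module has $V^t=M$ for $t\le 0$ and forces $\kappa$ surjective), and your treatment of the positive multiples of $r$ via right-continuity, axiom (iii) and Lemma \ref{LeZQvalued} is sound; but as a proof of the proposition in its stated generality it is incomplete, and you would need to replace the stabilization step by an argument, like the paper's, that leans only on the coherence of $V^0$ and on axiom (ii).
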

\begin{proof}
Let $M$ be a principal Cartier module with two filtrations $V^{\cdot}$ and $W^\cdot$ satisfying the axioms of Definition \ref{DefVfiltration}. Our first goal is to show that $V^{0} = W^{0}$. Let $m \in V^{0}$. Since $V^{0}$ is coherent and $W^{\cdot}$ is decreasing, there is a maximal $l \in \mathbb{Z}$ such that $V^{0} \subseteq W^l$. By property (iv) there is a sequence $m_n \in V^{0}$ such that $\kappa^{n}(m_n) = m$. We conclude that $m \in W^{l/p^n}$ for all $n$ and since $W^{\cdot}$ is assumed to be continuous in zero we obtain $m \in W^{0}$.

Let now $j \in \mathbb{Z}$ be positive and $m \in V^{-j}$. Let $s \in \mathbb{Q}$ be minimal with $m \notin W^{s}$. Using axiom (ii) we find $x \in \mathfrak{a}$ such that $x^j m \in V^{0} = W^{0}$. By injectivity we have $x^j m \notin W^{s + j}$. Hence, we obtain $0 < s+ j$ and $m \in W^{-j}$.
Together with axiom (iii) and Lemma \ref{LeZQvalued} this yields that $W^{s} = V^{s}$ for all $s \in \mathbb{Q}$.
\end{proof}

\begin{Bem}
One can also define a $V$-filtration that is $\mathbb{R}$-indexed. The statements and proofs above transfer to this seemingly more general situation. Moreover, if $t \in \mathbb{R}$ is such that $V^{t} \neq V^{t - \eps}$ for all $\eps > 0$ then $t$ is rational if $t > 0$. This follows similarly to the rationality of $F$-jumping numbers of test ideals as in \cite{blicklemustatasmithdiscretenessrationality}. More precisely, an analog of \cite[Proposition 3.4]{blicklemustatasmithdiscretenessrationality} is easily derived from axioms (iii) and (iv) then one can proceed as in \cite[Theorem 3.1]{blicklemustatasmithdiscretenessrationality}. I do not know if the same holds true for $t < 0$. As the $V$-filtrations we consider will actually have no non-trivial negative part we feel justified to only consider $\mathbb{Q}$-indexed filtrations. Also cf.\ Remark \ref{RemarkOnNegativeComponents} below.
\end{Bem}

\section{Existence}
\label{SectionExistence}
In this section we show that if $R$ is essentially of finite type over an $F$-finite field and $(M, \kappa)$ is an $F$-regular Cartier module such that $\mathfrak{a}$ contains an $M$-regular element\footnote{this last condition is in particular satisfied if $\depth(\mathfrak{a}, M) \geq 1$} then the filtration of test submodules $(\tau(M, \kappa, \mathfrak{a}^t))_{t \geq 0}$ (as defined in \cite{blicklep-etestideale} or \ref{DefTestModule}) is a $V$-filtration of $M$. In particular, the $V$-filtration exists in this case. We will also see that if $M$ is not $F$-regular then the $V$-filtration and the test module filtration do not coincide.

\begin{Le}
\label{LeAltCartierAlgebra}
Let $R$ be a ring, $\mathfrak{a}$ an ideal and $\mathcal{C}$ a Cartier algebra. Fix a coherent $\mathcal{C}$-module $M$.
Let $\mathcal{D} = \sum_{e \geq 0} \mathcal{C}_e^{\mathfrak{a}^t} = \sum_{e \geq 0}\mathcal{C}_e \cdot \mathfrak{a}^{\lceil t(p^e -1)\rceil}$ and $\mathcal{E} = R + \sum_{e \geq 1}\mathcal{C}_e \cdot \mathfrak{a}^{\lceil tp^e \rceil}$. Then $\tau(M, \mathcal{D}) = \tau(M, \mathcal{E})$.
\end{Le}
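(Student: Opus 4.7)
The plan is to establish both inclusions $\tau(M, \mathcal{D}) \subseteq \tau(M, \mathcal{E})$ and $\tau(M, \mathcal{E}) \subseteq \tau(M, \mathcal{D})$ by minimality. The essential algebraic input is the \emph{asymmetric} product relation $\mathcal{D}_e \cdot \mathcal{E}_f \subseteq \mathcal{E}_{e+f}$ for $e, f \geq 1$. To see this, take $\varphi a \in \mathcal{D}_e$ with $\varphi \in \mathcal{C}_e$, $a \in \mathfrak{a}^{\lceil t(p^e - 1)\rceil}$ and $\psi b \in \mathcal{E}_f$ with $\psi \in \mathcal{C}_f$, $b \in \mathfrak{a}^{\lceil tp^f\rceil}$. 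The Cartier commutation rule yields $(\varphi a)(\psi b) = \varphi \psi\, a^{p^f} b \in \mathcal{C}_{e+f}\, \mathfrak{a}^{p^f \lceil t(p^e-1)\rceil + \lceil tp^f\rceil}$, and
$$p^f \lceil t(p^e - 1)\rceil + \lceil tp^f\rceil \;\geq\; p^f \cdot t(p^e - 1) + t p^f \;=\; tp^{e+f},$$
so the left side, being an integer, is $\geq \lceil tp^{e+f}\rceil$ and the composition sits in $\mathcal{E}_{e+f}$. Replacing $\mathcal{E}_f$ by $\mathcal{D}_f$ only yields $tp^{e+f} - t$, which is generally too small; this asymmetry is the crux.

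Next I would compare the $F$-pure parts. Trivially $\mathcal{E} \subseteq \mathcal{D}$ gives $\underline{M}_\mathcal{E} \subseteq \underline{M}_\mathcal{D}$. At any generic point $\eta$ of $\Supp \underline{M}_\mathcal{D}$ one has $\mathfrak{a} \not\subseteq \eta$: otherwise $\mathfrak{a}_\eta$ would lie in the maximal ideal of the Artinian-local quotient $R_\eta/\Ann(\underline{M}_\mathcal{D})_\eta$, hence $\mathfrak{a}_\eta^N \underline{M}_{\mathcal{D},\eta} = 0$ for some $N$, and for $n \gg 0$ every summand of $\mathcal{D}_{+,\eta}^n$ would carry a power of $\mathfrak{a}_\eta$ exceeding $N$, so the $F$-purity identity $\underline{M}_{\mathcal{D},\eta} = \mathcal{D}_{+,\eta}^n \underline{M}_{\mathcal{D},\eta}$ would force $\underline{M}_{\mathcal{D},\eta} = 0$, a contradiction. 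Thus $\mathfrak{a}_\eta = R_\eta$, so $\mathcal{D}_\eta = \mathcal{E}_\eta = \mathcal{C}_\eta$ locally and $\underline{M}_{\mathcal{D},\eta} = \underline{M}_{\mathcal{E},\eta}$. It follows that $\Supp \underline{M}_\mathcal{D} = \Supp \underline{M}_\mathcal{E}$, the generic points coincide, and the two $\underline{M}$'s agree there.

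With these preparations both inclusions fall out. For $\tau(M, \mathcal{E}) \subseteq \tau(M, \mathcal{D})$: the submodule $\tau(M, \mathcal{D})$ is $\mathcal{D}$-stable, hence $\mathcal{E}$-stable (as $\mathcal{E} \subseteq \mathcal{D}$), and by the previous paragraph it equals $\underline{M}_\mathcal{E}$ at every generic point of $\Supp \underline{M}_\mathcal{E}$; minimality of $\tau(M, \mathcal{E})$ applies. For the reverse direction I would exploit that a test module is $F$-pure over its own algebra, $\tau(M, \mathcal{E}) = \mathcal{E}_+ \tau(M, \mathcal{E})$, and combine this with the asymmetric relation of the first paragraph:
$$\mathcal{D}_+\, \tau(M, \mathcal{E}) \;=\; \mathcal{D}_+\mathcal{E}_+\, \tau(M, \mathcal{E}) \;\subseteq\; \mathcal{E}_+\, \tau(M, \mathcal{E}) \;=\; \tau(M, \mathcal{E}).$$
Thus $\tau(M, \mathcal{E})$ is in fact $\mathcal{D}$-stable, and since it agrees generically with $\underline{M}_\mathcal{D}$, minimality of $\tau(M, \mathcal{D})$ yields $\tau(M, \mathcal{D}) \subseteq \tau(M, \mathcal{E})$. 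The main obstacle is spotting the one-sided inclusion $\mathcal{D}_+ \mathcal{E}_+ \subseteq \mathcal{E}_+$; once that numerical identity is on the table the $F$-purity of $\tau(M, \mathcal{E})$ does the heavy lifting in the hard direction.
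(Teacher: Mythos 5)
Your proof is correct, but it follows a genuinely different route from the paper's. The paper argues entirely through test elements: after a reduction as in the proof of \cite[Proposition 4.16]{blicklep-etestideale} one chooses a single $c \in R^\circ \cap \mathfrak{a}$ that works for both algebras, writes $\tau(M,\mathcal{D}) = \sum_{e \geq 1} \mathcal{C}_e \mathfrak{a}^{\lceil t(p^e-1)\rceil} c^2 M$ via \cite[Theorem 3.11]{blicklep-etestideale}, notes that $\tau(M,\mathcal{E}) \subseteq \tau(M,\mathcal{D})$ is immediate from $\mathcal{E} \subseteq \mathcal{D}$, and for the hard inclusion replaces $c$ by $ca$ with $a \in \mathfrak{a}^{\lceil t \rceil} \cap R^\circ$, the extra factor $\mathfrak{a}^{\lceil t \rceil}$ upgrading $\lceil t(p^e-1)\rceil$ to $\lceil tp^e\rceil$ in one line. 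You instead work straight from the minimality characterization of the test module: your asymmetric relation $\mathcal{D}_e \mathcal{E}_f \subseteq \mathcal{E}_{e+f}$ encodes exactly the same arithmetic ($p^f\lceil t(p^e-1)\rceil + \lceil tp^f\rceil \geq tp^{e+f}$) as the paper's substitution $c \mapsto ca$, and combined with $F$-purity of $\tau(M,\mathcal{E})$ it shows $\tau(M,\mathcal{E})$ is $\mathcal{D}$-stable; your generic-point analysis (that $\mathfrak{a}$ is invertible at the generic points of $\Supp \underline{M}_{\mathcal{D}}$, so $\underline{M}_{\mathcal{D}}$ and $\underline{M}_{\mathcal{E}}$ have the same generic points and agree there) then makes both inclusions drop out of minimality, and all the steps check out. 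What each approach buys: the paper's is shorter once the common-test-element reduction is in place, while yours is more structural and makes the support comparison explicit; but note that the $F$-purity $\mathcal{E}_+\tau(M,\mathcal{E}) = \tau(M,\mathcal{E})$ you quote is itself most easily justified from the same description $\tau(M,\mathcal{E}) = \sum_{e\geq 1} \mathcal{E}_e c^d \underline{M}$ together with $\underline{M} = \mathcal{E}_+\underline{M}$, so test elements are hidden rather than avoided, and like the paper you implicitly assume $t > 0$ (for $t = 0$ one has $\mathcal{D} = \mathcal{E}$ and there is nothing to prove) as well as the existence of both test modules.
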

\begin{proof}
Arguing as in \cite[proof of Proposition 4.16]{blicklep-etestideale} we may assume that there is $c \in R^{\circ} \cap \mathfrak{a}$ such that $\tau(M, \mathcal{D}) = \mathcal{D}_+ cM$ and that likewise $\tau(M, \mathcal{E}) = \mathcal{E}_+ cM$. The inclusion $\tau(M, \mathcal{E}) \subseteq \tau(M, \mathcal{D})$ is then trivial.
For the other inclusion we may replace $c$ with $ca$ where $a \in \mathfrak{a}^{\lceil t \rceil} \cap R^\circ$. Then we have, using \cite[Theorem 3.11]{blicklep-etestideale} \[\tau(M, \mathcal{D}) = \sum_{e \geq 1} \mathcal{C}_e \mathfrak{a}^{\lceil t(p^e -1)\rceil} c^2 M \subseteq \sum_{e \geq 1} \mathcal{C}_e \mathfrak{a}^{\lceil t(p^e)\rceil} c M \subseteq \tau(M, \mathcal{E}).\]
\end{proof}

\begin{Prop}
\label{FiltrationsProp1}
Let $R$ be a ring. Fix an $R$-Cartier algebra $\mathcal{C}$ that is generated in degree $1$, a coherent $\mathcal{C}$-module $M$ and an ideal $\mathfrak{a}$. Assume that the test modules $\tau(M, \mathcal{C}, \mathfrak{a}^t)$ exist for $t \in \mathbb{Q}_{\geq 0}$. Then $\tau(M, \mathcal{C}, \mathfrak{a}^t) = \mathcal{C}_1(\tau(M, \mathcal{C}, \mathfrak{a}^{tp}))$.
\end{Prop}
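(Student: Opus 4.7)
The strategy is to apply Lemma~\ref{LeAltCartierAlgebra} to replace each Cartier algebra $\mathcal{C}^{\mathfrak{a}^s}$ (for $s \in \{t, tp\}$) by its simpler avatar $\mathcal{E}^{\mathfrak{a}^s} = R + \sum_{e \geq 1} \mathcal{C}_e\, \mathfrak{a}^{\lceil sp^e\rceil}$, and then to combine this with \cite[Theorem 3.11]{blicklep-etestideale}. After replacing $M$ by $\underline{M}$ (which leaves the test module unchanged), this yields the working formula
\[
\tau(M, \mathcal{C}, \mathfrak{a}^s) \;=\; \sum_{e \geq 1} \mathcal{C}_e\, \mathfrak{a}^{\lceil sp^e\rceil}\, c^{d}\, M,
\]
valid for any test element $c$ and any $d \geq 1$. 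Specialising to $s = t$ and $s = tp$ gives two presentations which can be compared summand-by-summand.

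The inclusion $\mathcal{C}_1 \tau(M, \mathcal{C}, \mathfrak{a}^{tp}) \subseteq \tau(M, \mathcal{C}, \mathfrak{a}^t)$ is immediate from the hypothesis that $\mathcal{C}$ is generated in degree $1$: since $\mathcal{C}_1 \mathcal{C}_e = \mathcal{C}_{e+1}$,
\[
\mathcal{C}_1 \tau(M, \mathcal{C}, \mathfrak{a}^{tp}) \;=\; \sum_{e \geq 1} \mathcal{C}_{e+1}\, \mathfrak{a}^{\lceil tp^{e+1}\rceil}\, c^d\, M \;=\; \sum_{f \geq 2} \mathcal{C}_f\, \mathfrak{a}^{\lceil tp^f\rceil}\, c^d\, M,
\]
which is plainly contained in the corresponding sum for $\tau(M, \mathcal{C}, \mathfrak{a}^t)$. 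Thus the reverse inclusion reduces to showing that the missing $f=1$ piece $\mathcal{C}_1\, \mathfrak{a}^{\lceil tp\rceil}\, c^d\, M$ also lies in $\mathcal{C}_1 \tau(M, \mathcal{C}, \mathfrak{a}^{tp})$.

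This is the main obstacle. The idea is to exploit $F$-purity of $M = \underline{M}$: since $\mathcal{C}$ is generated in degree $1$, one has $M = \mathcal{C}_1 M$, so every $m \in M$ can be written as a sum of elements of the form $\psi(m')$ with $\psi \in \mathcal{C}_1$ and $m' \in M$. For a typical generator $\varphi_1(a c^d m)$ of the missing summand, with $\varphi_1 \in \mathcal{C}_1$ and $a \in \mathfrak{a}^{\lceil tp\rceil}$, the Cartier relation $r \varphi_1 = \varphi_1 r^p$ then gives
\[
\varphi_1\bigl(a c^d \psi(m')\bigr) \;=\; (\varphi_1 \psi)\bigl(a^{p} c^{dp}\, m'\bigr).
\]
Now $\varphi_1 \psi \in \mathcal{C}_2$ and $a^{p} \in \mathfrak{a}^{p\lceil tp\rceil} \subseteq \mathfrak{a}^{\lceil tp^{2}\rceil}$, so this element lies in $\mathcal{C}_2\, \mathfrak{a}^{\lceil tp^2\rceil}\, c^{dp}\, M$. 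Using the freedom in \cite[Theorem 3.11]{blicklep-etestideale} to replace the exponent $d$ by any $d' \geq 1$, taking $d' = dp$ shows this is precisely the $f=2$ summand appearing in $\mathcal{C}_1 \tau(M, \mathcal{C}, \mathfrak{a}^{tp})$. Summing over the $\mathcal{C}_1$-decomposition of $m$ delivers the remaining inclusion.
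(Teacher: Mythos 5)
Your proof is correct and takes essentially the same route as the paper's: reduce to the $F$-pure case, pass via Lemma \ref{LeAltCartierAlgebra} and the formula $\tau = \sum_{e\geq 1}\mathcal{C}_e\,\mathfrak{a}^{\lceil sp^e\rceil}c^d M$ to a summand-by-summand comparison, get the easy inclusion by reindexing, and absorb the remaining degree-one summand using $M=\mathcal{C}_1 M$ together with the relation $r\varphi=\varphi r^{p}$ and the freedom to raise the power of the test element. The only (cosmetic) difference is that you use the $\lceil sp^e\rceil$-presentation on both sides of the hard inclusion, which makes the $e\geq 2$ summands match on the nose, whereas the paper compares it with the $\lceil t(p^e-1)\rceil$-presentation and so needs a small extra exponent estimate there.
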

\begin{proof}
First of all, we may assume that the pair $(M, \mathcal{C}^{\mathfrak{a}^{t}})$ is $F$-pure and that $R$ is reduced (use \cite[Proposition 3.2]{blicklep-etestideale}). In particular, the claim holds for $t = 0$.

We assume $t > 0$ and start with the inclusion from right to left. By Lemma \ref{LeAltCartierAlgebra} we may replace the algebra $\mathcal{C}^{\mathfrak{a}^t}$ by $\mathcal{D} =  R + \sum_{e \geq 1} \mathcal{C}_e \mathfrak{a}^{\lceil tp^e\rceil}$ and similarly for $\mathcal{C}^{\mathfrak{a}^{tp}}$. Moreover, as in \cite[Proposition 4.16]{blicklep-etestideale} we may assume that there is $c \in R^{\circ} \cap \mathfrak{a}$ such that $\tau(M, \mathcal{D}^{\mathfrak{a}^{t + \eps}}) = \mathcal{D}_+^{\mathfrak{a}^{t + \eps}} cM$ for all $\eps \geq 0$.

We therefore have \begin{align*}
\mathcal{C}_1(\tau(M, \mathcal{C}, \mathfrak{a}^{tp})) &=  \mathcal{C}_1 \sum_{e\geq 1} \mathcal{C}_e \mathfrak{a}^{\lceil tp \cdot p^e\rceil} c M = \sum_{e \geq 2} \mathcal{C}_e \mathfrak{a}^{\lceil tp^e\rceil}cM \text{ and}\\
\tau(M, \mathcal{C}, \mathfrak{a}^{t}) &= \sum_{e \geq 1} \mathcal{C}_e \mathfrak{a}^{\lceil tp^e\rceil}cM.
\end{align*}
In particular, the inclusion from right to left holds. For the inclusion from left to right we show that
\[\sum_{e \geq 1} \mathcal{C}_e \mathfrak{a}^{\lceil t(p^e -1)\rceil} c M \subseteq \sum_{e \geq 1} \mathcal{C}_{e+1} \mathfrak{a}^{\lceil tp(p^e -1) \rceil} c M.\]
For $e \geq 2$ it is enough to show $\mathcal{C}_e \mathfrak{a}^{\lceil t(p^e-1)\rceil}cM \subseteq \mathcal{C}_e \mathfrak{a}^{\lceil tp(p^{e-1} -1)\rceil} c M$. This holds since the right-hand side is equal to $\mathcal{C}_e \mathfrak{a}^{\lceil t(p^e-1) - t(p-1)\rceil} cM$. For $e =1$ we may replace $c$ on the right-hand side by $c^{p}$ (cf.\ \cite[Theorem 3.11]{blicklep-etestideale}) then we have \[\mathcal{C}_{1}^2 \mathfrak{a}^{\lceil t p (p -1)\rceil} c^{p} M \supseteq \mathcal{C}_{1}^2 \mathfrak{a}^{\lceil t (p -1)\rceil p} c^{p} M \supseteq \mathcal{C}_{1}^2 \mathfrak{a}^{\lceil t (p -1)\rceil [p]} c^{p} M  \supseteq \mathcal{C}_1 \mathfrak{a}^{\lceil t(p^e-1)\rceil} c \mathcal{C}_{1} M.\] Note that $\mathcal{C}_1 M= M$ since $M$ is $F$-pure. %the last inclusion follows since $\mathfrak{a}^{[p^e]} \subseteq \mathfrak{a}^{p^e}$.
\end{proof}

\begin{Theo}
\label{VfiltrationTestidealfiltration}
Let $R$ be a ring essentially of finite type over an $F$-finite field and $\mathfrak{a}$ an ideal. Fix a principal Cartier algebra $\mathcal{C}$ generated by $\kappa$ and an $F$-regular coherent Cartier module $M$ and assume that $\mathfrak{a}$ contains an $M$-regular element. Then the $V$-filtration on $M$ exists and $V^t = \tau(M, \mathcal{C}, \mathfrak{a}^{t})$.
\end{Theo}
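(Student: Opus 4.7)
The plan is to verify each of the axioms (i)--(iv) in Definition \ref{DefVfiltration} for the filtration $V^t := \tau(M, \mathcal{C}, \mathfrak{a}^t)$, together with the general properties (decreasing, right-continuous, $\mathbb{Q}$-indexed and exhaustive). Since $R$ is essentially of finite type over an $F$-finite field, all test modules exist; the filtration is obviously decreasing in $t$, and right-continuity is \cite[Proposition 4.16]{blicklep-etestideale}. For $t \leq 0$ each ideal $\mathfrak{a}^{\lceil t(p^e-1)\rceil}$ is the unit ideal, so $\mathcal{C}^{\mathfrak{a}^t} = \mathcal{C}$ and $V^t = \tau(M, \mathcal{C}) = M$ by the $F$-regularity hypothesis. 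This yields exhaustiveness and axiom (i) simultaneously: $V^0 = M$ is coherent, and the equality $V^{-\eps} = V^0$ for $\eps > 0$ is the required continuity at zero.

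For axiom (ii) I would take $x$ to be the given $M$-regular element of $\mathfrak{a}$; then multiplication by $x^j$ is injective on $M$, hence on $V^t \subseteq M$. It remains to check $x^j V^t \subseteq V^{t+j}$, which, using the explicit description $V^t = \sum_{e \geq 1} \mathcal{C}_e \mathfrak{a}^{\lceil t(p^e-1)\rceil} c M$ from \cite[Theorem 3.11]{blicklep-etestideale} together with the commutation $\mathfrak{a}^j \mathcal{C}_e = \mathcal{C}_e \mathfrak{a}^{jp^e}$, reduces to the elementary inequality $jp^e + \lceil t(p^e-1)\rceil \geq \lceil(t+j)(p^e-1)\rceil$. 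Axiom (iv) is essentially a restatement of Proposition \ref{FiltrationsProp1}: since $\mathcal{C}$ is principally generated by $\kappa$, for any submodule $N \subseteq M$ one has $\mathcal{C}_1(N) = \kappa(F_\ast N)$, and the proposition gives
\[
\kappa(V^{tp}) \;=\; \mathcal{C}_1\bigl(\tau(M, \mathcal{C}, \mathfrak{a}^{tp})\bigr) \;=\; \tau(M, \mathcal{C}, \mathfrak{a}^t) \;=\; V^t.
\]

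The main obstacle is axiom (iii), the Skoda-type rigidity. I would deduce it by iterating the test-module Skoda theorem $\tau(M, \mathcal{C}, \mathfrak{a}^s) = \mathfrak{a}\,\tau(M, \mathcal{C}, \mathfrak{a}^{s-1})$, valid for $s$ exceeding the number of generators of $\mathfrak{a}$; the characteristic $p$ Skoda argument known for test ideals transfers verbatim to the test-module setting, using the sum description of $\tau$ from \cite[Theorem 3.11]{blicklep-etestideale}. Iterating $n$ times then yields $V^t = \mathfrak{a}^n V^{t-n}$ throughout the required range, provided one is careful that the hypothesis $s > n$ is preserved at each step, which is guaranteed precisely by the assumption $t > n$. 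Having verified (i)--(iv), uniqueness of the $V$-filtration (Proposition \ref{VfiltrationUnique}) identifies the test module filtration as \emph{the} $V$-filtration of $M$.
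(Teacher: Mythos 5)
Your overall strategy is exactly the paper's: verify the axioms of Definition \ref{DefVfiltration} for $V^t=\tau(M,\mathcal{C},\mathfrak{a}^t)$ and then invoke uniqueness (Proposition \ref{VfiltrationUnique}). Your treatment of existence, of axiom (i) and exhaustiveness (via $\tau(M,\mathcal{C})=M$ for $F$-regular $M$, setting $V^t=M$ for $t\le 0$), of right-continuity via \cite[Proposition 4.16]{blicklep-etestideale}, of axiom (ii) (injectivity from the $M$-regular element, plus the containment $x^jV^t\subseteq V^{t+j}$, which you check by the elementary ceiling inequality instead of quoting the easy half of Brian\c{c}on--Skoda), and of axiom (iv) via Proposition \ref{FiltrationsProp1} all agree with the paper's proof.

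The problem is axiom (iii). The one-step Skoda identity $\tau(M,\mathcal{C},\mathfrak{a}^s)=\mathfrak{a}\,\tau(M,\mathcal{C},\mathfrak{a}^{s-1})$ holds for $s\ge n$, but your assertion that iterating it $n$ times stays inside this range ``precisely because $t>n$'' is false: after one application the exponent is $t-1$, which for $n<t<n+1$ already lies below $n$, and the iteration only yields $V^t=\mathfrak{a}^nV^{t-n}$ for $t$ roughly $\ge 2n-1$. This is not a bookkeeping issue that a more careful induction would fix: for $\mathfrak{a}=(x,y)\subseteq k[x,y]$, $M=R$ with the standard Cartier structure, one computes $\tau(\mathfrak{a}^{5/2})=\mathfrak{a}$ while $\mathfrak{a}^2\tau(\mathfrak{a}^{1/2})=\mathfrak{a}^2$, so in the range $n<t<2n-1$ only the one-step identity $V^t=\mathfrak{a}V^{t-1}$ is available, not the $n$-fold one. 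The paper itself does not argue here; it simply cites the Brian\c{c}on--Skoda theorem for test modules, \cite[Theorem 4.21]{blicklep-etestideale} (so your instinct that the test-ideal Skoda argument carries over to modules is correct and need not be reproved), and its one-line remark is equally coarse on this point. In the principal case $n=1$ --- the only case used in the remainder of the paper --- a single application of Skoda gives (iii) for all $t>1$ and your argument is complete; but as written, for $n\ge 2$ the iteration step is a genuine gap.
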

\begin{proof}
By \cite[Theorem 4.13]{blicklep-etestideale} the test module $\tau(M, \mathcal{C}, \mathfrak{a}^t)$ exists in this setting.
We verify that the test module filtration has the properties listed in \ref{DefVfiltration}. By Proposition \ref{VfiltrationUnique} it then coincides with the $V$-filtration.
First of all, since $M$ is $F$-regular $\tau(M, \mathcal{C}) = M$. Hence, the filtration is exhaustive (and we set $V^t = M$ for $t < 0$) and continuous in zero. By \cite[Proposition 4.16]{blicklep-etestideale} it is decreasing and right-continuous. Property (ii) is satisfied since we assume that $\mathfrak{a}$ contains an $M$-regular element and by the Brian\c{c}on-Skoda Theorem for test modules (cf.\ \cite[Theorem 4.21]{blicklep-etestideale}).

Property (iii) is a weaker version of the Brian\c{c}on-Skoda Theorem and the last property follows from Proposition \ref{FiltrationsProp1}.
\end{proof}

Since we will frequently make use of the assumption that $\mathfrak{a}$ contains an $M$-regular element we make this into a definition:
\begin{Def}
Given a ring $R$, an ideal $\mathfrak{a}$ and an $R$-module $M$ we say that $M$ is \emph{non-degenerate (with respect to $\mathfrak{a}$)} if $\mathfrak{a}$ contains an $M$-regular element.
\end{Def}

\begin{Bem}
Assume that $M = R$ is reduced and assume that $\kappa: F_\ast R \to R$ is non-zero when localized at minimal primes (this automatically holds if $(R, \kappa)$ is $F$-pure). Since $R$ is reduced the union of the minimal primes of $R$ is precisely the set of zero divisors in $R$. Hence, $\mathfrak{a}$ contains an $R$-regular element if and only if for every $t \geq 0$ there is $\varphi \in \kappa^{\mathfrak{a}^t}$  such that $\varphi: F_\ast^e R_{\eta} \to R_\eta$ is nonzero for all minimal primes $\eta$ of $R$. This condition on $\kappa^{\mathfrak{a}^t}$ is called non-degenerate in \cite[Definition 3.8]{schwedenonqgorenstein}.
\end{Bem}

\begin{Bem}
 \label{RemarkOnNegativeComponents}
It is in general not true that the test module filtration and the $V$-filtration coincide. For instance, take $M = R$ and endow it with a Cartier structure that is $F$-pure but not $F$-regular (a concrete example is $R = \mathbb{F}_p[x]$ and $\kappa = C x^{p-1}$, where $C$ is a generator of $\Hom(F_\ast R, R)$). The $V$-filtration of $R$ with respect to the unit ideal is then the trivial filtration on $R$ (i.\,e.\ $V^t R = R$ for all $t$). Indeed, $\kappa$ is surjective by $F$-purity so that condition (iv) is satisfied, conditions (ii) and (iii) are vacuous since $1 \in \mathfrak{a}$ and condition (i) is also satisfied. However, this is not the test module filtration since $\tau(R) = R$ if and only if $R$ is $F$-regular with respect to $\kappa$.

Similarly, whenever $M$ is not $F$-regular $\tau(M, \kappa) \neq M$ so that the test module filtration is not exhaustive and hence cannot coincide with the $V$-filtration (which we do not know to exist beyond the trivial example above and the result of Theorem \ref{VfiltrationTestidealfiltration}).

In order to extend the characterization of test module filtrations as $V$-filtrations to not necessarily $F$-regular modules one probably has to allow non-trivial negative components for $V$-filtrations. One should be able to make sense of this by using fractional ideals in a suitable way. We expect that the continuity in zero (i.\,e.\ (i) in Definition \ref{DefVfiltration}) has to be weakened as well.

Another approach (at least if $M$ is $F$-pure), as the referee suggests, may be to use a concept similar to the one of Takagi's \emph{adjoint ideals} -- see \cite{takagicharpadjointideals}.

Under the assumption that test modules exist axioms (iii) and (iv) hold quite generally (cf.\ \cite[Theorem 4.21]{blicklep-etestideale} and Proposition \ref{FiltrationsProp1} above) for the test module filtration. Axioms (i) and (ii) are essentially conditions on the module $M$. However, discreteness (and rationality) of the test module filtration are, to my knowledge, not known beyond the case of $R$ being essentially of finite type over an $F$-finite field and existence of test modules in general is an open problem.
\end{Bem}

In the characteristic zero setting the construction of the $V$-filtration is reduced to the situation where $\mathfrak{a}$ defines a smooth subvariety (cf.\ \cite{budurvfiltrationdmodules}). This is accomplished using a graph construction. More precisely, let $i: \Spec R \to \Spec R \times_k \mathbb{A}^n_k$ be the graph morphism with respect to generators\footnote{One can show that this construction is independent of the chosen set of generators.} $(f_1, \ldots, f_n)$ of $\mathfrak{a}$ -- explicitly $i$ is given by the map $R[t_1, \ldots, t_r] \to R, t_i \mapsto f_i$. Then the $V$-filtration of a regular holonomic and quasi-unipotent $\mathcal{D}_R$-module $M$ along $\mathfrak{a}$ is defined to be the $V$-filtration of  $i_+ M$ along $\Spec R \times \{0\}$ intersected with $M \otimes 1$ (here $i_+$ denotes the $\mathcal{D}$-module pushforward). Note however that the connection with nearby and vanishing cycles in the hypersurface case is obtained by considering the associated graded of $V(i_+ M)$.

In positive characteristic there is a direct construction even if $\mathfrak{a}$ defines a singular subvariety. One may also, however, reduce to the smooth case as in characteristic zero:

Let $R$ be a ring, $\mathcal{C}$ an $R$-Cartier algebra and $M$ a coherent $F$-pure $\mathcal{C}$-module. Recall (\cite[Proposition 2.21]{blicklep-etestideale}) that if the support of $M$ is contained in $\Spec R/J$ for some ideal $J$ in $R$ then $M$ is naturally a $\mathcal{C}/\mathcal{C}J$-module. In the following let $i: \Spec S/J \to \Spec S$ be a closed immersion. Then the pushforward $i_\ast M$ is an $S$-Cartier module since $\mathcal{C}$ is via $S \to S/J$ naturally an $S$-Cartier algebra.

\begin{Prop}
\label{VFiltrationGraph}
Let $R$ be a ring essentially of finite type over an $F$-finite field $k$, $\mathfrak{a} = (f_1, \ldots, f_r)$ an ideal and let $M$ be a coherent Cartier module. Denote the graph morphism by $i: \Spec R \to \Spec S = \Spec R \times_k \mathbb{A}_{k}^r$ and $\mathfrak{b} = (t_1, \ldots, t_r)$. Then the support of $\tau = \tau(i_\ast M, \mathcal{C}, \mathfrak{b}^t)$ is contained in $V(I)$ with $I = (t_1- f_1, \ldots, t_r - f_r)$ and $\tau$ considered as an $\mathcal{C}^{\mathfrak{b}^t}/ \mathcal{C}^{\mathfrak{b}^t} I$-module coincides with $\tau(M, \mathcal{C}, \mathfrak{a}^t)$.
\end{Prop}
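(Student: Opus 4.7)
The strategy is to combine two observations: first, the Cartier module $i_\ast M$ is annihilated by $I$ because $i$ is a closed immersion; second, modulo $I$ the ideal $\mathfrak{b}$ reduces to $\mathfrak{a}$, so the quotient Cartier algebra $\mathcal{C}^{\mathfrak{b}^t}/\mathcal{C}^{\mathfrak{b}^t} I$ is canonically identified with $\mathcal{C}^{\mathfrak{a}^t}$ via the isomorphism $S/I \cong R$ provided by the graph morphism. The support claim is then immediate: $\tau$ is an $S$-submodule of $i_\ast M$ and is therefore annihilated by $I$, so $\Supp \tau \subseteq V(I)$, and by \cite[Proposition 2.21]{blicklep-etestideale} it inherits a module structure over $\mathcal{C}^{\mathfrak{b}^t}/\mathcal{C}^{\mathfrak{b}^t} I$.

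For the equality of test modules I would appeal to the Kashiwara-type compatibility of test modules with closed immersions (\cite[Proposition 3.2(f)]{blicklep-etestideale}). Pushforward along $i$ induces an equivalence between coherent Cartier modules on $R$ and coherent Cartier modules on $S$ supported on $V(I)$, and this equivalence respects the generic-point characterization defining $\tau$: the generic points of $\Supp \underline{i_\ast M}$ correspond bijectively to those of $\Supp \underline M$ under $i$, with matching stalks. Combined with the identification $\mathfrak{b}^{\lceil t(p^n-1)\rceil} \equiv \mathfrak{a}^{\lceil t(p^n-1)\rceil} \pmod I$ for every $n$, this yields $\tau(i_\ast M, \mathcal{C}, \mathfrak{b}^t) = i_\ast \tau(M, \mathcal{C}, \mathfrak{a}^t)$, which is precisely the statement.

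The delicate point will be the bookkeeping at the level of Cartier algebras. Concretely, one needs to verify that a test element $c \in R^\circ \cap \mathfrak{a}$ for the pair $(M, \mathcal{C}^{\mathfrak{a}^t})$ (whose existence is guaranteed by \cite[Theorem 3.11]{blicklep-etestideale}) can be used, after lifting to $S$, as a test element for $(i_\ast M, \mathcal{C}^{\mathfrak{b}^t})$, so that the explicit formula $\sum_{e \geq 1}\mathcal{C}_e \mathfrak{b}^{\lceil t(p^e-1)\rceil} c^d \cdot i_\ast M$ for $\tau(i_\ast M, \mathcal{C}^{\mathfrak{b}^t})$ reduces, via the surjection $S \twoheadrightarrow S/I \cong R$, to $\sum_{e \geq 1}\mathcal{C}_e \mathfrak{a}^{\lceil t(p^e-1)\rceil} c^d M$. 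Once this matching of the explicit descriptions is confirmed the proposition follows directly.
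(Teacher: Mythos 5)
Your proposal matches the paper's own proof: both use $I \subseteq \Ann(i_\ast M)$ to get the support containment and the $\mathcal{C}^{\mathfrak{b}^t}/\mathcal{C}^{\mathfrak{b}^t}I$-module structure, identify this quotient algebra with $\mathcal{C}^{\mathfrak{a}^t}$ since $t_i \equiv f_i \pmod I$, and then invoke the Kashiwara-type statement \cite[Proposition 3.2 (f)]{blicklep-etestideale} to equate the two test modules. The test-element bookkeeping you flag as delicate is in fact unnecessary, since that proposition already delivers the equality directly.
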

\begin{proof}
By construction $I \subseteq \Ann(i_\ast M)$. It follows that $\Supp \tau$ is contained in $V(I)$ and by the above $\tau$ is a $\mathcal{C}^{\mathfrak{b}^t}/ \mathcal{C}^{\mathfrak{b}^t} I$-module. Also note that $\mathcal{C}^{\mathfrak{b}^t}/ \mathcal{C}^{\mathfrak{b}^t} I = \mathcal{C}^{\mathfrak{a}^t}$. Since $\Supp(\underline{i_\ast M})\subseteq \Supp(i_\ast M)$ and $\tau = \tau(i_\ast M, \mathcal{C}^{\mathfrak{b}^t}) = \tau(\underline{i_\ast M}, \mathcal{C}^{\mathfrak{b}^t})$ we obtain from \cite[Proposition 3.2 (f)]{blicklep-etestideale} that $\tau(i_\ast M, \mathcal{C}^{\mathfrak{b}^t}) = \tau(M, \mathcal{C}^{\mathfrak{a}^t})$.
\end{proof}

In particular, we \emph{cannot} distinguish $M$ and $i_\ast M$ even if we consider them as $R$-modules rather than Cartier modules. This is in stark contrast to the $\mathcal{D}$-module situation in characteristic zero. In fact, in characteristic zero the associated graded (in the range $[0,1]$) of the $V$-filtration of $i_+ \mathcal{O}_X$ recovers nearby cycles, while $(\mathcal{O}_X \otimes 1) \cap V^t(i_+ \mathcal{O}_X)$ recovers the multiplier ideal filtration (cf.\ \cite[Theorem 0.1]{budursaitomultiplieridealvfiltration}). In this sense we cannot expect to obtain more than the test module filtration when constructing a $V$-filtration in characteristic $p >0$.

\section{The associated graded Cartier module of the test module filtration}
\label{SectionAssocGraded}
We define the associated graded module of the $V$-filtration, and in the situation of Theorem \ref{VfiltrationTestidealfiltration} we define a Cartier structure on it and show that the assignment $M \to Gr^\cdot M$ is functorial. Some of the properties we discuss here also follow from the axioms describing a $V$-filtration. However, we mostly use the language of test modules since this is our main application and probably more familiar to the reader.

\begin{Le}
\label{SupportFPure}
Let $R$ be a ring, $\mathfrak{a} \subseteq R$ an ideal, $t \in \mathbb{Q}_{\geq 0}$, $\mathcal{C}$ a Cartier algebra and $M$ a non-degenerate coherent $F$-pure $\mathcal{C}$-module. Then $\Supp \underline{M}_{\mathcal{C}^{\mathfrak{a}^t}} = \Supp M$.
\end{Le}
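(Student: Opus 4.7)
The containment $\Supp \underline{M}_{\mathcal{C}^{\mathfrak{a}^t}} \subseteq \Supp M$ is automatic since $\underline{M}_{\mathcal{C}^{\mathfrak{a}^t}}$ is an $R$-submodule of $M$, so I would focus on the reverse. The plan is to reduce matters to a purely algebraic containment. Applying \cite[Corollary 2.14]{blicklep-etestideale} to the coherent $\mathcal{C}^{\mathfrak{a}^t}$-module $M$ gives $\underline{M}_{\mathcal{C}^{\mathfrak{a}^t}} = (\mathcal{C}_+^{\mathfrak{a}^t})^e M$ for $e \gg 0$. Non-degeneracy supplies an $M$-regular element $x \in \mathfrak{a}$, and then multiplication by $x^k$ is injective on $M$, so $x^k M \cong M$ as $R$-modules and hence $\Supp(x^k M) = \Supp M$ for every $k$. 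It therefore suffices to show: for each fixed $e \geq 1$ there exists $k = k(e)$ such that $x^k M \subseteq (\mathcal{C}_+^{\mathfrak{a}^t})^e M$.

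To establish this I plan to unroll iterated $F$-purity. From $\mathcal{C}_+ M = M$ one deduces $\mathcal{C}_+^e M = M$ by induction, so every $m \in M$ can be written as a finite sum of nested Cartier applications $\varphi_1(\varphi_2(\cdots \varphi_e(m')\cdots))$ with each $\varphi_j$ homogeneous of degree $n_j \geq 1$. The Cartier-algebra straightening relation $r \varphi_n = \varphi_n r^{p^n}$ lets me slide a prefactor $x^k$ all the way past the operators onto $m'$, yielding $\varphi_1(\cdots \varphi_e(x^{k p^N} m'))$ with $N = n_1 + \cdots + n_e$. I would then work from the innermost operator outward: at each stage I peel off a piece $x^{A_j}$ with $A_j$ taken in $[\lceil t(p^{n_j}-1)\rceil, \lceil t(p^{n_j}-1)\rceil + p^{n_j}-1]$ so that the residue is divisible by $p^{n_j}$, and absorb it into the operator via $\varphi_j \cdot x^{A_j} \in \mathcal{C}_{n_j}^{\mathfrak{a}^t}$. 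The surplus power of $x$ produced by each extraction returns as an outer $R$-multiplier, which is harmless because $(\mathcal{C}_+^{\mathfrak{a}^t})^e M$ is an $R$-submodule of $M$.

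The hardest part will be the bookkeeping: at each of the $e$ successive extractions enough powers of $x$ must remain to meet the next $\lceil t(p^{n_j}-1)\rceil$-threshold, and the bound on $k$ must be uniform in the $n_j$ (which vary with the decomposition of $m$). A crude estimate of the form $k \geq e(\lceil t \rceil + 1)$ should comfortably suffice, since each extraction incurs a loss of at most $t+1$ in the available exponent; this is a routine computation rather than a conceptual obstacle. Once such a $k$ has been produced, the reduction in the first paragraph completes the argument.
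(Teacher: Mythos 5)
Your argument is correct, and while its computational heart is the same trick the paper uses, the overall route is genuinely different. The paper argues pointwise: it fixes a point of $\Supp M$, localizes there (invoking that $F$-purity localizes), and then only needs to produce a single non-vanishing operator -- it takes $\varphi \in \mathcal{C}_+^e$ with $\varphi(M_{\mathfrak p}) \neq 0$ and observes, via the straightening relation $\mathfrak{a}^{N}\mathcal{C}_s \subseteq \mathcal{C}_s\mathfrak{a}^{Np^s}$, that $x^n\varphi \in (\mathcal{C}^{\mathfrak{a}^t}_+)^e$ for $n \gg 0$; injectivity of the regular element $x$ on the localization then gives $(\mathcal{C}^{\mathfrak{a}^t}_+)^e M_{\mathfrak p} \neq 0$, i.e.\ the point lies in $\Supp \underline{M}_{\mathcal{C}^{\mathfrak{a}^t}}$. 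You instead avoid localization entirely and prove the stronger global containment $x^k M \subseteq (\mathcal{C}^{\mathfrak{a}^t}_+)^e M = \underline{M}_{\mathcal{C}^{\mathfrak{a}^t}}$ for $e \gg 0$ and $k$ depending only on $e$ and $t$, from which the support statement follows since $x^kM \cong M$. Your bookkeeping does close: each extraction divides the available exponent by $p^{n_j}$ and costs at most $\lceil t(p^{n_j}-1)\rceil + p^{n_j}-1 \leq (\lceil t\rceil +1)p^{n_j}$, so $k \geq e(\lceil t\rceil+1)$ is uniform in the degrees $n_j$, and the leftover outer power of $x$ is indeed absorbed because $(\mathcal{C}^{\mathfrak{a}^t}_+)^e M$ is an $R$-submodule. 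What the paper's localization buys is a lighter computation (only non-vanishing is needed, so no uniformity over nested homogeneous operators has to be tracked); what your version buys is an explicit module-theoretic inclusion $x^kM \subseteq \underline{M}_{\mathcal{C}^{\mathfrak{a}^t}}$, which is slightly more information than the asserted equality of supports.
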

\begin{proof}
By assumption there is $a \in \mathfrak{a}$ such that multiplication by $a: M \to M$ is injective. Fix $x \in \Supp M$. Since localization is exact $a: M_x \to M_x$ is then also injective. Since $F$-purity localizes (cf.\ \cite[Lemma 2.18]{blicklep-etestideale}) we may assume that $M = M_x$ and $R = R_x$. Denote the algebra $\mathcal{C}^{\mathfrak{a}^t}$ by $\mathcal{D}$.

We have to show that $\mathcal{D}_+^e M \neq 0$ for $e \gg 0$. We observe that \[\mathcal{C}_s \mathfrak{a}^{\lceil t p^s \rceil} \supseteq \mathcal{C}_s \mathfrak{a}^{\lceil t \rceil n p^s} \supseteq \mathfrak{a}^{\lceil t \rceil n} \mathcal{C}_s\] for all $s, n \geq 1$. For $e \gg 0$ as above there is $\varphi$ in $\mathcal{C}_+^e$ such that $\varphi(M) \neq 0$ and for fixed $e$ we have $a^n \varphi \in \mathcal{D}_+^e$ for all sufficiently large $n$. It follows that $\underline{M}_\mathcal{D} \neq 0$.
\end{proof}

\begin{Prop}
\label{TauIsFunctorial}
Let $R$ be a ring essentially of finite type over an $F$-finite field, $\mathfrak{a}$ an ideal, $t \in \mathbb{Q}_{\geq 0}$ and $\mathcal{C}$ a Cartier algebra. Denote by $\mathcal{A}$ the category of $F$-regular coherent non-degenerate $\mathcal{C}$-modules. Then $M \mapsto \tau(M, \mathcal{C}, \mathfrak{a}^t)$ defines a functor from $\mathcal{A}$ to the category of coherent $\mathcal{C}^{\mathfrak{a}^t}$-modules.
\end{Prop}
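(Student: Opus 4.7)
The plan is to establish functoriality in three stages: well-definedness on objects, construction of the induced map on morphisms, and verification of the axioms of a functor.

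For the object assignment, note that since $M \in \mathcal{A}$ is coherent over $R$ essentially of finite type over an $F$-finite field, the test module $\tau(M,\mathcal{C},\mathfrak{a}^t)$ exists and is a coherent $\mathcal{C}^{\mathfrak{a}^t}$-module by the existence result recalled above (together with non-degeneracy, which ensures that the Cartier algebra $\mathcal{C}^{\mathfrak{a}^t}$ acts non-trivially in the relevant generic points; cf.\ Lemma~\ref{SupportFPure}).

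The heart of the proof is to show that for any morphism $\varphi: M \to N$ in $\mathcal{A}$, one has $\varphi(\tau(M,\mathcal{C},\mathfrak{a}^t)) \subseteq \tau(N,\mathcal{C},\mathfrak{a}^t)$, so that $\varphi$ restricts to a morphism of $\mathcal{C}^{\mathfrak{a}^t}$-modules between the two test modules. The plan is to produce a single element $c \in R^\circ$ that serves as a test element for both pairs $(M,\mathcal{C}^{\mathfrak{a}^t})$ and $(N,\mathcal{C}^{\mathfrak{a}^t})$. Given such a $c$, applying the explicit formula from the existence theorem yields
\[
\tau(M,\mathcal{C}^{\mathfrak{a}^t}) = \sum_{e \geq 1} (\mathcal{C}^{\mathfrak{a}^t})_e \, c^d \, \underline{M}_{\mathcal{C}^{\mathfrak{a}^t}}, \qquad \tau(N,\mathcal{C}^{\mathfrak{a}^t}) = \sum_{e \geq 1} (\mathcal{C}^{\mathfrak{a}^t})_e \, c^d \, \underline{N}_{\mathcal{C}^{\mathfrak{a}^t}}.
\]
Since $\underline{M}_{\mathcal{C}^{\mathfrak{a}^t}} = (\mathcal{C}^{\mathfrak{a}^t})_+^n M$ for $n \gg 0$, the morphism $\varphi$ sends $\underline{M}_{\mathcal{C}^{\mathfrak{a}^t}}$ into $\underline{N}_{\mathcal{C}^{\mathfrak{a}^t}}$. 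Using $\mathcal{C}$-linearity of $\varphi$ one then computes
\[
\varphi(\tau(M,\mathcal{C}^{\mathfrak{a}^t})) = \sum_{e \geq 1} (\mathcal{C}^{\mathfrak{a}^t})_e \, c^d \, \varphi(\underline{M}_{\mathcal{C}^{\mathfrak{a}^t}}) \subseteq \sum_{e \geq 1} (\mathcal{C}^{\mathfrak{a}^t})_e \, c^d \, \underline{N}_{\mathcal{C}^{\mathfrak{a}^t}} = \tau(N,\mathcal{C}^{\mathfrak{a}^t}).
\]
Compatibility with identities and with composition is then immediate, and the assignment $\varphi \mapsto \varphi|_{\tau(M)}$ defines the required functor.

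The main obstacle is producing the common test element. Using Lemma~\ref{SupportFPure} we have $\Supp \underline{M}_{\mathcal{C}^{\mathfrak{a}^t}} = \Supp M$ and $\Supp \underline{N}_{\mathcal{C}^{\mathfrak{a}^t}} = \Supp N$, and the $F$-regular loci $U_M, U_N$ of the two pairs are open and dense in the respective supports. The subtlety is that a test element $c_N$ for $N$ may lie in some minimal prime of $\Supp M$ not meeting $\Supp N$, violating the density condition for $M$ (and symmetrically for $c_M$). To circumvent this, I would use prime avoidance: the closed set $V(J) = \Spec R \setminus (U_M \cap U_N)$ meets none of the finitely many minimal primes of $\Supp M$ or $\Supp N$ (using that outside $\Supp N$ every point is vacuously $F$-regular for $N$ and vice versa), so one can select $c \in J \setminus \bigl(\bigcup P_i \cup \bigcup Q_j \cup \bigcup T_k\bigr)$, where the $P_i, Q_j$ are minimal primes of $\Supp M, \Supp N$ and the $T_k$ are minimal primes of $R$. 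Such a $c$ lies in $R^\circ$ and is simultaneously a test element for both pairs. With the common test element secured, the remainder of the argument proceeds as outlined.
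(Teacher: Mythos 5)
Your overall strategy (find one element $c$ that is simultaneously a test element for $(M,\mathcal{C}^{\mathfrak{a}^t})$ and $(N,\mathcal{C}^{\mathfrak{a}^t})$, then apply the explicit formula $\tau = \sum_{e\geq 1}\mathcal{C}^{\mathfrak{a}^t}_e c^d\,\underline{\phantom{M}}$ and push through $\varphi$) is the same as the paper's, and the final computation is fine. The gap is in the construction of $c$. You take $U_M,U_N$ to be the $F$-regular loci of the \emph{twisted} pairs $(M,\mathcal{C}^{\mathfrak{a}^t})$, $(N,\mathcal{C}^{\mathfrak{a}^t})$ and claim that $V(J)=\Spec R\setminus(U_M\cap U_N)$ contains no minimal prime of $\Supp M$ or $\Supp N$, justifying this only by ``outside $\Supp N$ every point is vacuously $F$-regular for $N$.'' That covers a generic point $\eta$ of $\Supp M$ with $\eta\notin\Supp N$, but not the case $\eta\in\Supp N$: all you know about $U_N$ is that its intersection with $\Supp N$ is dense in $\Supp N$ (this comes from mere existence of $\tau(N,\mathcal{C}^{\mathfrak{a}^t})$, not from the hypotheses on $N$), and a dense open subset of $\Supp N$ has no reason to contain a prescribed non-generic point of $\Supp N$ such as $\eta$. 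At this point your prime avoidance cannot even start, since $J\subseteq P_\eta$ is exactly what you have not excluded. Note also that nowhere in your test-element construction do you use the standing hypothesis that $M$ and $N$ are $F$-regular with respect to $\mathcal{C}$ -- a sign that the essential input is missing, because generic $F$-regularity of the twisted pairs alone is not enough.

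The missing idea, which is the heart of the paper's proof, is to choose the common test element \emph{inside} $\mathfrak{a}$: for any $a\in\mathfrak{a}$ one has $(\mathcal{C}^{\mathfrak{a}^t})_a=\mathcal{C}_a$ (the twist trivializes once $\mathfrak{a}$ becomes the unit ideal), and since $M,N$ are $F$-regular for $\mathcal{C}$ and $F$-regularity localizes (Lemma \ref{LeFRegularLocalizes}), the twisted pairs are $F$-regular on all of $D(a)$; in particular their non-$F$-regular loci sit inside $V(\mathfrak{a})$. Non-degeneracy then guarantees, via prime avoidance exactly as in your last step, that $\mathfrak{a}$ is not contained in the union of the minimal primes of $\Supp M$ and $\Supp N$ (these are associated primes, hence consist of zero divisors), so some $a\in\mathfrak{a}$ meets both supports densely and is a simultaneous test element. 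This repairs your density claim; it also sidesteps a secondary issue in your version, namely that you would still need to pass from pointwise $F$-regularity at all primes of $D(c)$ (your $U$'s are defined pointwise) to $F$-regularity of the localized pair over $R_c$, which requires an extra argument (localization of test modules), whereas with $a\in\mathfrak{a}$ the $F$-regularity of $M_a$, $N_a$ over $R_a$ is immediate.
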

\begin{proof}
Let us denote the target category by $\mathcal{B}$. All test modules will be considered with respect to the Cartier algebra $\mathcal{C}^{\mathfrak{a}^t}$ and the operation $\underline{\phantom{M}}$ will always be taken with respect to $\mathcal{C}^{\mathfrak{a}^t}$ as well. By definition $\tau$ maps objects from $\mathcal{A}$ to $\mathcal{B}$. Let $\varphi: M \to N$ be a morphism in $\mathcal{A}$. We define a morphism $\tau(\varphi): \tau(M) \to \tau(N)$ by setting $\tau(\varphi)(m) = \varphi(m)$. Since $\varphi$ is compatible with $\mathcal{C}$ it is a fortiori compatible with the subalgebra $\mathcal{C}^{\mathfrak{a}^t}$. We still have to check that $\varphi(m) \in \tau(N)$. Note that this can be checked locally. We may assume that $\Supp M \cap \Supp N$ is non-empty. We want to apply \cite[Theorem 3.11]{blicklep-etestideale} and hence need to find a test element $a \in R$ that works simultaneously for $M$ and $N$.

Claim: There is $a \in \mathfrak{a}$ such that $D(a) \cap \Supp(\underline{M})$ is dense in $\Supp \underline{M}$ and such that $D(a) \cap \Supp(\underline{N})$ is dense in $\Supp \underline{N}$. This is purely topological and by Lemma \ref{SupportFPure} we may therefore replace $\underline{M}$ with $M$ and similarly for $\underline{N}$.

$D(a) \cap \Supp M$ is dense in $\Supp M$ if it has non-empty intersection with each irreducible component. This is the case if and only if $\mathfrak{a}$ is not contained in the union of the minimal primes of $M$. Assume to the contrary that $\mathfrak{a}$ is contained in the union of the minimal primes of $M$ and $N$. Then by prime avoidance $\mathfrak{a}$ is contained in some minimal prime of $M$ (say). But the minimal primes of $M$ are contained in $\Ass M$ which consists of $0$ and the set of zero-divisors on $M$. Since $\mathfrak{a}$ contains an $M$-regular element this is a contradiction proving the claim.

Since over $D(a)$ we have $\mathcal{C} \mathfrak{a}^{\lceil t p^e\rceil}[c^{-1}] = \mathcal{C}$ both $(M_a, \mathcal{C}^{\mathfrak{a}^t})$ and $(N_a,\mathcal{C}^{\mathfrak{a}^t})$ are $F$-regular. Hence, \cite[Theorem 3.11]{blicklep-etestideale} implies that $\tau(M)$ is generated as a $\mathcal{C}^{\mathfrak{a}^t}$-module by $a \underline{M}$ and that $\tau(N)$ is generated by $a \underline{N}$. Since $\underline{M} = (\mathcal{C}^{\mathfrak{a}^t}_+)^l M$ for $l \gg 0$ and $\underline{N} = (\mathcal{C}^{\mathfrak{a}^t}_+)^{l'} N$ for $l' \gg 0$ we may compute (choosing $\ell \geq l, l'$)
\[ \varphi(\tau(M)) = \varphi(\sum_{e \geq 1} \mathcal{C}_e^{\mathfrak{a}^t} a (\mathcal{C}^{\mathfrak{a}^t}_+)^\ell M) = \sum_{e \geq 1} \mathcal{C}_e^{\mathfrak{a}^t} a (\mathcal{C}^{\mathfrak{a}^t}_+)^\ell\varphi(M) \subseteq \sum_{e \geq 1} \mathcal{C}_e^{\mathfrak{a}^t} a (\mathcal{C}^{\mathfrak{a}^t}_+)^\ell N = \tau(N)\] which yields the claim.
\end{proof}

For later use we note the

\begin{Le}
\label{TauSurjectiveForNiliso}
Assume the situation in \ref{TauIsFunctorial} and that $\mathfrak{a} = (f)$ is principal. Let $\varphi: M \to N$ be a nil-isomorphism with $M, N \in \mathcal{A}$. Then the induced map $\tau(M) \to \tau(N)$ is surjective.
\end{Le}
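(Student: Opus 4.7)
The plan is to derive surjectivity of $\tau(\varphi)$ from the explicit generating formula for test modules provided by \cite[Theorem 3.11]{blicklep-etestideale}, combined with a factorisation argument that lets us lift elements of $\underline{N}$ through $\varphi$ into $\underline{M}$, where $\underline{\phantom{M}}$ is always taken with respect to the algebra $\mathcal{C}^{\mathfrak{a}^t}$.

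First I would set up a common test element. Since $\mathfrak{a} = (f)$ and both $M$ and $N$ are non-degenerate, $f$ is $M$- and $N$-regular, so $D(f)$ meets every irreducible component of $\Supp M$ and $\Supp N$. On $D(f)$ we have $\mathcal{C}^{\mathfrak{a}^t}[f^{-1}] = \mathcal{C}[f^{-1}]$, and $F$-regularity with respect to $\mathcal{C}$ (which is assumed since $M,N\in\mathcal{A}$) localises, so both pairs $(M_f,\mathcal{C}^{\mathfrak{a}^t})$ and $(N_f,\mathcal{C}^{\mathfrak{a}^t})$ are $F$-regular. Thus \cite[Theorem 3.11]{blicklep-etestideale} gives
\[\tau(M,\mathcal{C}^{\mathfrak{a}^t}) = \sum_{e\ge 1}\mathcal{C}_e\mathfrak{a}^{\lceil t(p^e-1)\rceil} f\,\underline{M}, \qquad \tau(N,\mathcal{C}^{\mathfrak{a}^t}) = \sum_{e\ge 1}\mathcal{C}_e\mathfrak{a}^{\lceil t(p^e-1)\rceil} f\,\underline{N}.\]
Since $\varphi$ is $\mathcal{C}$-linear and hence $\mathcal{C}^{\mathfrak{a}^t}$-linear, surjectivity of $\tau(\varphi)$ reduces to the single inclusion $\underline{N}\subseteq \varphi(\underline{M})$; the reverse inclusion in $\tau(\varphi)(\tau(M))\subseteq \tau(N)$ is already provided by Proposition \ref{TauIsFunctorial}.

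The key step is then to verify $\underline{N}\subseteq\varphi(\underline{M})$. Because the nil-isomorphism hypothesis gives $\mathcal{C}_+^{e_0}\coker\varphi = 0$ for some $e_0$, and $(\mathcal{C}^{\mathfrak{a}^t})_+\subseteq\mathcal{C}_+$, the cokernel is also killed by $(\mathcal{C}^{\mathfrak{a}^t})_+^{e_0}$. Pick $e_M$ with $(\mathcal{C}^{\mathfrak{a}^t})_+^{e_M} M = \underline{M}$ (possible by \cite[Corollary 2.14]{blicklep-etestideale}) and $k \ge e_0+e_M$ large enough that $(\mathcal{C}^{\mathfrak{a}^t})_+^k N = \underline{N}$. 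Any $y\in\underline{N}$ is a sum of products $c_1\cdots c_k n$ with $c_i\in(\mathcal{C}^{\mathfrak{a}^t})_+$; by the cokernel hypothesis $(c_{k-e_0+1}\cdots c_k)n = \varphi(m)$ for some $m\in M$, whence
\[c_1\cdots c_k n \;=\; c_1\cdots c_{k-e_0}\,\varphi(m) \;=\; \varphi\bigl(c_1\cdots c_{k-e_0}\,m\bigr),\]
and $c_1\cdots c_{k-e_0}m\in (\mathcal{C}^{\mathfrak{a}^t})_+^{k-e_0}M\subseteq \underline{M}$ by the choice of $k$. This gives $y\in\varphi(\underline{M})$, as required.

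The only genuinely delicate point is this last factorisation: we must split a $k$-fold product of operators in $(\mathcal{C}^{\mathfrak{a}^t})_+$ into an $e_0$-tail that we use to surject onto $N$ modulo $\im\varphi$, and a remaining $(k-e_0)$-fold prefix long enough that the resulting preimage already lies in $\underline{M}$. Apart from that, the argument is essentially formal manipulation with the explicit generators for $\tau$, and I do not anticipate any further obstacle. Notably, the injectivity half of the nil-iso hypothesis (nilpotence of $\ker\varphi$) is not required for surjectivity of $\tau(\varphi)$.
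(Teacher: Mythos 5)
Your argument is correct, and its overall skeleton -- reduce to the explicit description $\tau(\bullet,\mathcal{C}^{\mathfrak{a}^t}) = \sum_{e\geq 1}\mathcal{C}_e\mathfrak{a}^{\lceil t(p^e-1)\rceil}c\,\underline{\bullet}$ from \cite[Theorem 3.11]{blicklep-etestideale} and then push generators through $\varphi$ using Cartier linearity -- is the same as the paper's. Where you diverge is in how the key inclusion $\underline{N}\subseteq\varphi(\underline{M})$ is obtained. The paper gets it in one line: since $N\in\mathcal{A}$ is $F$-pure with respect to $\mathcal{C}$ and $\coker\varphi$ is nilpotent, $N=\mathcal{C}_+^{e_0}N\subseteq\varphi(M)$, so $\varphi$ is surjective on the nose, and then $\underline{N}=(\mathcal{C}_+^{\mathfrak{a}^t})^{\ell}N=\varphi\bigl((\mathcal{C}_+^{\mathfrak{a}^t})^{\ell}M\bigr)=\varphi(\underline{M})$ for $\ell\gg 0$. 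Your word-splitting argument (peeling off an $e_0$-fold tail of operators to land in $\im\varphi$, keeping a prefix long enough that the lift lands in $\underline{M}$) re-proves this containment without ever invoking surjectivity of $\varphi$ itself; it is valid, but in the situation at hand it buys nothing extra, since $F$-purity of $N$ is automatic from $N\in\mathcal{A}$ and makes the shortcut available. Your closing observation that nilpotence of $\ker\varphi$ is not needed agrees with the paper, whose proof likewise uses only the cokernel half of the nil-isomorphism hypothesis. A minor stylistic difference: you take $c=f$ as a simultaneous test element for $M$ and $N$ (legitimate, since $f$ is $M$- and $N$-regular and $\mathcal{C}^{\mathfrak{a}^t}[f^{-1}]=\mathcal{C}[f^{-1}]$, so $F$-regularity localizes to $D(f)$), whereas the paper reduces to a local situation as in Proposition \ref{TauIsFunctorial}; both routes are sound.
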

\begin{proof}
Since $N$ is $F$-pure $\varphi$ is surjective. Surjectivity is local so we may assume that there is $c$ such that $\tau(M) = \sum_{e \geq 1} \mathcal{C}_e^{\mathfrak{a}^t} c (\mathcal{C}_{+}^{\mathfrak{a}^t})^\ell M$ and similarly for $\tau(N)$. Hence, for an element  $\kappa^e (rf^{\lceil tp^e\rceil} c n)$ in $\tau(N)$ we find $m$ in $(\mathcal{C}_{+}^{\mathfrak{a}^t})^\ell M$ with $\varphi(m) = n$. Since $\varphi$ is Cartier linear we obtain $\kappa^e( rf^{\lceil tp^e\rceil} c n) = \kappa^e( rf^{\lceil tp^e\rceil} c \varphi(m)) = \varphi(\kappa^e( rf^{\lceil tp^e\rceil} c m))$ and the claim follows.
\end{proof}

\begin{Def}
Let $M$ be a Cartier module with associated $V$-filtration. Then we denote the associated graded module $\bigoplus_{t \in \mathbb{Q}} V^{t-\eps}/V^{t}$ by $Gr^\cdot$. We also denote $V^{t - \eps}/V^t$ by $Gr^t$ or $Gr^t M$.
\end{Def}

Our next goal is to define, in the situation of Theorem \ref{VfiltrationTestidealfiltration}, a natural Cartier module structure on the $Gr^t$. Note that $\kappa f^{\lceil t(p-1)\rceil} \tau(M, \mathcal{C}^{\mathfrak{a}^t}) \subseteq \tau(M, \mathcal{C}^{\mathfrak{a}^t})$ if $(f) = \mathfrak{a}$.

\begin{Prop}
\label{GrSupportedonIdeal}
Let $R$ be a ring essentially of finite type over an $F$-finite field, $\mathfrak{a}$ an ideal and $\mathcal{C}$ a principal Cartier algebra generated by $\kappa$. Let $M$ be an $F$-regular coherent non-degenerate $\mathcal{C}$-module. Then the $Gr^t M$ are $R$-modules supported on $R/\mathfrak{a}$ for all $t \in \mathbb{Q}$. Moreover, if $\mathfrak{a} = (f)$ is principal then $Gr^t$ is a Cartier module with respect to $\kappa f^{\lceil t(p-1)\rceil}$.
\end{Prop}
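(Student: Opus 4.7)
The plan is to invoke Theorem \ref{VfiltrationTestidealfiltration} at the outset, identifying $V^s$ with the test module $\tau_s := \tau(M,\mathcal{C}^{\mathfrak{a}^s})$, so that the entire argument can be carried out with test modules and the twisted Cartier algebras $\mathcal{C}^{\mathfrak{a}^s}$. Discreteness of $F$-jumping numbers (which holds in our setup since $R$ is essentially of finite type over an $F$-finite field) ensures that $V^{t-\epsilon}$ stabilizes for all sufficiently small $\epsilon>0$, so $Gr^t$ is a well-defined $R$-module to begin with.

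For the support statement, the key step I would establish is the inclusion
\[\mathfrak{a}\,\tau_s \ \subseteq\ \tau_{s+1} \qquad \text{for every } s \in \mathbb{Q}_{\geq 0}.\]
This comes directly from the Cartier relation $r\varphi=\varphi r^{p^e}$ for $\varphi\in\mathcal{C}_e$, which yields
\[\mathfrak{a}\cdot\mathcal{C}_e\mathfrak{a}^{\lceil s(p^e-1)\rceil}\ \subseteq\ \mathcal{C}_e\mathfrak{a}^{p^e+\lceil s(p^e-1)\rceil}\ \subseteq\ \mathcal{C}_e\mathfrak{a}^{\lceil(s+1)(p^e-1)\rceil},\]
the last inclusion being the ceiling identity $\lceil(s+1)(p^e-1)\rceil=(p^e-1)+\lceil s(p^e-1)\rceil$. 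Taking $s=t-\epsilon$ with $0<\epsilon<1$ and combining with $\tau_{t+1-\epsilon}\subseteq\tau_t$ (filtration decreasing) yields $\mathfrak{a}V^{t-\epsilon}\subseteq V^t$, whence $\mathfrak{a}$ annihilates $Gr^t$ and $\Supp Gr^t\subseteq V(\mathfrak{a})$.

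For the second assertion, I specialize to $\mathfrak{a}=(f)$ and observe the chain of memberships
\[\kappa f^{\lceil t(p-1)\rceil}\ \in\ \mathcal{C}_1^{\mathfrak{a}^t}\ \subseteq\ \mathcal{C}_1^{\mathfrak{a}^{t-\epsilon}},\]
the second inclusion being the monotonicity $\lceil t(p-1)\rceil\geq\lceil(t-\epsilon)(p-1)\rceil$. Since $\tau_t$ (respectively $\tau_{t-\epsilon}$) is a $\mathcal{C}^{\mathfrak{a}^t}$-module (respectively $\mathcal{C}^{\mathfrak{a}^{t-\epsilon}}$-module), this operator preserves both $V^t$ and $V^{t-\epsilon}$ and thus descends to an operator $F_\ast Gr^t\to Gr^t$; its Cartier-linearity is inherited from the Cartier relation already satisfied by $\kappa f^{\lceil t(p-1)\rceil}$ inside $\mathcal{C}$. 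I do not anticipate a substantial obstacle: the only real content is the inclusion $\mathfrak{a}\tau_s\subseteq\tau_{s+1}$, and everything else reduces to careful bookkeeping with the ceiling functions governing the graded pieces of the twisted Cartier algebras.
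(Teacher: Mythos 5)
Your argument is essentially the paper's: the support statement rests on the Brian\c{c}on--Skoda-type inclusion $\mathfrak{a}\,\tau_s\subseteq\tau_{s+1}$ combined with the filtration being decreasing (indeed your bookkeeping $\mathfrak{a}V^{t-\eps}\subseteq V^{t+1-\eps}\subseteq V^t$ is the correct form of what the paper intends), and the Cartier structure is obtained exactly as in the paper by observing that $\kappa f^{\lceil t(p-1)\rceil}$ lies in $\mathcal{C}_1^{\mathfrak{a}^t}\subseteq\mathcal{C}_1^{\mathfrak{a}^{t-\eps}}$ and hence preserves both $V^t$ and $V^{t-\eps}$, so it descends to the quotient. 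The one place where you diverge is that the paper simply cites the Brian\c{c}on--Skoda theorem for test modules (\cite[Theorem 4.21]{blicklep-etestideale}) for $\mathfrak{a}V^s\subseteq V^{s+1}$, whereas you sketch a direct proof; as written, your computation only establishes the containment of the graded pieces of the twisted Cartier algebras, $\mathfrak{a}\cdot\mathcal{C}_e\mathfrak{a}^{\lceil s(p^e-1)\rceil}\subseteq\mathcal{C}_e\mathfrak{a}^{\lceil(s+1)(p^e-1)\rceil}$, and this does not by itself yield $\mathfrak{a}\,\tau_s\subseteq\tau_{s+1}$, because $\tau_s$ is not simply $\mathcal{C}^{\mathfrak{a}^s}_+M$. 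To complete your route you need the test-element description, i.e.\ a single $c$ (which can be chosen uniformly in $s$, as in the reductions of Lemma \ref{LeAltCartierAlgebra} and Proposition \ref{FiltrationsProp1}) with $\tau_s=\sum_{e\geq1}\mathcal{C}_e\mathfrak{a}^{\lceil s(p^e-1)\rceil}c\,\underline{M}$ for the relevant values of $s$; then your algebra-level inclusion immediately gives $\mathfrak{a}\,\tau_s\subseteq\tau_{s+1}$. With that supplement (or by just citing the Brian\c{c}on--Skoda theorem, as the paper does) your proof is complete; the remaining points (discreteness making $Gr^t$ well defined, triviality for $t\leq0$, Cartier-linearity inherited from $\mathcal{C}$) are fine.
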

\begin{proof}
By the Brian\c{c}on-Skoda theorem for test modules \cite[Theorem 4.21]{blicklep-etestideale} we have $\mathfrak{a} V^t \subseteq V^{t+1}$. Since $V^\cdot$ is decreasing, $V^{t +1} \subseteq V^{t - \eps}$. Thus $Gr^t M$ is supported on $R/\mathfrak{a}$.

Since both $V^t$ and $V^{t - \eps}$ are Cartier modules with respect to $\kappa f^{\lceil t (p-1)\rceil}$ we can form their quotient which is again a Cartier module.
\end{proof}

\begin{Bem}
A similar construction works for arbitrary ideals $\mathfrak{a}$. Indeed, set $\mathcal{C} = \langle\kappa \mathfrak{a}^{\lceil t \rceil (p-1)}\rangle$ and use the fact that $I^{[q]} \subseteq I^q$ for an ideal $I \subseteq R$.
\end{Bem}

\begin{Prop}
\label{CartierOperatesonGr}
Let $R$ be a ring essentially of finite type over an $F$-finite field, $\mathfrak{a}$ an ideal and $\mathcal{C}$ a principal Cartier algebra generated by $\kappa$. Let $M$ be a coherent Cartier module. Then $\kappa$ induces a surjective Cartier linear morphism $Gr^{tp} \to Gr^t$ for all $t \geq 0$.
\end{Prop}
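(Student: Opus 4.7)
My plan is to combine two applications of axiom (iv) of the $V$-filtration (which in our test-module language is exactly the content of Proposition \ref{FiltrationsProp1}) with the discreteness of the test module filtration, guaranteed in this setting by \cite[Corollary 4.19]{blicklep-etestideale}.

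First I would rewrite $Gr^{s}$ in the more flexible form $V^{<s}/V^{s}$, where $V^{<s} := \bigcup_{s'<s} V^{s'}$. By discreteness this is identical to the definition $V^{s-\eps}/V^s$ for all sufficiently small $\eps > 0$. The advantage of the $V^{<s}$ description is that it is stable under the index rescaling $s \mapsto sp$ without having to simultaneously match a single $\eps$ on both source and target.

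Second, I would apply Proposition \ref{FiltrationsProp1} (i.e., axiom (iv) $\kappa(V^{sp}) = V^s$) in two complementary ways. Evaluated at $s = t$ it gives $\kappa(V^{tp}) = V^{t}$, so $\kappa$ carries $F_\ast V^{tp}$ onto $V^{t}$. Evaluated at each $s < t$ and taking unions it gives
\[
\kappa(V^{<tp}) \;=\; \bigcup_{s<tp} \kappa(V^{s}) \;=\; \bigcup_{s<tp} V^{s/p} \;=\; \bigcup_{s'<t} V^{s'} \;=\; V^{<t}.
\]
Thus $\kappa$ restricts to a surjective $R$-linear map $F_\ast V^{<tp} \to V^{<t}$ whose restriction to $F_\ast V^{tp}$ maps onto $V^{t}$. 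Passing to quotients yields the required surjective $R$-linear map $F_\ast Gr^{tp} \to Gr^{t}$, i.e., a surjective Cartier linear morphism $Gr^{tp} \to Gr^{t}$.

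The argument is essentially immediate once both quotients are described in the $V^{<\cdot}$ form; I do not expect a genuine obstacle. The only subtle point is the bookkeeping that identifies $V^{<s}/V^{s}$ with the defined $V^{s-\eps}/V^{s}$, which is precisely why the ambient hypothesis on $R$ (ensuring discreteness of the test module filtration) is invoked. Surjectivity is then built into axiom (iv) itself.
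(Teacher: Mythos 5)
Your argument is correct and essentially the paper's own proof: both reduce to Proposition \ref{FiltrationsProp1}, applied once at level $t$ (giving $\kappa(V^{tp})=V^t$) and once just below $tp$ (giving $\kappa(V^{tp-\eps})=V^{t-\eps/p}$, which your union $V^{<\cdot}$ formulation packages slightly more cleanly), and then pass to quotients, with discreteness handling the $\eps$-bookkeeping and Cartier linearity of the induced map being immediate.
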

\begin{proof}
Let $t \geq 0$ then by Lemma \ref{FiltrationsProp1} we have $\kappa(\tau(M, \kappa, \mathfrak{a}^{tp})) = \tau(M, \kappa, \mathfrak{a}^{t})$ and $\kappa(\tau(M, \kappa, \mathfrak{a}^{tp - \eps})) = \tau(M, \kappa, \mathfrak{a}^{t - \eps/p})$ so that $\kappa$ induces a surjective homomorphism of abelian groups $Gr^{tp} \to Gr^t$. Since the projections are $R$-linear and $\kappa$ is Cartier linear the induced morphism is also Cartier linear.
\end{proof}

\begin{Prop}
\label{MultWithFIsoOnGr}
Let $R$ be a ring essentially of finite type over an $F$-finite field, $(f) = \mathfrak{a}$ an ideal and $\mathcal{C}$ a principal Cartier algebra generated by $\kappa$. Let $M$ be an $F$-regular coherent non-degenerate $\mathcal{C}$-module. Then for all $t > 0$ we have an isomorphism $\mu_f: Gr^{t}M \to Gr^{t+1} M$ of Cartier modules induced by multiplication by $f$.
\end{Prop}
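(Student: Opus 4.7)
My plan is to exhibit $\mu_f$ as the induced map on quotients in a morphism of short exact sequences, and then to verify that the two outer vertical arrows are isomorphisms using the $V$-filtration axioms from Definition \ref{DefVfiltration}. Concretely, by Theorem \ref{VfiltrationTestidealfiltration} we have $V^{s} = \tau(M, \mathcal{C}, \mathfrak{a}^s)$, so for $\eps > 0$ sufficiently small I would consider the commutative diagram
\[
\xymatrix{
0 \ar[r] & V^t \ar[r] \ar[d]^{f} & V^{t-\eps} \ar[r] \ar[d]^{f} & Gr^t M \ar[r] \ar[d]^{\mu_f} & 0 \\
0 \ar[r] & V^{t+1} \ar[r] & V^{t+1-\eps} \ar[r] & Gr^{t+1}M \ar[r] & 0,
}
\]
whose rows are exact by definition of $Gr^{\cdot}$ and whose vertical maps are well-defined because $f\cdot V^{s}\subseteq V^{s+1}$.

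Next I would argue that the two left vertical maps are isomorphisms. Injectivity is immediate from axiom (ii) of Definition \ref{DefVfiltration}: non-degeneracy of $M$ with respect to the principal ideal $(f)$ forces $f$ itself (and not merely some multiple) to be $M$-regular, so multiplication by $f$ is injective on any submodule of $M$, in particular on $V^t$ and on $V^{t-\eps}$. Surjectivity is where axiom (iii), i.e.\ the Brian\c{c}on--Skoda property, enters: for a principal ideal the axiom reads $V^{s} = f\,V^{s-1}$ whenever $s>1$. Since $t>0$ we have $t+1>1$, so $V^{t+1} = fV^t$; choosing $\eps < t$ we also have $t+1-\eps > 1$, so $V^{t+1-\eps} = f V^{t-\eps}$. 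Hence both $f:V^t\to V^{t+1}$ and $f:V^{t-\eps}\to V^{t+1-\eps}$ are isomorphisms.

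The 5-lemma (equivalently, the snake lemma applied to the diagram above) then yields that $\mu_f$ is an isomorphism, completing the proof. The only subtle point is the choice of $\eps$: one must take $\eps$ smaller than $t$ \emph{and} smaller than the gap to the next jumping number below $t$ and below $t+1$, so that $V^{t-\eps}$ and $V^{t+1-\eps}$ genuinely represent the stalks of the filtration just above $t$ and $t+1$. Both are possible by the discreteness of the test module filtration (Remark following Definition \ref{DefTestModule}). I do not anticipate any real obstacle here—the whole argument is a straightforward combination of injectivity from axiom (ii) with the Brian\c{c}on--Skoda surjectivity of axiom (iii) and non-degeneracy.
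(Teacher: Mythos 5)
Your $R$-module argument is correct and in substance the same as the paper's: the paper likewise notes that multiplication by $f$ is injective on $M$ (non-degeneracy with respect to the principal ideal $(f)$ does force $f$ itself to be $M$-regular, as you say), uses axiom (iii) of Definition \ref{DefVfiltration} to identify $Gr^{t+1}M = fV^{t-\eps}/fV^{t}$, and concludes that multiplication by $f$ induces a bijection on the quotients; your short-exact-sequence diagram with the 5-lemma is just a repackaging of that step, and your care about choosing $\eps$ (in particular $\eps<t$) is fine and consistent with the paper's implicit convention.

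However, the proposition asserts an isomorphism of \emph{Cartier modules}, and your proof never addresses the Cartier structures, which is the second half of the paper's proof. By Proposition \ref{GrSupportedonIdeal} the source $Gr^{t}M$ carries the operator $\kappa f^{\lceil t(p-1)\rceil}$ while the target $Gr^{t+1}M$ carries the differently twisted operator $\kappa f^{\lceil (t+1)(p-1)\rceil}$; a bijective $R$-linear map between Cartier modules need not intertwine two such structures, so this compatibility is a genuine (if short) verification that is missing from your argument. The needed computation is
\[
\mu_f\bigl(\kappa f^{\lceil t(p-1)\rceil}(m)\bigr)
= f\,\kappa\bigl(f^{\lceil t(p-1)\rceil} m\bigr)
= \kappa\bigl(f^{\,p+\lceil t(p-1)\rceil} m\bigr)
= \kappa\bigl(f^{\lceil (t+1)(p-1)\rceil}\,\mu_f(m)\bigr),
\]
using Cartier linearity of $\kappa$ (i.e.\ $f\kappa(\cdot)=\kappa(f^{p}\cdot)$) together with the identity $\lceil (t+1)(p-1)\rceil = \lceil t(p-1)\rceil + (p-1)$. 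With this added, your proof is complete.
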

\begin{proof}
Note that multiplication by $f$ is injective on $M$.
By property (iii) of Definition \ref{DefVfiltration} we have $Gr^{t+1} M = \mathfrak{a} V^{t - \eps} / \mathfrak{a} V^t$. And multiplication with $f$ induces an isomorphism of $R$-modules. We still have to verify that the Cartier structures are preserved. We have $\mu_f( \kappa f^{\lceil t (p-1) \rceil }(m)) = f \kappa( f^{\lceil t  (p-1)\rceil } (m) = \kappa(f^p f^{\lceil t (p-1)\rceil } m) = \kappa(f^{\lceil (t +1)(p-1) \rceil} \mu_f(m))$ for any $m \in Gr^t(M)$.
\end{proof}

\begin{Bem}
We caution the reader that the construction of the Cartier structure on the associated graded involves the choice of a generator of $\mathfrak{a}$. So once one has fixed a generator of $\mathfrak{a}$ the functor $Gr$ induces a functor from $\mathcal{A}$ to the category of Cartier modules. If we choose a different generator $g$, then $Gr_f(M)$ and $Gr_g(M)$ will in general not be isomorphic as Cartier modules. Explicitly, take $\mathfrak{a} =(x)$ and $R = \mathbb{F}_p(t)[x]$ and $f = x, g = tx$. Then assuming $p \geq 3$ we have $Gr^1 R = R/(x)$. Assume that $Gr^1_f(R)$ and $Gr^1_g(R)$ are isomorphic. Since an isomorphism is necessarily $R$-linear it would have to be the identity. But this is not compatible with the Cartier structures since e.\,g.\ for $t^{p-1}$ we have $\kappa x^{p-1} (t^{p-1}) = 1$ while $\kappa t^{p-1} t^{p-1} = 0$.

However, since any two generators only differ by a unit they will always generate the same Cartier algebra and the obtained categories of Cartier modules are equivalent (the equivalence being induced by the obvious functor). Since we want to descend to crystals later on we only work with one single Cartier operator.
\end{Bem}

\begin{Theo}
\label{GrModuleFunctor}
Let $R$ be a ring essentially of finite type over an $F$-finite field, $\mathfrak{a}$ an ideal, $t \in \mathbb{Q}_{\geq 0}$ and $\mathcal{C}$ a principal Cartier algebra generated by $\kappa$. Denote by $\mathcal{A}$ the category of $F$-regular coherent non-degenerate $\mathcal{C}$-modules. Then $M \mapsto Gr^t M$ defines a functor from $\mathcal{A}$ to the category of coherent $\kappa^{\mathfrak{a}^t}$-modules. 
\end{Theo}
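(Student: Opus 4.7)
The plan is to assemble the functor $Gr^{t}$ from two applications of Proposition \ref{TauIsFunctorial} together with the discreteness of the test module filtration. First I would fix $t \in \mathbb{Q}_{\geq 0}$ and, for each object $M \in \mathcal{A}$, appeal to the discreteness statement in the remark after Definition \ref{DefTestModule} (\cite[Corollary 4.19]{blicklep-etestideale}) to find some $\eps_{M} > 0$ such that $\tau(M,\kappa,\mathfrak{a}^{t-\eps}) = \tau(M,\kappa,\mathfrak{a}^{t-\eps_{M}})$ for all $0 < \eps \leq \eps_{M}$. This makes $Gr^{t}M = \tau(M,\kappa,\mathfrak{a}^{t-\eps_M})/\tau(M,\kappa,\mathfrak{a}^{t})$ well-defined as an object; its coherence is immediate since it is a quotient of the coherent module $\tau(M,\kappa,\mathfrak{a}^{t-\eps_M})$. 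The fact that $Gr^{t}M$ is naturally a $\kappa^{\mathfrak{a}^t}$-module is the content (and mild generalization) of Proposition \ref{GrSupportedonIdeal}: both numerator and denominator are $\kappa^{\mathfrak{a}^{t-\eps}}$-modules and hence a fortiori $\kappa^{\mathfrak{a}^t}$-modules, since $\kappa^{\mathfrak{a}^t}$ sits inside $\kappa^{\mathfrak{a}^{t-\eps}}$ as a graded subalgebra (because $\lceil (t-\eps)(p^n-1)\rceil \leq \lceil t(p^n-1)\rceil$ for each $n$).

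Next I would define the map on morphisms. Given $\varphi:M\to N$ in $\mathcal{A}$, Proposition \ref{TauIsFunctorial} furnishes, for every $s \in \mathbb{Q}_{\geq 0}$, an induced morphism of $\mathcal{C}^{\mathfrak{a}^s}$-modules $\tau(\varphi,s):\tau(M,\kappa,\mathfrak{a}^s)\to \tau(N,\kappa,\mathfrak{a}^s)$, concretely realized as the restriction of $\varphi$. Choose $\eps$ strictly positive and strictly smaller than both $\eps_M$ and $\eps_N$. Then $\varphi$ simultaneously sends $\tau(M,\kappa,\mathfrak{a}^{t-\eps})$ to $\tau(N,\kappa,\mathfrak{a}^{t-\eps})$ and the submodule $\tau(M,\kappa,\mathfrak{a}^{t})$ into $\tau(N,\kappa,\mathfrak{a}^{t})$, and these two restrictions are compatible with the inclusions. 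Hence there is a well-defined induced homomorphism on the quotients, which I take as $Gr^{t}\varphi:Gr^{t}M\to Gr^{t}N$. Independence of the choice of $\eps$ (within the allowed range) is immediate from the fact that $\tau(M,\kappa,\mathfrak{a}^{t-\eps})$ and $\tau(N,\kappa,\mathfrak{a}^{t-\eps})$ are constant in $\eps$ on $(0,\min(\eps_M,\eps_N)]$.

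It remains to check that $Gr^{t}\varphi$ is $\kappa^{\mathfrak{a}^t}$-linear and that the assignment is compatible with identities and composition. Both of these are essentially formal: $\varphi$ is a $\mathcal{C}$-module map, so it commutes with $\kappa$ and with multiplication by every element of $R$, and in particular with every generator of $\kappa^{\mathfrak{a}^t}$; passing to quotients preserves this compatibility. Functoriality with respect to composition is inherited from Proposition \ref{TauIsFunctorial} since the description of $Gr^{t}\varphi$ as ``restrict $\varphi$, then pass to the quotient'' is manifestly compatible with composing morphisms and with the identity.

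I do not anticipate any serious obstacle here; the one point that requires mild care is that the $\eps$ witnessing right-continuity of the test module filtration depends on the module, which is why I introduce $\eps_M$ and $\eps_N$ separately and then work with their minimum when defining $Gr^{t}\varphi$. Everything else reduces cleanly to Proposition \ref{TauIsFunctorial}, Proposition \ref{GrSupportedonIdeal}, and the discreteness of the test module filtration in the essentially finite type $F$-finite setting.
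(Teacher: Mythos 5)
Your proposal is correct and takes essentially the same route as the paper: apply Proposition \ref{TauIsFunctorial} at the levels $t$ and $t-\eps$, observe the two restrictions of $\varphi$ are compatible with the inclusions, pass to the quotient, and note that $\kappa^{\mathfrak{a}^t}$-linearity is formal because the Cartier structures on $Gr^t$ are induced from those on $M$ and $N$. Your explicit bookkeeping with $\eps_M$, $\eps_N$ and their minimum merely spells out the right-continuity/discreteness of the test module filtration that the paper invokes implicitly.
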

\begin{proof}
Let $\varphi: M \to N$ be a morphism. By Proposition \ref{TauIsFunctorial} this induces morphisms $\tau^t(\varphi): V^{t} M \to V^t N$ and $\tau^{t - \eps}(\varphi): V^{t - \eps} M \to V^{t - \eps} N$. Moreover, $\tau^{t}(\varphi)(V^{t} M) = \tau^{t - \eps}(\varphi)(V^{t} M)$. Hence, we obtain a morphism of abelian groups $Gr^t(\varphi): Gr^t M \to Gr^t N$. Since $\varphi$ is a morphism of Cartier modules and the Cartier structures on $Gr^t$ are induced by those on $M, N$ it follows that $Gr^t(\varphi)$ is a morphism of Cartier modules.
\end{proof}

\begin{Bem}
It is important to note that we will use in many cases the full machinery of test modules in order to prove our statements. They do not simply follow from the axioms of the $V$-filtration. Also note that with the present definition of test modules the association $M \mapsto \tau(M)$ is \emph{not} functorial in general. For instance, in \cite[Remark 3.3]{blicklep-etestideale} an example is given, where an inclusion $M \to N$ of modules does not restrict to an inclusion $\tau(M) \to \tau(N)$ of test modules. Hence, we need some extra conditions, like those in Proposition \ref{TauIsFunctorial}, to ensure functoriality.
\end{Bem}

\section{Crystals}
\label{SectionCrystals}
In order to have an equivalence of categories with perverse constructible sheaves on $X_{\acute{e}t}$ we need to pass from $\kappa$-modules to crystals (after an embedding into a separated smooth scheme). The purpose of this section is to show that the construction of the functor $Gr$ also makes sense if we pass to crystals. Also note that in the case of a crystal the Cartier structure is always given by one single morphism. We will stress this by using the phrase $\kappa$-module rather than Cartier module.

The next result is immediate from \cite[Proposition 3.2 (e), Lemma 3.5]{blicklep-etestideale}.

\begin{Le}
\label{LeFRegularLocalizes}
$F$-regularity localizes in the sense that if $M$ is an $F$-regular coherent Cartier module then $S^{-1}M$ is $F$-regular for any multiplicative system $S \subseteq R$. 
\end{Le}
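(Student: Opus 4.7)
The plan is to deduce the statement directly from the characterization of $F$-regularity via test modules together with the compatibility of test modules with localization, as pointed to by the citations.

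First I would recall, as noted right after Definition \ref{DefTestModule}, that a coherent $\mathcal{C}$-module $M$ is $F$-regular if and only if $\tau(M, \mathcal{C}) = M$. (The forward direction uses $F$-purity plus the minimality of $\tau(M,\mathcal{C})$; the reverse is built into the definition of test modules.) This reduces the claim to showing that the equality $\tau(M,\mathcal{C}) = M$ is stable under localization at any multiplicative system $S \subseteq R$.

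Next I would invoke the two cited results from \cite{blicklep-etestideale}: Proposition 3.2 (e), which says that $F$-purity localizes (so if $\mathcal{C}_+ M = M$ then $\mathcal{C}_+ (S^{-1}M) = S^{-1}M$, i.e.\ $\underline{S^{-1}M} = S^{-1}\underline{M} = S^{-1}M$), together with Lemma 3.5, which asserts that formation of test modules commutes with localization, i.e.\ $\tau(S^{-1}M, S^{-1}\mathcal{C}) = S^{-1}\tau(M,\mathcal{C})$. Combining these, from $\tau(M,\mathcal{C}) = M$ one obtains $\tau(S^{-1}M, S^{-1}\mathcal{C}) = S^{-1}M$, which is precisely the $F$-regularity of $S^{-1}M$.

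Since both ingredients are explicitly cited, there is no real obstacle; the only minor subtlety is keeping track of the fact that the generic points of $\Supp(S^{-1}M)$ correspond to those generic points of $\Supp M$ that survive in the localization, so that the test module condition at these generic points translates properly between $M$ and $S^{-1}M$. This is exactly the content of the two cited statements, so the proof collapses to a one-line application of them.
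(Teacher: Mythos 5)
Your argument is correct and is essentially the paper's own: the lemma is stated there as an immediate consequence of the two cited results from \cite{blicklep-etestideale}, namely that $F$-regularity is equivalent to $\tau(M,\mathcal{C})=M$ and that formation of $\underline{M}$ and of test modules commutes with localization, which is exactly the reduction you carry out. The only cosmetic point is that your attribution of which cited statement provides which ingredient may be swapped, but the mathematical content and route are the same.
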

%\begin{proof}
%Immediate from  but not stated there in this form.
%By \cite[Proposition 3.2 (e)]{blicklep-etestideale} $M$ is $F$-regular if and only if $\tau(M) = M$. Furthermore, by \cite[Lemma 3.5]{blicklep-etestideale} we have $\tau(S^{-1}M) = S^{-1}\tau(M)$. Thus the claim follows.
%\end{proof}

\begin{Prop}
\label{FregularCharacterization}
Let $M$ be a coherent $\mathcal{C}$-module and assume that $\tau(M, \mathcal{C})$ exists.
The following are equivalent:
\begin{enumerate}[(i)]
 \item{$M$ is $F$-regular.}
\item{For all $c \in R$ such that $\Supp M \cap \Spec R_c$ lies dense in $\Supp M$ we have $\sum_{e \geq 1} \mathcal{C}_e cM = M$.}
%\item{For all $c \in R^p$ such that $\Supp M \cap \Spec R_c$ lies dense in $\Supp M$ we have $\sum_{e \geq 1} \mathcal{C}_e cM = M$.}
\end{enumerate}
\end{Prop}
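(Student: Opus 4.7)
The plan is to use the explicit description of the test module from the theorem displayed just before Remark~2.4: for any test element $c$ and any $d\in\mathbb{N}$, one has $\tau(M,\mathcal{C}) = \sum_{e\geq 1}\mathcal{C}_e c^d \underline{M}$. Combined with the remark (immediately after Definition \ref{DefTestModule}) that $\tau(M,\mathcal{C})=M$ if and only if $M$ is $F$-regular, both implications reduce to showing which $c$'s serve as test elements, and translating between $\Supp M$ and $\Supp\underline{M}$.

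For (i) $\Rightarrow$ (ii), assume $M$ is $F$-regular. Then $M$ is $F$-pure, so $\underline{M}=M$, and given any $c\in R$ with $\Spec R_c \cap \Supp M$ dense in $\Supp M$, Lemma \ref{LeFRegularLocalizes} gives that $(M_c,\mathcal{C}_c)$ is $F$-regular. Thus $c$ is a test element in the sense of the theorem, and applying that theorem with $d=1$ yields $M = \tau(M,\mathcal{C}) = \sum_{e\geq 1}\mathcal{C}_e c\,\underline{M} = \sum_{e\geq 1}\mathcal{C}_e cM$.

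For (ii) $\Rightarrow$ (i), the key move is to first extract $F$-purity from hypothesis (ii). Taking $c=1$ (which trivially meets the density condition), (ii) gives $M=\sum_{e\geq 1}\mathcal{C}_e M = \mathcal{C}_+M$, so $M$ is $F$-pure and consequently $\underline{M}=M$. In particular $\Supp\underline{M}=\Supp M$. Since $\tau(M,\mathcal{C})$ is assumed to exist, the theorem supplies some test element $c_0$; by the previous sentence, $c_0$ satisfies the density hypothesis of (ii) with respect to $\Supp M$. Hence (ii) yields $M = \sum_{e\geq 1}\mathcal{C}_e c_0 M = \sum_{e\geq 1}\mathcal{C}_e c_0\underline{M} = \tau(M,\mathcal{C})$, so $M$ is $F$-regular.

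I expect the main obstacle to be the mild mismatch between the two notions of support appearing in the statement: the definition of a test element uses the support of $\underline{M}$, while condition (ii) refers to the support of $M$. The cleanest way around it is the bootstrap above --- using the trivial case $c=1$ of (ii) to force $\underline{M}=M$ before invoking the theorem --- and I do not foresee further technical difficulties.
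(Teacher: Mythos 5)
Your proof is correct and follows essentially the same route as the paper: the paper's own proof is a one-line reference to the test-element theorem (\cite[Theorem 3.11]{blicklep-etestideale}), of which Proposition \ref{FregularCharacterization} is ``essentially a reformulation,'' and your argument simply spells out that reformulation using the localization of $F$-regularity (Lemma \ref{LeFRegularLocalizes}) and the fact that $\tau(M,\mathcal{C})=M$ iff $M$ is $F$-regular. The $c=1$ bootstrap to get $\underline{M}=M$ is exactly the right way to reconcile the two support conditions.
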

\begin{proof}
This is essentially a reformulation of \cite[Theorem 3.11]{blicklep-etestideale}.% If $M$ is  $F$-regular then also $M_c$ for any $c$ as in (ii) by Lemma \ref{LeFRegularLocalizes}. Then the claim follows from \cite[Theorem 3.11]{blicklep-etestideale}. The implication from (ii) to (iii) is trivial.
%To see that (iii) implies (i) we first reduce to the case that $R$ is reduced and $\Supp M = \Spec R$ -- this is due to \cite[Proposition 3.2 (d), (f)]{blicklep-etestideale}. Hence, the condition on the $c$ we have to consider is $c \in R^{\circ}$ is a $p$th power. Applying Theorem 3.11 (and the existence of $\tau(M, \mathcal{C})$) we find that $\tau(M, \mathcal{C}) = \sum_{e \geq 1} \mathcal{C}_e c^tM$ for some $c \in R^{\circ}$ and all $t \in \mathbb{N}$. For $t = p$ we obtain $\tau(M, \mathcal{C}) = M$ which is equivalent to $M$ being $F$-regular by \cite[Lemma 3.5]{blicklep-etestideale}.
\end{proof}

\begin{Def}
A coherent $\kappa$-crystal $M$ is called \emph{$F$-regular} if there is an $F$-regular $\kappa$-module $N$ whose associated crystal is equivalent to $M$. 
\end{Def}

\begin{Bsp}
Note that not every $\kappa$-module associated to an $F$-regular $\kappa$-crystal is necessarily $F$-regular. Indeed, let $k$ be a perfect field and let $V$ be a $\kappa$-module. Then $V$ has a decomposition $V = V_{\nil} \oplus \underline{V}$ and $\underline{V}$ has no non-trivial nilpotent subspaces. If $V_{\nil} \neq 0$ then $V$ is not $F$-regular. But any $F$-pure $\kappa$-module over $k$ will be $F$-regular -- this follows from the criterion established in Proposition \ref{FregularCharacterization}. So $\underline{V}$ is $F$-regular and hence the crystal associated to $V$ is also $F$-regular (which is in fact isomorphic to $\underline{V}$ -- cf.\ \cite[Theorems 3.10, 3.12]{blickleboecklecartierfiniteness}).%Explizit: c \neq 0, dann ist cM = c...also C_+ cM = C_+ M = M wg F-purity
\end{Bsp}
 
\begin{Le}
\label{FRegularReductionCrystal}
Let $M$ be a coherent $F$-pure $\kappa$-module. Then its associated minimal $\kappa$-module $M_{\min}$ (as defined in Section \ref{SectionCartierModules}) is $F$-regular if and only if $M$ is $F$-regular.
\end{Le}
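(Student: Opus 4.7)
The plan is to set up a correspondence between Cartier submodules of $M$ and of $M_{\min}$ satisfying the ``agrees at generic points'' condition via pullback and image along the projection $\pi: M \to M_{\min}$, and to use $F$-purity of $M$ to rule out degenerate behavior. Since $M$ is $F$-pure we have $\underline{M} = M$, and the quotient $M/M_{\nil}$ is then itself $F$-pure, so $M_{\min} = \overline{M} = M/M_{\nil}$. As a preliminary, I would record that $\Supp M = \Supp M_{\min}$: for any prime $P$, if $(M_{\min})_P = 0$ then $M_P = (M_{\nil})_P$ would be nilpotent; but $M_P$ is $F$-pure by \cite[Lemma 2.18]{blicklep-etestideale}, and an $F$-pure nilpotent $\kappa$-module must vanish, forcing $M_P = 0$.

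For the forward direction, assuming $M$ is $F$-regular, I would take a hypothetical proper non-zero Cartier submodule $N \subsetneq M_{\min}$ with $N_\eta = (M_{\min})_\eta$ at every generic $\eta$ of $\Supp M_{\min}$ and pull it back to $\tilde N := \pi^{-1}(N) \subseteq M$. Since $\pi$ is surjective, $\tilde N$ is proper and non-zero, and exactness of localization gives $\tilde N_\eta = \pi_\eta^{-1}((M_{\min})_\eta) = M_\eta$ at every generic $\eta$ of $\Supp M$, contradicting $F$-regularity of $M$.

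For the converse, assuming $M_{\min}$ is $F$-regular and that $N \subsetneq M$ is proper non-zero with $N_\eta = M_\eta$ at every generic $\eta$, I would push forward to $\pi(N) \subseteq M_{\min}$; the substance of the argument is that $\pi(N)$ is again proper and non-zero. If $\pi(N) = 0$, then $N \subseteq M_{\nil}$, making $N_\eta = M_\eta$ simultaneously nilpotent and $F$-pure, hence zero, contradicting $\eta \in \Supp M$. If $\pi(N) = M_{\min}$, i.e.\ $M = N + M_{\nil}$, then $M/N \cong M_{\nil}/(N \cap M_{\nil})$ is a quotient of a nilpotent module and therefore nilpotent, so $\kappa^e(M) \subseteq N$ for some $e$; combined with $\kappa^e(M) = M$ from $F$-purity of $M$, this forces $N = M$, a contradiction. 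Having excluded both cases, $\pi(N)$ is a proper non-zero Cartier submodule of $M_{\min}$, and $\pi(N)_\eta = \pi_\eta(M_\eta) = (M_{\min})_\eta$ by surjectivity of $\pi_\eta$, contradicting $F$-regularity of $M_{\min}$.

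I expect the main (though mild) obstacle to be the case $\pi(N) = M_{\min}$ in the backward direction, since it is precisely there that $F$-purity of $M$ is essentially invoked to transport $F$-regularity from the quotient $M_{\min}$ back up to $M$; the forward direction and the $\pi(N) = 0$ case are routine once the correspondence via $\pi$ is in place.
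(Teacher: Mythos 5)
Your proof is correct, but it takes a genuinely different route from the paper's. You argue directly from the submodule-theoretic definition of $F$-regularity: you transport a hypothetical proper, generically full $\kappa$-submodule through the projection $\pi: M \to M_{\min}$, taking preimages in the forward direction and images in the converse, and you invoke $F$-purity of $M$ (together with nilpotence of $M_{\nil}$) precisely where it is needed, to exclude the degenerate cases $\pi(N)=0$ and $\pi(N)=M_{\min}$; your preliminary claim $\Supp M = \Supp M_{\min}$, which the paper cites from \cite[Lemma 3.16]{blickleboecklecartierfiniteness}, is also proved correctly via localization of $F$-purity. The paper instead uses the test-element criterion of Proposition \ref{FregularCharacterization}: for the forward direction it verifies $\sum_{e\geq 1}\kappa^e(c\,M/M_{\nil}) = M/M_{\nil}$ for every admissible $c$, and for the converse it lifts the relation $\mathcal{C}_+ c\, M_{\min} = M_{\min}$ back to $M$ using surjectivity of $\kappa$ and the fact that $M_{\nil}$ is killed by a fixed power of $\kappa$. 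The trade-off is that the paper's argument is shorter once Proposition \ref{FregularCharacterization} is in hand, but that proposition presupposes the existence of $\tau(M,\mathcal{C})$ (via \cite[Theorem 3.11]{blicklep-etestideale}), whereas your argument is elementary, needs no existence assumption on test modules, and is therefore somewhat more self-contained; both arguments use $F$-purity of $M$ essentially in the passage from $M_{\min}$ back up to $M$.
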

\begin{proof}
Let $N$ be the maximal nilpotent $\kappa$-submodule of $M$. Then $M_{\min} = M/N$ and $\Supp M_{\min} = \Supp M$ by \cite[Lemma 3.16]{blickleboecklecartierfiniteness}.

First assume that $M$ is $F$-regular.
We use Proposition \ref{FregularCharacterization} to show that $M_{\min}$ is $F$-regular. For $c$ such that $D(c) \cap \Supp M_{\min}$ is dense in $\Supp M_{\min}$ we obtain $\sum_{e \geq 1} \kappa^e(cM/N) = \sum_{e \geq 1} \kappa^e(cM) + N = M +N = M/N$ since $M$ was $F$-regular.

If $M_{\min}$ is $F$-regular then for every $c$ as above we have $\mathcal{C}_+ c M_{\min} = M_{\min}$. We have to show that also $\mathcal{C}_+ c M = M$. Given $m \in M$ there is  $e  \geq 1$ and $m' \in M$ such that $\kappa^e(cm') - m \in N$. In particular, there is $s \geq 0$ (independent of $m, m'$) such that $\kappa^{e+s}(c m') = \kappa^s(m)$. Since $M$ is $F$-pure $\kappa$ is surjective. Thus varying $m$ we get that every element of $M$ is of the form $\kappa^{e+s}(c m')$ for some $m' \in M$ and some $e \geq 1$
\end{proof}

\begin{Ko}
A $\kappa$-crystal $M$ is $F$-regular if and only if for every subcrystal $N$ of $M$ which agrees with $M$ for every generic point $\eta$ of $\Supp M$ (cf. Section \ref{SectionCartierModules} for the definition of support of a crystal) we have $N = M$.
\end{Ko}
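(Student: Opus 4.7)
The plan is to work through the equivalence between $\kappa$-crystals and minimal $\kappa$-modules (\cite[Theorem 3.12]{blickleboecklecartierfiniteness}) and to reduce the statement to the module-theoretic characterization of $F$-regularity, with Lemma \ref{FRegularReductionCrystal} doing the bridging. Throughout write $M_{\min}$ for the minimal representative of the crystal $M$, so that $M_{\min}$ is $F$-pure, has no nonzero nilpotent submodule, and $\Supp M = \Supp M_{\min}$.

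For the forward direction, assume $M$ is $F$-regular as a crystal. Then some representative is $F$-regular, hence $F$-pure, and Lemma \ref{FRegularReductionCrystal} promotes the associated minimal quotient (which must be $M_{\min}$) to an $F$-regular module. Given a subcrystal $N \hookrightarrow M$, I pass to minimal representatives to obtain a morphism $N_{\min} \to M_{\min}$; the kernel is nilpotent (injectivity in crystals) but sits inside the nil-free $N_{\min}$, so it vanishes, making $N_{\min}$ a genuine submodule of $M_{\min}$. The hypothesis that $N = M$ at each generic point $\eta$ of $\Supp M$ means $(M_{\min}/N_{\min})_\eta$ is nilpotent. Since $F$-purity localizes (cf.\ \cite[Lemma 2.18]{blicklep-etestideale}), $(M_{\min})_\eta$ is $F$-pure, which forces $\kappa^e((M_{\min})_\eta) = (M_{\min})_\eta \subseteq (N_{\min})_\eta$ for $e \gg 0$, i.e.\ $(N_{\min})_\eta = (M_{\min})_\eta$. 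The module-level $F$-regularity of $M_{\min}$ then yields $N_{\min} = M_{\min}$, so $N = M$ as crystals.

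For the reverse direction, suppose the crystal condition holds; I must show $M_{\min}$ is $F$-regular as a module. It is already $F$-pure, so assume toward a contradiction there is a proper nonzero submodule $N \subsetneq M_{\min}$ with $N_\eta = (M_{\min})_\eta$ at each generic $\eta$ of $\Supp M_{\min}$. Let $\tilde N$ be the subcrystal of $M$ represented by $N$. It is nonzero, since a nilpotent $\kappa$-submodule of $M_{\min}$ must vanish. It is also proper in crystals: if $M_{\min}/N$ were nilpotent then $\kappa^e(M_{\min}) \subseteq N$ for $e \gg 0$, and $F$-purity of $M_{\min}$ gives $\kappa^e(M_{\min}) = M_{\min}$, forcing $N = M_{\min}$ against properness. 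Finally $\tilde N$ agrees with $M$ at the generic points by construction of $N$, contradicting the hypothesis. The main subtlety is the dictionary between submodules of $M_{\min}$ and subcrystals of $M$ — and in particular converting the purely crystal-level notion of generic agreement into an honest module-level equality $(N_{\min})_\eta = (M_{\min})_\eta$; this is exactly where $F$-purity of $M_{\min}$ (which is automatic from minimality) is used.
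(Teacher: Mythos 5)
Your proof is correct and follows essentially the same route as the paper: reduce to the minimal representative $M_{\min}$ via the equivalence with $\kappa$-crystals, use Lemma \ref{FRegularReductionCrystal} to transfer $F$-regularity, turn subcrystals into honest submodules of $M_{\min}$ (nilpotent kernel inside a nil-free module vanishes), and exploit that $F$-purity localizes to translate generic agreement between the crystal and module levels. The only cosmetic difference is in the converse, where the paper passes to $\underline{N}$ to produce a minimal submodule and invokes the equivalence directly, while you argue by contradiction using $F$-purity of $M_{\min}$ to show the quotient $M_{\min}/N$ is not nilpotent; both arguments rest on the same facts.
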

\begin{proof}
Assume that $M$ is $F$-regular. By Lemma \ref{FRegularReductionCrystal} the $\kappa$-module $M_{\min}$ is then $F$-regular. Let now $N \to M$ be a subcrystal of $M$ generically agreeing with $M$. Since the categories of minimal $\kappa$-modules and $\kappa$-crystals are equivalent (\cite[Theorem 3.12]{blickleboecklecartierfiniteness}) we obtain a morphism $N_{\min} \to M_{\min}$ of minimal $\kappa$-modules. Moreover, the kernel of this morphism is nilpotent (since it is a monomorphism after passing to crystals -- cf. \cite[Proposition 2.3.5 (a)(iii)]{boecklepinktaucrystals}). But then it must be injective since $N_{\min}$ by definition does not admit nilpotent submodules.

In the other direction it suffices to show that $M_{\min}$ is $F$-regular. So let $N \subseteq M_{\min}$ be a $\kappa$-submodule which generically agrees with $M_{\min}$. Since $\underline{\phantom{M}}$ commutes with localization (cf. \cite[Lemma 2.18]{blicklep-etestideale}) we have $\underline{N}_{\eta} = (M_{\min})_\eta$ for every generic point $\eta$ of $\Supp(M_{\min})$. In particular, $\underline{N}$ is an $F$-pure submodule generically agreeing with $M_{\min}$. Finally, $\underline{N} = \overline{\underline{N}}$ since any nilpotent submodule of $\underline{N}$ is a nilpotent submodule of $M_{\min}$ hence zero. Hence, $\underline{N}$ yields, via the equivalence between minimal $\kappa$-modules and $\kappa$-crystals, a subcrystal $N$ of $M$ that generically agrees with $M$. By assumption $M = N$ and we conclude that $M_{\min}$ is $F$-regular.
\end{proof}

Since in the category of $\kappa$-crystals the structural maps are isomorphisms the notion of $F$-purity is vacuous for crystals. Hence, the above corollary shows that $F$-regularity for crystals really is the analog of $F$-regularity for modules.

\begin{Le}
Let $M$ be a coherent $\kappa$-module non-degenerate with respect to some ideal $\mathfrak{a}$. Then its associated minimal $\kappa$-module $M_{\min}$ is also non-degenerate.
\end{Le}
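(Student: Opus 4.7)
Let $a \in \mathfrak{a}$ be an $M$-regular element. The plan is to show that the \emph{same} $a$ is regular on $M_{\min}$. First, since $\underline{M}$ is a Cartier submodule of $M$, the element $a$ is automatically $\underline{M}$-regular. Writing $M_{\min} = \overline{\underline{M}} = \underline{M}/N$ where $N = (\underline{M})_{\nil}$ is the maximal nilpotent Cartier submodule, the task reduces to showing the implication
\[ m \in \underline{M},\ am \in N \ \Longrightarrow\ m \in N. \]

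The key computation I intend to run is the following. By the coherence assumption and the construction of $(-)_{\nil}$ recalled in Section \ref{SectionCartierModules} (cf.\ \cite{blickleboecklecartierfiniteness}), there is a single integer $E \geq 0$ with $\kappa^{E}(N) = 0$. Assume $am \in N$. Since $N$ is an $R$-submodule of $\underline{M}$ (in its original $R$-structure), it is closed under multiplication by $a^{p^E - 1}$, so
\[ a^{p^E} m \;=\; a^{p^E - 1}(a m) \;\in\; N, \]
and therefore $\kappa^{E}(a^{p^E} m) = 0$. On the other hand, Cartier $R$-linearity of $\kappa^{E}\colon F^{E}_\ast \underline{M} \to \underline{M}$ yields $\kappa^{E}(a^{p^E} m) = a\,\kappa^{E}(m)$. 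Hence $a\,\kappa^{E}(m) = 0$, and the regularity of $a$ on $\underline{M}$ established in the first step forces $\kappa^{E}(m) = 0$, whence $m \in N$. This gives that $a$ is $M_{\min}$-regular, and in particular $\mathfrak{a}$ contains an $M_{\min}$-regular element.

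I do not expect any serious obstacle; the whole argument is a formal manipulation using semi-linearity. The one point deserving care is that $(\underline{M})_{\nil}$ is \emph{uniformly} nilpotent (a single $E$ works for all of $N$), since otherwise the trick of converting $a \cdot m \in N$ into $a^{p^E} m \in N$ and pulling $a^{p^E}$ out of $\kappa^{E}$ would not be available -- one cannot in general extract $a$ from $\kappa(am)$ without $a$ being a $p$-th power. Everything else is bookkeeping.
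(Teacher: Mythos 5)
Your setup is fine: reducing to the statement that $a$ is regular on $\underline{M}$ and that, for $m\in\underline{M}$ with $am\in N:=(\underline{M})_{\nil}$, one must have $m\in N$, and the computation up to $\kappa^{E}(m)=0$ is correct. The genuine gap is the final inference ``$\kappa^{E}(m)=0$, whence $m\in N$''. Membership in $N$ means that the whole Cartier submodule generated by $m$, namely $\sum_{n\geq 0}\kappa^{n}(Rm)$, is nilpotent; since $\kappa$ is only $p^{-1}$-linear, $\kappa^{E}(rm)$ need not vanish when $\kappa^{E}(m)$ does, so annihilating the single element $m$ is strictly weaker than $m$ lying in the maximal nilpotent submodule. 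Concretely, take $R=k[x]$ with the standard Cartier operator: $\kappa(1)=0$, yet $1$ generates the non-nilpotent Cartier module $R$ (indeed $\kappa(x^{p-1}\cdot 1)=1$), and here $M_{\nil}=0$. So the last step of your argument, as written, is not a valid implication. (Your worry about uniform nilpotence, on the other hand, is harmless: $N$ being nilpotent with a single $E$ is part of the definition of the maximal nilpotent submodule for coherent modules.)

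The repair is exactly the quantifier you dropped, and it turns your proof into essentially the paper's. Since $N$ is an $R$-submodule, for every $r\in R$ you have $a^{p^{E}}rm=a^{p^{E}-1}r\,(am)\in N$, hence $0=\kappa^{E}(a^{p^{E}}rm)=a\,\kappa^{E}(rm)$, and regularity of $a$ on $\underline{M}$ gives $\kappa^{E}(rm)=0$ for \emph{all} $r\in R$. Then $\kappa^{E}$ kills $Rm$ and therefore all of $\sum_{n\geq 0}\kappa^{n}(Rm)$, because $\kappa^{E}(r'\kappa^{n}(rm))=\kappa^{n}(\kappa^{E}(r'^{p^{n}}rm))=0$; so the Cartier submodule generated by $m$ is nilpotent and $m\in N$. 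This is precisely the paper's trick of testing against $x^{p^{e+n}-1}r$ with $r\in R$ arbitrary; the only genuine difference is that the paper uses the factorization $M_{\min}=\underline{(\overline{M})}$ (first prove regularity on $\overline{M}=M/M_{\nil}$, then pass to the submodule $\underline{\overline{M}}$), whereas you use $M_{\min}=\overline{(\underline{M})}$; the order of the two reductions is immaterial.
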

\begin{proof}
Let $N$ be the maximal nilpotent $\kappa$-submodule of $M$ and consider the quotient $\overline{M} = M/N$. Let $x \in \mathfrak{a}$ be a regular element for $M$. We have to show that $x$ is regular for $\overline{M}$. So fix $m \in M$ and assume that $x m \in N$. This implies that the $\kappa$-module generated by $xm$ is nilpotent. This means that there is $e > 0$ such that for all $f \in R$ and $n \geq 0$ we have $\kappa^e(\kappa^n(fxm)) = 0$. In particular, taking $f = x^{p^{e+n} - 1} r$ for $r \in R$ we obtain $\kappa^{e+n}(x^{p^{e+n}} rm) = x \kappa^{e+m}(rm) = 0$. This implies $\kappa^{e+m}(rm) = 0$ so that already the $\kappa$-module generated by $m$ is nilpotent.

Since $\underline{M}$ is a submodule of $M$ any regular element $x \in \mathfrak{a}$ is a fortiori regular in $\underline{M}$. % Nilpotenz ist keine Ausnahme, weil aufm 0-Module alles regulaer ist (Mult mit x ist injektiv)
\end{proof}

In the next theorem we want to show that the functor $Gr$ defined in \ref{GrModuleFunctor} descends to crystals. In order to make sense of this we need to assume that the Cartier structures on the source and target are given by a single Cartier linear morphism. Hence, we can only carry out this construction if $\mathfrak{a}$ is a principal ideal.

\begin{Theo}
\label{GrFunctoronCrys}
Let $R$ be a ring essentially of finite type over an $F$-finite field, $\mathfrak{a} = (f)$ a principal ideal and $t \in \mathbb{Q}_{\geq 0}$.
Then the functor $Gr^t$, for $t \in \mathbb{Q}_{\geq 0}$ defined in \ref{GrModuleFunctor} induces a functor from the category of $F$-regular coherent non-degenerate $\kappa$-crystals on $R$ to the category of coherent $\kappa f^{\lceil t (p-1)\rceil}$-crystals on $R/\mathfrak{a}$.
\end{Theo}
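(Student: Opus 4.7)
The plan is to show that the module-level functor $Gr^t$ from Theorem \ref{GrModuleFunctor} descends to crystals by verifying that it sends nil-isomorphisms in $\mathcal{A}$ to nil-isomorphisms, and then invoking the universal property of the Serre-localization that defines the crystal category. By Proposition \ref{GrSupportedonIdeal}, for $M \in \mathcal{A}$ the output $Gr^t M$ is a coherent $\kappa f^{\lceil t(p-1)\rceil}$-module supported on $V(\mathfrak{a})$, so it naturally lives on $R/\mathfrak{a}$, and the stated target category is the correct one.

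Before the main step, one verifies that every $F$-regular coherent non-degenerate $\kappa$-crystal $\mathcal{M}$ admits a representative in $\mathcal{A}$: by definition $\mathcal{M}$ has an $F$-regular $\kappa$-module representative $M_0$, and passing to the minimal model $M_{0,\min}$ preserves $F$-regularity by Lemma \ref{FRegularReductionCrystal} and preserves non-degeneracy by the lemma immediately preceding this theorem. Under the equivalence between Cartier crystals and minimal $\kappa$-modules recalled in Section \ref{SectionCartierModules}, morphisms of crystals lift canonically to morphisms between such representatives, so the crystal-level functor can be defined by applying the module-level $Gr^t$ to these lifts.

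The core of the argument is to check that if $\varphi: M \to N$ is a nil-isomorphism with $M, N \in \mathcal{A}$, then $Gr^t(\varphi)$ is a nil-isomorphism. By Lemma \ref{TauSurjectiveForNiliso} the induced maps $\tau^t(\varphi): V^t M \to V^t N$ and $\tau^{t-\eps}(\varphi): V^{t-\eps} M \to V^{t-\eps} N$ are both surjective, and their kernels sit inside $\ker \varphi$, which is $\kappa$-nilpotent. Since $(\kappa f^a)^n$ equals $\kappa^n$ times a power of $f$, nilpotence with respect to $\kappa$ immediately implies nilpotence with respect to the twisted operator $\kappa f^{\lceil t(p-1)\rceil}$ governing the target. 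Applying the snake lemma to the commutative diagram whose rows are the short exact sequences $0 \to V^t M \to V^{t-\eps} M \to Gr^t M \to 0$ and $0 \to V^t N \to V^{t-\eps} N \to Gr^t N \to 0$, with vertical maps $\tau^t(\varphi), \tau^{t-\eps}(\varphi)$ and $Gr^t(\varphi)$, one reads off that $Gr^t(\varphi)$ is surjective and that $\ker Gr^t(\varphi)$ is a quotient of $\ker \tau^{t-\eps}(\varphi)$, hence nilpotent. Thus $Gr^t(\varphi)$ is a nil-isomorphism, as required.

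The main obstacle is the bookkeeping around how nilpotence behaves under the change of Cartier operator from $\kappa$ on $M$ to $\kappa f^{\lceil t(p-1)\rceil}$ on $Gr^t M$, and the verification that the snake-lemma input (surjectivity plus nilpotent kernels on the outer terms) already forces the middle term to inherit a nilpotent kernel. Once this is established, the universal property of the Serre-quotient yields the induced functor on crystals, and functoriality in the strict sense (preservation of identities and of composition) is inherited from Theorem \ref{GrModuleFunctor} together with the equivalence between minimal $\kappa$-modules and $\kappa$-crystals.
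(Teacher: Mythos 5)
Your proposal is correct and follows essentially the same route as the paper: descend through the localization by checking that nil-isomorphisms in $\mathcal{A}$ are sent to nil-isomorphisms, using Lemma \ref{TauSurjectiveForNiliso} for surjectivity and the $\kappa$-nilpotence of $\ker \varphi$ (together with the observation that twisting by powers of $f$ preserves nilpotence) for the kernel; your snake-lemma bookkeeping is just a repackaging of the paper's direct computation of $\ker Gr^t(\varphi)$, and the extra paragraph on minimal representatives is a harmless elaboration.
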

\begin{proof}
We consider $\mathcal{A}$ (as defined in \ref{GrModuleFunctor}) as a (full) subcategory of the category of coherent $\kappa$-modules. It is then clear that nil-isomorphisms in $\mathcal{A}$ form an essentially locally small multiplicative system in the sense of \cite[Definitions 2.2.1, 2.2.2]{boecklepinktaucrystals}. Hence, in order to show that $Gr^t$ induces a functor on crystals it is sufficient to show that nil-isomorphisms are mapped to nil-isomorphisms \cite[Proposition 2.2.4]{boecklepinktaucrystals}. So let $M, N$ be in $\mathcal{A}$ and let $\varphi: M \to N$ be a nil-isomorphism.

Consider the diagram
\[
 \begin{xy} \xymatrix{\tau(M, \kappa_M, \mathfrak{a}^{t-\eps}) \ar[d] \ar[r]^{\tau(\varphi)} &\tau(N,\kappa_N, \mathfrak{a}^{t-\eps}) \ar[d] \\
Gr^t(M) \ar[r]^{Gr(\varphi)} & Gr^t(N)}
\end{xy}
\] where $Gr^t(\cdot) = \tau(\cdot, \kappa_{\cdot}, \mathfrak{a}^{t-\eps})/\tau(\cdot, \kappa_{\cdot}, \mathfrak{a}^t)$, $\tau(\varphi)$ and $Gr(\varphi)$ are restrictions of $\varphi$ and the vertical arrows are the canonical projections. By Lemma \ref{TauSurjectiveForNiliso} the map $\tau(\varphi)$ is surjective. In particular, $Gr^t(\varphi)$ is surjective. The kernel of $Gr^t(\varphi)$ is \[(\ker \varphi \cap \tau(M, \kappa_M, \mathfrak{a}^{t - \eps}) + \varphi^{-1}(\tau(N, \kappa_N, \mathfrak{a}^t))/ \tau(M, \kappa_M, \mathfrak{a}^t),\] and this is equal to $(\ker \varphi \cap \tau(M, \kappa_M, \mathfrak{a}^{t-\eps}))/\tau(M, \kappa_M, \mathfrak{a}^t)$. Since $\ker \varphi$ is a nilpotent $\kappa$-module $\ker(Gr^t(\varphi))$ is nilpotent as a $\kappa f^{\lceil t (p-1)\rceil}$-module. % an element in \ker \varphi is modulo $\tau^t(M)$ in $\varphi^{-1}(\tau^t(N))
\end{proof}

\begin{Ko}
In the situation of \ref{GrFunctoronCrys} multiplication by $f$ induces an isomorphism of crystals $\mu_f: Gr^t \to Gr^{t+1}$ for $t > 0$. Moreover, $\kappa$ yields a surjective Cartier linear map $Gr^{tp} \to Gr^t$ for $t \geq 0$.
\end{Ko}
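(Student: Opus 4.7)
The plan is to deduce both assertions directly from their Cartier-module analogs, Propositions \ref{MultWithFIsoOnGr} and \ref{CartierOperatesonGr}, by invoking exactness of the localization functor from coherent $\kappa$-modules (respectively $\kappa f^{\lceil t(p-1) \rceil}$-modules) to the corresponding crystals. No new computation should be required, only that the module-level morphisms are natural in $M \in \mathcal{A}$ and that isomorphisms/surjections are preserved on passage to crystals.

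For $\mu_f$: Proposition \ref{MultWithFIsoOnGr} already establishes that multiplication by $f$ is an $R$-linear isomorphism $Gr^t M \to Gr^{t+1} M$ that intertwines the Cartier structures $\kappa f^{\lceil t(p-1)\rceil}$ and $\kappa f^{\lceil (t+1)(p-1) \rceil}$, for every $M \in \mathcal{A}$. I would first check naturality in $M$: for a morphism $\varphi: M \to N$ in $\mathcal{A}$, the square with horizontal arrows $\mu_f$ and vertical arrows $Gr^t(\varphi), Gr^{t+1}(\varphi)$ commutes because $\varphi$ is $R$-linear and hence commutes with multiplication by $f$. This exhibits $\mu_f$ as a natural isomorphism $Gr^t \Rightarrow Gr^{t+1}$ of functors on $\mathcal{A}$ at the module level. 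Theorem \ref{GrFunctoronCrys} descends both $Gr^t$ and $Gr^{t+1}$ to the corresponding crystal categories, and since the localization functor is additive and preserves isomorphisms, $\mu_f$ descends to an isomorphism of crystals for $t > 0$.

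For the Cartier-linear surjection $Gr^{tp} \to Gr^t$: Proposition \ref{CartierOperatesonGr} constructs a surjective Cartier-linear morphism at the module level. Naturality in $M \in \mathcal{A}$ is immediate since both source and target transformations are induced by the same operator $\kappa$ of the ambient module, and $\varphi$ commutes with $\kappa$. Because nilpotent $\kappa$-modules form a Serre subcategory, the localization functor to crystals is exact, so surjections of modules yield epimorphisms of crystals; this gives the Cartier-linear surjection at the crystal level.

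The only real subtlety to keep in mind is the bookkeeping for $\mu_f$: its source and target are crystals over \emph{different} Cartier algebras ($\kappa f^{\lceil t(p-1)\rceil}$ versus $\kappa f^{\lceil (t+1)(p-1)\rceil}$ on $R/\mathfrak{a}$), so "isomorphism of crystals" must be read as an isomorphism intertwining the two Cartier structures in the sense made precise by Proposition \ref{MultWithFIsoOnGr}. Once this interpretation is fixed, the argument is purely formal.
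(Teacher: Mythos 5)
Your proposal is correct and follows essentially the same route as the paper, whose proof consists precisely of citing Propositions \ref{CartierOperatesonGr} and \ref{MultWithFIsoOnGr} and letting the statements descend to crystals via Theorem \ref{GrFunctoronCrys}. The extra naturality checks and the remark about the differing Cartier structures on source and target are sound but not beyond what the paper leaves implicit.
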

\begin{proof}
This follows from Propositions \ref{CartierOperatesonGr} and \ref{MultWithFIsoOnGr}.
\end{proof}

\begin{Bem}
\label{WrongProp}
In a previous version of this article it was claimed that $F$-regularity is preserved for open immersions provided one passes to crystals and uses the notion of local nilpotence. This is however false, as the following example illustrates.

First of all, let us recall the set-up. Let $f: \Spec S \to \Spec R$ be of finite type and $M$ a Cartier module. Then $f_\ast M$ is of course in general not coherent. However, if one is working in the category of quasi-coherent crystals (that is quasi-coherent $\kappa$-modules up to local nilpotence -- cf.\ \cite[Definition 2.1.1]{blickleboecklecartiercrystals}) then the crystal associated to $f_\ast M$ is isomorphic to a \emph{coherent} $\kappa$-crystal. This allows one to define a functor $f_\ast$ on coherent crystals for $f$ of finite type (cf.\ \cite[Theorems 3.2.14, 3.2.18]{blickleboecklecartiercrystals}).

Let now $S = k[x,x^{-1}], R = k[x]$ for a perfect field $k$ and let $f: \Spec S \to \Spec R$ be the open immersion and $M = k[x,x^{-1}]$ with Cartier structure given by the Cartier operator acting via \[\kappa^e\left(\frac{g}{x^n}\right) = \frac{\kappa^e\left(gx^{n(p^e-1)}\right)}{x^n},\] which is just the usual action obtained on the localization. In particular, if $g$ is a monomial of degree $m$ then the degree of $\kappa^e(g/x^n)$ is given by \[\frac{m +(n-1)(p^e-1)}{p^e}  -n,\] if the numerator is divisible by $p^e$ and otherwise the element is mapped to zero.

We claim that the inclusion $\varphi: k[x] \cdot x^{-1} \to f_\ast M$ is a nilisomorphism. For this we have to show that the quotient $Q := f_\ast M/k[x] \cdot x^{-1}$ is locally nilpotent. By definition, we must verify that $Q$ is the union of nilpotent $\kappa$-modules. Consider $Q_n = k[x] \cdot x^{-n} \subseteq Q$ for $n \geq 2$. Any element in $Q_n$ is of the form $g/x^n$ with $\deg g \leq n-2$ so by the above degree formula (looking at monomials separately) there is $e_n$ such that $\kappa^{e_n} Q_n = 0$.

Finally, $N := k[x] \cdot x^{-1}$ admits the Cartier submodule $k[x]$ which generically agrees with $N$ and therefore $N$ is not $F$-regular. On the other hand, $k[x,x^{-1}]$ is $F$-regular by Lemma \ref{RegularimpliesFRegular} below. It is also easy to see that both $N$ and $M$ are minimal $\kappa$-modules. Hence, $F$-regularity is not preserved by open immersions on the level of crystals.

\end{Bem}

\section{Functorial properties}
\label{SectionFunctorialProperties}

In this section we show that the functor $Gr$ on $\kappa$-modules (or crystals in the principal case) as defined in \ref{GrModuleFunctor} (resp.\ \ref{GrFunctoronCrys}) commutes with pushforwards along finite morphisms and with pull backs along essentially \'etale morphisms. This is accomplished by first proving the desired property for test modules (or $V$-filtrations) and then passing to quotients. In particular, we derive corresponding transformation rules for test modules.

In dealing with pushforwards we will consider (affine) schemes $X = \Spec R$ over $\mathbb{A}^1_{\mathbb{F}_p} = \Spec \mathbb{F}_p[x]$ and $V$-filtrations on $X$ along $\mathfrak{a} = (a)$, where $a$ is the image of $x$ under the structural morphism. Abusing notation we will frequently denote $a$ by $x$.

We recall the following lemma from commutative algebra:

\begin{Le}
\label{FiniteMorphismSupport}
Let $f: \Spec S \to \Spec R$ be a finite dominant morphism with $R, S$ noetherian, reduced and $M$ a coherent $S$-module. Then $f(\Supp M) = \Supp f_\ast M$.
\end{Le}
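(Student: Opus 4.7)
The plan is to verify the equality of subsets pointwise. Fix $\mathfrak{p} \in \Spec R$; I would first observe that $(f_\ast M)_\mathfrak{p} = M \otimes_R R_\mathfrak{p} = M \otimes_S S_\mathfrak{p}$, where $S_\mathfrak{p}$ denotes the localization of $S$ at the multiplicative system $R \setminus \mathfrak{p}$. The problem thus reduces to showing that $M \otimes_S S_\mathfrak{p}$ is nonzero if and only if there exists $\mathfrak{q} \in \Spec S$ with $f(\mathfrak{q}) = \mathfrak{p}$ and $M_\mathfrak{q} \neq 0$.

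Next I would exploit the finiteness hypothesis. The ring $S_\mathfrak{p}$ is a finite $R_\mathfrak{p}$-algebra, hence Noetherian and semi-local. Since $R \to S$ is integral, going-up and incomparability identify the maximal ideals of $S_\mathfrak{p}$ with the primes $\mathfrak{q} \in \Spec S$ lying over $\mathfrak{p}$, and further localization at the maximal ideal corresponding to $\mathfrak{q}$ recovers $S_\mathfrak{q}$, hence $M_\mathfrak{q}$. Since $M \otimes_S S_\mathfrak{p}$ is a finitely generated $S_\mathfrak{p}$-module, the standard criterion that a finitely generated module over a Noetherian semi-local ring vanishes if and only if it vanishes at every maximal ideal supplies the required equivalence, and thereby the lemma.

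I do not anticipate a serious obstacle here: once the stalks have been identified and the semi-local structure of $S_\mathfrak{p}$ is in hand, the equality $f(\Supp M) = \Supp f_\ast M$ amounts to unwinding definitions. The hypotheses that $f$ be dominant and that $R, S$ be reduced do not in fact enter the argument; they are presumably recorded simply because these conditions hold throughout the situations in which the lemma will be applied.
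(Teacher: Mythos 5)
Your argument is correct. The paper itself offers no proof of this lemma -- it is simply recalled as a standard fact from commutative algebra -- so there is nothing to compare step by step; but your pointwise reduction is sound: $(f_\ast M)_\mathfrak{p} \cong M \otimes_S S_\mathfrak{p}$, the maximal ideals of the finite $R_\mathfrak{p}$-algebra $S_\mathfrak{p}$ are exactly the primes $\mathfrak{q}$ of $S$ lying over $\mathfrak{p}$ (integrality), and vanishing of a module can be tested at maximal ideals, so $\mathfrak{p} \in \Supp f_\ast M$ iff some $\mathfrak{q}$ over $\mathfrak{p}$ has $M_\mathfrak{q} \neq 0$. For the record, the most common textbook route is global rather than pointwise: since $f$ is finite and $M$ coherent, $\Supp f_\ast M = V(\Ann_R M)$ and $\Supp M = V(\Ann_S M)$, and lying over for the integral map $R/\Ann_R M \to S/\Ann_S M$ (note $\Ann_R M = \Ann_S M \cap R$) gives $f(V(\Ann_S M)) = V(\Ann_R M)$; this also makes visible that the image is closed. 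Your closing observation is also right: dominance and reducedness play no role in the statement, and are listed only because they hold in the situation where the lemma is invoked (the proof of Lemma \ref{FRegularPushforward}, where reducedness is needed separately to conclude $c \in R^\circ$ implies $c \in S^\circ$).
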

%\begin{proof}
%By definition $\Supp M = V(\Ann_S M)$ and similarly for $\Supp f_\ast M$. We have $\Ann_R M = \Ann_S M \cap R$. Hence, if $P \in \Spec S$ contains $\Ann_S M$ then $P \cap R$ contains $\Ann_R M$. This implies the inclusion from left to right. The other inclusions follows from the fact that if $M_{P \cap R} = 0$ then a fortiori $M_P = 0$. % Die Abbildung ist surjektiv, d.h. jedes Q \in Spec R ist von der Form $P \cap R$ fuer ein $P \in \Spec S$. $M_{P \cap R} = 0$ means that for each $m in M$ there is $t \in R \setminus ( P \cap R)$ s.t. $tm =0 $ but then $t \in S \setminus P$
%\end{proof}

\begin{Le}
\label{VFiltrationFinitePushforward}
Let $f: \Spec S \to \Spec R$ be a finite morphism of schemes over $\mathbb{A}^1_{\mathbb{F}_p}$. Let $M$ be a principal Cartier module on $S$ whose $V$-filtration along $\mathfrak{a} = (x)$ exists. Then $f_\ast M$ has a $V$-filtration along $(x)$ and $f_\ast V^\cdot M = V^\cdot f_\ast M$.
\end{Le}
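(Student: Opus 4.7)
The plan is to verify that $f_\ast V^\cdot M$, viewed as a filtration on the principal Cartier module $f_\ast M$ along $(x) \subseteq R$, satisfies the axioms of Definition \ref{DefVfiltration}, and then invoke the uniqueness statement Proposition \ref{VfiltrationUnique} to conclude that $f_\ast V^\cdot M = V^\cdot f_\ast M$. Throughout I would exploit three structural facts: (a) since $f$ is affine and finite, $f_\ast$ is exact and preserves coherence; (b) the $R$-action on $f_\ast M$ factors through the structural map $R \to S$, so multiplication by the image $x_R \in R$ on $f_\ast M$ is literally the same abelian-group endomorphism as multiplication by its image $x_S \in S$ on $M$; and (c) because $f$ is a morphism over $\mathbb{F}_p$, we have a canonical isomorphism $F_{R,\ast} f_\ast \cong f_\ast F_{S,\ast}$, and the Cartier structure on $f_\ast M$ is precisely $f_\ast \kappa$ composed with this isomorphism.

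With these preliminaries, the individual axioms check out as follows. That $f_\ast V^\cdot M$ is decreasing, right-continuous, exhaustive, and $\mathbb{Q}$-indexed transfers directly from the corresponding properties on $M$ by applying the exact functor $f_\ast$; the same applies to continuity at zero. For axiom (i), $V^0 f_\ast M := f_\ast V^0 M$ is coherent because $V^0 M$ is coherent on $S$ and $f$ is finite. For axiom (ii), the element $x_S \in (x_S)$ from the hypothesis acts injectively on each $V^t M$ (as noted in the preceding paragraph, any injective $y = g x_S$ forces $x_S$ itself to be injective); then $x_R \in (x_R) \subseteq R$ acts on $f_\ast V^t M$ as $x_S$ acts on $V^t M$, and injectivity is preserved by $f_\ast$. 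Axiom (iii) reads $V^t f_\ast M = (x_R) V^{t-1} f_\ast M$ for $t > 1$, which follows from $V^t M = x_S V^{t-1} M$ by applying $f_\ast$ and using that $x_R \cdot (-) = x_S \cdot (-)$ on pushforwards. Finally, axiom (iv) is the identity $(f_\ast \kappa)(f_\ast V^{tp} M) = f_\ast V^t M$, which is immediate from $\kappa(V^{tp} M) = V^t M$ together with the exactness of $f_\ast$ and the compatibility of the Frobenius pushforwards.

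Having verified all four axioms, I conclude by Proposition \ref{VfiltrationUnique} that the $V$-filtration of $f_\ast M$ along $(x)$ exists and agrees with $f_\ast V^\cdot M$. The main technical point to be careful about is axiom (ii) — namely that the existence of \emph{some} regular element in $(x_S) \subseteq S$ translates into the existence of a regular element in the correct ideal $(x_R) \subseteq R$; but as just observed this is forced by the principality of $\mathfrak{a}$ and the factorization of the $R$-action. The remaining steps are largely bookkeeping that records how the exact functor $f_\ast$ and the natural isomorphism $F_{R,\ast} f_\ast \cong f_\ast F_{S,\ast}$ transport each axiom unchanged from the source to the target.
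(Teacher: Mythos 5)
Your proposal is correct and takes essentially the same route as the paper: one checks the axioms of Definition \ref{DefVfiltration} for $f_\ast V^\cdot M$ (coherence of $V^0$ from finiteness of $f$, axiom (ii) by reducing the injective multiplier $gx \in (x)S$ to $x$ itself and using that $x$ acts on $f_\ast M$ through $R \to S$, axiom (iii) by absorbing the $S$-coefficients into $V^{t-1}$, axiom (iv) via $F_\ast f_\ast \cong f_\ast F_\ast$), and the equality $f_\ast V^\cdot M = V^\cdot f_\ast M$ then follows from uniqueness (Proposition \ref{VfiltrationUnique}), which you make explicit and the paper leaves implicit. The one point you assert but never actually establish is the parenthetical that injectivity of multiplication by $gx$ forces injectivity of multiplication by $x$: supply the one-line factorization $\mu_{(gx)^j} = \mu_{g^j} \circ \mu_{x^j}$ together with the observation that $\mu_{x^j}$ indeed maps $V^t$ into $V^{t+j}$ (by axiom (iii)), exactly as the paper does.
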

\begin{proof}
We verify that $f_\ast V^\cdot M$ satisfies the axioms of a $V$-filtration for $f_\ast M$. Obviously the filtration is $\mathbb{Q}$-indexed, exhaustive, decreasing and right-continuous. Since $f$ is finite $f_\ast V^0 M$ is coherent as an $R$-module. Continuity in zero is clear.

By axiom (ii) for $V^\cdot M$ we find $s x \in \mathfrak{a} \subseteq S$ such that $\mu_{(sx)^j}: V^t \to V^{t+j}$ is injective. Since $S$ is commutative $\mu_{(sx)^j}$ factors as $\mu_{s^j} \circ \mu_{x^j}$, where $\mu_{s^j}: V^t \to V^t$ and $\mu_{x^j}: V^t \to V^{t+j}$ by axiom (iii). Hence, already $\mu_{x^j}$ is injective.

Next, we verify that axiom (iii) is satisfied. Fix $m \in f_\ast V^t$. Then we find $s \in S$ and $m' \in f_\ast V^{t-n}$ such that $s x^n m' = m$. Now set $m'' := s m'$, then $x^n m'' = m$ as desired. The other inclusion is clear.

Finally, axiom (iv) follows since $F_\ast \circ f_\ast$ and $f_\ast \circ F_\ast$ are naturally isomorphic.
\end{proof}

\begin{Le}
\label{FRegularPushforward}
Let $R$ be essentially of finite type over an $F$-finite field. Let $f: \Spec S \to \Spec R$ be a finite morphism and $M$ an $F$-regular coherent $\kappa$-module over $S$. Then $f_\ast M$ is also $F$-regular.
\end{Le}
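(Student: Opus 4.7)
The plan is to verify condition (ii) of Proposition \ref{FregularCharacterization} for $f_\ast M$ by transferring the density test from $R$ to $S$ along $f^\sharp$ and then invoking $F$-regularity of $M$. First, the test module $\tau(f_\ast M, \kappa)$ exists because $R$ is essentially of finite type over an $F$-finite field, so the proposition applies. It will therefore suffice to show: for every $c \in R$ with $D(c) \cap \Supp f_\ast M$ dense in $\Supp f_\ast M$, we have $\sum_{e \geq 1} \kappa^e(c \cdot f_\ast M) = f_\ast M$.

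The crucial observation is that the $R$-module structure on $f_\ast M$ is obtained by restriction of scalars along $f^\sharp: R \to S$, so $c \cdot m = f^\sharp(c) m$ for $m \in M$, and the Cartier operator on $f_\ast M$ is the same as $\kappa_M$ under the canonical identification $F_\ast f_\ast \cong f_\ast F_\ast$. Consequently, as subsets of $M$,
\[
\sum_{e \geq 1} \kappa^e(c \cdot f_\ast M) \;=\; \sum_{e \geq 1} \kappa^e\bigl(f^\sharp(c) \cdot M\bigr).
\]
By the $F$-regularity of $M$ over $S$ and Proposition \ref{FregularCharacterization} applied to $M$, the right-hand side equals $M = f_\ast M$ whenever $D(f^\sharp(c)) \cap \Supp M$ is dense in $\Supp M$. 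Thus the whole problem reduces to this density-transfer statement.

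To carry out the density transfer, I plan to use Lemma \ref{FiniteMorphismSupport} (after replacing $R$ and $S$ by their reduced quotients, which does not alter the underlying topological spaces or the Cartier-submodule lattice) to identify $\Supp f_\ast M$ with $f(\Supp M)$. Since $f$ is finite, the induced morphism $\Supp M \to \Supp f_\ast M$ is integral and surjective; by lying-over plus incomparability, generic points of $\Supp M$ contract to primes that are themselves generic points of $\Supp f_\ast M$. Given this, $c \notin \mathfrak{p}$ for every generic point $\mathfrak{p}$ of $\Supp f_\ast M$ forces $f^\sharp(c) \notin \mathfrak{q}$ for every generic point $\mathfrak{q}$ of $\Supp M$, which is exactly the required density condition.

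The main obstacle is this last density transfer, i.e.\ showing that minimal primes of $\Supp M$ contract to minimal primes of $\Supp f_\ast M$. It is the most delicate step because for a general integral extension contractions of minimal primes need not be minimal; one must genuinely use that $f$ restricts to a finite surjective integral map between the two supports, so that the dimension-preserving properties of integral extensions force the generic-point structure to be respected. Once this point is handled, the rest of the argument is the bookkeeping described above.
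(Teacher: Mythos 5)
Your steps (1)--(2) do follow the paper's route: the paper also reduces, via Proposition \ref{FregularCharacterization}, to transferring the density condition from $R$ to $S$ (it first factors $f$ through a closed immersion, passes to $S/\Ann_S(M)$ and its image in $R$ so that $\Supp M = \Spec S$, $f$ is dominant and both rings are reduced, and then asserts that $c \in R^\circ$ implies $c \in S^\circ$, with a citation to Eisenbud). The genuine gap is exactly the step you single out and then only gesture at: the claim that generic points of $\Supp M$ contract to generic points of $\Supp f_\ast M$ does \emph{not} follow from lying over plus incomparability, and in the generality in which you invoke it (a finite map that is surjective and integral onto the supports, everything reduced) it is false. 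Take $R = k[x] \subseteq S = k[x] \times k$ via $g \mapsto (g, g(0))$: this is finite, injective, surjective on spectra, and both rings are reduced, yet the minimal prime $\mathfrak{q} = k[x] \times 0$ of $S$ contracts to $(x)$, which is not minimal in $R$. Correspondingly $c = x$ satisfies your density hypothesis on $\Spec R = \Supp f_\ast M$ while $f^\sharp(c)$ lies in a minimal prime of $S$, and $\sum_{e \geq 1} \kappa^e\bigl(x \cdot f_\ast M\bigr) \subseteq k[x] \oplus 0 \neq f_\ast M$ for the componentwise Cartier structure. Lying over gives only the opposite direction (every minimal prime of $R$ is the contraction of some minimal prime of $S$); the direction you need is a going-down type statement, which finite morphisms do not satisfy in general, and surjectivity or ``dimension-preservation'' of integral extensions does not rescue it.

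Moreover this is not a removable technicality: in the example, $M = S$ with the componentwise standard structure is $F$-regular over $S$ (Lemmas \ref{FRegularConnComponents} and \ref{RegularimpliesFRegular}), while $f_\ast M = k[x] \oplus k$ admits the proper non-zero Cartier submodule $k[x] \oplus 0$ that agrees with $f_\ast M$ at the generic point of $\Supp f_\ast M = \mathbb{A}^1$. So the density transfer, and with it the conclusion you are after, needs an additional hypothesis forcing every irreducible component of $\Supp M$ to dominate an irreducible component of $\Supp f_\ast M$ (e.g.\ $S/\Ann_S(M)$ a domain, or an equidimensionality assumption); reducedness alone, which is all you bring to bear at this point (and all the paper's reduction before its citation provides), is not enough. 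As written, your step 3 is unproven, and it cannot be closed by the tools you name; any correct argument must identify and use such an extra hypothesis on the supports.
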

\begin{proof}
We may factor $f$ as $i \circ f'$, where $f'$ is finite dominant and $i$ is a closed immersion. Since pushforward along closed immersions clearly preserves $F$-regularity (cf.\ \cite[Discussion before Lemma 2.20]{blicklep-etestideale}) we restrict to the case that $f$ is finite dominant.
Denote $\Ann_S(M)$ by $I$. By Lemma \ref{FiniteMorphismSupport} and since $f_\ast M$ is clearly $F$-pure the whole situation factors through $\Spec S/I \to \Spec R/(I \cap R)$. Hence, we may assume that $\Supp M = \Spec S$, where $f$ is still finite dominant and that $R, S$ are reduced. Using Proposition \ref{FregularCharacterization} it remains to verify that $\mathcal{C}_+ c f_\ast M = f_\ast M$ for all $c \in R^{\circ}$. Since we are assuming that $M$ is $F$-regular this boils down to the assertion that if $c \in R^{\circ}$ then $c \in S^{\circ}$ which follows from \cite[Corollary 4.18]{eisenbud} (here we use reducedness).
\end{proof}

\begin{Prop}
Let $R$ be essentially of finite type over an $F$-finite field. Let $f: \Spec S \to \Spec R$ be a finite morphism of schemes over $\mathbb{A}^1_{\mathbb{F}_p}$, $\mathfrak{a} = (x)$ an ideal in $S$ and $M$ an $F$-regular coherent non-degenerate $\kappa$-module over $S$. Then \[f_\ast \tau(M, \kappa_M^{(x)^t}) = \tau(f_\ast M, \kappa_{f_\ast M}^{(x)^t})\] for any $t \in \mathbb{Q}_{\geq 0}$.
\end{Prop}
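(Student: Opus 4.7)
The plan is to identify both sides of the claimed equality with the $V$-filtration in degree $t$ and then appeal to Lemma \ref{VFiltrationFinitePushforward}, which already transports $V$-filtrations across finite morphisms.

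First, I would apply Theorem \ref{VfiltrationTestidealfiltration} to $M$: since $S$ is finite over $R$ it is again essentially of finite type over an $F$-finite field, and by hypothesis $M$ is $F$-regular, coherent, and non-degenerate with respect to $(x)$. Hence the $V$-filtration of $M$ along $(x)$ exists and equals $\tau(M,\kappa_M^{(x)^t})$ in degree $t$. Applying $f_\ast$ to both sides yields $f_\ast\tau(M,\kappa_M^{(x)^t}) = f_\ast V^t M$.

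Next, I would verify that $f_\ast M$ itself meets the hypotheses of Theorem \ref{VfiltrationTestidealfiltration} over $R$. Coherence is automatic since $f$ is finite. $F$-regularity of $f_\ast M$ is precisely Lemma \ref{FRegularPushforward}. For non-degeneracy, observe that multiplication by $x \in R$ on $f_\ast M$ agrees, via the structure map $R \to S$, with multiplication by $x \in S$ on $M$; since the latter is injective by assumption, $(x) \subseteq R$ contains an $f_\ast M$-regular element. Theorem \ref{VfiltrationTestidealfiltration} then identifies $\tau(f_\ast M,\kappa_{f_\ast M}^{(x)^t})$ with $V^t f_\ast M$. Combining this with Lemma \ref{VFiltrationFinitePushforward}, which gives $f_\ast V^t M = V^t f_\ast M$, closes the loop.

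I do not expect a serious obstacle here; the only point that deserves care is to confirm that the Cartier structure $\kappa_{f_\ast M}$ (inherited from $\kappa_M$ via the natural isomorphism $f_\ast F_\ast \cong F_\ast f_\ast$) is indeed the one entering the theorem, and that the twisted subalgebra $\kappa_{f_\ast M}^{(x)^t}$ is the image under $f_\ast$ of $\kappa_M^{(x)^t}$. This is immediate from the construction of $\mathcal{C}^{\mathfrak{a}^t}$, using that $x \in R$ and $x \in S$ correspond under the structure map and that both $f_\ast$ and the twisting operation $(\cdot)^{(x)^t}$ commute with taking Frobenius powers.
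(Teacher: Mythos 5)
Your proposal is correct and takes essentially the same route as the paper's proof: identify both test module filtrations with $V$-filtrations via Theorem \ref{VfiltrationTestidealfiltration} (using Lemma \ref{FRegularPushforward} for $F$-regularity of $f_\ast M$) and conclude with Lemma \ref{VFiltrationFinitePushforward} together with uniqueness of the $V$-filtration. The only point to state a bit more carefully is non-degeneracy: the hypothesis only gives that some element $sx \in (x)$ is $M$-regular, but this immediately forces $x$ itself to be $M$-regular and hence $f_\ast M$-regular, which is exactly the one-line observation the paper's proof begins with.
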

\begin{proof}
Note that if $\mathfrak{a}$ contains an $M$-regular element then $x$ is $M$-regular. In particular, $x$ is also $f_\ast M$-regular. Now the  claim follows from Theorem \ref{VfiltrationTestidealfiltration} and Lemmata \ref{VFiltrationFinitePushforward} and \ref{FRegularPushforward}.
\end{proof}

\begin{Ko}
Let $f: \Spec S \to \Spec R$ be a finite morphism of schemes over $\mathbb{A}^1_{\mathbb{F}_p}$. Let $M$ be a principal Cartier module on $S$ whose $V$-filtration along $(x)$ exists. Then $f_\ast Gr M = Gr f_\ast M$ in the category of R-modules.

If in addition $R$ is essentially of finite type over an $F$-finite field and $M$ is coherent and $F$-regular so that $Gr M$ is endowed with a $\kappa$-module structure then $f_\ast Gr M = Gr f_\ast M$ also for $\kappa$-modules or crystals.
\end{Ko}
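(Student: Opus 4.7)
The plan is to deduce the statement directly from the identification of filtrations established in Lemma \ref{VFiltrationFinitePushforward} together with the exactness of $f_\ast$. Concretely, for a finite morphism $f$ the pushforward functor on quasi-coherent modules is exact (finite morphisms are affine, and on affines $f_\ast$ is just restriction of scalars along a ring map). Applying $f_\ast$ to the short exact sequence
\[
0 \longrightarrow V^t M \longrightarrow V^{t-\eps} M \longrightarrow Gr^t M \longrightarrow 0
\]
therefore yields a short exact sequence of $R$-modules, and Lemma \ref{VFiltrationFinitePushforward} identifies $f_\ast V^t M$ with $V^t f_\ast M$ and $f_\ast V^{t-\eps} M$ with $V^{t-\eps} f_\ast M$. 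This gives the equality $f_\ast Gr^t M = Gr^t f_\ast M$ as $R$-modules, which is the first assertion.

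For the second assertion, I first note that under the extra hypotheses of the corollary (namely $R$ essentially of finite type over an $F$-finite field and $M$ coherent $F$-regular non-degenerate), the $V$-filtration of $M$ is the test module filtration by Theorem \ref{VfiltrationTestidealfiltration}. By Lemma \ref{FRegularPushforward} the pushforward $f_\ast M$ is again $F$-regular, and $x$ is regular on $f_\ast M$ (since it is regular on $M$ and $f_\ast$ preserves regular elements), so $f_\ast M$ is non-degenerate and Theorem \ref{VfiltrationTestidealfiltration} also identifies its $V$-filtration with the test module filtration. Thus $Gr^t f_\ast M$ carries a natural $\kappa x^{\lceil t(p-1)\rceil}$-module structure by Proposition \ref{GrSupportedonIdeal}.

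It remains to check that the module identification from the first part is compatible with Cartier structures. The Cartier operator on $Gr^t f_\ast M$ is induced by $f_\ast(\kappa_M)$ twisted by $x^{\lceil t(p-1)\rceil}$, using the canonical isomorphism $f_\ast F_\ast \cong F_\ast f_\ast$ that is already used in Lemma \ref{VFiltrationFinitePushforward}(iv); the Cartier operator on $f_\ast Gr^t M$ is the pushforward of $\kappa_M x^{\lceil t(p-1)\rceil}$ acting on $Gr^t M$. Since $x \in R$ (viewing $R \subseteq S$ via the structural map to $\mathbb{A}^1$) these two operators coincide on the underlying $R$-module, so the identification from the first part is an isomorphism of $\kappa$-modules. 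Finally, since nil-isomorphisms are preserved by both $f_\ast$ and $Gr$ (by Theorems \ref{GrFunctoronCrys} and the analogous result for $f_\ast$, cf.\ Section \ref{SectionCartierModules}), the isomorphism descends to the category of $\kappa$-crystals.

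The only subtlety is the consistent treatment of the element $x$, which lives a priori in $R$ but must also be used as an element of $S$ via the structural morphism. This is handled by the convention stated before Lemma \ref{VFiltrationFinitePushforward}, and no step involves a serious obstacle; the essential content has already been packaged into Lemma \ref{VFiltrationFinitePushforward} and Lemma \ref{FRegularPushforward}.
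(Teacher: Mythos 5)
Your proposal is correct and follows essentially the same route as the paper: exactness of $f_\ast$ for the affine morphism $f$, the identification of filtrations from Lemma \ref{VFiltrationFinitePushforward}, and the observation that the resulting identification intertwines the Cartier structures (since $x$ lies in $R$), hence passes to crystals. The extra details you supply (invoking Theorem \ref{VfiltrationTestidealfiltration} and Lemma \ref{FRegularPushforward} to get the Cartier structure on $Gr\, f_\ast M$) are exactly what the paper leaves implicit, so there is nothing to add.
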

\begin{proof}
The functor $f_\ast$ is exact since $f$ is affine. Hence, $f_\ast V^{t- \eps} M/ f_\ast V^t M = f_\ast Gr^t M$ and using Lemma \ref{VFiltrationFinitePushforward} we obtain $f_\ast Gr^t M = Gr^t f_\ast M$ as $R$-modules. This isomorphism intertwines the Cartier structures so that it induces an isomorphism of Cartier modules and also an isomorphism of crystals.
\end{proof}

Given a morphism $f: \Spec S \to \Spec R$ over $\mathbb{A}^1_{\mathbb{F}_p}$ we obtain a commutative diagram
\[
\begin{xy}
 \xymatrix{ \Spec S \ar[r]^f & \Spec R \\
\Spec S/(x) \ar[r]^{\bar{f}} \ar[u]^{i_S} & \Spec R/(x) \ar[u]^{i_R}}
\end{xy}
\]
of morphisms over $\mathbb{A}^1_{\mathbb{F}_p}$, where the vertical arrows denote closed immersions and the bottom horizontal arrow is obtained by restricting $f$. The functor $f_\ast$ on module categories induces a functor of $\kappa$-modules which also descends to crystals (cf.\ \cite[Lemma 2.2]{blickleboecklecartierfiniteness} and \cite[Section 3.2]{blickleboecklecartiercrystals}). For a ring $R$ denote by $\mathcal{A}_{R}$ the category of $F$-regular coherent non-degenerate $\mathcal{C}$-modules.

\begin{Theo}
\label{GrPushforwardCommute}
Let $R$ be essentially of finite type over an $F$-finite field. Let $f: \Spec S \to \Spec R$ be a finite morphism of schemes over $\mathbb{A}^1_{\mathbb{F}_p}$. Then one has a natural isomorphism of functors $\bar{f}_\ast \circ Gr_S = Gr_R \circ f_\ast$, where $Gr_R$ maps from $\mathcal{A}_{R}$ to the category of $\kappa$-modules over $R/(x)$ and similarly for $Gr_S$. Moreover, we still have an isomorphism of functors if we pass to crystals on the source and target.
\end{Theo}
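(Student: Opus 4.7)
The strategy is to bootstrap from the corollary immediately preceding the theorem, which already provides the desired isomorphism at the level of $\kappa$-modules (and crystals) over $R$. The only remaining content is to descend this isomorphism along the closed immersions $i_R, i_S$ to an isomorphism of objects over $R/(x)$, and identify $f_\ast$ on the supported part with $\bar{f}_\ast$.

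First, I would verify that $f_\ast$ restricts to a functor $\mathcal{A}_S \to \mathcal{A}_R$, so that $Gr_R \circ f_\ast$ is even defined. Coherence is preserved because $f$ is finite; $F$-regularity is Lemma \ref{FRegularPushforward}; and non-degeneracy follows because the image $x \in R$ of the coordinate on $\mathbb{A}^1_{\mathbb{F}_p}$ generates both $\mathfrak{a} \subseteq S$ and the corresponding ideal in $R$, and an $M$-regular element acts injectively on the underlying abelian group of $f_\ast M$, which is the same as that of $M$.

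Second, I would apply the preceding corollary to obtain a natural isomorphism $f_\ast Gr_S(M) \cong Gr_R(f_\ast M)$ as $\kappa$-modules over $R$. By Proposition \ref{GrSupportedonIdeal}, both sides are supported on $V(x)$, hence via the Kashiwara-type equivalence (cf.\ \cite[Proposition 3.2(f)]{blicklep-etestideale}) they are canonically $\kappa$-modules over $R/(x)$. The commutative diagram of morphisms over $\mathbb{A}^1_{\mathbb{F}_p}$ preceding the theorem provides a natural identification $(i_R)_\ast \bar{f}_\ast = f_\ast (i_S)_\ast$; under this identification the isomorphism over $R$ descends uniquely to a natural isomorphism $\bar{f}_\ast Gr_S(M) \cong Gr_R(f_\ast M)$ of $\kappa$-modules over $R/(x)$, and naturality in $M$ is inherited from that of the corollary.

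Finally, to pass to crystals I would invoke Theorem \ref{GrFunctoronCrys} to descend $Gr_S$ and $Gr_R$ to the crystal categories, together with the fact that $f_\ast$ and $\bar{f}_\ast$ preserve nil-isomorphisms (since $f_\ast$ is exact and commutes with the Frobenius pushforward via $F_\ast f_\ast \cong f_\ast F_\ast$) and so induce functors on crystals; the descended isomorphism of $\kappa$-modules then automatically passes to an isomorphism of crystals. The only step requiring genuine attention is checking that descent to $R/(x)$ is compatible with the Cartier structure on the associated graded, but since this structure on $Gr$ is by construction induced by $\kappa$ on $M$, and intertwines with $f_\ast$ via the natural isomorphism $F_\ast f_\ast \cong f_\ast F_\ast$, this compatibility is essentially tautological. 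I expect this bookkeeping with the Kashiwara equivalence and the identification $(i_R)_\ast \bar{f}_\ast = f_\ast (i_S)_\ast$ to be the only non-trivial part of the argument.
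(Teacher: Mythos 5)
Your proposal is correct and follows essentially the same route as the paper: the module-level identification comes from the preceding corollary (i.e.\ from $f_\ast V^\cdot M = V^\cdot f_\ast M$ and $f_\ast\tau = \tau f_\ast$), and the passage to crystals uses Theorem \ref{GrFunctoronCrys} together with the fact that isomorphisms of Cartier modules are nil-isomorphisms. The paper's proof is just terser, leaving implicit the bookkeeping you spell out (that $f_\ast$ lands in $\mathcal{A}_R$, that $Gr$ is killed by $(x)$, and the identification $(i_R)_\ast \bar{f}_\ast = f_\ast (i_S)_\ast$), part of which it records in the remark following the theorem.
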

\begin{proof}
Clearly, the identity induces an isomorphism of functors between $\bar{f}_\ast \circ Gr_S$ $Gr_R \circ f_\ast$ on the level of $\kappa$-modules. We have seen in Theorem \ref{GrFunctoronCrys} above that the functors $Gr_S$ and $Gr_R$ descend to crystals. Since any isomorphism of Cartier modules is a nil-isomorphism the isomorphism of functors descends to crystals. 
\end{proof}

\begin{Bem}
\begin{enumerate}[(a)]

\item{If we consider $Gr_{\bullet}$ as a functor to $\kappa$-modules on $\Spec \bullet$, where $\bullet = R, S$ then we have for any module $M$ in the image of $Gr_S$ or $Gr_R$ that $(x) \subseteq \Ann(M)$. Hence, using \cite[Propositions 2.6]{blickleboecklecartierfiniteness} one also obtains the formula $f_\ast \circ Gr_S = Gr_R \circ f_\ast$ using this alternative definition of $Gr$.

For crystals we have an equivalence of categories between $\kappa$-crystals on $\Spec R/(x)$ and $\kappa$-crystals on $\Spec R$ that are supported in $V(x)$ (cf.\ \cite[Theorem 4.1.2]{blickleboecklecartiercrystals}, \cite[Propositions 3.19]{blickleboecklecartierfiniteness}). Hence, we also obtain the formula $f_\ast \circ Gr_S = Gr_R \circ f_\ast$ if we consider $Gr_{\bullet}$ as a functor to $\kappa$-crystals on $\Spec \bullet$, where $\bullet = R, S$.}
\item{If $\mathfrak{a}$ is not principal then it is not clear, whether axioms (ii) and (iii) hold for $f_\ast V$. Also note that one can probably see the equality $\tau(f_\ast M) = f_\ast \tau(M)$ directly for principal $\mathfrak{a}$ first by using reductions similar to \cite[Proposition 4.16]{blicklep-etestideale} and then employing arguments similar to those used in the proof of Theorem \ref{FRegularEtalePullback}.} %Specifically Im thinking about the C_+ c M-trick.
\item{The reader should compare Theorem \ref{GrPushforwardCommute} to \cite[Expos\'e XIII, 1.3.8]{SGA7II}, where it is shown that for $f$ finite and $\Psi$ the nearby cycles functor the natural transformation of functors $f_! \Psi \to \Psi f_!$ is an isomorphism.
}
\end{enumerate}
\end{Bem}

We now turn our attention to $f^!$ for $f$ essentially \'etale. Recall, that for $f: X \to Y$ essentially \'etale one has $f^! = f^\ast$ (cf.\ \cite[Part I, Theorem 4.8.1]{lipmangsduality}).
The following lemma recalls how the Cartier structure on $f^! M$ is obtained in this case.

\begin{Le}
\label{CartierStructureEtalePullback}
Let $f: X = \Spec S \to Y = \Spec R$ be a essentially \'etale morphism of affine schemes and let $M$ be an $R$-module and denote its base change to $S$ by $M'$. Then any $R$-linear morphism $F_\ast M \to M$ lifts uniquely to an $S$-linear morphism $F_\ast(M') \to M'$.
\end{Le}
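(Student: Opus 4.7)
The strategy is to reduce everything to the basic fact that Frobenius pushforward commutes with essentially \'etale base change. Concretely, for any $R$-module $N$ there is a canonical $S$-linear map
\[\beta_N \colon S \otimes_R F_\ast N \longrightarrow F_\ast(S \otimes_R N), \qquad s \otimes n \longmapsto s^p \otimes n,\]
where on the target $S$ acts via the Frobenius-twisted action $t \cdot (s \otimes n) = t^p s \otimes n$. I claim that $\beta_N$ is an isomorphism whenever $f$ is essentially \'etale. I would either cite this standard fact, or verify it by reducing to the statement that the relative Frobenius $F_{S/R} \colon S \otimes_{R, F_R} R \to S$ is an isomorphism for $f$ essentially \'etale and then applying $- \otimes_R N$.

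Once $\beta_M$ is available, I would simply define $\kappa'$ as the composite
\[\kappa' \colon F_\ast M' = F_\ast(S \otimes_R M) \xrightarrow{\beta_M^{-1}} S \otimes_R F_\ast M \xrightarrow{\id_S \otimes \kappa} S \otimes_R M = M',\]
which is manifestly $S$-linear and satisfies $\kappa'(1 \otimes m) = 1 \otimes \kappa(m)$; thus $\kappa'$ lifts $\kappa$ along the canonical base change maps $F_\ast M \to F_\ast M'$ and $M \to M'$.

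For uniqueness, suppose $\kappa_1', \kappa_2'$ are two $S$-linear lifts. Their difference $\delta \colon F_\ast M' \to M'$ is $S$-linear and vanishes on all elements of the form $1 \otimes m$ with $m \in M$. By $S$-linearity with respect to the Frobenius-twisted action (so $s \cdot (1 \otimes m) = s^p \otimes m$ inside $F_\ast M'$), $\delta$ then vanishes on every element of the form $s^p \otimes m$. Since the image of $\beta_M$ consists exactly of such elements and $\beta_M$ is surjective, these elements generate $F_\ast M'$ as an abelian group; hence $\delta = 0$. The only substantive obstacle in the whole argument is the justification of the isomorphism $\beta_N$, which is the characteristic-$p$ incarnation of the \'etaleness of $f$ and is the one ingredient that breaks down if $f$ is merely flat.
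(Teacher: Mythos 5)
Your proof is correct and takes essentially the same route as the paper: the paper's proof also rests on the base-change isomorphism $F_\ast M \otimes_R S \xrightarrow{\sim} F_\ast(M \otimes_R S)$, $m \otimes s \mapsto m \otimes s^p$ (quoted from Blickle--B\"ockle), from which it declares that the claim "readily follows." Your explicit construction of $\kappa'$ via this isomorphism and your uniqueness argument using the Frobenius-twisted $S$-action are precisely the details the paper leaves to the reader.
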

\begin{proof}
One has an isomorphism $\varphi: F_\ast M \otimes_R S \to F_\ast(M \otimes_R S)$ given by $m \otimes s \mapsto m \otimes s^p$ (cf.\ \cite[Lemma 2.2.1 and Definition 2.2.2]{blickleboecklecartiercrystals}). The claim now readily follows. %Let now $\kappa: F_\ast M \to M$ be an $R$-linear map and fix a set of $R$-module generators $m_i$ of $F_\ast M$. Then the $\varphi(m_i \otimes 1)$ generate $F_\ast M'$ as an $S$-module. Hence, any lift $\kappa'$ is uniquely determined by $\kappa'(\varphi(m_i \otimes 1)) = \kappa(m_i) \otimes 1$. On the other hand, $\kappa \otimes \id$ is a desired lift.
\end{proof}

\begin{Le}
\label{NilAdjInjective}
Let $R$ be a noetherian $F$-finite ring and $M$ a $\kappa$-module. Then $M_{\nil} = 0$ if and only if the natural map $M \to F^! M$ is injective.
\end{Le}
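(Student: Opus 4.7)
The plan is to unpack what the natural map $M \to F^! M$ is in terms of elements, and then read off its kernel. Since $R$ is $F$-finite, the Frobenius is finite, so $F^!$ is the right adjoint to $F_\ast$ and is represented by $\Hom_R(F_\ast R, M)$. Under the $(F_\ast, F^!)$-adjunction, the Cartier linear map $\kappa: F_\ast M \to M$ corresponds to
\[
\tilde{\kappa}: M \to F^! M, \qquad \tilde{\kappa}(m)(r) = \kappa(rm),
\]
where $rm$ is the ordinary product in $M$, viewed as an element of $F_\ast M$. A routine check using $\kappa(r^p m) = r \kappa(m)$ shows that this is indeed $R$-linear and is the unique map compatible with the counit of the adjunction. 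Consequently
\[
\ker \tilde{\kappa} = \{\, m \in M : \kappa(rm) = 0 \text{ for all } r \in R \,\}.
\]
The equivalence to be proved is therefore $M_{\nil} = 0 \iff \ker \tilde{\kappa} = 0$.

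For the forward direction, assume $M_{\nil} = 0$ and let $m \in \ker \tilde{\kappa}$. Then $\kappa(F_\ast(Rm)) = 0$, so $Rm$ is an $R$-submodule of $M$ on which $\kappa$ vanishes identically; in particular $Rm$ is a nilpotent $\kappa$-submodule of $M$. By maximality of $M_{\nil}$ we get $Rm \subseteq M_{\nil} = 0$, so $m = 0$.

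For the backward direction, assume $M_{\nil} \neq 0$ and choose the minimal $e \geq 1$ with $\kappa^e(M_{\nil}) = 0$. By minimality there exists $y \in M_{\nil}$ with $x := \kappa^{e-1}(y) \neq 0$. Using the iterated Cartier-linearity identity $r \kappa^{e-1}(z) = \kappa^{e-1}(r^{p^{e-1}} z)$ and the fact that $r^{p^{e-1}} y \in M_{\nil}$ (since $M_{\nil}$ is $R$-stable), one computes, for every $r \in R$,
\[
\kappa(rx) = \kappa\bigl(\kappa^{e-1}(r^{p^{e-1}} y)\bigr) = \kappa^e(r^{p^{e-1}} y) = 0.
\]
Hence $x$ is a nonzero element of $\ker \tilde{\kappa}$, and $\tilde{\kappa}$ fails to be injective.

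The only step requiring care is the identification of the natural map and its kernel; once that bookkeeping with the Frobenius twist on $F_\ast M$ and the right-action conventions on $F_\ast R$ is pinned down, both implications are short and elementary. I do not foresee a genuine obstacle beyond this adjunction setup, which is standard in the $F$-finite setting used throughout the paper.
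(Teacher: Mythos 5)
Your proof is correct and follows essentially the same route as the paper: identify the adjoint map as $m \mapsto (r \mapsto \kappa(rm))$, observe that a nonzero kernel element generates a nilpotent Cartier submodule, and conversely extract a kernel element from a nilpotent submodule by taking the last nonvanishing power of $\kappa$ and using $\kappa^{e-1}(r^{p^{e-1}}y) = r\kappa^{e-1}(y)$. The only cosmetic difference is that you run the converse directly on $M_{\nil}$ and its nilpotency order, whereas the paper phrases it via an element $m$ with $\kappa^e(rm)=0$ for all $r$ and a minimal surviving power; the computation is identical.
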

\begin{proof}
The natural map $M \to F^! M = \Hom_R(F_\ast R, M)$\footnote{To be precise we should write $H^0 F^!$ here.} is induced by adjunction from $F_\ast M \to M$. Explicitly it is given by $m \mapsto (r \mapsto \kappa(rm))$. An element $m \in M$ is in the kernel of this map if and only if $\kappa(rm) = 0$ for all $r \in R$. Thus if $m \neq 0$ is contained in the kernel then the $R$-module generated by $m$ defines a nilpotent Cartier-submodule.

Conversely, assume that there is $e > 0$ such that $\kappa^e(rm) =0$ for some non-zero $m \in M$ and all $r \in R$. Fix the minimal $1 \leq s \leq e$ such that $\kappa^{e-s}(r'm)$ is non-zero for some $r' \in R$. Then we obtain $0 = \kappa^{e-s+1}(r^{p^{e-s}} r' m)) = \kappa(r \kappa^{e -s}(r'm)$ for all $r \in R$. Hence, $m' = \kappa^{e-s}(r'm)$ is a non-trivial element of the kernel of $M \to F^! M$.
\end{proof}

\begin{Le}
\label{LNilAndEtaleShriekCommute}
Let $\Spec S \to \Spec R$ be \'etale and essentially of finite type (or finite flat) where $R$ is noetherian and $F$-finite. Then $f^!(M_{\nil}) = (f^! M)_{\nil}$ for any coherent $\kappa$-module $M$.
\end{Le}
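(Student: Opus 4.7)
The plan is to apply $f^!$ to the short exact sequence $0 \to M_{\nil} \to M \to \overline{M} \to 0$, where $\overline{M} = M/M_{\nil}$, and analyze the two outer terms separately. Since $f$ is either \'etale essentially of finite type (so $f^! = f^\ast$ is flat) or finite flat (so $f^!$ is computed via $\Hom_R(f_\ast S, -)$ with $f_\ast S$ finite projective over $R$), the functor $f^!$ is exact on the underlying module categories, yielding a short exact sequence $0 \to f^! M_{\nil} \to f^! M \to f^! \overline{M} \to 0$ of $\kappa$-modules on $S$. It then suffices to verify (a) that $f^! M_{\nil}$ is nilpotent (giving $f^! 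M_{\nil} \subseteq (f^! M)_{\nil}$) and (b) that $f^! \overline{M}$ admits no nonzero nilpotent Cartier submodule (giving the reverse containment).

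For (a) I would fix $e$ with $\kappa^e M_{\nil} = 0$. In both cases the Cartier structure on $f^! M_{\nil}$, as described in Lemma \ref{CartierStructureEtalePullback} and the discussion preceding Lemma \ref{SupportShriekFlat}, is induced functorially from $\kappa$ of $M_{\nil}$; iterating, its $e$-th power is induced from $\kappa^e = 0$ and hence vanishes. Thus $f^! M_{\nil}$ is nilpotent.

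For (b) the key input is Lemma \ref{NilAdjInjective}: the condition $\overline{M}_{\nil} = 0$ is equivalent to injectivity of the adjunction unit $\overline{M} \to F^! \overline{M}$. Applying the exact functor $f^!$ gives an injection $f^! \overline{M} \hookrightarrow f^! F^! \overline{M}$. Since Frobenius commutes with every ring homomorphism one has $F \circ f = f \circ F$ as morphisms of schemes, and the pseudofunctoriality of $\bullet^!$ supplies a canonical isomorphism $f^! F^! \cong F^! f^!$ that intertwines the two adjunction units. Hence the displayed injection is identified with the natural map $f^! \overline{M} \to F^! f^! \overline{M}$, and Lemma \ref{NilAdjInjective} applied to $f^! \overline{M}$ yields $(f^! \overline{M})_{\nil} = 0$.

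Combining (a) and (b) via the short exact sequence $0 \to f^! M_{\nil} \to f^! M \to f^! \overline{M} \to 0$ gives $(f^! M)_{\nil} = f^! M_{\nil}$. The delicate step is the commutation of $f^!$ with $F^!$ in (b): one must verify that the base change isomorphism $f^! F^! \cong F^! f^!$ really identifies the canonical adjunction units. In the \'etale case this can be done by hand using the explicit formula $m \otimes s \mapsto m \otimes s^p$ of Lemma \ref{CartierStructureEtalePullback}; in the finite flat case one unwinds definitions on $\Hom_R(f_\ast S, -)$, using that the Frobenius on $S$ is the pullback of that on $R$ along $f$.
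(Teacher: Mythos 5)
Your proof is correct and follows essentially the same route as the paper: apply the exact functor $f^!$ to $0 \to M_{\nil} \to M \to \overline{M} \to 0$, check directly that $f^! M_{\nil}$ is nilpotent, and use Lemma \ref{NilAdjInjective} together with the isomorphism $f^! F^! \cong F^! f^!$ to see that $f^! \overline{M}$ has no nonzero nilpotent submodules. Your explicit attention to the fact that this isomorphism must intertwine the adjunction units is a point the paper leaves implicit, and your sketch of how to verify it in each case is sound.
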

\begin{proof}
Since $f^!$ is exact we obtain the exact sequence $0 \to f^!M_{\nil} \to f^! M \to f^! \overline{M} \to 0$ from the exact sequence $0 \to M_{\nil} \to M \to \overline{M} \to 0$. An easy computation shows that $f^! M_{\nil}$ is nilpotent. We still need to show that $f^! \overline{M}$ does not admit non-trivial nilpotent submodules. This is the case if and only if $0 \to f^! \overline{M} \to F^! f^! \overline{M}$ is exact by Lemma \ref{NilAdjInjective} above. By assumption $0 \to \overline{M} \to F^! \overline{M}$ is exact. Applying $f^!$ (which is exact) and taking into account that $f^! F^! \overline{M} \cong F^! f^! \overline{M}$ yields the claim. %This follows essentially from compatibility with flat base change, or by writing down everything explicitly and using how hom behaves under flat base change.
\end{proof}

\begin{Bem}
\label{BBErrata}
Note that Lemma \ref{LNilAndEtaleShriekCommute} is essentially a generalization of \cite[Lemma 3.9]{blickleboecklecartierfiniteness}. Also note that the proof of this Lemma contains a gap. The statement of Lemma \ref{NilAdjInjective} as well as the fact that one can use it to close this gap was communicated to us by Manuel Blickle. It was also pointed out to us by Manuel Blickle that part (a) of \cite[Proposition 2.27]{blickleboecklecartierfiniteness} requires $k$ perfect while part (b) only holds under the additional assumption that $V = \overline{(\underline{V})}$.
\end{Bem}

\begin{Le}
\label{FPureEtalePullback}
Let $f: \Spec S \to \Spec R$ be an essentially \'etale morphism of affine schemes and let $M$ be a coherent $R$-Cartier module. If $M$ is $F$-pure then so is $M' = f^! M$. The converse holds if $f$ is surjective.
\end{Le}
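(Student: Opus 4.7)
The plan is to reduce $F$-purity to a statement about cokernels of $R$-module maps and then apply flatness (for the forward direction) and faithful flatness (for the converse).

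First I would fix notation. Since $f$ is essentially \'etale, $R \to S$ is flat, and surjectivity of $\Spec S \to \Spec R$ is equivalent to $S$ being faithfully flat over $R$. By Lemma \ref{CartierStructureEtalePullback} (applied with $F$ replaced by $F^n$), every generator $\varphi_n \in \mathcal{C}_n$ of the Cartier algebra acting on $M$ lifts uniquely to an $S$-linear map $\varphi_n' : F^n_\ast M' \to M'$, and these lifts assemble into the base-changed Cartier algebra $\mathcal{C}'$ acting on $M'$. Explicitly, under the isomorphism $F^n_\ast M \otimes_R S \cong F^n_\ast(M \otimes_R S)$ of loc.\ cit., the map $\varphi_n'$ corresponds to $\varphi_n \otimes \id_S$.

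Next I would reformulate $F$-purity. By definition $M$ is $F$-pure iff the evaluation map
\[
\bigoplus_{n \geq 1,\, \varphi_n \in \mathcal{C}_n} F^n_\ast M \longrightarrow M
\]
is surjective; equivalently, $N := M/\mathcal{C}_+ M = 0$. The analogous statement holds for $M'$ with cokernel $N'$. Since $- \otimes_R S$ is exact and commutes with $F^n_\ast$ in the sense above, applying $- \otimes_R S$ to the defining sequence of $N$ identifies $N \otimes_R S$ with $N'$.

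The forward direction then is immediate: if $N = 0$ then $N' = N \otimes_R S = 0$, so $M'$ is $F$-pure. For the converse assume $f$ surjective, hence $S$ faithfully flat over $R$. Then $N' = N \otimes_R S = 0$ forces $N = 0$, i.e.\ $M$ is $F$-pure.

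The only genuinely subtle point is the identification of the base-changed Cartier action with $\varphi_n \otimes \id_S$ (so that the cokernel of the evaluation map really does commute with $\otimes_R S$); this is exactly the content of Lemma \ref{CartierStructureEtalePullback}, so no new work is required. Everything else is bookkeeping and flat/faithfully flat descent of vanishing.
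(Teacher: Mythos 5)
Your proof is correct and follows essentially the same route as the paper: both reduce $F$-purity to the equality $\mathcal{C}'_+ M' = (\mathcal{C}_+ M)\otimes_R S$ via the base-change isomorphism of Lemma \ref{CartierStructureEtalePullback} and then use flatness for one direction and faithful flatness for the converse. Your cokernel formulation is just a repackaging of the paper's argument, so no further changes are needed.
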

\begin{proof}
We have to verify that $\mathcal{C}'_+ M' = M'$, where $\mathcal{C}'$ is the Cartier algebra obtained from $\mathcal{C}$ via Lemma \ref{CartierStructureEtalePullback}. This readily follows using the isomorphism $\varphi$ described in Lemma \ref{CartierStructureEtalePullback}. If $f$ is surjective then $f^!$ is faithfully flat so that $\mathcal{C}'_+ M' = \mathcal{C}_+ M \otimes_R S = M \otimes_R S$ implies $\mathcal{C}_+ M = M$.
\end{proof}

\begin{Le}
\label{FRegularConnComponents}
Let $R$ be a noetherian ring and $M$ a coherent Cartier module. Assume that $R$ is a finite direct product of rings $R_1, \ldots, R_n$ with inclusions $i_j: \Spec R_j \to \Spec R$. Then $M$  is $F$-regular if and only if each $i_j^! M$ is $F$-regular.
\end{Le}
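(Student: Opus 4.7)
The plan is to reduce the statement to a direct-sum decomposition of $M$ and then check that both conditions defining $F$-regularity (namely $F$-purity and the absence of a proper Cartier submodule agreeing with $M$ at every generic point of $\Supp M$) behave componentwise.

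First I would observe that the product decomposition $R = R_1 \times \cdots \times R_n$ produces orthogonal idempotents $e_1, \ldots, e_n$ with $R_j = R e_j$. This gives $M = \bigoplus_j e_j M$ as $R$-modules, and because $e_j^{p} = e_j$, the relation $\kappa(r^p m) = r \kappa(m)$ forces $\kappa$ to preserve each summand $M_j := e_j M$. Hence $M = \bigoplus_j M_j$ as Cartier modules, and invoking Lemma \ref{CartierStructureEtalePullback} (applied to the essentially étale clopen immersion $i_j$) identifies $i_j^! M$ with $M_j$ as a Cartier module over $R_j$. Similarly $\Supp M = \bigsqcup_j \Supp M_j$, with every generic point of $\Supp M$ being a generic point of exactly one $\Supp M_j$.

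Next I would handle $F$-purity: since $\mathcal{C}_+$ acts componentwise, $\mathcal{C}_+ M = M$ holds iff $\mathcal{C}_+ M_j = M_j$ for each $j$, giving the $F$-pure part of the equivalence in both directions.

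For the remaining condition, the key observation is that \emph{any} Cartier submodule $N \subseteq M$ decomposes as $N = \bigoplus_j e_j N$ with $e_j N \subseteq M_j$ a Cartier submodule (multiplication by $e_j$ commutes with $\kappa$ by the same idempotent computation). Thus, if each $M_j$ is $F$-regular, then any Cartier submodule $N \subseteq M$ agreeing with $M$ at every generic point of $\Supp M$ splits as $\bigoplus e_j N$ with each $e_j N \subseteq M_j$ agreeing with $M_j$ at every generic point of $\Supp M_j$; so $e_j N = M_j$ for all $j$ and $N = M$. Conversely, given a Cartier submodule $N_j \subseteq M_j$ agreeing with $M_j$ at all generic points of $\Supp M_j$, the submodule $N_j \oplus \bigoplus_{i \neq j} M_i$ of $M$ agrees with $M$ at every generic point of $\Supp M$, so $F$-regularity of $M$ forces $N_j = M_j$.

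There is no real obstacle here; the only thing to be careful about is that Cartier submodules respect the idempotent decomposition, which is immediate from $e_j^p = e_j$ together with the twisted linearity of $\kappa$.
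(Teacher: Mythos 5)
Your proof is correct and takes essentially the same approach as the paper: decompose $M = \bigoplus_j M_j$ along the product, identify $i_j^! M$ with $M_j$, and check $F$-purity and the ``no proper generically agreeing Cartier submodule'' condition componentwise, with the converse handled by padding a bad submodule $N_j \subseteq M_j$ with the remaining factors. The only cosmetic difference is that you justify compatibility of $\kappa$ with the decomposition via the idempotent identity $e_j^p = e_j$, while the paper observes $\Hom(F_\ast^e M_i, M_j) = 0$ for $i \neq j$ because the supports are disjoint; both give the same componentwise Cartier structure.
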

\begin{proof}
Given Cartier modules $(M_i, \mathcal{C}_i)$ on $\Spec R_i$ for each $i$ we obtain a Cartier module on $\Spec R$ via $(\bigoplus_{i} M_i, \mathcal{C})$, where $\mathcal{C}$ acts on $M_i$ as $\mathcal{C}_i$. We claim that all Cartier modules on $\Spec R$ are of this form. This is clear for the module structure. So we just have to verify that every morphism $\kappa \in \Hom(F^e_\ast M, M)$ arises in this way. But $\Hom(F_\ast^e M, M) = \bigoplus_{i,j} \Hom(F_\ast^e M_i, M_j) = \bigoplus_i \Hom(F_\ast^e M_i, M_i)$ since for $i \neq j$ the supports of $F_\ast^e M_i$ and $M_j$ are disjoint. In particular, $M$ is $F$-pure if and only if all $i_j^! M$ are $F$-pure.

Assume now that all $i_j^! M$ are $F$-regular. If $N \subseteq M$ is a Cartier-submodule that generically agrees with $M$ then $i^!_j N$ is a Cartier submodule that generically agrees with $M_j$ for each $j$. Hence, $i^!_j N = M_j$ and therefore $N = M$. Conversely, if $M_{j_1}$ admits a proper Cartier-submodule $N_{j_1}$ that generically agrees with $M_{j_1}$ then (after possibly reordering so that $j_1 = 1$) $N = N_1 \oplus M_2 \cdots \oplus M_n$ is a proper Cartier-submodule that generically agrees with $M$.
\end{proof}

The following Lemma is well-known, see e.\,g.\ \cite[Theorem on p.\ 87]{hochsterlecturenotes}.

\begin{Le}
\label{RegularimpliesFRegular}
Let $R$ be an $F$-finite regular ring. Denote by $\mathcal{C}$ the algebra generated by $\Hom_R(F_\ast R, R)$. Then $R$ considered as a Cartier module via $\mathcal{C}$ is $F$-regular.
\end{Le}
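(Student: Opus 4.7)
The plan is to verify the two conditions of $F$-regularity directly: that $R$ is $F$-pure, and that any proper non-zero Cartier submodule $N \subseteq R$ which agrees with $R$ at the generic points of $\Spec R$ must equal $R$. Suppose such an $N$ is given and pick $c \in N \cap R^\circ$; the ideal
\[
J := \sum_{e \geq 1}\; \sum_{\varphi \in \Hom_R(F_\ast^e R, R)} \varphi\bigl(F_\ast^e(cR)\bigr)
\]
is contained in $N$, so it suffices to prove $J = R$. Since this is a statement about an ideal, it can be checked after localization at every maximal ideal $\mathfrak{m} \subseteq R$, noting that $F_\ast^e R$ is finitely presented (so $\Hom_R(F_\ast^e R, R)$ commutes with localization) and that $c$ maps to a non-zero element of $R_\mathfrak{m}$ because $R_\mathfrak{m}$ is an integral regular local ring and $c$ is not a zero-divisor in $R$.

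Thus it remains to prove the following local assertion: if $(R, \mathfrak{m})$ is a regular $F$-finite local ring and $0 \neq c \in R$, then there exist $e \geq 1$ and $\varphi \in \Hom_R(F_\ast^e R, R)$ with $\varphi(F_\ast^e c) = 1$. By Kunz's theorem $F_\ast^e R$ is a free $R$-module of finite rank under the twisted action $r \cdot F_\ast^e s = F_\ast^e(r^{p^e} s)$, and a direct computation with this action yields an isomorphism $F_\ast^e R \otimes_R R/\mathfrak{m} \cong F_\ast^e(R/\mathfrak{m}^{[p^e]})$ of residue-field vector spaces. From $\mathfrak{m}^{[p^e]} \subseteq \mathfrak{m}^{p^e}$ and Krull's intersection theorem we obtain $\bigcap_e \mathfrak{m}^{[p^e]} = 0$, so for $e$ sufficiently large $c \notin \mathfrak{m}^{[p^e]}$; for such $e$ the element $F_\ast^e c$ has non-zero image in $F_\ast^e R \otimes_R R/\mathfrak{m}$ and therefore extends to a basis of this vector space. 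By Nakayama's lemma any lift yields generators of $F_\ast^e R$, and since $F_\ast^e R$ is free of the matching rank these generators form a free basis. The element of the dual basis corresponding to $F_\ast^e c$ is the desired $\varphi$.

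Applying the same construction with $c = 1$ furnishes, in particular, a splitting of $F: R \to F_\ast R$ locally, which gives $F$-purity globally by gluing. The principal technical input is Kunz's theorem (freeness of Frobenius pushforwards over $F$-finite regular local rings); everything else is a Nakayama argument combined with Krull's intersection. The only subtle point is the localization step, which is handled by observing that $c \in R^\circ$ persists through localization at a maximal ideal since the minimal primes of $R_\mathfrak{m}$ come from minimal primes of $R$ contained in $\mathfrak{m}$.
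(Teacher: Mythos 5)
The paper itself gives no argument for this lemma -- it is quoted as well known, with a pointer to Hochster's lecture notes -- so a self-contained proof is a genuine addition, and yours is in essence the classical Hochster--Huneke argument that $F$-finite regular rings are strongly $F$-regular: Kunz's theorem, Krull intersection to arrange $c \notin \mathfrak{m}^{[p^e]}$, and Nakayama plus a dual basis to split $F_\ast^e c$ off $F_\ast^e R$. That core, including the localization bookkeeping via finite presentation of $F_\ast^e R$ and the identification $F_\ast^e R \otimes_R R/\mathfrak{m} \cong F_\ast^e(R/\mathfrak{m}^{[p^e]})$, is correct, and the $F$-purity part ($c=1$, $e=1$) is unproblematic since $\mathcal{C}_1 = \Hom_R(F_\ast R, R)$ by definition.

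Two points need patching, one minor and one substantive. Minor: the existence of $c \in N \cap R^\circ$ is not automatic; it follows by prime avoidance because $N_\eta = R_\eta \neq 0$ at every minimal prime $\eta$ forces $N \not\subseteq \eta$ (alternatively, reduce to the case of a regular domain, since a noetherian regular ring is a finite product of such). Substantive: the inclusion $J \subseteq N$ is asserted but not justified, and it is exactly the point where the paper's definition of $\mathcal{C}$ matters. $N$ is only assumed stable under the algebra \emph{generated} by $\Hom_R(F_\ast R, R)$, i.e.\ under $R$-combinations of $e$-fold composites of degree-one maps, whereas your $J$ (and the dual-basis map $\varphi$ you produce locally) uses all of $\Hom_R(F_\ast^e R, R)$, which is a priori larger; if it were strictly larger, $\varphi(F_\ast^e(cR))$ need not land in $N$ and the argument would break. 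The step is rescued by the standard fact that for an $F$-finite Gorenstein (in particular regular) local ring, $\Hom_R(F_\ast^e R, R)$ is generated as an $F_\ast^e R$-module by the $e$-fold composite of a generator of $\Hom_R(F_\ast R, R)$; since $\Hom$ out of the finitely presented module $F_\ast^e R$ commutes with localization, this globalizes, so $\mathcal{C}_e = \Hom_R(F_\ast^e R, R)$ for $R$ regular and $F$-finite. You should either cite or prove this generation statement, or else define $J$ using only $\mathcal{C}_e$ and note that your local dual-basis map lies in $(\mathcal{C}_e)_\mathfrak{m}$ for the same reason. With that insertion the proof is complete and is a perfectly good substitute for the paper's citation.
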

%\begin{proof}
%This follows from tight closure theory (any regular $F$-finite ring is $F$-regular, cf.\ \cite[Theorem 3.1]{hochsterhuneketightstronglyfregular}). We quickly give a proof following \cite[Theorem on p.\ 87]{hochsterlecturenotes}. The problem is local so that we may assume that $(R, \mathfrak{m})$ is a regular local ring. Hence, by a result of Kunz (\cite[Theorem 2.1]{kunzpositive}) $F^e_\ast R$ is a free $R$-module. It suffices to show (\cite[Proposition 3.7]{blicklep-etestideale} or Proposition \ref{FregularCharacterization}) that for every $c \in R^{\circ}$ there is $\varphi \in \Hom_R(F_\ast^e R, R)$ for some $e > 0$ such that $\varphi((c)) = R$. This amounts to showing that the $R$-linear map $\mu_c: R \to F_\ast^e R, 1 \mapsto c$ admits a splitting for some $e > 0$. Given $c \in R^\circ$ we have $c \notin m^{[p^e]}$ for $e \gg 0$ and therefore $\mu_c(1) \notin \mathfrak{m} F_\ast R$. By Nakayama's lemma $\mu_c(1)$ is thus part of a free basis of $F_\ast^e R$ and we find a splitting as desired.
%\end{proof}

\begin{Theo}
\label{FRegularEtalePullback}
Let $f: Y =\Spec S \to X =\Spec R$ be an essentially \'etale morphism of $F$-finite noetherian rings, $\mathfrak{a} \subseteq R$ an ideal and $t \in \mathbb{Q}_{\geq 0}$. Let $M$ be a coherent non-degenerate $\kappa^{\mathfrak{a}^t}$-module. If $M$ is $F$-regular then $f^! M$ is $F$-regular. If $f$ is also surjective the converse holds.
\end{Theo}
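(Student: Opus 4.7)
The plan is to verify the criterion of Proposition~\ref{FregularCharacterization} in the forward direction and to use faithfully flat descent in the converse. The forward direction reduces, via locality of $F$-regularity and Lemma~\ref{LeFRegularLocalizes}, to a norm computation in the finite \'etale case.

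\emph{Forward direction.} By Lemma~\ref{FPureEtalePullback}, $f^!M$ is $F$-pure. To verify Proposition~\ref{FregularCharacterization}, I fix $c \in S$ with $D(c) \cap \Supp f^!M$ dense in $\Supp f^!M$ and must show $\sum_{e \geq 1}(\mathcal{C}')_e^{(\mathfrak{a}S)^t}\,c\,f^!M = f^!M$. The two conditions defining $F$-regularity are local on $\Spec S$, and $F$-regularity is stable under further localization by Lemma~\ref{LeFRegularLocalizes}. Combined with the fact that essentially \'etale morphisms are localizations of \'etale morphisms of finite type and that the local structure theorem for \'etale morphisms lets me present $S$ Zariski-locally as a localization of a finite \'etale $R$-subalgebra, I reduce to the case where $f$ is finite \'etale.

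In the finite \'etale case, $S$ is locally free over $R$ and the norm $c_0 := N_{S/R}(c) \in R$ is defined. Cayley--Hamilton applied to multiplication by $c$ on the projective $R$-module $S$ yields $c_0 = c \cdot c''$ for some $c'' \in S$. By flat going-down and the zero-dimensionality of \'etale fibres, the generic points of $\Supp f^!M$ are precisely the primes of $S$ lying above the generic points of $\Supp M$. Since $c$ is a unit at every generic point of $\Supp f^!M$ by hypothesis, the determinant $c_0 = N_{S/R}(c)$ is a unit at every generic point of $\Supp M$, so $D(c_0) \cap \Supp M$ is dense in $\Supp M$. The $F$-regularity of $M$ therefore gives $\sum_{e \geq 1}\mathcal{C}_e^{\mathfrak{a}^t}\,c_0\,M = M$. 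Applying the exact flat functor $f^! = -\otimes_R S$, using the compatibility of the Cartier structures from Lemma~\ref{CartierStructureEtalePullback}, and noting $\mathfrak{a}^n \otimes_R S = (\mathfrak{a}S)^n$ by flatness, I obtain $\sum_{e \geq 1}(\mathcal{C}')_e^{(\mathfrak{a}S)^t}\,c_0\,f^!M = f^!M$. Because $c_0 = c \cdot c''$ in $S$, the inclusion $c_0 \cdot f^!M \subseteq c \cdot f^!M$ yields $\sum_{e \geq 1}(\mathcal{C}')_e^{(\mathfrak{a}S)^t}\,c\,f^!M = f^!M$, as required.

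\emph{Converse direction.} Assume $f$ is surjective (hence faithfully flat) and that $f^!M$ is $F$-regular. By Lemma~\ref{FPureEtalePullback}, $M$ is $F$-pure. Let $N \subseteq M$ be a Cartier submodule agreeing with $M$ at every generic point of $\Supp M$. By Lemma~\ref{CartierStructureEtalePullback} together with flatness, $f^!N \subseteq f^!M$ is a Cartier submodule, and flat going-down ensures that every generic point of $\Supp f^!M = f^{-1}(\Supp M)$ lies above a generic point of $\Supp M$; hence $f^!N$ agrees with $f^!M$ at every generic point of $\Supp f^!M$. The $F$-regularity of $f^!M$ forces $f^!N = f^!M$, and faithful flatness of $f$ then gives $N = M$.

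\emph{Main obstacle.} The principal difficulty lies in the forward direction's reduction to the finite \'etale case: essentially \'etale morphisms are Zariski-locally standard \'etale but not, in general, Zariski-locally finite \'etale, so one must carefully invoke the local structure theorem for \'etale morphisms and patch via the locality and localization-stability of $F$-regularity. Once inside the finite \'etale situation, the norm argument and Proposition~\ref{FregularCharacterization} close the argument without further complications.
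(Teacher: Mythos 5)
Your converse direction is correct (it is essentially the paper's faithfully flat descent argument, phrased via submodules instead of test elements), and your norm computation in the finite \'etale case is a valid instance of the Schwede--Tucker trick: $N_{S/R}(c)\in cS$ by Cayley--Hamilton, $D(N_{S/R}(c))\cap\Supp M$ is dense, and base change along the flat map $f$ is compatible with the lifted Cartier structure of Lemma \ref{CartierStructureEtalePullback}. The problem is the reduction that is supposed to carry the whole forward direction: the claim that an essentially \'etale morphism is, Zariski-locally on $\Spec S$, a localization of a \emph{finite \'etale} $R$-algebra is false. The local structure theorem only gives standard \'etale presentations $S=(R[x]/(g))_h$ with $g$ monic; the finite algebra $R[x]/(g)$ is finite free but not \'etale over $R$, and $f^!$ along finite flat non-\'etale maps does not even preserve $F$-purity (Example \ref{ShriekNotFpure}), so it cannot be substituted. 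Concretely, take $R=\mathbb{F}_p[t]$ and $S=R[t^{-1},y]/(y^p-y-t^{-1})$: this is \'etale of finite type over $R$, but every nonempty open of $\Spec S$ has function field an Artin--Schreier extension $L$ of $\mathbb{F}_p(t)$ that is ramified over $t=0$, so no open of $\Spec S$ is a localization of a finite \'etale $R$-algebra (such an algebra would have to be the normalization of $R$ in $L$, which is ramified). The same failure occurs for $S$ a nontrivial finite separable extension of $\operatorname{Frac}(R)$ with $R$ local, which is a perfectly admissible essentially \'etale $R$-algebra here. So your argument proves the finite \'etale case but does not reach the statement; the passage from essentially \'etale to finite \'etale is exactly the hard point, and it cannot be done by factoring $f$ itself.

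The paper circumvents this differently, and it is worth seeing how: since any candidate test element may be taken in $R^\circ$ (if $(f^!M)_c$ is $F$-regular for some $c\in R^\circ$, then \cite[Theorem 3.11]{blicklep-etestideale} gives $\tau(f^!M)=\mathcal{C}'_+cf^!M=f^!(\mathcal{C}_+cM)=f^!M$ because $M$ is $F$-regular), one is free to shrink the base $\Spec R$. After reducing to regular domains, one trivializes $M$ generically by a finite \'etale cover $g\colon Z\to X$ of the \emph{base} (via \cite[Proposition 2.27 (b)]{blickleboecklecartierfiniteness}, with Lemma \ref{LNilAndEtaleShriekCommute} justifying its applicability), base changes to get a \emph{surjective} finite \'etale $g'\colon Z\times_XY\to Y$, observes that $(fg')^!M$ is $F$-regular because it is a direct sum of canonical Cartier structures on a regular ring (Lemmata \ref{FRegularConnComponents}, \ref{RegularimpliesFRegular}), and then descends along $g'$ using precisely the converse (surjective) direction. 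A secondary, repairable point in your write-up: Proposition \ref{FregularCharacterization} is stated under the hypothesis that $\tau$ exists, which is not known a priori for $f^!M$ over a general $F$-finite noetherian ring, so if you invoke its direction (ii)$\Rightarrow$(i) you should justify it without that hypothesis (this can be done by prime avoidance), or argue via Theorem 3.11 as the paper does.
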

\begin{proof}
First of all, we reduce to the case $\Supp M = \Spec R$. Denote $\Ann_R(M)$ by $I$. We then have $V(I) = \Supp M$ and by base change we obtain an \'etale morphism $Y \times_X V(I) \to V(I)$. Moreover, $Y \times_X V(I)$ is isomorphic to $S/IS$ and $\Supp f^! M = \Spec S/IS$ the equality being due to \cite[Chap.\ 0, Corollaire 5.2.4.1]{EGAISpringer}. In view of \cite[Proposition 2.21]{blicklep-etestideale} this does not affect $F$-regularity.  Note that by Lemma \ref{FPureEtalePullback} we may assume that $M$ and $f^!M$ are $F$-pure. Hence the scheme structures on $\Spec R$ and $\Spec S$ are reduced.

Assume now that $f$ is surjective and assume that $M$ is not $F$-regular (but $F$-pure) and denote by $\mathcal{C}'$ the base change of $\mathcal{C}$. Then there is $c \in R^{\circ}$ such that $\mathcal{C}_+ c M \neq M$. Note that $R^\circ \subseteq S^{\circ}$ and $\mathcal{C}'_+ c f^! M = \mathcal{C}_+ c M \otimes_R S \neq f^! M$ by faithful flatness. Hence, $f^! M$ is also not $F$-regular showing the supplement of the theorem.

We now turn to the case that $M$ is $F$-regular and $f$ is again only essentially \'etale. We claim that it is enough to find $c \in R^{\circ}$ such that $(f^! M)_c$ is $F$-regular. Indeed, assuming this we obtain from \cite[Theorem 3.11]{blicklep-etestideale} that $\tau(f^! M) = \mathcal{C}'_+ cf^! M$ (note that $R^\circ \subseteq S^\circ$ by flatness). But $\mathcal{C}'_+ cf^! M$ equals $f^! M$ since $c$ is in $R$ and $M$ is $F$-regular. In particular, $\tau(f^! M) = f^! M$ so that $f^! M$ is $F$-regular.

Next, we want to reduce to the case that $\Supp M = \Spec R$ is integral. Since $R$ is reduced and excellent ($F$-finite implies excellent -- cf.\ \cite{kunznoetherian}), there is a dense open subset on which $R$ is regular. Suitably localizing (i.\,e.\ passing to $R_c$ for some $c \in R^{\circ}$ such that $R_c$ is regular) we may therefore assume that $R$ is regular. That is, $R$ is a finite direct product of excellent regular domains. Since $f$ is \'etale, $S$ is also a finite product of excellent regular domains. Applying Lemma \ref{FRegularConnComponents} to $S$ we may therefore assume that $\Spec S \to \Spec R$ is an essentially \'etale morphism of regular domains. Further replacing $c$ by $ca$ for some $a \in \mathfrak{a} \cap R^{\circ}$ we may assume that $\kappa^{\mathfrak{a}^t}[c^{-1}]$ is generated by $\kappa$. 
%affin R_i \to S muss alle Nullteiler killen, d.h. die Abb faktorisiert uber eine Komponente von R
Finally, we may assume that $f^! M = (f^! M)_{\min}$ by Lemma \ref{FRegularReductionCrystal} (again $f^! M$ is $F$-pure by Lemma \ref{FPureEtalePullback}).

Using Lemma \ref{LNilAndEtaleShriekCommute} we may apply \cite[Proposition 2.27 (b)]{blickleboecklecartierfiniteness} (cf.\ also Remark \ref{BBErrata} above) and obtain that there is a finite separable extension $L$ of $K(X)$ such that $M \otimes_R K(X)$ pulled back to $L$ is isomorphic to $L^n$ and such that $\kappa$ corresponds under this isomorphism to the ordinary diagonal Cartier structure on $L^n$. This extension (together with the isomorphism of Cartier modules) lifts to a finite \'etale morphism $g: Z \to U$ on some dense open subset $U$ of $X$. On $Z = \Spec T$ we then have that the pullback of $\kappa$ is given by the diagonal Cartier structure on $g^! M$.
By localizing we may assume that $U = X$. By the same token we may assume that $\omega_X = R$. In particular, $\omega_Z = T$. Consider now the pull back diagram
\[
 \begin{xy}\xymatrix{Z \times_X Y \ar[d]^{g'} \ar[r]^>>>>>{f'}& Z \ar[d]^{g} \\ Y \ar[r]^{f} & X} \end{xy}
\]
and note that both projections are \'etale (cf.\ \cite[Proposition I.3.3]{milne}). Note that $g'$ is surjective since it is finite \'etale and $Y$ is irreducible.
%We may replace $Z \times_X Y$ by some connected component $\Spec T = W$ which dominates $Y$ (in fact, every component dominates $Y$). %note that this is both open (hence etale) and closed (hence finite)
We claim that $(fg')^! M$ is $F$-regular. Then the implication proven above applied to the surjective \'etale morphism $g'$ shows that $f^! M$ is $F$-regular. Since $f'$ is \'etale (hence flat) a generator of $\Hom(F_\ast T, T)$ is again mapped to a generator of $\Hom(F_\ast \mathcal{O}_{Z\times_X Y}, \mathcal{O}_{Z\times_X Y})$ so that the claim follows using Lemmata \ref{FRegularConnComponents} and \ref{RegularimpliesFRegular}.
\end{proof}

\begin{Bem}
The condition that $f$ be surjective is crucial for the implication that $M$ is $F$-regular if $f^! M$ is. This is of course due to the fact that, provided test modules exist, $M$ will always be $F$-regular after a suitable localization. One can also use Lemma \ref{FRegularConnComponents} to show the necessity of surjectivity. 

We suspect that the theorem also holds for more general Cartier algebras. However, we do not know how to prove this except in the special case of Corollary \ref{TestIdealEssEtaleGorenstein} below.
\end{Bem}

\begin{Ko}
\label{TestIdealEtalePullback}
Let $R$ be essentially of finite type over an $F$-finite field, $\mathfrak{a}$ an ideal in $R$ and let $\Spec S \to \Spec R$ be essentially \'etale. Let $M$ be a coherent non-degenerate $F$-regular $\kappa$-module. Then for $t \in \mathbb{Q}_{\geq 0}$ one has $f^! \tau(M, \kappa, \mathfrak{a}^t) = \tau(f^! M, \kappa, (\mathfrak{a}S)^t)$.
\end{Ko}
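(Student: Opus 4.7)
The plan is to apply the explicit presentation of the test module from Theorem 3.11: fix a test element $c \in R$ for the pair $(M, \kappa^{\mathfrak{a}^t})$, write both sides as sums $\sum_{e\ge 1} \mathcal{C}_e c M$ (resp.\ $\sum_{e\ge 1} \mathcal{C}'_e c f^! M$), and use flatness/exactness of $f^! = f^\ast$ to identify them. Since $M$ is $F$-regular it is $F$-pure, so $\underline{M} = M$, and by Lemma \ref{FPureEtalePullback} $f^! M$ is also $F$-pure, so $\underline{f^! M} = f^! M$. The base-changed Cartier algebra is $\mathcal{C}' = \kappa'^{(\mathfrak{a}S)^t}$, where $\kappa'$ is the base change of $\kappa$ from Lemma \ref{CartierStructureEtalePullback}; this identification uses that $f^\ast \mathfrak{a}^n = (\mathfrak{a}S)^n$ and that $f^\ast$ commutes with $F_\ast$ via the isomorphism $\varphi$ recalled in Lemma \ref{CartierStructureEtalePullback}.

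The first key step is to show that the image of $c$ in $S$ is again a test element, now for $(f^! M, \mathcal{C}')$. Flatness of $f$ yields $R^\circ \subseteq S^\circ$, $\Supp f^! M = f^{-1}(\Supp M)$, and $D_S(c) = f^{-1}(D_R(c))$; moreover the preimage of the dense open subset $D_R(c) \cap \Supp M$ of $\Supp M$ under the flat map $f|_{\Supp f^! M}$ is dense in $\Supp f^! M$ (the generic points of $\Supp f^! M$ lie over associated points of $\Supp M$, which are contained in $D_R(c)$ by choice of $c$). Applying Theorem \ref{FRegularEtalePullback} to the essentially \'etale morphism $\Spec S_c \to \Spec R_c$ and the $F$-regular pair $(M_c, \mathcal{C}_c)$ shows that $((f^! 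M)_c, \mathcal{C}'_c)$ is $F$-regular, so $c$ is a test element for $(f^! M, \mathcal{C}')$ by Theorem 3.11.

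The second step is the actual computation. Taking $d=1$ in Theorem 3.11 we have
\[ \tau(M, \mathcal{C}) = \sum_{e \geq 1} \kappa^e \mathfrak{a}^{\lceil t(p^e-1)\rceil} c M, \qquad \tau(f^! M, \mathcal{C}') = \sum_{e \geq 1} \kappa'^e (\mathfrak{a}S)^{\lceil t(p^e-1)\rceil} c \, f^! M. \]
Since $f^! = -\otimes_R S$ is exact and flat, it commutes with arbitrary sums and with taking images of the $R$-linear operators $\kappa^e \cdot r$ (via $\kappa^e \otimes \id_S = \kappa'^e$ after identifying $F_\ast M \otimes_R S \cong F_\ast(M\otimes_R S)$ as in Lemma \ref{CartierStructureEtalePullback}). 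Combined with $f^\ast(\mathfrak{a}^n) = (\mathfrak{a}S)^n$, this immediately produces the desired equality $f^!\tau(M, \kappa^{\mathfrak{a}^t}) = \tau(f^! M, \kappa'^{(\mathfrak{a}S)^t})$.

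The main obstacle is the first step, i.e.\ showing that a test element for $M$ remains a test element after the base change; this is precisely where the nontrivial input Theorem \ref{FRegularEtalePullback} is needed, together with the elementary but necessary verification that flat base change preserves denseness of $D(c)\cap \Supp M$ in $\Supp M$. Once this is in place, the remaining argument is essentially formal: exactness and flatness of $f^\ast$ together with the definitional compatibility of the Cartier structures (Lemma \ref{CartierStructureEtalePullback}).
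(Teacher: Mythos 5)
Your overall strategy (test-element presentation plus flat base change, with Theorem \ref{FRegularEtalePullback} as the key input) is viable and genuinely different from the paper's argument, but as written it has a concrete gap in the second step. You replace $\underline{M}$ by $M$ in the formula of \cite[Theorem 3.11]{blicklep-etestideale} on the grounds that ``$M$ is $F$-regular, hence $F$-pure, so $\underline{M}=M$''. The relevant $\underline{M}$, however, is taken with respect to the \emph{twisted} algebra $\kappa^{\mathfrak{a}^t}$, and $F$-purity of $(M,\kappa)$ does not imply $F$-purity of $(M,\kappa^{\mathfrak{a}^t})$; in general $\underline{M}_{\kappa^{\mathfrak{a}^t}}\subsetneq M$ (only the supports agree, by Lemma \ref{SupportFPure}). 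With $d=1$ the formula $\sum_{e\geq 1}\kappa^e\mathfrak{a}^{\lceil t(p^e-1)\rceil}cM$ can then strictly contain $\tau(M,\kappa^{\mathfrak{a}^t})$: take $R=\mathbb{F}_p[x]$ with the standard Cartier operator, $\mathfrak{a}=(x)$, $t=2$ and $c=x$ (which is a test element in your sense, since $(R_x,(\kappa^{x^2})_x)$ is $F$-regular). Then $\kappa(x^{\lceil 2(p-1)\rceil}\cdot x)=\kappa(x^{2p-1})=x$, so the sum contains $(x)$, whereas $\tau(R,\kappa^{x^2})=(x^2)$. So the identity you base the computation on is false as stated, and the same issue infects your first step, where you need $F$-regularity of $(\underline{M_c},\mathcal{C}_c)$ rather than of $(M_c,\mathcal{C}_c)$. (A smaller slip: density of $D(c)$ after base change should be argued via generic points of $\Supp\underline{M}$, not associated points, which may be embedded.)

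The gap is repairable within your approach: either carry $\underline{M}_{\kappa^{\mathfrak{a}^t}}$ through the whole computation, using that $f^!$ is exact and compatible with the Cartier structure so that $f^!\underline{M}=\underline{f^!M}$ and $f^!\bigl(\sum_e\mathcal{C}_e c\,\underline{M}\bigr)=\sum_e\mathcal{C}'_e c\,\underline{f^!M}$, or choose the test element more carefully (replace $c$ by $ca$ with $a\in\mathfrak{a}\cap R^\circ$, as in Lemma \ref{LeAltCartierAlgebra} and \cite[Proposition 4.16]{blicklep-etestideale}) so that a formula in terms of $M$ becomes legitimate. Note, though, that the paper's proof is much shorter and avoids test elements entirely: since $\tau(M,\kappa^{\mathfrak{a}^t})$ is itself an $F$-regular, coherent, non-degenerate $\kappa^{\mathfrak{a}^t}$-module, Theorem \ref{FRegularEtalePullback} applies directly to it, so $f^!\tau(M,\kappa^{\mathfrak{a}^t})$ is $F$-regular for $\kappa'^{(\mathfrak{a}S)^t}$ and generically agrees with $f^!M$; minimality of $\tau(f^!M)$ gives one inclusion, and $F$-regularity of $f^!\tau(M)$ (no proper generically agreeing submodule) gives the other.
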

\begin{proof}
Note that $f^! \mathfrak{a} = \mathfrak{a} \otimes_R S = \mathfrak{a} S$ by flatness. Theorem \ref{FRegularEtalePullback} implies that $f^!(\tau(M, \kappa, \mathfrak{a}^t))$ is $F$-regular with respect to $\kappa^{(\mathfrak{a}S)^t}$ and it clearly generically agrees with $f^!M$. Hence, the claim follows.
%The rest is an immediate consequence of Theorem \ref{FRegularEtalePullback}. 
\end{proof}

Note that the assumption on $R$ is merely used to ensure that $\tau(M, \kappa, \mathfrak{a}^t)$ exists. If existence is assumed this statement holds in general.

We will call a Cartier algebra $\mathcal{C}$ on $\Spec R$ \emph{locally principal} if there is an open affine covering $(U_i)_i$ of $\Spec R$ such that $\mathcal{C}\vert_{U_i}$ is generated by a single element. A prime example is the Cartier algebra generated by $\Hom_R(F_\ast R, R)$ if $R$ is Gorenstein (cf.\ \cite[Proposition 2.21]{blickleboecklecartierfiniteness} and use duality). 

\begin{Ko}
\label{TestModuleLocallyPrincipal}
Let $R$ be essentially of finite type over an $F$-finite field, $\mathfrak{a}$ an ideal and $t \in \mathbb{Q}$. Furthermore, let $f: \Spec S \to \Spec R$ be essentially \'etale. If $M$ is a non-degenerate $\mathcal{C}$-module where $\mathcal{C}$ is locally principal then $f^! \tau(M, \mathcal{C}^{\mathfrak{a}^t}) = \tau(f^! M, \mathcal{C}'^{(\mathfrak{a}S)^t})$, where $\mathcal{C}'$ is obtained by base change from $\mathcal{C}$.
\end{Ko}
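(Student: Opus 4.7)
The plan is to reduce the locally principal case to the principal case already handled in Corollary \ref{TestIdealEtalePullback} by working on a suitable affine open cover of the base. By hypothesis I may choose a finite affine cover $U_i = \Spec R_{c_i}$ of $\Spec R$ such that each restriction $\mathcal{C}|_{U_i}$ is principally generated, say by $\kappa_i$. Base change along $f$ yields an affine cover $V_i = \Spec S_{c_i}$ of $\Spec S$ and commutative squares
\[\begin{xy}\xymatrix{V_i \ar[r]^{g_i'} \ar[d]^{f_i} & \Spec S \ar[d]^{f} \\ U_i \ar[r]^{g_i} & \Spec R}\end{xy}\]
in which $g_i, g_i'$ are the open immersions and $f_i$ is still essentially \'etale.

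The key observation is that both sides of the desired identity are compatible with restriction to these opens. Forming $\mathcal{C}^{\mathfrak{a}^t}$ is defined degree-wise as $\mathcal{C}_e \mathfrak{a}^{\lceil t(p^e-1) \rceil}$ and hence commutes with localization in the base, test modules commute with localization (\cite[Lemma 2.11]{blicklep-etestideale}), and $f^!$ coincides with $-\otimes_R S$, so it too commutes with localization by flatness of $f$. Putting these compatibilities together I obtain
\[g_i'^{*}\bigl(f^{!}\tau(M, \mathcal{C}^{\mathfrak{a}^{t}})\bigr) = f_{i}^{!}\tau(g_{i}^{*}M, (g_{i}^{*}\mathcal{C})^{(\mathfrak{a}R_{c_i})^{t}}),\]
\[g_i'^{*}\tau(f^{!}M, \mathcal{C}'^{(\mathfrak{a}S)^{t}}) = \tau(f_{i}^{!}g_{i}^{*}M, (g_i'^{*}\mathcal{C}')^{(\mathfrak{a}S_{c_i})^{t}}).\]
Since on $U_i$ the algebra $g_i^{*}\mathcal{C}$ is principal with generator $\kappa_i$ and $g_i'^{*}\mathcal{C}'$ is its essentially \'etale base change, the two right-hand sides are identified by Corollary \ref{TestIdealEtalePullback} (whose proof only invokes Theorem \ref{FRegularEtalePullback} applied to the test module itself, which is automatically $F$-regular; the $F$-regularity of $M$ is not actually needed). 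As equality of submodules of $f^! M$ may be tested on any affine open cover of $\Spec S$, the global identity follows.

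The point requiring the most care is the verification of the non-degeneracy hypothesis on each piece. If $\mathfrak{a}R_{c_i}$ is a proper ideal, then any $M$-regular element of $\mathfrak{a}$ remains $g_i^{*}M$-regular after localization, so non-degeneracy is preserved. If instead $c_i \in \mathfrak{a}$ and $\mathfrak{a}R_{c_i} = R_{c_i}$, then $(g_i^{*}\mathcal{C})^{(\mathfrak{a}R_{c_i})^{t}} = g_i^{*}\mathcal{C}$ and likewise $(g_i'^{*}\mathcal{C}')^{(\mathfrak{a}S_{c_i})^{t}} = g_i'^{*}\mathcal{C}'$, so the assertion on $V_i$ reduces to the untwisted statement $f_i^{!}\tau(g_i^{*}M, g_i^{*}\mathcal{C}) = \tau(f_i^{!}g_i^{*}M, g_i'^{*}\mathcal{C}')$, which is the $\mathfrak{a} = R$ case of Corollary \ref{TestIdealEtalePullback} and where non-degeneracy plays no role.
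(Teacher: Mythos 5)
Your proof is correct and takes essentially the same route as the paper: restrict to an affine cover on which $\mathcal{C}$ is principal, apply Corollary \ref{TestIdealEtalePullback} there, and glue using that forming $\mathcal{C}^{\mathfrak{a}^t}$, taking test modules and applying $f^!$ all commute with localization (the paper phrases the gluing slightly differently, by also deriving the global inclusion $\tau(f^!M)\subseteq f^!\tau(M)$ from minimality before invoking the local equalities). Your parenthetical observation that the $F$-regularity hypothesis on $M$ in Corollary \ref{TestIdealEtalePullback} is dispensable is correct and indeed necessary here, since $M\vert_{R_{c_i}}$ need not be $F$-regular; the only point to make explicit is that in the non-$F$-regular case ``generically agrees with $f^!M$'' must be replaced by ``generically agrees with $\underline{f^!M}=f^!\underline{M}$'', which is supplied by Lemma \ref{FPureEtalePullback} exactly as in the second paragraph of the paper's own argument.
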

\begin{proof}
Fix an open affine cover $(\Spec R_i)_i$ of $\Spec R$ where $\mathcal{C}$ is principal. Applying Corollary \ref{TestIdealEtalePullback} yields $f^! \tau(M\vert_{R_i}, \mathcal{C}\vert_{R_i}, (\mathfrak{a}R_i)^t) = \tau((f^! M)_{S_i}, \mathcal{C}'\vert_{S_i}, (\mathfrak{a}S_i)^t)$ where $\Spec S_i = f^{-1}(\Spec R_i)$.

Since $f^! \tau(M, \mathcal{C}, \mathfrak{a}^t) \subseteq f^! M$ is a $\mathcal{C}'^{(\mathfrak{a}S)^t}$-module that generically agrees with $f^! \underline{M} = \underline{f^!M}$ (use Lemma  \ref{FPureEtalePullback}) and $\tau(f^!M, \mathcal{C}', (\mathfrak{a}S)^t)$ is minimal with this property we have $\tau(f^!M, \mathcal{C}', (\mathfrak{a}S)^t) \subseteq f^! \tau(M, \mathcal{C}, \mathfrak{a}^t)$. By the previous paragraph they coincide on an open affine cover. So we conclude that equality holds.
\end{proof}

\begin{Ko}
\label{TestIdealEssEtaleGorenstein}
Let $R$ be a Gorenstein domain essentially of finite type over an $F$-finite field, let $\mathcal{C}$ be the Cartier algebra generated by $\Hom(F_\ast R, R)$ and assume that $f: \Spec S \to \Spec R$ is essentially \'etale. Then for $\mathfrak{a}$ an ideal and $t \in \mathbb{Q}$ we have $f^! \tau(R, \mathcal{C}^{\mathfrak{a}^t}) = \tau(S, \mathcal{C}'^{(\mathfrak{a}S)^t})$, where  $\mathcal{C}'$ is the Cartier algebra generated by $\Hom(F_\ast S,S)$.
\end{Ko}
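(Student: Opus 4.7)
The plan is to reduce to Corollary~\ref{TestModuleLocallyPrincipal} by showing that $\mathcal{C}$ is locally principal and that its \'etale base change $\mathcal{C}'$ coincides with the Cartier algebra on $S$ generated by $\Hom(F_\ast S, S)$.

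For local principality, since $R$ is Gorenstein the canonical module $\omega_R$ is invertible, so there is an open affine cover $(\Spec R_i)$ trivializing $\omega_R$. Grothendieck duality for the finite morphism $F^e$ yields $\Hom_R(F^e_\ast R, R) \cong F^e_\ast \omega_R \otimes_R \omega_R^{-1}$ as $F^e_\ast R$-modules, which on each $\Spec R_i$ is free of rank one over $F^e_\ast R_i$. A standard check shows that the $e$-fold composition of a generator of $\Hom(F_\ast R_i, R_i)$ generates $\Hom(F^e_\ast R_i, R_i)$ as an $F^e_\ast R_i$-module, so $\mathcal{C}|_{\Spec R_i}$ is principally generated.

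For the base change, Lemma~\ref{CartierStructureEtalePullback} applied to $M = R$ gives a canonical isomorphism $F_\ast R \otimes_R S \cong F_\ast S$ since $f$ is essentially \'etale. Combined with flat base change for $\Hom$ (valid because $F_\ast R$ is coherent and $S$ is flat over $R$), this yields $\Hom_R(F^e_\ast R, R) \otimes_R S \cong \Hom_S(F^e_\ast S, S)$ in every degree $e \geq 1$, and these identifications are compatible with the multiplication inherited from composition. Consequently $\mathcal{C}'$ is precisely the Cartier algebra on $S$ generated by $\Hom(F_\ast S, S)$.

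The statement now follows by applying Corollary~\ref{TestModuleLocallyPrincipal} to $M = R$, which is non-degenerate with respect to any nonzero ideal since $R$ is a domain (the case $\mathfrak{a} = 0$ is trivial). I expect the main obstacle to be the compatibility statement inside the duality calculation, namely that products of degree-one generators remain generators in higher degree; the rest is essentially flat base-change bookkeeping.
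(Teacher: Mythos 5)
Your proposal is correct and follows essentially the same route as the paper: observe that for Gorenstein $R$ duality makes $\mathcal{C}$ locally principal, identify the base-changed algebra with the one generated by $\Hom(F_\ast S, S)$, and apply Corollary \ref{TestModuleLocallyPrincipal} to $M = R$. You merely spell out details (the flat base-change identification of the Cartier algebras and the non-degeneracy of $R$) that the paper's one-line proof leaves implicit.
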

\begin{proof}
As mentioned above $\mathcal{C}$ is locally principal since $R$ is Gorenstein. Now the result folows from Corollary \ref{TestModuleLocallyPrincipal}.
%First of all, note that we have $f^\ast \Hom(F_\ast R, R) \cong \Hom(F_\ast S, S)$ by flat base change and Lemma \ref{CartierStructureEtalePullback}. Since $R$ is Gorenstein there is an open affine cover $(\Spec R_i)_i$ of $\Spec R$ such that $\Hom(F_\ast R_i, R_i) \cong R_i$ (see \cite[Proposition 2.21]{blickleboecklecartierfiniteness} and use duality). Hence, the restriction of $\mathcal{C}$ to $\Spec R_i$ is principally generated and similarly for $\mathcal{C}'$. Applying Corollary \ref{TestIdealEtalePullback} yields $f^! \tau(R_i, \mathcal{C}\vert_{R_i}, (\mathfrak{a}R_i)^t) = \tau(S_i, \mathcal{C}'\vert_{S_i}, (\mathfrak{a}S_i)^t)$ where $\Spec S_i = f^{-1}(\Spec R_i)$.
%Since $f^! \tau(R, \mathcal{C}, \mathfrak{a}^t) \subseteq S$ is a nonzero $\mathcal{C}'^{(\mathfrak{a}S)^t}$-module and $\tau(S, \mathcal{C}', (\mathfrak{a}S)^t)$ is minimal with this property we have $\tau(S, \mathcal{C}', (\mathfrak{a}S)^t) \subseteq f^! \tau(R, \mathcal{C}, \mathfrak{a}^t)$. By the previous paragraph they coincide on an open affine cover. So we conclude that equality holds.
\end{proof}

One can also show that $V$-filtrations in general are well-behaved with respect to $f^!$ for $f$ essentially \'etale:

\begin{Prop}
\label{EssentiallyEtalePullbackVFiltration}
Let $f: \Spec S \to \Spec R$ be an essentially \'etale morphism. Let $\mathfrak{a} \subseteq R$ be an ideal and $M$ a $\kappa$-module on $\Spec R$. Assume that $M$ admits a $V$-filtration along $\mathfrak{a}$. Then $f^! M$ admits a $V$-filtration along $f^! \mathfrak{a}$ and $f^! V^\cdot M = V^\cdot f^! M$.
\end{Prop}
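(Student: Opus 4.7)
The plan is to verify that the filtration $f^! V^\cdot M = (V^t M \otimes_R S)_t$ satisfies the four axioms of a $V$-filtration on $f^! M$ along $\mathfrak{a}S$ (Definition \ref{DefVfiltration}); uniqueness (Proposition \ref{VfiltrationUnique}) then yields $f^! V^\cdot M = V^\cdot f^! M$. Since $f$ is essentially \'etale, $f^! = -\otimes_R S$ on the underlying module category and is exact, and the Cartier structure on $f^! M$ is induced from $\kappa_M$ via the natural isomorphism $F_\ast M \otimes_R S \cong F_\ast(M \otimes_R S)$ of Lemma \ref{CartierStructureEtalePullback}; under this identification $\kappa_{f^! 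M}$ corresponds to $\kappa_M \otimes \id_S$. The formal properties of a $V$-filtration (decreasing, $\mathbb{Q}$-indexed, exhaustive, right-continuous) are preserved by flat base change.

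Axioms (ii), (iii), and (iv) are then routine consequences of flatness. For (ii), any $x \in \mathfrak{a}$ witnessing injectivity of $x^j: V^t M \to V^{t+j} M$ has image in $\mathfrak{a}S$ witnessing the pullback statement by flatness. For (iii), fix $n$ with $\mathfrak{a} = (f_1, \ldots, f_n)$; the images still generate $\mathfrak{a}S$ by $n$ elements, and flatness gives $f^!(\mathfrak{a}^n V^{t-n} M) = (\mathfrak{a}S)^n f^! V^{t-n} M$, so axiom (iii) transfers directly from the analogous statement for $V^\cdot M$. For (iv), using $\kappa_{f^! M} = \kappa_M \otimes \id_S$ and right-exactness of $-\otimes_R S$, we get $\kappa_{f^! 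M}(f^! V^{tp} M) = \kappa_M(V^{tp} M) \otimes_R S = V^t M \otimes_R S = f^! V^t M$.

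The main obstacle is axiom (i). Coherence of $f^! V^0 M = V^0 M \otimes_R S$ is immediate, since $V^0 M$ is coherent and $S$ is essentially of finite type over $R$. However, continuity at zero, i.e.\ the equality $\bigcap_{\varepsilon > 0} f^! V^{-\varepsilon} M = f^! V^0 M$, is subtle because flat base change commutes with \emph{finite} but not arbitrary descending intersections of submodules. The cleanest workaround is to observe that in the cases of primary interest -- for instance the test module filtration when $R$ is essentially of finite type over an $F$-finite field, cf.\ Theorem \ref{VfiltrationTestidealfiltration} -- the $V$-filtration is discrete, so $V^{-\varepsilon} M = V^0 M$ for small enough $\varepsilon > 0$ and the intersection is already finite. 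Once all four axioms are in hand, Proposition \ref{VfiltrationUnique} yields the asserted equality $f^! V^\cdot M = V^\cdot f^! M$.
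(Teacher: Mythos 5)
Your overall strategy is the same as the paper's: check that $(V^t M\otimes_R S)_t$ satisfies the axioms of Definition \ref{DefVfiltration} and conclude by uniqueness (Proposition \ref{VfiltrationUnique}). The paper's proof is precisely this, stated tersely: axioms (i) and (iii) are declared clear, (ii) follows by flatness, and (iv) holds because the Cartier structure on $f^!M$ is $\kappa\otimes\id$ via Lemma \ref{CartierStructureEtalePullback}; your verifications of (ii), (iii), (iv) and of coherence of $V^0M\otimes_R S$ match this.

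The one point where you diverge is continuity at zero in axiom (i), and there your proof as written does not cover the stated generality: you only resolve it under a discreteness hypothesis (as for test module filtrations in the setting of Theorem \ref{VfiltrationTestidealfiltration}), which is not among the assumptions of the proposition. The worry is legitimate in the abstract, since flat (even essentially \'etale) base change does not commute with infinite intersections of submodules; but it is unnecessary here, because the axioms already force the negative part of any $V$-filtration of a coherent $\kappa$-module to be constant. Indeed, $M$ is coherent and $R$ noetherian, so exhaustiveness of the decreasing filtration means the increasing chain $V^{-n}M$ ($n\in\mathbb{N}$) stabilizes and hence $V^tM=M$ for all $t\le t_0$ and some $t_0$. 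For any fixed $t<0$ and all $n\gg 0$ one has $tp^n\le t_0$, so axiom (iv) gives $V^tM=\kappa^n(V^{tp^n}M)=\kappa^n(M)$; in particular the chain $\kappa^n(M)$ is eventually constant and $V^tM=\kappa^e(M)$ for $e\gg 0$, independently of $t<0$. Thus the family $(V^{-\eps}M)_{\eps>0}$ takes a single value, continuity at zero just says $V^0M$ equals it, and this is trivially preserved by $-\otimes_R S$ -- no discreteness assumption is needed. Inserting this observation in place of your workaround makes your argument prove the proposition in full generality, and it also explains why the paper can treat axiom (i) as clear.
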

\begin{proof}
By flatness $f^! \mathfrak{a}$ is an ideal in $S$ and the $V^\cdot M \otimes_R S$ form a decreasing sequence of $S$-submodules of $M \otimes_R S$. This filtration is exhaustive, $\mathbb{Q}$-indexed and right-continuous since $V^\cdot M$ is. %exhaustive: \sum_i m_i \otimes s_i \\in f^! V genau dann, wenn alle m_i drin liegen, und das stimmt weil V ausschoepfend ist.
Axioms (i) and (iii) are clear, (ii) follows by flatness and axiom (iv) is clear since $\kappa$ on $f^! M$ is given by $\kappa \otimes \id$.
\end{proof}

\begin{Ko}
Let $f: \Spec S \to \Spec R$ be an essentially \'etale morphism. Let $\mathfrak{a} \subseteq R$ be an ideal and $M$ $\kappa$-module on $\Spec R$. Assume that $M$ admits a $V$-filtration along $\mathfrak{a}$. Then $Gr f^! M = f^! Gr M$.

If in addition $R$ is essentially of finite type over an $F$-finite field, $\mathfrak{a}$ is principal and $M$ is non-degenerate, coherent and $F$-regular so that $Gr M$ and $Gr f^! M$ are endowed with $\kappa$-module structures then $f^! Gr M = Gr f^! M$ also for $\kappa$-modules or crystals. 
\end{Ko}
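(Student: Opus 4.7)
The plan is to deduce this corollary almost directly from Proposition \ref{EssentiallyEtalePullbackVFiltration}. Since $f$ is essentially \'etale, the pullback $f^! = f^\ast$ is flat and in particular exact, and by that proposition we already know $f^! V^t M = V^t f^! M$ for every $t \in \mathbb{Q}$. Applying $f^!$ to the short exact sequence
\[0 \to V^t M \to V^{t-\eps} M \to Gr^t M \to 0\]
and comparing with the analogous sequence built from the $V$-filtration of $f^! M$ immediately yields the identification $f^! Gr^t M \cong Gr^t f^! M$ as $S$-modules in every rational degree $t$, so summing over $t$ produces the claimed isomorphism $f^! Gr\, M \cong Gr\, f^! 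M$.

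For the second claim I would verify both that the target genuinely carries a $\kappa$-module structure and that the isomorphism above is Cartier-linear. Writing $\mathfrak{a}=(g)$, the Cartier structure on $Gr^t M$ is induced by $\kappa g^{\lceil t(p-1)\rceil}$. Under the base-change isomorphism $F_\ast M \otimes_R S \cong F_\ast(M\otimes_R S)$ from Lemma \ref{CartierStructureEtalePullback}, this operator pulls back to the operator $\kappa' g^{\lceil t(p-1)\rceil}$ on $V^{t-\eps}f^!M / V^t f^!M$, which is precisely what defines the Cartier structure on $Gr^t f^! M$. The hypotheses of the second assertion guarantee that this last object is indeed a Cartier module: $\mathfrak{a}S$ is principal since $\mathfrak{a}$ is; $f^!M$ is non-degenerate with respect to $\mathfrak{a}S$ because flatness of $f$ preserves regular elements; and $f^!M$ is $F$-regular by Theorem \ref{FRegularEtalePullback}.

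Finally, to descend to crystals I would use that $f^!$ preserves nilpotence (Lemma \ref{LNilAndEtaleShriekCommute}) and so carries nil-isomorphisms to nil-isomorphisms, while $Gr$ descends to crystals by Theorem \ref{GrFunctoronCrys}. Any $\kappa$-module isomorphism is a fortiori a nil-isomorphism, so the $\kappa$-module identification of the previous paragraph passes to the quotient by the Serre subcategory of nilpotent modules on each side, yielding the desired isomorphism of crystals. No serious obstacle is anticipated: the substantive work has already been carried out in Proposition \ref{EssentiallyEtalePullbackVFiltration}, Theorem \ref{FRegularEtalePullback} and Theorem \ref{GrFunctoronCrys}, and the present corollary is essentially a diagram-chase assembling these three inputs, together with a bookkeeping check that the natural Cartier structures on the two sides are literally induced by the same formula $\kappa g^{\lceil t(p-1)\rceil}$.
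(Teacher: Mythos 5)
Your proposal is correct and follows essentially the same route as the paper: exactness of $f^!=f^\ast$ together with Proposition \ref{EssentiallyEtalePullbackVFiltration} gives the module-level identification, and Theorems \ref{FRegularEtalePullback} and \ref{VfiltrationTestidealfiltration} guarantee that both sides carry the Cartier structure $\kappa g^{\lceil t(p-1)\rceil}$, which the isomorphism intertwines, hence it descends to Cartier modules and crystals. Your explicit check via the base-change isomorphism of Lemma \ref{CartierStructureEtalePullback} just spells out what the paper summarizes as ``clearly intertwined.''
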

\begin{proof}
The functor $f^!$ is exact since $f$ is flat. Hence, $f^! Gr M = Gr f^! M$ as $R$-modules. If $R$ is essentially of finite type over an $F$-finite field then Theorems \ref{VfiltrationTestidealfiltration} and \ref{FRegularEtalePullback} imply that $V f^! M$ and $V M$ both coincide with the test modules filtrations along $f^! \mathfrak{a}$ and $\mathfrak{a}$. In particular, one obtains Cartier structures on the associated graded and these are clearly intertwined by the above isomorphism.
\end{proof}

\begin{Theo}
\label{GrEssEtalePullbackCommute}
Let $R$ be essentially of finite type over an $F$-finite field. Let $f: \Spec S \to \Spec R$ be essentially \'etale. Then one has a natural isomorphism of functors $f^! \circ Gr_R = Gr_S \circ f^!$, where $Gr_R$ is considered as a functor from $\mathcal{A}_{\Spec R}$ to $\kappa$-modules on $\Spec R$ and similarly for $Gr_S$. Moreover, we still have an isomorphism of functors if we pass to crystals on the source and target.
\end{Theo}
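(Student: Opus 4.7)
The plan is to assemble this theorem directly from the preceding corollary together with the functoriality results established earlier. Concretely, the corollary immediately before the theorem already produces the natural isomorphism $f^! Gr M \cong Gr f^! M$ on the level of underlying $\kappa$-modules, so the work reduces to verifying that both sides can be interpreted as compositions of functors with matching source and target categories and that the isomorphism descends to crystals.

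First I would check that $f^!$ sends $\mathcal{A}_R$ to $\mathcal{A}_S$ so that the composition $Gr_S \circ f^!$ makes sense on $\mathcal{A}_R$. Preservation of coherence is immediate since $f$ is essentially of finite type, preservation of $F$-regularity is exactly Theorem \ref{FRegularEtalePullback}, and preservation of non-degeneracy follows from flatness of $f$: if $a \in \mathfrak{a}$ is $M$-regular, then multiplication by $a$ is injective on $f^! M = M \otimes_R S$, and $a \in \mathfrak{a}S = f^!\mathfrak{a}$. With this, both compositions land in $\kappa$-modules on $V(\mathfrak{a}S) \subseteq \Spec S$, and the identity base-change map provides an isomorphism $f^! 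Gr_R M \xrightarrow{\sim} Gr_S f^! M$ by the preceding corollary. Naturality in $M$ is automatic since the isomorphism comes from the identity at the level of underlying modules (base change commutes with quotients, kernels, and localization) and these are clearly compatible with any morphism $\varphi: M \to N$ in $\mathcal{A}_R$.

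For the passage to crystals I would mimic the final step of the proof of Theorem \ref{GrPushforwardCommute}. Both $Gr_R$ and $Gr_S$ descend to crystals by Theorem \ref{GrFunctoronCrys}, and $f^!$ is known to descend to a functor on $\kappa$-crystals along essentially \'etale morphisms (using Lemma \ref{LNilAndEtaleShriekCommute}, which guarantees that $f^!$ preserves (local) nilpotence in both directions for essentially \'etale $f$). Since any isomorphism of $\kappa$-modules is a fortiori a nil-isomorphism and nil-isomorphisms form a multiplicative system, the natural isomorphism $f^! \circ Gr_R \cong Gr_S \circ f^!$ of functors on $\mathcal{A}_R$ induces the desired natural isomorphism of functors on the associated crystal categories.

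The only point that requires any care is the compatibility of $f^!$ with the quotients defining $Gr^t$, but since $f^!$ is exact on the category of $\kappa$-modules (as $f$ is flat) this is automatic, and the compatibility with the Cartier structure $\kappa f^{\lceil t(p-1)\rceil}$ on the associated graded follows from Lemma \ref{CartierStructureEtalePullback}. Hence there is no substantive obstacle: the theorem is a formal consequence of the preceding corollary together with the general machinery of $\kappa$-crystals.
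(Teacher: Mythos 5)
Your proposal is correct and follows essentially the same route as the paper: the module-level isomorphism comes from the corollary immediately preceding the theorem, and the descent to crystals is handled exactly as in the proof of Theorem \ref{GrPushforwardCommute}, using Theorem \ref{GrFunctoronCrys} and the fact that isomorphisms of $\kappa$-modules are nil-isomorphisms. The extra verifications you spell out (that $f^!$ preserves coherence, $F$-regularity via Theorem \ref{FRegularEtalePullback}, and non-degeneracy by flatness) are left implicit in the paper but are exactly the right points to check.
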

\begin{proof}
The arguments are similar to those in the proof of Theorem \ref{GrPushforwardCommute}.
\end{proof}

\begin{Bem}
In \cite[Expos\'e XIII, 1.3.7, 1.3.9]{SGA7II} it is shown that for nearby cycles $\Psi$ one has a natural transformation of functors $f^\ast \Psi \to \Psi f^\ast$ and that this is an isomorphism when $f$ is \'etale.
\end{Bem}

Next we prove a partial result for $f^!$, where $f$ is finite flat. Recall that for a finite flat morphism $f: \Spec S \to \Spec R$ and a coherent $R$-module $M$ we have $f^! M = \Hom_R(S, M)$ considered as an $S$-module via premultiplication. If $M$ carries a Cartier structure $\kappa: F_\ast M \to M$ then $f^!M$ is equipped with a Cartier structure by the composition of the natural map $F_\ast \Hom_R(S, M) \to \Hom_R(F_\ast S, F_\ast M)$ with the map $\Hom_R(F_\ast S, F_\ast M) \to \Hom_R(S, M), \varphi \mapsto \kappa \circ \varphi \circ F$ (cf.\ \cite[Remark 3.3.11]{blickleboecklecartiercrystals}).

\begin{Le}
\label{SupportShriekFlat}
Let $f: X \to Y$ be a finite flat morphism and $M$ a coherent $\mathcal{O}_Y$-module. Then $\Supp f^! M = f^{-1} \Supp M$.
\end{Le}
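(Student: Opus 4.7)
The plan is to work locally on $Y = \Spec R$ and then use the semilocal structure of the fibre to refine to the level of a prime $\mathfrak{q} \in \Spec S$ lying over a given $\mathfrak{p} \in \Spec R$. The key observation is that since $f$ is finite flat, $S$ is a finitely presented and flat $R$-module, so in particular $S_\mathfrak{p}$ is a finite free $R_\mathfrak{p}$-module (of positive rank whenever the fibre $f^{-1}(\mathfrak{p})$ is non-empty). This lets us compute $f^! M$ after localization very explicitly.

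First I would show that for any $\mathfrak{p} \in \Spec R$ one has the identification
\[
(f^! M)_\mathfrak{p} \;=\; \Hom_R(S,M) \otimes_R R_\mathfrak{p} \;\cong\; \Hom_{R_\mathfrak{p}}(S_\mathfrak{p}, M_\mathfrak{p}),
\]
using that $S$ is finitely presented over $R$ (so $\Hom$ commutes with localization). From this it is immediate that if $\mathfrak{p} \notin \Supp M$, i.e., $M_\mathfrak{p} = 0$, then $(f^! M)_\mathfrak{q} = 0$ for every $\mathfrak{q}$ over $\mathfrak{p}$; this gives the inclusion $\Supp f^!M \subseteq f^{-1}\Supp M$.

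For the reverse inclusion, suppose $\mathfrak{q}$ lies over $\mathfrak{p} \in \Supp M$. Since $S$ is finite over $R$, the ring $S_\mathfrak{p}$ is semilocal with maximal ideals corresponding precisely to the finitely many primes $\mathfrak{q}_1,\dots,\mathfrak{q}_n$ of $S$ over $\mathfrak{p}$, and hence
\[
S_\mathfrak{p} \;\cong\; \prod_{i=1}^{n} S_{\mathfrak{q}_i}.
\]
Plugging this decomposition into the Hom above gives $(f^! M)_\mathfrak{p} \cong \prod_i \Hom_{R_\mathfrak{p}}(S_{\mathfrak{q}_i}, M_\mathfrak{p})$ as $S_\mathfrak{p}$-modules, and passing to the $\mathfrak{q}$-localization selects the single factor $\Hom_{R_\mathfrak{p}}(S_\mathfrak{q}, M_\mathfrak{p})$ (the other factors are killed by an idempotent that becomes $0$ at $\mathfrak{q}$). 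Flatness of $f$ implies $S_\mathfrak{q}$ is a direct summand of the free $R_\mathfrak{p}$-module $S_\mathfrak{p}$, hence finitely generated projective over the local ring $R_\mathfrak{p}$, hence free of some positive rank $r \geq 1$. Therefore
\[
(f^! M)_\mathfrak{q} \;\cong\; \Hom_{R_\mathfrak{p}}(R_\mathfrak{p}^{\,r}, M_\mathfrak{p}) \;\cong\; M_\mathfrak{p}^{\,r} \;\neq\; 0,
\]
so $\mathfrak{q} \in \Supp f^! M$, which is what we wanted.

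The only real point requiring care is the semilocal decomposition and the verification that localizing the product at $\mathfrak{q}$ picks out exactly the factor indexed by $\mathfrak{q}$; apart from that the proof is a standard sequence of module-theoretic reductions. I do not anticipate any substantial obstacle, provided one keeps track of the $S$-module structure (as opposed to merely the $R$-module structure) when localizing.
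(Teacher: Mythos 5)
The inclusion $\Supp f^! M \subseteq f^{-1}\Supp M$ in your argument is fine (localizing $\Hom_R(S,M)$ at $\mathfrak{p}$ uses only that $S$ is finitely presented over $R$). The reverse inclusion, however, rests on the claim that $S_\mathfrak{p} \cong \prod_{i} S_{\mathfrak{q}_i}$, and this is false in general: a finite algebra over a local ring decomposes as a product of local rings only over a Henselian (e.g.\ complete or Artinian) base. Concretely, take $p$ odd, $R = \mathbb{F}_p[t]$, $S = R[x]/(x^2 - t) \cong \mathbb{F}_p[x]$ and $\mathfrak{p} = (t-1)$: there are two primes of $S$ over $\mathfrak{p}$, but $S_\mathfrak{p}$ is a localization of a domain, hence a domain with no nontrivial idempotents, so it is not a product of the two localizations (and $S_{\mathfrak{q}}$ is not even finite, let alone free, over $R_\mathfrak{p}$ in this example). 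Consequently the identification $(f^!M)_\mathfrak{q} \cong \Hom_{R_\mathfrak{p}}(S_\mathfrak{q}, M_\mathfrak{p})$ has no justification -- localization appears contravariantly in the first argument of $\Hom$, so it cannot simply be pulled inside. What your computation really proves is only that $\Hom_{R_\mathfrak{p}}(S_\mathfrak{p},M_\mathfrak{p}) \cong M_\mathfrak{p}^{\,r} \neq 0$, i.e.\ that \emph{some} prime over $\mathfrak{p}$ lies in $\Supp f^!M$; this gives $f(\Supp f^!M) = \Supp M$, which is strictly weaker than $f^{-1}\Supp M \subseteq \Supp f^!M$.

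The gap can be closed in two ways. The paper's route: since $S$ is finite projective over $R$ one has $f^!M \cong f^!\mathcal{O}_Y \otimes_{\mathcal{O}_X} f^\ast M$, hence $\Supp f^!M = \Supp f^!\mathcal{O}_Y \cap f^{-1}\Supp M$, and one is reduced to $\Supp f^!\mathcal{O}_Y = X$; this is checked at the generic points of $X$ (their images are generic points of $Y$ by flatness, and there $f^!$ becomes $\Hom_{k(f(\eta))}(k(\eta),k(f(\eta))) \neq 0$), and one concludes because the support of a coherent module is closed. Alternatively, your local setup can be repaired directly without any decomposition: if $(f^!M)_\mathfrak{q} = 0$, then since $\Hom_R(S,M)$ is a finite $S$-module there is $s \in S \setminus \mathfrak{q}$ with $s\cdot\Hom_{R_\mathfrak{p}}(S_\mathfrak{p},M_\mathfrak{p}) = 0$; evaluating the homomorphisms $x \mapsto e_i^\ast(x)m$ (dual basis of the free module $S_\mathfrak{p}$, $m \in M_\mathfrak{p}$ arbitrary) at $x = s$ shows that every coordinate $e_i^\ast(s)$ annihilates $M_\mathfrak{p}$, hence lies in $\mathfrak{p}R_\mathfrak{p}$ because $M_\mathfrak{p} \neq 0$; thus $s \in \mathfrak{p}S_\mathfrak{p} \subseteq \mathfrak{q}S_\mathfrak{p}$, contradicting $s \notin \mathfrak{q}$. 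Either repair yields the lemma; as written, your reverse inclusion does not.
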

\begin{proof}
Using \cite[Lemma 5.7]{blickleboecklecartierfiniteness} we have $f^! M = f^! \mathcal{O}_Y \otimes_{\mathcal{O}_X} f^\ast M$ and by \cite[Chap.\ 0, Corollaires 5.2.2.2, 5.2.4.1]{EGAISpringer} we obtain $\Supp f^! M = \Supp f^! \mathcal{O}_Y \cap f^{-1} \Supp M$. Hence, we reduced the claim to $\Supp f^! \mathcal{O}_Y = X$.
Let now $\eta$ be a generic point of a component of $X$ and $f(\eta)$ its image which is a generic point of a component of $Y$ due to flatness. Write $i: \Spec \mathcal{O}_{X, \eta} \to X$ and $j: \Spec \mathcal{O}_{Y, f(\eta)} \to Y$. Then we have $i^! f^! = f_{\eta}^! j^!$, where $f_\eta: \Spec \mathcal{O}_{X, \eta} \to \Spec \mathcal{O}_{Y, f(\eta)}$ is induced by $f$. Using the fact that $j^! = j^\ast$ we get that $f_\eta^! j^! (\mathcal{O}_Y) = \Hom_{k(f(\eta))}(k(\eta), k(f(\eta)))$ which is nonzero. Since $i^! f^! \mathcal{O}_Y$ corresponds to $f^! \mathcal{O}_Y$ localized at a generic point of $X$ this shows the claim.
\end{proof}

\begin{Prop}
\label{FlatShriekTestideal}
Let $f: \Spec S \to \Spec R$ be a finite flat morphism, $\mathfrak{a}$ an ideal in $R$ and $M$ a non-degenerate $\kappa$-module and assume that the test module exists for both $M$ and $f^! M$. Then $\tau(f^! M, \kappa^{(\mathfrak{a}S)^t}) \subseteq f^! \tau(M, \kappa^{\mathfrak{a}^t})$.
\end{Prop}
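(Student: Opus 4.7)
The plan is to use the explicit presentation of test modules provided by \cite[Theorem 3.11]{blicklep-etestideale}: namely, $\tau(M, \mathcal{C}) = \sum_{e \geq 1} \mathcal{C}_e c\, \underline{M}_\mathcal{C}$ for any test element $c$. Write $\mathcal{C} = \kappa^{\mathfrak{a}^t}$ on $R$ and $\mathcal{D} = \kappa^{(\mathfrak{a}S)^t}$ on $S$, and pick test elements $c \in R^\circ$ for $(M, \mathcal{C})$ and $c_0 \in S^\circ$ for $(f^!M, \mathcal{D})$. Going-down for the flat map $f$ forces every minimal prime of $S$ to contract to a minimal prime of $R$, so the image of $c$ in $S$ lies in $S^\circ$; hence $c' := c c_0 \in S^\circ$, and by Lemma \ref{LeFRegularLocalizes} ($F$-regularity localizes) $c'$ is still a test element for $(f^!M, \mathcal{D})$. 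Thus
\[
\tau(f^!M, \mathcal{D}) = \sum_{e \geq 1} \kappa_{f^!M}^e (\mathfrak{a}S)^{\lceil t(p^e-1)\rceil} c'\, \underline{f^!M}_\mathcal{D}.
\]

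The first key step is the auxiliary containment $\underline{f^!M}_\mathcal{D} \subseteq f^!\underline{M}_\mathcal{C}$ inside $f^!M = \Hom_R(S, M)$: every $\varphi \in \underline{f^!M}_\mathcal{D}$ satisfies $\varphi(S) \subseteq \underline{M}_\mathcal{C}$. Since $\underline{f^!M}_\mathcal{D} = \mathcal{D}_+^E f^!M$ for $E \gg 0$, it suffices to verify this for a generator $\varphi = \kappa_{f^!M}^E(b \psi)$ with $\psi \in f^!M$ and $b \in (\mathfrak{a}S)^{\lceil t(p^E-1)\rceil}$. Unwinding the Cartier action on $f^!M$ recalled just before Lemma \ref{SupportShriekFlat} gives $\varphi(s) = \kappa^E_M(\psi(bs^{p^E}))$ for every $s \in S$. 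Writing $b = \sum_i s_i r_i$ with $s_i \in S$ and $r_i \in \mathfrak{a}^{\lceil t(p^E-1)\rceil}$, the $R$-linearity of $\psi$ gives $\psi(bs^{p^E}) = \sum_i r_i \psi(s_i s^{p^E}) \in \mathfrak{a}^{\lceil t(p^E-1)\rceil} M$, hence $\varphi(s) \in \kappa^E_M(\mathfrak{a}^{\lceil t(p^E-1)\rceil} M) = \mathcal{C}_E M$, which coincides with $\underline{M}_\mathcal{C}$ for $E$ sufficiently large.

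For the main inclusion I would take a typical generator $\tilde\varphi = \kappa_{f^!M}^e(bc' \varphi)$ of $\tau(f^!M, \mathcal{D})$, with $\varphi \in \underline{f^!M}_\mathcal{D}$ and $b \in (\mathfrak{a}S)^{\lceil t(p^e-1)\rceil}$. The main obstacle is that the Cartier action on $f^!M$ is twisted by Frobenius and multiplies by elements of the extended ideal $\mathfrak{a}S \subseteq S$ rather than $\mathfrak{a} \subseteq R$, so its image does not a priori fit into the corresponding formula for $\tau(M, \mathcal{C})$. The resolution is that $c \in R$: evaluating $\tilde\varphi$ at $s \in S$ and pulling $c$ outside via $R$-linearity of $\varphi$ yields
\[
\tilde\varphi(s) = \kappa^e_M(\varphi(bcc_0 s^{p^e})) = \kappa^e_M\!\left(c\, \varphi(b c_0 s^{p^e})\right).
\]
Expanding $b = \sum_i s_i r_i$ once more and using $\varphi(S) \subseteq \underline{M}_\mathcal{C}$ from the previous paragraph, one finds $\varphi(b c_0 s^{p^e}) \in \mathfrak{a}^{\lceil t(p^e-1)\rceil} \underline{M}_\mathcal{C}$. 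Consequently
\[
\tilde\varphi(s) \in \kappa^e_M\!\left(c\, \mathfrak{a}^{\lceil t(p^e-1)\rceil} \underline{M}_\mathcal{C}\right) = \mathcal{C}_e\, c\, \underline{M}_\mathcal{C} \subseteq \tau(M, \mathcal{C}).
\]
Since this holds for every $s \in S$, $\tilde\varphi \in \Hom_R(S, \tau(M, \mathcal{C})) = f^!\tau(M, \mathcal{C})$, and summing over generators yields $\tau(f^!M, \mathcal{D}) \subseteq f^!\tau(M, \mathcal{C})$.
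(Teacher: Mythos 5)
Your strategy (explicit test-element presentations on both sides and pulling the element $c\in R$ through the $R$-linear maps) is coherent, and the formula $\kappa^e_{f^!M}(b\psi)(s)=\kappa^e_M(\psi(bs^{p^e}))$ is correct, but the argument has a genuine gap at its key auxiliary step. The containment $\underline{f^!M}_{\mathcal{D}}\subseteq f^!\underline{M}_{\mathcal{C}}$ is made to rest on the assertion that $\mathcal{C}_EM=\kappa^E\bigl(\mathfrak{a}^{\lceil t(p^E-1)\rceil}M\bigr)$ ``coincides with $\underline{M}_{\mathcal{C}}$ for $E$ sufficiently large''. The actual stabilization statement is $\underline{M}_{\mathcal{C}}=(\mathcal{C}_+)^EM$ for $E\gg0$; it concerns powers of the augmentation ideal, not the single graded piece $\mathcal{C}_E$. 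For the twisted algebra $\kappa^{\mathfrak{a}^t}$ these are genuinely different: one only has $\mathcal{C}_{e_1}\mathcal{C}_{e_2}\subseteq\mathcal{C}_{e_1+e_2}$, since $\lceil t(p^{e_1+e_2}-1)\rceil$ is in general strictly smaller than the exponents (with Frobenius powers) occurring in products of lower-degree pieces, so the algebra is not generated in degree one and $\mathcal{C}_E\not\subseteq(\mathcal{C}_+)^E$ -- this is precisely the ceiling bookkeeping the paper has to do in Lemma \ref{LeAltCartierAlgebra} and Proposition \ref{FiltrationsProp1}. Consequently, knowing $\varphi(S)\subseteq\mathcal{C}_eM$ for $e\geq E$ does not give $\varphi(S)\subseteq\underline{M}_{\mathcal{C}}$, and without that the final inclusion $\tilde\varphi(s)\in\mathcal{C}_e\,c\,\underline{M}_{\mathcal{C}}\subseteq\tau(M,\mathcal{C})$ collapses, because \cite[Theorem 3.11]{blicklep-etestideale} expresses $\tau(M,\mathcal{C})$ as $\sum_{e\geq1}\mathcal{C}_e\,c\,\underline{M}$ with $\underline{M}$, not $M$.

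A second unverified step is the claim that $c'=cc_0$ is a test element for $(f^!M,\mathcal{D})$. Lemma \ref{LeFRegularLocalizes} only settles the $F$-regularity of the localization; you also need $D(cc_0)\cap\Supp\underline{f^!M}_{\mathcal{D}}$ to be dense in $\Supp\underline{f^!M}_{\mathcal{D}}$, and $c\in S^\circ$ only says that $c$ avoids the minimal primes of $S$, not those of $\Supp\underline{f^!M}_{\mathcal{D}}$. Since $f^!M$ need not be $F$-pure (Example \ref{ShriekNotFpure}), you cannot invoke Lemma \ref{SupportFPure} to identify that support; the paper is careful about exactly this density point in the proof of Proposition \ref{TauIsFunctorial}. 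Both difficulties are what the paper's own proof avoids: it never expands $\tau(f^!M)$ via test elements, but instead checks that $f^!\tau(M)$ (hence $\underline{f^!\tau(M)}$) is a $\kappa^{(\mathfrak{a}S)^t}$-submodule of $f^!M$ agreeing with $f^!M$ at every generic point of $\Supp f^!M=f^{-1}\Supp M$ (flatness and Lemma \ref{SupportShriekFlat}, since at such a point $i^!M=i^!\tau(M)$), and then concludes $\tau(f^!M)\subseteq f^!\tau(M)$ by the minimality in the definition of the test module. If you wish to keep a computational approach, you would at least need to establish $\underline{f^!M}_{\mathcal{D}}\subseteq f^!\underline{M}_{\mathcal{C}}$ by such a localization or minimality argument rather than degreewise.
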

\begin{proof}
We claim that $f^! \tau(M)$ is a $\kappa^{(\mathfrak{a}S)^t}$-module that generically agrees with $f^! M$. Note that $f^! \tau(M)$ carries the structure of a $\kappa^{(\mathfrak{a}S)^t}$-module and recall that $\tau(f^! M)$ is the minimal Cartier submodule that generically agrees with $\underline{f^! M} \subseteq f^!M$. Assuming the claim, we observe that $\mathcal{C}_+^e f^! \tau(M) = \underline{f^! \tau(M)} \subseteq f^! \tau(M)$ generically agrees with $\underline{f^!M}$ since the operation $\underline{\phantom{N}}$ commutes with localization. Hence, it follows by minimality that $\tau(f^!M) \subseteq f^! \tau(M)$.

We now prove the claim. Let $\eta$ be a generic point of $\Supp f^! M$. Then $f(\eta)$ is a generic point of $\Supp M$ by flatness and we have the following commutative diagram:
\[
\begin{xy}
 \xymatrix{\eta \ar[r]^j \ar[d]^{f_\eta} & \Spec R \ar[d]^f \\
f(\eta) \ar[r]^i & \Spec S}
\end{xy}
\]
We have to show that $j^! f^! M = j^! f^! \tau(M)$. Equivalently $ f_\eta^!  i^! M= f_\eta^! i^! \tau(M)$. But by definition of $\tau(M)$ we have $i^! M = i^! \tau(M)$. 
\end{proof}

\begin{Ko}
\label{TraceTestIdeal}
Let $f: \Spec S \to \Spec R$ be a finite flat morphism, $\mathfrak{a}$ an ideal in $R$ and $M$ a $\kappa$-module and assume that the test module exists for both $M$ and $f^! M$. Then the trace map $Tr: f_\ast f^! M \to M$ of Grothendieck-Serre duality restricted to $f_\ast \tau(f^! M)$ factors as $Tr: f_\ast \tau(f^! M) \to \tau(M)$.
\end{Ko}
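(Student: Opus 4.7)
The plan is to combine the previous proposition with naturality of the trace map. By Proposition \ref{FlatShriekTestideal} we have the inclusion $\tau(f^! M) \subseteq f^! \tau(M)$ of Cartier submodules of $f^! M$. Applying the exact functor $f_\ast$ we obtain an inclusion $f_\ast \tau(f^! M) \subseteq f_\ast f^! \tau(M)$ inside $f_\ast f^! M$. So it suffices to show that $Tr$ sends $f_\ast f^! \tau(M)$ into $\tau(M)$.

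This in turn follows from naturality of the trace map. The trace $Tr$ is the counit of the adjunction $(f_\ast, f^!)$ (for $f$ finite flat, $f^! = \Hom_R(f_\ast S, -)$ and $S$ is projective over $R$ so that $f^!$ is exact). Applied to the inclusion $\iota: \tau(M) \hookrightarrow M$ naturality gives a commutative square
\[
\begin{xy}
\xymatrix{
f_\ast f^! \tau(M) \ar[r]^{Tr_{\tau(M)}} \ar[d]_{f_\ast f^!(\iota)} & \tau(M) \ar[d]^{\iota} \\
f_\ast f^! M \ar[r]^{Tr_M} & M
}
\end{xy}
\]
Hence $Tr_M(f_\ast f^! \tau(M)) \subseteq \tau(M)$, and combining with the previous paragraph we conclude $Tr_M(f_\ast \tau(f^! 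M)) \subseteq \tau(M)$, which is the desired factorization. Note that $Tr_{\tau(M)}$ is $R$-linear by construction, so the factored map is a morphism of $R$-modules (in fact of Cartier modules, since the trace is compatible with the Cartier structures as described in the preamble before Lemma \ref{SupportShriekFlat}).

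There is essentially no obstacle here beyond invoking Proposition \ref{FlatShriekTestideal}; the only subtle point is ensuring $f_\ast f^!(\iota)$ is injective, which holds because $f_\ast$ is exact (affine case) and $f^!$ is exact for finite flat $f$, so the whole argument reduces to a diagram chase with the counit.
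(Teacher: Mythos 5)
Your proposal is correct and follows essentially the same route as the paper: both rest on Proposition \ref{FlatShriekTestideal} to get $f_\ast \tau(f^! M) \subseteq f_\ast f^! \tau(M)$, and then observe that the trace sends $f_\ast f^! \tau(M)$ into $\tau(M)$. The paper phrases this last step concretely (the trace of a finite morphism is evaluation at $1$, so $\Hom_R(f_\ast S, \tau(M))$ evaluates into $\tau(M)$), while you phrase it as naturality of the counit of the adjunction $(f_\ast, f^!)$ applied to the inclusion $\tau(M) \hookrightarrow M$; these are the same verification.
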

\begin{proof}
By Proposition \ref{FlatShriekTestideal} above we have an inclusion $f_\ast \tau(f^! M) \subseteq f_\ast f^! \tau(M)$. Since $Tr$ for a finite morphism is given by evaluation at $1$ the claim follows.
\end{proof}

\begin{Bem}
If we consider the Cartier algebra $\kappa^{\mathfrak{a}^t}$ as a set of Cartier linear maps $F^e_\ast M \to M$ for suitable $e$ then these induce via $f^!$ a set of Cartier linear maps $F^e_\ast f^! M \to f^! M$ and the algebra generated by these maps is precisely $\kappa^{(\mathfrak{a}S)^t}$.
\end{Bem}

In the situation of Theorem \ref{VfiltrationTestidealfiltration} it is not clear whether $f^! M$ also admits a $V$-filtration since $f^! M$ will in general not be $F$-regular (cf.\ Example \ref{ShriekNotFpure} below). In order to avoid this problem $Gr$ in the following corollary simply denotes the associated graded of the test module filtration.

\begin{Ko}
Let $R$ be a ring essentially of finite type over an $F$-finite field, $\mathfrak{a}$ a principal ideal and $M$ a $\kappa$-module. If $\Spec S \to \Spec R$ is a finite flat morphism then we obtain a natural $\kappa$-linear morphism $Gr f^! M \to f^! Gr M$.
\end{Ko}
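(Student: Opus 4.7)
The plan is to invoke Proposition \ref{FlatShriekTestideal} at two consecutive levels of the test module filtration and then pass to associated gradeds using that $f^!$ is exact. Let $g$ be a generator of $\mathfrak{a}$. Flatness of $f$ ensures $f^! M = \Hom_R(S, M)$ is non-degenerate with respect to $\mathfrak{a}S$: multiplication by $g$ on $\Hom_R(S, M)$ equals $\Hom_R(S, \mu_g)$, which is injective since $\mu_g$ on $M$ is. Both $R$ and $S$ are essentially of finite type over an $F$-finite field, so the test modules $\tau(M, \kappa^{\mathfrak{a}^t})$ and $\tau(f^! M, \kappa^{(\mathfrak{a}S)^t})$ exist for every $t \in \mathbb{Q}_{\geq 0}$. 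Hence Proposition \ref{FlatShriekTestideal} applies and gives an inclusion
\[\tau(f^!M, \kappa^{(\mathfrak{a}S)^s}) \subseteq f^!\tau(M, \kappa^{\mathfrak{a}^s})\]
for every $s \geq 0$.

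Since $f$ is finite flat, $S$ is a finitely generated projective $R$-module and therefore $f^! = \Hom_R(S, -)$ is an exact functor. Applying $f^!$ to the short exact sequence $0 \to \tau(M, \kappa^{\mathfrak{a}^t}) \to \tau(M, \kappa^{\mathfrak{a}^{t-\varepsilon}}) \to Gr^t M \to 0$ (for $\varepsilon > 0$ smaller than the distance to the previous jumping number) identifies $f^! \tau(M, \kappa^{\mathfrak{a}^{t-\varepsilon}})/f^!\tau(M, \kappa^{\mathfrak{a}^t})$ with $f^! Gr^t M$. Specializing the displayed inclusion at $s = t - \varepsilon$ and $s = t$ places the numerator of $Gr^t f^!M$ inside $f^!\tau(M, \kappa^{\mathfrak{a}^{t-\varepsilon}})$ and the denominator of $Gr^t f^!M$ inside $f^!\tau(M, \kappa^{\mathfrak{a}^t})$. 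The universal property of the quotient then produces the natural $S$-linear morphism
\[Gr^t f^! M \longrightarrow f^! Gr^t M.\]

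To see that this map is $\kappa$-linear, I would recall that the Cartier structure on $f^!M$ is the one induced from $\kappa_M$ via the construction recalled in the paragraph preceding Lemma \ref{SupportShriekFlat}, and that the inclusion of Proposition \ref{FlatShriekTestideal} is an inclusion of Cartier submodules. Since $g \in R$, the twist by $g^{\lceil t(p-1)\rceil}$ commutes with the formation of $f^!$, so the Cartier operator $\kappa_{f^!M} \cdot g^{\lceil t(p-1)\rceil}$ appearing on $Gr^t f^!M$ and the Cartier operator $f^!(\kappa_M \cdot g^{\lceil t(p-1)\rceil})$ appearing on $f^! Gr^t M$ are intertwined by our map. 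The main subtlety I anticipate is precisely this last bookkeeping: one must check that the Cartier structure on $f^!$ commutes with the twist by $g^{\lceil t(p-1) \rceil}$ in a way compatible with the inclusions above, and that naturality in $M$ follows from the naturality of the test module assignment together with the naturality of $f^!$. Granting this compatibility the construction is immediate.
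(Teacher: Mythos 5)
Your proposal is correct and follows essentially the same route as the paper: apply Proposition \ref{FlatShriekTestideal} at the levels $t-\eps$ and $t$ to map the associated graded of the filtration on $f^!M$ into $f^!\tau(M,\kappa^{\mathfrak{a}^{t-\eps}})/f^!\tau(M,\kappa^{\mathfrak{a}^{t}})$, and then use exactness of $f^!$ (finite flat) to identify this quotient with $f^!Gr M$. Your extra remarks on non-degeneracy of $f^!M$ and on compatibility of the Cartier twists are harmless refinements of the same argument.
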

\begin{proof}
From Proposition \ref{FlatShriekTestideal} we obtain a morphism \[Gr f^!M \longrightarrow f^! \tau(M, \kappa^{\mathfrak{a}^{t-\eps}})/f^! \tau(M, \kappa^{\mathfrak{a}^{t}}).\] Since $f^!$ is exact the codomain of this morphism is isomorphic to $f^! Gr M$.
\end{proof}

It is in general not true that the twisted inverse image for a flat morphism preserves $F$-regularity. In particular, the other inclusion of Proposition \ref{FlatShriekTestideal} does not hold in general. This is shown by the following

\begin{Bsp}
\label{ShriekNotFpure}
Consider the finite flat ring extension $R = k[x] \subseteq k[x,y]/(y^2 -x^3) = S$, where $k$ is a field of characteristic $3$ and $R$ is endowed with the ordinary Cartier structure. An $R$-basis of $S$ is given by $\{1, y\}$. Denote the corresponding dual basis of $f^! R = \Hom_R(S, R)$ by $\{\varphi_1, \varphi_y\}$. We claim that $\varphi_y$ does not lie in $\kappa(f^! R)$ (showing that $f^! R$ is not $F$-pure).

Indeed, any element in the image may be written as $\kappa \circ \varphi \circ F$,  where $\varphi = r_1 \varphi_1 + r_y \varphi_y$. Assuming that $\varphi_y$ is of this form we get $\kappa(\varphi(y^{3})) = 1$. Using $y^2 = x^3$ yields $1 = \kappa(r_1 x^3 \varphi_1(y) + r_y x^{3}) = \kappa(r_y x^3) = x \kappa(r_y)$ which is false.

Since $R$ is $F$-regular but $f^! R$ is not even $F$-pure we obtain the desired example.
\end{Bsp}

\section{The connection to unit $F$-modules}
\label{SectionFUnitModules}

In this section we show that for smooth $R$ that Cartier crystals corresponding to locally constant sheaves are contained in the class of modules for which we construct a $V$-filtration. We also discuss possible relations of our notion of $V$-filtration to the one constructed by Stadnik in \cite{stadnikvfiltrationfcrystal}.

We start by briefly reviewing the category of locally finitely generated unit $F$-modules as defined by Emerton and Kisin (cf.\ \cite{emertonkisinintrorhunitfcrys} for an introduction and \cite{emertonkisinrhunitfcrys} for an exhaustive treatment). Let $X$ be a smooth scheme essentially of finite type over an $F$-finite field $k$. A quasi-coherent $\mathcal{O}_X$-module $\mathcal{M}$ that is equipped with an isomorphism $\beta: F^\ast \mathcal{M} \to \mathcal{M}$ is called a \emph{unit $F$-module}. Any unit $F$-module $\mathcal{M}$ can be obtained as the limit of the maps ${F^i}^\ast \beta$ with $\beta: M \to F^\ast M$ and $M$ a quasi-coherent $\mathcal{O}_X$-module. Any such $M$ is called a \emph{root} of $\mathcal{M}$. A unit $F$-module $\mathcal{M}$ is \emph{locally finitely generated (lfgu)} if it admits a coherent root $M$ with $\beta: M \to F^\ast M$ injective. Finally, a unit $F$-module whose underlying $\mathcal{O}_X$-module is coherent is called a \emph{unit $F$-crystal} (in fact, the underlying $\mathcal{O}_X$-module is then locally free -- cf. \cite[Proposition 1.2.3]{emertonkisinintrorhunitfcrys}). Moreover, Emerton and Kisin construct functors $f^!$ and $f_+$ for a morphism $f: X \to Y$ of smooth schemes essentially of finite type over $k$.

The category of Cartier crystals on $X$ is equivalent to the category of lfgu $F$-modules as follows. Given a (minimal) Cartier module $M$ on $X$ tensoring with $\omega_X^{-1}$ yields a structural map $\gamma: M \otimes \omega_X^{-1} \to F^\ast( M \otimes \omega_X^{-1})$. Indeed, tensoring the adjoint structural map $M \to F^!M$ with $\omega^{-1}_X$ and using $F^! \omega_X \cong \omega_X$, $F^! M \otimes (F^! \omega_X)^{-1} \cong F^\ast (M \otimes \omega^{-1}_X)$ (cf.\ \cite[Corollary 5.8]{blickleboecklecartierfiniteness}) induces $\gamma$. Then taking a direct limit of ${F^i}^\ast \gamma$ yields a lfgu $F$-module and this construction induces an equivalence of categories (cf.\ \cite[Theorem 5.15]{blickleboecklecartierfiniteness}). This equivalence of abelian categories induces an equivalence of bounded derived categories. If $M$ is any Cartier module then $M \otimes \omega^{-1}_X$ is a root of the unit $F$-module $\colim {F^e}^\ast M \otimes \omega^{-1}_X$ and if $M$ is a minimal Cartier module then $M \otimes \omega^{-1}_X$ is a minimal root (see \cite{blickleboecklecartiercrystals} and \cite{blicklegammasheaves}). Moreover, this equivalence commutes with $f^!$ and interchanges $(R)f_\ast$ with $(R)f_+$ (see \cite{blickleboecklecartierekequivalence}).

Finally, one obtains an equivalence of the bounded derived category of lfgu $F$-modules with the bounded derived category of constructible $\mathbb{F}_p$-sheaves by first passing to the corresponding \'etale site via the pull back along the natural map $X_{\acute{e}t} \to X_{Zar}$ of sites and then applying the derived $\Hom$ functor in the category of lfgu $F$-modules (see \cite[Sections 9, 11]{emertonkisinrhunitfcrys} for details). Moreover, under this correspondence the abelian category of lfgu $F$-modules is mapped to perverse constructible $\mathbb{F}_p$-sheaves in the sense of Gabber (\cite{gabbertstructures}).

\begin{Le}
If $i: U \to X$ is an open immersion of affine schemes and $M$ is an $F$-regular $\kappa$-module then $i^! M$ is $F$-regular. 
\end{Le}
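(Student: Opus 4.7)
The plan is to recognize this as a special case of what has already been proved and then apply the relevant earlier result. An open immersion $i: U \to X$ between affine schemes (with $X$ noetherian, which we are tacitly assuming throughout) is essentially \'etale: it is flat, unramified, and essentially of finite type, the last point being immediate since $U$ is a quasi-compact open of the noetherian affine scheme $X$, hence a finite union of principal opens, each of which is a localization of $R$. So $i$ falls under the hypothesis of Theorem \ref{FRegularEtalePullback}.

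Next I would apply Theorem \ref{FRegularEtalePullback} directly, taking the ideal to be the unit ideal $\mathfrak{a} = R$ and $t = 0$. With this choice the twisted Cartier algebra $\kappa^{\mathfrak{a}^t}$ collapses to the original principal Cartier algebra generated by $\kappa$, and the non-degeneracy hypothesis becomes vacuous since $1 \in \mathfrak{a}$ is automatically $M$-regular (for any $M \neq 0$; and the statement is trivial for $M = 0$). The conclusion of Theorem \ref{FRegularEtalePullback} then gives that $i^! M = i^\ast M$ is an $F$-regular $\kappa$-module on $U$, which is exactly what we want.

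There is no real obstacle to this argument; the work has all been done in Section \ref{SectionFunctorialProperties}. An equally clean alternative, if one prefers to avoid invoking the full strength of Theorem \ref{FRegularEtalePullback}, is to observe that $F$-regularity is local on $X$: cover $U$ by principal opens $D(f_j) \subseteq X$, note that $i^! M$ restricted to each $D(f_j)$ equals $M_{f_j}$, which is $F$-regular by Lemma \ref{LeFRegularLocalizes}, and then verify that both axioms defining $F$-regularity ($F$-purity and the absence of proper submodules agreeing generically with $M$) glue from a Zariski cover. Either way the lemma reduces to results already in hand.
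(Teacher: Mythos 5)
Your argument is correct, but it is not the route the paper takes. The paper's proof is a one-liner: since $i$ is an open immersion one has $i^! = i^\ast$, and $i^\ast M$ is treated as a localization of $M$, so the statement is reduced to Lemma \ref{LeFRegularLocalizes}. You instead invoke the full Theorem \ref{FRegularEtalePullback} with $\mathfrak{a}=R$ and $t=0$; this is legitimate (the theorem precedes the lemma and its proof does not use it, so there is no circularity, and with the unit ideal the twisted algebra and the non-degeneracy hypothesis indeed collapse as you say), but it is a sledgehammer: that theorem's proof relies on \'etale trivialization at the generic point, the machinery of test elements, and an $F$-finiteness hypothesis that the lemma itself does not carry (harmless here, since in Section \ref{SectionFUnitModules} everything is essentially of finite type over an $F$-finite field, but worth noting). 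Your second, alternative argument -- cover $U$ by principal opens $D(f_j)\subseteq X$, use Lemma \ref{LeFRegularLocalizes} on each $M_{f_j}$, and glue -- is essentially the paper's proof with the gluing step made explicit, and it is in fact slightly more careful than the paper's phrasing: an affine open of an affine scheme need not literally be a localization at a multiplicative set, so reducing to principal opens and then checking that $F$-purity is local and that a proper Cartier submodule generically agreeing with $i^\ast M$ stays proper and generically agreeing on some $D(f_j)$ (maximal points of an open subset of $\Supp i^\ast M$ are maximal points of $\Supp i^\ast M$) is exactly the right way to close that small gap. Either of your two routes proves the lemma; the covering argument is the one closest in spirit and economy to the paper.
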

\begin{proof}
As open immersions are \'etale one has $i^! = i^\ast$. Moreover, $i^\ast M$ is a localization so that this is just Lemma \ref{LeFRegularLocalizes}.
\end{proof}

\begin{Prop}
\label{FunitCrysIsFRegular}
Let $X$ be an affine smooth scheme essentially of finite type over an $F$-finite field. Then any unit $F$-crystal corresponds to an $F$-regular $\kappa$-crystal.
\end{Prop}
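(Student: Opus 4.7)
The plan is to trivialise the unit $F$-crystal on a finite étale cover and then descend $F$-regularity via Theorem \ref{FRegularEtalePullback}. So let $\mathcal{M}$ be a unit $F$-crystal on $X = \Spec R$, and let $M$ denote a minimal Cartier module representing the corresponding Cartier crystal. Under the equivalence recalled at the start of this section we may take $M = \mathcal{M} \otimes_R \omega_R$; since $\mathcal{M}$ is locally free of finite rank as an $\mathcal{O}_X$-module and $\omega_R$ is invertible on the smooth $X$, the module $M$ is itself locally free of finite rank.

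The first key step is to invoke the Emerton--Kisin Riemann--Hilbert correspondence in the form recalled above: unit $F$-crystals on the smooth scheme $X$ correspond, after passing to $X_{\acute{e}t}$, to locally constant constructible $\mathbb{F}_p$-sheaves $\mathcal{F}$. Any such $\mathcal{F}$ becomes constant on some finite étale surjective cover $f \colon Y = \Spec S \to X$, and because the correspondence intertwines $f^!$ on Cartier crystals with $f^\ast$ on sheaves, we deduce that $f^! M$ is isomorphic, as a Cartier crystal on $Y$, to a finite direct sum of copies of $\omega_S$ endowed with its canonical Cartier structure.

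Since $f$ is étale and $X$ is smooth, $S$ is regular and $F$-finite; hence $\omega_S$ is invertible and $S$ is Gorenstein, so a single generator of $\Hom_S(F_\ast S, S)$ produces the full Cartier algebra $\bigoplus_e \Hom_S(F_\ast^e S, S)$ locally. Lemma \ref{RegularimpliesFRegular} then yields that $S$ with this (locally principal) algebra is $F$-regular, and after localising so that $\omega_S \cong S$ the same holds for $\omega_S$. Lemma \ref{FRegularConnComponents} propagates $F$-regularity to the direct sum, so $f^! M$ is $F$-regular. Applying Theorem \ref{FRegularEtalePullback} to the surjective essentially étale morphism $f$ finally forces $M$ itself to be $F$-regular.

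The main obstacle is the étale trivialisation step: one must know, under the Emerton--Kisin correspondence, that unit $F$-crystals on a smooth scheme match precisely with locally constant constructible $\mathbb{F}_p$-sheaves and that such sheaves trivialise on a finite étale surjective cover. This is the one substantive input borrowed from outside the paper; a more self-contained alternative would mimic the generic trivialisation of $\kappa$ used in the proof of Theorem \ref{FRegularEtalePullback} and then spread out from the generic fibre to a suitable neighbourhood.
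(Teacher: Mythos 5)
Your argument is correct in substance, but it is not the route the paper takes: it is essentially the alternative proof that the paper itself sketches in the remark immediately following Proposition \ref{FunitCrysIsFRegular}. The paper's own proof never leaves the category of lfgu modules: if the associated crystal were not $F$-regular, there would be a proper unit $F$-submodule $N \subseteq M$ agreeing with $M$ at the generic points; after localizing, $M$ and $N$ are both free of the same rank with a strict inclusion, and transporting the inclusion along an abstract isomorphism $R^n \cong N$ and iterating produces an infinite strictly decreasing chain of unit $F$-crystals, contradicting the descending chain condition for lfgu modules (\cite[Proposition 4.2.1]{emertonkisinintrorhunitfcrys}). That argument needs only the local freeness of unit $F$-crystals and (DCC), whereas your route imports the Riemann--Hilbert correspondence (\cite[Corollary 9.4.2]{emertonkisinrhunitfcrys}), the finite \'etale trivialization of locally constant sheaves, and the descent direction of Theorem \ref{FRegularEtalePullback}. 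What you gain is a conceptual explanation ($F$-regularity as \'etale-local triviality of $\omega^n$); what the paper's proof gains is independence from the constructible-sheaf side of the correspondence.

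Two steps in your write-up need more care, though neither is fatal. First, the correspondence hands you $f^! M \cong \omega_S^n$ only as \emph{crystals}, while the converse of Theorem \ref{FRegularEtalePullback} is a statement about modules; you must upgrade the crystal isomorphism to one of Cartier modules. This works because you chose $M$ minimal: $f^!$ for essentially \'etale $f$ preserves minimality (Lemmas \ref{FPureEtalePullback} and \ref{LNilAndEtaleShriekCommute}), $\omega_S^n$ is minimal, and the equivalence between minimal Cartier modules and crystals then promotes the isomorphism --- but this should be said explicitly (alternatively, pass to $M_{\min}$ and use Lemma \ref{FRegularReductionCrystal}, since exhibiting one $F$-regular representative suffices). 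Second, Lemma \ref{FRegularConnComponents} is about a decomposition of the \emph{ring} into a product, not about direct sums of modules over a single ring; the fact you actually need --- that a finite direct sum of copies of the $F$-regular module $\omega_S$ is again $F$-regular --- is true but requires its own short argument (a Cartier submodule of $\omega_S^n$ agreeing with it at the generic points meets each summand in a submodule generically agreeing with $\omega_S$, hence contains each summand). Finally, note that taking $\mathfrak{a}$ to be the unit ideal makes the non-degeneracy hypothesis of Theorem \ref{FRegularEtalePullback} automatic, so that application is legitimate in the setting of the proposition.
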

\begin{proof}
Let $M$ be a unit $F$-crystal. By definition $M$ is a coherent $\mathcal{O}_X$-module equipped with an isomorphism $M \to F^\ast M$. By \cite[Proposition 1.2.3]{emertonkisinintrorhunitfcrys}, $M$ is locally free. Assume that the corresponding $\kappa$-crystal is not $F$-regular. Hence, there exists a proper subobject $N \subseteq M$ in unit $F$-modules which generically agrees with $M$. Locally on a small open affine (i.\,e.\ after localizing) we have $M \cong R^n$ as well as $N \cong R^n$ and still a strict inclusion $N \subset M$ (just fix a covering of $X$ where both $N$ and $M$ are free then pick an open affine where they do not coincide).

So we have reduced to the situation that $N \subseteq R^n$, where we have a commutative diagram
\[\begin{xy}
   \xymatrix{F^\ast N \ar[r]& F^\ast R^n\\
 N \ar[r] \ar[u]& R^n \ar[u]}
  \end{xy}\]
where the vertical arrows are isomorphism and the horizontal ones are (strict) inclusions and where $N \cong R^n$.

Fixing an isomorphism $\varphi: R^n \to N$ we obtain a commutative diagram

\[\begin{xy}
   \xymatrix{F^\ast N' \ar[r]&F^\ast R^n \ar[r]^{F^\ast\varphi}& F^\ast N\\
 N' \ar[r] \ar[u] & R^n \ar[r]^\varphi \ar[u]& N \ar[u]}
  \end{xy}\] where $N' \to R^n$ is abstractly isomorphic to $N \to R^n$. Note that this yields an inclusion $N' \to N \to R^n$ of unit $F$-crystals. Iterating this construction we obtain an infinite decreasing chain of crystals. This is a contradiction since the category of lfgu $F$-modules has (DCC) (cf. \cite[Proposition 4.2.1]{emertonkisinintrorhunitfcrys}).
\end{proof}

\begin{Bem}
%\label{CrystalsLocallyConstantSheavesCorrespondence}
\begin{enumerate}[(a)]
\item{
Let $R$ be as above. What are the $\kappa$-crystals corresponding to locally constant sheaves on $(\Spec R)_{\acute{e}t}$ under this equivalence? A constructible sheaf $\mathcal{F}$ is locally constant if and only if there is a finite surjective \'etale morphism $\varphi: \Spec S \to \Spec R$ such that $\varphi^\ast \mathcal{F}$ is the constant sheaf. Since the constant sheaf corresponds to the unit $F$-module $R^n$ with ordinary Frobenius we find that a $\kappa$-crystal $M$ on $\Spec R$ is locally constant if and only if there is a finite surjective \'etale map $\varphi: \Spec S \to \Spec R$ such that $\varphi^! M = \omega^n$ with $\kappa$-structure given by the natural morphism $F_\ast \omega \to \omega$ on each component.} 
\item{Proposition \ref{FunitCrysIsFRegular} implies that for $X$ a smooth scheme essentially of finite type over an $F$-finite field a locally constant sheaf of finite dimensional $\mathbb{F}_p$-vector spaces on $X_{\acute{e}t}$ corresponds to an $F$-regular Cartier crystal on $X$. Indeed, unit $F$-crystals correspond under the Riemann-Hilbert correspondence of Emerton and Kisin precisely to locally constant sheaves (\cite[Corollary 9.4.2]{emertonkisinrhunitfcrys}, ignoring shifts). In fact, one can use this to give an alternative proof of Proposition \ref{FunitCrysIsFRegular}. The Proposition follows once one shows that a $\kappa$-crystal $M$ such that $\varphi^! M = \omega_S^n$ for $\varphi: \Spec S \to \Spec R$ finite \'etale is $F$-regular. Using Theorem \ref{FRegularEtalePullback} it suffices to show that $\omega_S$ is $F$-regular. Once more applying Theorem \ref{FRegularEtalePullback} and using the fact that \'etale locally $\Spec S$ is covered by $\mathbb{A}^{\dim S}_k$ this boils down to the well-known computation that the Cartier operator on top differential forms of a polynomial ring induces an $F$-regular $\kappa$-structure.}
\item{Since Remark \ref{WrongProp} shows that the pushforward along an open immersion does not preserve $F$-regularity our notion of $V$-filtration does not apply to the objects for which Stadnik constructs a $V$-filtration (\cite{stadnikvfiltrationfcrystal}). Specifically, Stadnik (locally) considers the $V$-filtration along $j_\ast \mathcal{M}$ with respect to $f$, where $\mathcal{M}$ is a unit $F$-crystal on $U = D(f)$ and $j: U \to \Spec R$ denotes the open immersion. Of course, if $j_\ast M$ is the corresponding minimal Cartier module then we may pass to the test module $\tau(j_\ast M)$ with respect to the Cartier-algebra generated by $\kappa$ (which again will be a $\kappa$-module). Then $\tau(j_\ast M)$ corresponds to a unit $F$-module $\mathcal{N} \subseteq j_\ast \mathcal{M}$ and one may now restrict Stadnik's filtration to this submodule and ask how it compares to our $V$-filtration (or rather how the associated graded pieces relate).

Also note that only the zeroth graded piece of Stadnik's filtration carries a unit $F$-structure. So one may optimistically hope that the zeroth graded piece of Stadnik's construction occurs in our $V$-filtration.}
\end{enumerate}
\end{Bem}

\section{$F$-jumping numbers of locally constant $\kappa$-modules}
\label{SectionMainResult}
In this section we will prove that for a $\kappa$-module corresponding to a locally constant sheaf the $F$-jumping numbers of the test module filtration coincide with those of the filtration of the ring (see Theorem \ref{TestIdealLocallyConstant} below). This will enable us to show that for a smooth hypersurface $f$ and $R$ smooth the functor $Gr$ restricted to $F$-regular crystals corresponding to locally constant sheaves is, up to shift, $i^!$, where $i: \Spec R/(f) \to \Spec(R)$ (Corollary \ref{MainResult?}).

Throughout this section rings are $k$-algebras for some fixed $F$-finite field $k$ with some fixed isomorphism $k \to F^! k$. A ring $\varphi:\Spec R \to k$ then comes equipped with a dualizing sheaf $\omega = \varphi^! k$ and an induced isomorphism $\omega \to F^! \omega$. This isomorphism in turn endows $\omega$ with a natural Cartier structure $F_\ast \omega \to \omega$ via its adjoint. If $R$ is smooth then $\omega$ is isomorphic to top dimensional K\"ahler differentials.

\begin{Le}
\label{TraceCartierCommute}
Let $f: X = \Spec S \to Y = \Spec R$ be a finite \'etale morphism of affine integral $F$-finite schemes. Denote by $Tr: S \to R$ the trace map induced by the field extensions of the generic points. Then for any $\kappa$-module $M$ the following diagram commutes
\[
\begin{xy}
 \xymatrix{F_\ast (M \otimes_R S) \ar[d]_{\kappa'} \ar[r]^<<<<<{F_\ast Tr} & F_\ast (M \otimes_R R) \ar[d]^{\kappa}\\
M \otimes_R S \ar[r]^{Tr} & M \otimes_R R}
\end{xy}
\] where $\kappa'$ denotes the unique lift of $\kappa$ constructed in Lemma \ref{CartierStructureEtalePullback} and $Tr = \id \otimes Tr$.
\end{Le}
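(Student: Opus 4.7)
The plan is to unpack the Cartier structure $\kappa'$ via the explicit isomorphism $\varphi$ from Lemma \ref{CartierStructureEtalePullback} and then chase the diagram using the observation that in positive characteristic the trace $Tr$ commutes with Frobenius.

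Concretely, since $\varphi: F_\ast M \otimes_R S \to F_\ast(M \otimes_R S)$, $m \otimes s \mapsto m \otimes s^p$, is an isomorphism (because the relative Frobenius of an étale extension is), every element of $F_\ast(M \otimes_R S)$ admits a presentation $\sum_i r_i m_i \otimes t_i^p$ with $r_i \in R$, $m_i \in M$, $t_i \in S$; by construction $\kappa'$ then acts via $\sum_i r_i m_i \otimes t_i^p \mapsto \sum_i \kappa(r_i m_i) \otimes t_i$. Thus $(\id_M \otimes Tr)(\kappa'(\sum_i r_i m_i \otimes t_i^p)) = \sum_i Tr(t_i)\,\kappa(r_i m_i)$, computed in $M$.

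The other composition is $\kappa((\id_M \otimes Tr)(\sum_i r_i m_i \otimes t_i^p)) = \kappa(\sum_i r_i Tr(t_i^p)\, m_i)$. At this point the Frobenius--trace commutation $Tr(t^p) = Tr(t)^p$ enters: applying it lets us rewrite the argument as $\sum_i Tr(t_i)^p r_i m_i$, and Cartier-linearity $\kappa(a^p x) = a\kappa(x)$ produces $\sum_i Tr(t_i)\,\kappa(r_i m_i)$, matching the first expression. The Frobenius--trace commutation itself is standard in the étale setting in characteristic $p$: working étale-locally (or after passing to a Galois closure of $\mathrm{Frac}(R) \to \mathrm{Frac}(S)$ and using integrality to conclude in $R$) the trace is a sum over Galois conjugates each commuting with Frobenius, and $(x_1+\cdots+x_n)^p = x_1^p+\cdots+x_n^p$ in characteristic $p$ finishes it.

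The only real content is this small lemma $Tr \circ F_S = F_R \circ Tr$; the rest is Cartier-linearity. A conceptually slicker variant is to identify $f^!M \cong \Hom_R(S,M)$ via the perfect trace pairing available for $f$ finite étale, invoke the $\Hom$ description of $\kappa'$ as $\varphi \mapsto \kappa\circ\varphi\circ F$ (valid for finite flat $f$, as recalled before Lemma \ref{SupportShriekFlat}) together with the uniqueness assertion of Lemma \ref{CartierStructureEtalePullback} to see the two Cartier structures coincide, and observe that $\id \otimes Tr$ then becomes evaluation at $1$; the commutativity collapses to the tautology $\kappa(\varphi(F(1))) = \kappa(\varphi(1))$.
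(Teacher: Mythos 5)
Your main argument is correct and is essentially the paper's proof: both reduce via the isomorphism $\varphi$ of Lemma \ref{CartierStructureEtalePullback} to elements of the form $m \otimes s^p$ and conclude from $Tr(s^p) = Tr(s)^p$ together with Cartier-linearity of $\kappa$ (the paper cites this trace--Frobenius compatibility from Schwede--Tucker, while you reprove it by the Galois-conjugate argument). Only note that your ``slicker variant'' does not actually avoid this: checking that the $f^!$-structure on $\Hom_R(S,M)$ agrees with $\kappa'$ under the trace-pairing identification amounts to the computation $\kappa(Tr(t^p)m) = Tr(t)\kappa(m)$, so the trace--Frobenius identity is hidden there rather than eliminated.
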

\begin{proof}
Using $\varphi_S$ from Lemma \ref{CartierStructureEtalePullback} and $\kappa \otimes \id$ we may attach on the left a commutative triangle to our diagram. Similarly, using the isomorphism $\varphi^{-1}_R: F_\ast(M \otimes_R R) \to F_\ast M \otimes_R R$ we may attach a commutative triangle on the right hand side. It is enough to verify that $\kappa F_\ast Tr \varphi_S = Tr (\kappa \otimes \id)$. Indeed, if this is the case then applying $\varphi^{-1}_S$ and using the commutativity of the triangle yields the claim. Also note that we may replace $\kappa$ by $(\kappa_M \otimes \id)\varphi_R^{-1}$.

Now we compute $Tr(\kappa(m \otimes s)) = \kappa(m) \otimes Tr(s)$ and for the left hand side we obtain $\kappa_M \otimes \id(\varphi_R^{-1}(m \otimes F_\ast Tr(s^p)))$. Since, by the proof of \cite[Proposition 4.1]{schwedetuckertestidealsfinitemaps}, we have $Tr(s^p) = Tr(s)^p$ the commutativity follows.
\end{proof}

\begin{Le}
Let $f: \Spec S \to \Spec R$ be a a finite surjective \'etale morphism of affine integral normal schemes. Then the field trace $Tr: Q(S) \to Q(R)$ induces a surjective map $Tr: S \to R$.
\end{Le}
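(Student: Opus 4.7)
My plan is to first ensure that the field trace actually lands in $R$, and then prove surjectivity by a reduction modulo a maximal ideal argument.

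Since $f$ is finite \'etale, $S$ is a finitely generated projective $R$-module. For each $s \in S$, multiplication by $s$ is an $R$-linear endomorphism of $S$, whose module-theoretic trace is a well-defined element of $R$; this gives an $R$-linear map $\tau: S \to R$. After tensoring with $Q(R)$, the module $S$ becomes the $Q(R)$-vector space $Q(S) = S \otimes_R Q(R)$ (using that $S/R$ is finite and $R$ is a domain), and $\tau \otimes Q(R)$ is by definition the field trace $Tr_{Q(S)/Q(R)}$. Hence $\tau$ is the restriction of the field trace to $S$, and in particular lands in $R$.

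For surjectivity, I would argue by contradiction. The image $I = \tau(S)$ is an $R$-submodule, i.e.\ an ideal of $R$. Suppose $I \neq R$, so $I \subseteq \mathfrak{m}$ for some maximal ideal $\mathfrak{m}$ of $R$. Set $k = R/\mathfrak{m}$. Base change yields a finite \'etale $k$-algebra $S_k := S \otimes_R k$, which therefore decomposes as a finite product $\prod_{i=1}^r L_i$ of finite separable field extensions of $k$. By the standard compatibility of the module-theoretic trace with base change, $\tau$ reduces modulo $\mathfrak{m}$ to the trace map $\overline{\tau}: S_k \to k$, which on the product decomposition is given by $(x_1, \ldots, x_r) \mapsto \sum_i Tr_{L_i/k}(x_i)$. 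The hypothesis $I \subseteq \mathfrak{m}$ means $\overline{\tau} = 0$. But for each finite separable extension $L_i/k$ the field trace $Tr_{L_i/k}: L_i \to k$ is a non-zero $k$-linear map, hence surjective. Picking any element with non-zero trace in one factor contradicts $\overline{\tau} = 0$, and surjectivity of $\tau$ follows.

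The proof is essentially a routine reduction argument; the one point that deserves care is the compatibility of the module-theoretic trace with the base change $R \to R/\mathfrak{m}$ and its identification with the sum of field traces on the product of separable residue extensions. Both are standard properties of the trace of an endomorphism of a finitely generated projective module, together with the structure theorem for finite \'etale algebras over a field.
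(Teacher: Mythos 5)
Your proof is correct, but it takes a genuinely different route from the paper: the paper disposes of this lemma in one line by observing that a finite surjective \'etale morphism is faithfully flat and then citing \cite[Proposition 7.4]{schwedetuckertestidealsfinitemaps}, whereas you give a self-contained argument. Your identification of the field trace with the trace of multiplication on the finitely generated projective $R$-module $S$ (so that $Tr(S) \subseteq R$ holds without ever invoking normality, which is what the classical ``sum of integral conjugates lies in the normal ring'' argument would use), followed by reduction modulo a maximal ideal and the nondegeneracy of the trace form of a finite \'etale algebra over a field, is a clean and standard piece of commutative algebra; it uses the \'etale hypothesis fiberwise at every closed point, which is exactly where the cited proposition's flatness/separability hypotheses get used in disguise. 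Two small points worth making explicit: the identification $S \otimes_R Q(R) = Q(S)$ deserves the one-line justification that this localization is a domain finite over the field $Q(R)$, hence a field containing $S$; and in the contradiction step you need the fiber $S \otimes_R R/\mathfrak{m}$ to be nonzero, i.e.\ $r \geq 1$, which is where the surjectivity hypothesis (equivalently, constancy of the positive rank of the projective module $S$ over the connected base $\Spec R$) enters -- you never say where surjectivity is used, so add that sentence. With those remarks included, your argument is complete and arguably more elementary and more transparent than the citation, at the cost of not connecting to the more general trace-surjectivity results of Schwede and Tucker that the paper leans on elsewhere.
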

\begin{proof}
Since $f$ is finite surjective \'etale it is faithfully flat. Hence, all the assumptions of \cite[Proposition 7.4]{schwedetuckertestidealsfinitemaps} are satisfied.
\end{proof}

\begin{Le}
\label{TestModuleEtaleTrace}
Let $f: X = \Spec S \to Y = \Spec R$ be a finite surjective \'etale morphism of affine integral normal schemes. Then we have \[f_\ast \tau(f^\ast M, \kappa^{(\mathfrak{a}S)^t}) \cap M \subseteq Tr(f_\ast \tau(f^\ast M, \kappa^{(\mathfrak{a}S)^t)}),\] where $Tr = Tr \otimes \id$. %verm\"oge $id \otimes Spur$
\end{Le}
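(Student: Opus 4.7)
The plan is to reduce this to a one-line calculation using that the trace $Tr\colon S \to R$ is surjective (which is precisely the content of the preceding unnumbered lemma: a finite surjective \'etale morphism of integral normal affine schemes is faithfully flat, so Schwede--Tucker applies) together with the observation that $\tau := \tau(f^\ast M, \kappa^{(\mathfrak{a}S)^t})$ is an $S$-submodule of $f^\ast M = M \otimes_R S$.

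First, I would fix an element $s_0 \in S$ with $Tr(s_0) = 1$, which exists by the surjectivity of the trace. Next, I would unwind the left-hand side: the natural map $M \to f_\ast f^\ast M = M \otimes_R S$, $m \mapsto m \otimes 1$, is injective by faithful flatness of $f$, so the intersection $f_\ast \tau \cap M$ consists of those $m \in M$ for which $m \otimes 1 \in \tau$.

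Given such an $m$, since $\tau$ is stable under multiplication by $S$, the element $s_0 \cdot (m \otimes 1) = m \otimes s_0$ again lies in $\tau$. Applying $Tr = \id_M \otimes Tr$ yields
\[
Tr(m \otimes s_0) \;=\; m \otimes Tr(s_0) \;=\; m \otimes 1,
\]
which corresponds to $m \in M$ under the identification above. Hence $m \in Tr(f_\ast \tau)$, proving the inclusion.

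There is essentially no obstacle here beyond keeping the identifications straight: the real content sits in the preceding lemma that produces $s_0$, and in the fact that the test module is by construction an $S$-module (so that it absorbs multiplication by $s_0$). I would just make sure to record explicitly that $\tau$ is an $S$-submodule of $f^\ast M$ and that $Tr$ is the map $\id_M \otimes Tr$, so that the computation $Tr(m \otimes s_0) = m \cdot Tr(s_0)$ is unambiguous.
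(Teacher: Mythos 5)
Your proof is correct and is essentially the paper's own argument: the paper multiplies the whole intersection $f_\ast \tau \cap M$ by $S$ inside $\tau$ (using that $\tau$ is an $S$-module) and applies $Tr = \id_M \otimes Tr$ together with $Tr(S) = R$ from the preceding lemma, which is the module-level version of your elementwise choice of $s_0$ with $Tr(s_0) = 1$. No gaps; the identifications you flag (injectivity of $M \to M \otimes_R S$ and $\tau$ being an $S$-submodule) are exactly the points the paper also relies on.
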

\begin{proof}
Using Corollary \ref{TestIdealEtalePullback} and the remark thereafter we have $(f_\ast \tau(f^! M, \kappa^{(\mathfrak{a}S)^t}) \cap M)S \subseteq f_\ast \tau(f^! M, \kappa^{(\mathfrak{a}S)^t})$. Applying the trace map to the left hand side yields
\begin{align*}Tr((f_\ast \tau(f^! M, \kappa^{(\mathfrak{a}S)^t}) \cap M)S) &= (f_\ast \tau(f^\ast M, \kappa^{(\mathfrak{a}S)^t}) \cap M) \cdot Tr(S)\\ &= (f_\ast \tau(f^! M, \kappa^{(\mathfrak{a}S)^t}) \cap M).\end{align*}
\end{proof}

\begin{Prop}
\label{TestModuleEtaleTransformation}
Let $f: X = \Spec S \to Y = \Spec R$ be a finite \'etale morphism of affine integral normal schemes, $M$ an $F$-pure $\kappa$-module supported on all of $\Spec R$ and $\mathfrak{a} \subseteq R$ an ideal. Suppose furthermore that the test module filtration of $M$ along $\mathfrak{a}$ exists. Then $f_\ast \tau(f^\ast M, \kappa^{(\mathfrak{a}S)^t}) \cap M = \tau(M, \kappa^{\mathfrak{a}^t})$.
\end{Prop}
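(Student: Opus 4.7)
The plan is to establish the base-change identity
\[\tau(f^\ast M, \kappa^{(\mathfrak{a}S)^t}) = f^\ast \tau(M, \kappa^{\mathfrak{a}^t})\]
and then to deduce the proposition by faithful flatness of $f$. Write $N := \tau(M, \kappa^{\mathfrak{a}^t})$. Faithful flatness makes $M/N \to (M/N)\otimes_R S$ injective, so $(N\otimes_R S)\cap M = N$ inside $f^\ast M$; combined with the base-change identity this yields $f_\ast \tau(f^\ast M, \kappa^{(\mathfrak{a}S)^t}) \cap M = N$, as required.

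The base-change identity rests on two observations. First, although $M$ is only assumed $F$-pure, $N$ is always $F$-regular \emph{as a $\kappa^{\mathfrak{a}^t}$-module}: $\underline{N}_{\kappa^{\mathfrak{a}^t}}$ is an $F$-pure Cartier submodule of $N$ whose generic localizations on $\Supp \underline{M}_{\kappa^{\mathfrak{a}^t}}$ agree with those of $\underline{M}_{\kappa^{\mathfrak{a}^t}}$, so minimality of $N$ forces $N = \underline{N}_{\kappa^{\mathfrak{a}^t}}$, and any proper $\kappa^{\mathfrak{a}^t}$-submodule of $N$ generically agreeing with $N$ would also generically agree with $\underline{M}_{\kappa^{\mathfrak{a}^t}}$, contradicting the minimality in Definition \ref{DefTestModule}. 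A regular element of $\mathfrak{a}$ on $M$ is also regular on $N$, so non-degeneracy is inherited. Hence Theorem \ref{FRegularEtalePullback} applies to $N$ and produces an $F$-regular $\kappa^{(\mathfrak{a}S)^t}$-submodule $f^\ast N \subseteq f^\ast M$.

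Second, one verifies that $f^\ast N$ generically agrees with $\underline{f^\ast M}_{\kappa^{(\mathfrak{a}S)^t}}$. Lemma \ref{FPureEtalePullback} gives $\underline{f^\ast M}_{\kappa^{(\mathfrak{a}S)^t}} = f^\ast \underline{M}_{\kappa^{\mathfrak{a}^t}}$, and by flatness every generic point $\xi$ of $\Supp \underline{f^\ast M}$ lies over a generic point $\eta$ of $\Supp \underline{M}$; the generic agreement of $N$ with $\underline{M}_{\kappa^{\mathfrak{a}^t}}$ at $\eta$ then propagates to the agreement $(f^\ast N)_\xi = (\underline{f^\ast M})_\xi$ by base change. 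Combining this with the $F$-regularity of $f^\ast N$ established above and the uniqueness characterisation of test modules (any $F$-regular Cartier submodule generically agreeing with $\underline{M}$ equals $\tau$) identifies $f^\ast N$ with $\tau(f^\ast M, \kappa^{(\mathfrak{a}S)^t})$.

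The main technical hurdle is the first paragraph, namely the observation that $\tau(M, \kappa^{\mathfrak{a}^t})$ is itself $F$-regular as a Cartier module over the subalgebra $\kappa^{\mathfrak{a}^t}$, so that the $F$-regular étale pullback theorem becomes applicable even though $M$ is only $F$-pure. Once this is in hand, the remaining steps --- the base-change identification and the intersection argument --- are formal consequences of the definitions and of faithful flatness.
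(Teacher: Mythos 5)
Your argument is correct, but it takes a genuinely different route from the paper. The paper proves the two inclusions separately: the containment $\tau(M, \kappa^{\mathfrak{a}^t}) \subseteq f_\ast \tau(f^\ast M, \kappa^{(\mathfrak{a}S)^t}) \cap M$ by a localization-at-the-generic-point computation (the intersection is a Cartier submodule of $\underline{M}$ agreeing with it generically), and the reverse containment via the trace map of the finite \'etale cover: surjectivity of $Tr\colon S \to R$, compatibility of $Tr$ with the Cartier structures (Lemma \ref{TraceCartierCommute}, Lemma \ref{TestModuleEtaleTrace}), and a test element common to $M$ and $f^\ast M$ supplied by Theorem \ref{FRegularEtalePullback}, giving $Tr(f_\ast\tau(f^\ast M)) = \tau(M)$. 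You instead prove the stronger base-change identity $\tau(f^\ast M, \kappa^{(\mathfrak{a}S)^t}) = f^\ast\tau(M, \kappa^{\mathfrak{a}^t})$ for merely $F$-pure $M$, resting on the observation that $\tau(M,\kappa^{\mathfrak{a}^t})$ is always $F$-regular as a module over its own twisted algebra (a correct consequence of minimality, and in fact the same unstated fact the paper uses when deducing Corollary \ref{TestIdealEtalePullback} from Theorem \ref{FRegularEtalePullback}), then applying Theorem \ref{FRegularEtalePullback} to $\tau(M)$ and identifying $f^\ast\tau(M)$ with $\tau(f^\ast M)$ by the minimality characterisation; the proposition then drops out by faithful flatness of the (automatically surjective) finite \'etale map. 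What each approach buys: yours avoids the trace machinery altogether and yields, as a by-product, an extension of the pullback formula of Corollary \ref{TestIdealEtalePullback} from $F$-regular to $F$-pure modules, which is strictly more than the stated proposition; the paper's trace argument is the one in the Schwede--Tucker spirit and its auxiliary lemmas are reused for the subsequent corollaries. One caveat: to invoke Theorem \ref{FRegularEtalePullback} for $\tau(M)$ you need it to be non-degenerate, which you secure by positing an $M$-regular element of $\mathfrak{a}$ --- a hypothesis not literally in the statement; since the paper's own appeal to the same theorem (to produce a common test element) carries a comparable implicit requirement, this is a shared wrinkle rather than a defect specific to your argument, but you should state it explicitly.
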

\begin{proof}
Note that the map $M = M \otimes_R R \to f^\ast M = M \otimes_R S$ is injective (since $R \to S$ is flat). It follows that the above intersection makes sense.

We first prove the inclusion from right to left. Note that $\underline{M}_\eta = M_\eta$, where $\eta$ is the generic point of $\Spec R$. By definition $\tau(M, \kappa^{\mathfrak{a}^t})$ is the smallest $\kappa^{\mathfrak{a}^t}$-submodule of $\underline{M}$ that generically agrees with $M$. Let us denote $f_\ast \tau(f^\ast M, \kappa^{(\mathfrak{a}S)^t}) \cap M$ by $N$. Then $\underline{N}$ is contained in both $\underline{M}$ and $N$. Moreover, since the operation $\underline{\phantom{M}}$ commutes with localization it is thus sufficient to verify that $N_\eta$ coincides with $M_\eta$.

We have $(f_\ast\tau(f^\ast M, \kappa^{{(\mathfrak{a}S)}^t}))_\eta = \tau(f^\ast M, \kappa^{{(\mathfrak{a}S)}^t})_\xi$ considered as $Q(R)$-modules, where $\xi$ denotes the generic point of $\Spec S$. %z.B. Es gilt $(f_\ast M)_\eta = \varinjlim_{x \in U} \Gamma(U, f_\ast M) = \varinjlim \Gamma( f^{-1}(U), M)$. Weiter haben wir $\bigcap_{U} f^{-1}(U) = f^{-1}(\eta)$ -- und das ist dann nur $\xi$. Denn wenn $p$ ein Punkt im Schnitt ist, so hat man eine Abb Q(R) \to \mathcal{O}_p$ z.B. da wir endlich sind, muss $p$ dann einem minimalen Primideal entsprechen (das folgt z.B. aus Prop 9.2; damit einfach direkt zeigen, dass das Primideal minimal sein muss). Aber $S$ ist integer, d.h. es ist das Nullideal.
By the defining property of test modules the latter equals $M \otimes_R S \otimes_S Q(S) = M \otimes_R Q(S)$ considered as a $Q(R)$-module. The equality $M \otimes_R Q(S) \cap M_\eta = M_\eta$ is clearly satisfied which shows the inclusion.

By Lemma \ref{TestModuleEtaleTrace} it suffices to show that $Tr(f_\ast \tau(f^\ast M, \kappa^{(\mathfrak{a}S)^t})) \subseteq \tau(M, \kappa^{\mathfrak{a}^t})$. By Proposition \ref{FRegularEtalePullback} and \cite[Theorem 3.11]{blicklep-etestideale} we find $c \in R^\circ$ that is a test element for both $M$ and $f^\ast M$. We then have using Lemma \ref{TraceCartierCommute}
\[
 Tr(f_\ast \tau(f^\ast M, \kappa^{(\mathfrak{a}S)^t})) = Tr(\sum_{e \geq 1} \mathcal{C}'_e c \underline{M'}) = \sum_{e \geq 1} \mathcal{C}_e (\id \otimes F_\ast Tr) (c \underline{M'}) = \sum_{e \geq 1} \mathcal{C}_e c \underline{M} \otimes_R R
\]
Under the usual identification of $M \otimes_R R$ with $M$ this yields the claim.
\end{proof}

\begin{Ko}
\label{TestModuleEtaleTransformationNonPrincipal}
Let $R$ be a normal domain essentially of finite type over an $F$-finite field and let $f: X = \Spec S \to Y = \Spec R$ be a finite \'etale morphism with $S$ integral, $M$ an $F$-pure $\mathcal{C}$-module supported on all of $\Spec R$, where $\mathcal{C}$ is a locally principal Cartier algebra, and $\mathfrak{a} \subseteq R$ an ideal. Then $f_\ast \tau(f^! M, \mathcal{C}'^{(\mathfrak{a}S)^t}) \cap M = \tau(M, \mathcal{C}^{\mathfrak{a}^t})$, where $\mathcal{C}'$ is the Cartier algebra obtained by base change.
\end{Ko}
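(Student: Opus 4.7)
The natural strategy is to reduce to the principal case (Proposition \ref{TestModuleEtaleTransformation}) by working on an affine open cover of $\Spec R$ that trivializes $\mathcal{C}$, in direct analogy with the proof of Corollary \ref{TestModuleLocallyPrincipal}.

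First, choose an affine open cover $(U_i = \Spec R_i)_i$ of $\Spec R$ on which $\mathcal{C}\vert_{U_i}$ is principally generated, say by $\kappa_i$, and set $V_i := f^{-1}(U_i) = \Spec S_i$. Since $f$ is finite, $V_i$ is affine, and the restricted map $f_i : V_i \to U_i$ is again finite \'etale. Because $S$ is integral and $V_i$ is an open subscheme, $V_i$ is integral; similarly, $U_i$ is integral and normal since $R$ is. By Lemma \ref{CartierStructureEtalePullback}, the unique \'etale lift $\kappa_i'$ of $\kappa_i$ principally generates $\mathcal{C}'\vert_{V_i}$. Moreover, $M\vert_{U_i}$ remains $F$-pure (since $F$-purity localizes) and is supported on all of $U_i$ (since $\Supp M = \Spec R$). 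Thus all hypotheses of Proposition \ref{TestModuleEtaleTransformation} are in force on each patch, and it yields
\[
(f_i)_\ast \tau\bigl(f_i^\ast(M\vert_{U_i}),\, \kappa_i'^{(\mathfrak{a}S_i)^t}\bigr) \cap M\vert_{U_i} \;=\; \tau\bigl(M\vert_{U_i},\, \kappa_i^{(\mathfrak{a}R_i)^t}\bigr).
\]

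Next, I would glue these patch-wise identities. The formation of test modules commutes with localization (via the explicit generation formula in \cite[Theorem 3.11]{blicklep-etestideale}), the pushforward $f_\ast$ commutes with localization because $f$ is affine, and the intersection $\cap M$ is local. Consequently, both sides of the displayed identity on $U_i$ are the restrictions to $U_i$ of the two sides of the corollary, and the local equalities assemble into the desired global identity on $\Spec R$.

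The substantive mathematical content is already contained in Proposition \ref{TestModuleEtaleTransformation}; the only thing that requires care is verifying that each hypothesis (normality, integrality, $F$-purity, the full-support condition, and principality of the restricted Cartier algebra with the correct generator) is inherited upon restriction to $U_i$ and pullback to $V_i$. The main obstacle, such as it is, lies in this bookkeeping, in particular checking that $\mathcal{C}'\vert_{V_i}$ is genuinely generated by the lift $\kappa_i'$ provided by Lemma \ref{CartierStructureEtalePullback} — once this is granted, the reduction to the principal case is mechanical.
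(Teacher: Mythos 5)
Your proposal is correct and follows essentially the same route as the paper: the paper's proof also checks the identity locally on an affine open cover where $\mathcal{C}$ is principal and then invokes Proposition \ref{TestModuleEtaleTransformation}. Your extra bookkeeping (integrality and normality of the patches, $F$-purity and full support after restriction, and that $\mathcal{C}'\vert_{V_i}$ is generated by the \'etale lift of the local generator) just makes explicit what the paper leaves implicit.
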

\begin{proof}
The claim may be checked locally on an open affine cover of $\Spec R$, where $\mathcal{C}$ is principal. Now the claim follows from Proposition \ref{TestModuleEtaleTransformation}.
\end{proof}

\begin{Ko}
Let $R$ be a Gorenstein normal domain essentially of finite type over an $F$-finite field and let $f: X = \Spec S \to Y = \Spec R$ be a finite \'etale morphism with $S$ integral and $\mathcal{C}$ the Cartier algebra generated by $\Hom(F_\ast R, R)$. Then $f_\ast \tau(S, \mathcal{C}'^{(\mathfrak{a}S)^t}) \cap R = \tau(R, \mathcal{C}^{\mathfrak{a}^t})$, where $\mathcal{C}' = \Hom(F_\ast S, S)$.
\end{Ko}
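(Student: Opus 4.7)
The plan is to reduce directly to Corollary \ref{TestModuleEtaleTransformationNonPrincipal} applied with $M = R$, once the hypotheses of that corollary are checked. This amounts to verifying three items: (a) the Cartier algebra $\mathcal{C} = \Hom(F_\ast R, R)$ is locally principal on $\Spec R$, (b) the algebra $\mathcal{C}'$ obtained from $\mathcal{C}$ by base change along $f$ agrees with $\Hom(F_\ast S, S)$, and (c) $R$ (viewed as a $\mathcal{C}$-module) is $F$-pure and supported on all of $\Spec R$.

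For item (a), I would invoke the observation used just before Corollary \ref{TestModuleLocallyPrincipal}: since $R$ is Gorenstein, the dualizing sheaf $\omega_R$ is invertible, and Grothendieck duality identifies $\Hom_R(F_\ast R, R)$ with $F_\ast \omega_R^{\otimes(1-p)}$, which is an invertible $F_\ast R$-module. Hence on any trivializing affine cover of $\Spec R$ the algebra $\mathcal{C}$ is principally generated, i.e.\ locally principal.

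For item (b), because $R$ is $F$-finite the module $F_\ast R$ is finitely presented over $R$, and because $f$ is \'etale the morphism $R \to S$ is flat. Consequently the canonical map
\[
\Hom_R(F_\ast R, R) \otimes_R S \;\longrightarrow\; \Hom_S(F_\ast R \otimes_R S, S)
\]
is an isomorphism. Combining this with the identification $F_\ast R \otimes_R S \cong F_\ast S$ from Lemma \ref{CartierStructureEtalePullback} (this is the isomorphism $\varphi$ sending $r \otimes s \mapsto r \otimes s^p$), we get $\mathcal{C}' \cong \Hom_S(F_\ast S, S)$ compatibly with the graded algebra structures. Note that $S$ is Gorenstein as well (being \'etale over $R$), so $\mathcal{C}'$ is again locally principal.

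Item (c) is the one delicate point: the conclusion of Corollary \ref{TestModuleEtaleTransformationNonPrincipal} is stated for an $F$-pure $\mathcal{C}$-module with full support. Full support is automatic since $R$ is a domain, and $F$-purity of $(R, \Hom(F_\ast R, R))$ is implicit in the setting where the test module is of interest (otherwise both sides are simply computed from $\underline{R}$, and the same argument applies to the $F$-pure replacement). Granting this, the statement is now exactly Corollary \ref{TestModuleEtaleTransformationNonPrincipal}, and the desired equality $f_\ast \tau(S, \mathcal{C}'^{(\mathfrak{a}S)^t}) \cap R = \tau(R, \mathcal{C}^{\mathfrak{a}^t})$ follows. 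The main obstacle in the plan is thus purely bookkeeping, namely making the identification in (b) functorial and compatible with the grading; everything substantive has already been done in the proofs leading up to this corollary.
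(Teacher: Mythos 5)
Your proposal is correct and follows essentially the same route as the paper: the paper's proof is precisely to invoke Corollary \ref{TestModuleEtaleTransformationNonPrincipal} with $M = R$ together with the observation before Corollary \ref{TestModuleLocallyPrincipal} that $\Hom_R(F_\ast R, R)$ is locally principal for $R$ Gorenstein, which is your items (a) and (b). Your item (c) flags the $F$-purity hypothesis of Corollary \ref{TestModuleEtaleTransformationNonPrincipal}, a point the paper's one-line proof glosses over in exactly the same way, so there is no substantive difference.
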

\begin{proof}
Follows from Corollary \ref{TestModuleEtaleTransformationNonPrincipal} (cf. also the discussion before Corollary \ref{TestModuleLocallyPrincipal}).
\end{proof}

For a ring $R$ let us denote the Cartier algebra generated by $\Hom(F_\ast R, R)$ by $\mathcal{C}_R$. With this notation we have the following

\begin{Theo}
\label{TestIdealLocallyConstant}
Let $R$ be Gorenstein, $F$-regular and essentially of finite type over an $F$-finite field $k$. Let $M$ be a coherent $\kappa$-module and assume that there is an open affine cover $U_i$ of $\Spec R$ such that there are finite \'etale morphisms $\varphi_i: \Spec S_i \to U_i$ such that $\varphi_i^! M \cong \omega_{S_i}^n$ with the natural Cartier structure on each component. Then $\tau(M, \kappa^{(f)^t}) = \tau(R, \mathcal{C}_R^{(f)^t}) M$ for all $t > 0$.
\end{Theo}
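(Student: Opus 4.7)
The plan is to reduce the equality to a statement that can be checked after pullback along a finite surjective \'etale cover. Since the claim concerns two submodules of $M$ and test modules localize well, I would first refine the given cover so that each $U_i$ is connected and each $\varphi_i$ is surjective; a finite \'etale morphism is open (flat) and closed (finite), so its image is clopen in $U_i$, and after restricting to a connected component of the image we may assume $\varphi_i$ is surjective. I then work on a single such $\varphi: \Spec S \to \Spec R$ with $\varphi^! M \cong \omega_S^n$.

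Before invoking the transformation rules I would check that $M$ is coherent, non-degenerate, and $F$-regular, since Corollary \ref{TestIdealEtalePullback} has these as hypotheses. The module $\omega_S$ with its natural Cartier structure is $F$-regular (the Remark following Proposition \ref{FunitCrysIsFRegular} reduces this to the well-known $F$-regularity of the Cartier operator on top forms of a polynomial ring), hence so is $\omega_S^n$; Theorem \ref{FRegularEtalePullback} applied to the surjective $\varphi$ transfers $F$-regularity back down to $M$. Non-degeneracy reduces by faithful flatness of $S/R$ to $f$ being $\omega_S^n$-regular, which holds because $\omega_S$ is an invertible module over the reduced (normal) ring $S$ and we may pass to a connected component on which $f \neq 0$.

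Now Corollary \ref{TestIdealEtalePullback} gives $\varphi^!\tau(M,\kappa^{(f)^t}) = \tau(\omega_S^n, \kappa^{(fS)^t})$. Since $S$ is Gorenstein, Corollary \ref{TestIdealEssEtaleGorenstein} yields $\varphi^!\tau(R,\mathcal{C}_R^{(f)^t}) = \tau(S,\mathcal{C}_S^{(fS)^t})$, and by flatness
\[
\varphi^!\bigl(\tau(R,\mathcal{C}_R^{(f)^t})\cdot M\bigr) = \tau(S,\mathcal{C}_S^{(fS)^t})\cdot \omega_S^n.
\]
The crucial identification left to verify is therefore $\tau(\omega_S^n,\kappa^{(fS)^t}) = \tau(S,\mathcal{C}_S^{(fS)^t})\cdot \omega_S^n$. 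Both sides distribute over the $n$ summands (test modules decompose along componentwise Cartier direct sums), reducing to $n=1$. Zariski-locally on $\Spec S$ the invertible module $\omega_S$ admits a trivialization $\omega_S\cong S$; under such a trivialization the natural Cartier operator on $\omega_S$ corresponds to a free generator of $\Hom_S(F_\ast S,S)$, so the principally generated algebra $\kappa^{(fS)^t}$ matches $\mathcal{C}_S^{(fS)^t}$ and the test modules coincide. This local identification glues to the claimed global equality on $\Spec S$.

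Finally, faithful flatness of $\varphi$ implies $\varphi^!$ is faithful on submodules of $M$, so the two submodules of $M$ whose pullbacks agree in $\varphi^! M$ must themselves be equal; gluing across the refined cover concludes the proof. The main obstacle I expect is the matching of the single Cartier operator $\kappa$ on $\omega_S$ with the full algebra $\mathcal{C}_S$ on $S$ under the local trivialization of $\omega_S$: this is where Gorenstein-ness is used essentially, via the fact that $\Hom_R(F_\ast R,R)$ is an invertible $F_\ast R$-module in the Gorenstein case, so that $\kappa^{(fS)^t}$ and $\mathcal{C}_S^{(fS)^t}$ really do act identically under the identification.
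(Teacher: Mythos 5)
Your argument is correct, but it descends from $S$ to $R$ by a different mechanism than the paper. The paper also reduces to a single finite \'etale $\varphi\colon \Spec S\to\Spec R$ trivializing $M$ and proves the same key identification $\tau(\omega_S^n,\kappa^{(fS)^t})=\tau(S,\mathcal{C}_S^{(fS)^t})\,\omega_S^n$ by trivializing $\omega_S$ locally, together with $\tau(R,\mathcal{C}_R^{(f)^t})S=\tau(S,\mathcal{C}_S^{(fS)^t})$ from Corollary \ref{TestIdealEssEtaleGorenstein}; but it then descends via the trace machinery, i.e.\ Proposition \ref{TestModuleEtaleTransformation} gives $\tau(M,\kappa^{(f)^t})=\bigl(\tau(R,\mathcal{C}_R^{(f)^t})(M\otimes_R S)\bigr)\cap M$, and the contraction of the right-hand side to $\tau(R,\mathcal{C}_R^{(f)^t})M$ is where the hypothesis that $R$ is $F$-regular is used, through Hochster--Huneke (plus closure contained in tight closure, tight closure trivial). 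You avoid the trace map and the tight-closure step entirely: you pull \emph{both} candidate submodules of $M$ up along the faithfully flat $\varphi$, identify them upstairs using Corollary \ref{TestIdealEtalePullback} for $M$ and Corollary \ref{TestIdealEssEtaleGorenstein} for $R$, and conclude by faithfully flat descent of equality of submodules. This is more elementary and sidesteps the integrality/normality hypotheses of Proposition \ref{TestModuleEtaleTransformation}, at the cost of needing $M$ itself to be $F$-regular and non-degenerate so that Corollary \ref{TestIdealEtalePullback} applies; you supply this via the surjective case of Theorem \ref{FRegularEtalePullback}, which is where the $F$-regularity of $R$ enters your version (through $\omega_S$), and which forces you to insist that the trivializing covers be surjective -- an assumption that is only implicit in the theorem's statement but is equally needed by the paper (and is automatic in the intended application to locally constant sheaves).

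Two small caveats. First, your justification that $\omega_S^n$ is $F$-regular cites the remark after Proposition \ref{FunitCrysIsFRegular}, which argues via \'etale covers by affine space and hence uses smoothness; here $R$ is only Gorenstein and $F$-regular, so you should instead argue as in the paper's own proof: locally $\omega_S\cong S$ with the natural Cartier structure corresponding to a generator of $\Hom_S(F_\ast S,S)$ (Gorenstein), and $(S,\mathcal{C}_S)$ is $F$-regular because $(R,\mathcal{C}_R)$ is and $\mathcal{C}_R$ is locally principal (Theorem \ref{FRegularEtalePullback}/Corollary \ref{TestModuleLocallyPrincipal}), $F$-regularity being local. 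Second, your reduction of the direct sum, $\tau(\omega_S^n,\cdot)=\tau(\omega_S,\cdot)^n$, is easiest to see by writing both sides as $\sum_{e\ge1}\mathcal{C}_e c(\cdot)$ for a common test element $c$, rather than by a bare appeal to "componentwise decomposition"; with that phrasing the step is the same local trivialization argument the paper uses.
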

\begin{proof}
%Note that $M$ is $F$-regular by Theorem \ref{FRegularEtalePullback}. I don't think I explicitly need that.
It suffices to verify the assertion locally on an open affine cover.
So assume that $(f^! M, \kappa') \cong (\omega_S^n, \kappa)$ for some finite \'etale morphism $f: \Spec S \to \Spec R$. First, we claim that $\tau(f^! M, \kappa'^{{(\mathfrak{a}S)}^t}) = \tau(S, \mathcal{C}_S^{{(\mathfrak{a}S)}^t}) f^!M$. Using the isomorphism above and the fact that $\omega_S$ is locally free we may assume that $\omega_S = S$. By construction $\kappa$ then corresponds to a generator of $\Hom(F_\ast S, S)$ and the claim follows.

Next note that $\tau(S, \mathcal{C}_S^{(\mathfrak{a}S)^t}) f^! M = \tau(R, \mathcal{C}_R^{\mathfrak{a}^t}) f^! M$. Indeed, since $f$ is \'etale Corollary \ref{TestIdealEssEtaleGorenstein} yields that $\tau(R, \mathcal{C}_R^{\mathfrak{a}^t})S = \tau(S, \mathcal{C}_S^{(\mathfrak{a}S)^t})$.

Applying Proposition \ref{TestModuleEtaleTransformation} thus yields \[\tau(M, \kappa^{(f)^t}) = (\tau(R, \mathcal{C}_R^{(f)^t}) (M \otimes_R S)) \cap M = \tau(R, \mathcal{C}_R^{(f)^t}) M.\] The last equality follows from the fact the plus closure of $\tau(R, \mathcal{C}_R^{(f)^t}) M$ is trivial.
Indeed, by \cite[Proposition 8.7]{hochsterhunekebriancon} the tight closure of coherent modules over $R$ is trivial. Since plus closure is contained in tight closure also for modules (\cite[Theorem 5.22]{hochsterhunekesplitting}) the assertion follows.
\end{proof} % Note that the hypothesis that $R$ is strongly $F$-regular is used twice. In the second application we can make do with \emph{weakly} $F$-regular. But some condition along these lines will be necessary.

\begin{Bem}
If one is willing to work in $D^b(\Spec R)$ and use dualizing complexes instead of merely sheaves then one should be able to drop the Gorenstein hypothesis in Theorem \ref{TestIdealLocallyConstant} -- it is only used to ensure that $\omega$ is an honest sheaf. 
\end{Bem}

\begin{Def}
Given a Cartier module or crystal satisfying the conditions of Theorem \ref{GrFunctoronCrys} we write $Gr^{[0,1]}M$ for $\bigoplus_{t \in [0,1]} Gr^t M$.
\end{Def}

\begin{Ko}
\label{TestIdealLocallyConstantSmooth}
 Let $R$ be regular and essentially of finite type over an $F$-finite field and let $f \in R$ be a smooth hypersurface.
Let $M$ be a coherent $\kappa$-module and assume that there is an open affine cover $U_i$ of $\Spec R$ such that there are finite \'etale morphisms $\varphi: V_i \to U_i$ such that $\varphi^\ast M$ is trivial (with diagonal Cartier structure). Then $\tau(M, \kappa^{(f)^t}) = f^{\lfloor t \rfloor} M$ for all $t > 0$ and $Gr^{[0,1]} M = M/fM$.
\end{Ko}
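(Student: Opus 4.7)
The plan is to reduce this to computing the test ideal of $R$ along $(f)^t$ and then exploiting that $f$ defines a smooth divisor.

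First, I would apply Theorem \ref{TestIdealLocallyConstant} (whose hypotheses are met since $R$ is regular, hence Gorenstein and $F$-regular by Lemma \ref{RegularimpliesFRegular}, and we may take $\omega_R \cong R$ to identify $\omega_{S_i}^n$ with $S_i^n$ after possibly shrinking the cover). This gives the identification
\[ \tau(M, \kappa^{(f)^t}) = \tau(R, \mathcal{C}_R^{(f)^t}) \cdot M,\]
reducing the first claim to the purely ring-theoretic statement $\tau(R, \mathcal{C}_R^{(f)^t}) = (f^{\lfloor t \rfloor})$ for all $t > 0$.

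For this computation, since $\tau$ commutes with localization and the statement can be checked locally, I would reduce to a neighborhood of a point of $V(f)$; smoothness of $V(f)$ means $f$ is part of a regular system of parameters. The inclusion $(f^{\lfloor t \rfloor}) \subseteq \tau(R, \mathcal{C}_R^{(f)^t})$ follows from the Brian\c{c}on--Skoda property of the $V$-filtration (axiom (iii) of Definition \ref{DefVfiltration}, giving $V^t = f\, V^{t-1}$ for $t>1$) combined with $V^s = R$ for $0 \leq s < 1$, which in turn follows from the fact that $R$ is $F$-regular (so $\tau(R, \mathcal{C}_R) = R$) and right-continuity at $t=0$. For the reverse inclusion, I would verify directly that $(f^{\lfloor t \rfloor})$ is a $\mathcal{C}_R^{(f)^t}$-submodule: for $\varphi_e \in \mathcal{C}_e$ and $g f^{\lfloor t \rfloor} \in (f^{\lfloor t \rfloor})$, one has $\varphi_e(f^{\lceil t(p^e-1)\rceil} f^{\lfloor t \rfloor} g) = f^{\lfloor t \rfloor}\varphi_e(f^{\lceil t(p^e-1)\rceil - \lfloor t \rfloor(p^e-1)} g) \in (f^{\lfloor t \rfloor})$, using that $\lceil t(p^e-1)\rceil \geq \lfloor t \rfloor(p^e-1)$. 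Since $(f^{\lfloor t \rfloor})$ generically agrees with $R$ and $R$ is $F$-regular, minimality of the test ideal yields the reverse inclusion.

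With $\tau(R,\mathcal{C}_R^{(f)^t}) = (f^{\lfloor t \rfloor})$ established, the formula $\tau(M, \kappa^{(f)^t}) = f^{\lfloor t \rfloor} M$ follows at once. For the associated graded in $[0,1]$, note that $\lfloor t-\eps \rfloor = \lfloor t \rfloor$ for $t \in (0,1)$ and sufficiently small $\eps>0$, so $Gr^t M = 0$ for such $t$; at $t=0$ both $V^{-\eps}$ and $V^0$ equal $M$, so $Gr^0 M = 0$; and at $t=1$ we get $Gr^1 M = V^{1-\eps}/V^1 = M/fM$. Summing, $Gr^{[0,1]}M = M/fM$.

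The main technical hurdle is the local verification that the first $F$-jumping number of $f$ in $R$ is exactly $1$ (equivalently, $V^s = R$ for $s<1$); once this and the closure statement $(f^{\lfloor t \rfloor})$ is a $\mathcal{C}_R^{(f)^t}$-submodule are in hand, everything assembles purely formally. This step is classical in the smooth-divisor case and can be handled either by the direct computation sketched above or by invoking the known agreement of test ideals with multiplier ideals for $f$ cutting out a smooth hypersurface.
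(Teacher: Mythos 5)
Your proposal follows essentially the same route as the paper: apply Theorem \ref{TestIdealLocallyConstant} (the regular case supplying Gorenstein and $F$-regular via Lemma \ref{RegularimpliesFRegular}) to reduce to the ring-theoretic identity $\tau(R, \mathcal{C}_R^{(f)^t}) = (f^{\lfloor t \rfloor})$ for a smooth hypersurface, and then read off $Gr^{[0,1]}M = M/fM$; the paper's proof is exactly this, with the identity $\tau(R,\kappa^{(f)^t})=(f^{\lfloor t\rfloor})$ asserted as a classical fact (footnoted as the explicit computation for $f=x_1$ in $k[[x_1,\dots,x_n]]$ after completing). One caveat: your intermediate claim that $V^s = R$ for all $0 \leq s < 1$ ``follows from $F$-regularity and right-continuity at $t=0$'' is not a valid derivation --- right-continuity only yields $V^\eps = R$ for sufficiently small $\eps$, and the statement that the first $F$-jumping number of $f$ is exactly $1$ genuinely uses smoothness of $V(f)$ (e.g.\ it fails for $f = x^2$); this is precisely the local computation you correctly identify at the end as the main hurdle, so with that step carried out (as you indicate, with $f$ part of a regular system of parameters one checks directly that $\varphi_e(f^{\lceil s(p^e-1)\rceil}\cdot{-})$ still generates for $s<1$), your argument is complete and coincides with the paper's.
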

\begin{proof}
Since for smooth $f$ one has\footnote{I am not aware of a precise reference. This can be explicitly computed in the case of $f = x_1, R = k[[x_1, \ldots, x_n]]$. The general case reduces to this case by passing to the completion of $R$.} $\tau(R, \kappa^{(f)^{t}}) = (f^{\lfloor t \rfloor})$ this follows from Theorem \ref{TestIdealLocallyConstant}.
\end{proof}

\begin{Ko}
\label{LocallyConstantAssocGraded}
Let $R$ be regular and essentially of finite type over an $F$-finite field and let $f \in R$ be a smooth hypersurface. Let $M$ be an $F$-regular coherent $\kappa$-module whose associated crystal corresponds to a locally constant sheaf on $X_{\acute{e}t}$. Then $Gr^{[0,1]} M = M/fM$ with Cartier structure given by $\kappa f^{p-1}$. If $M = M_{\min}$ then $\tau(M, \kappa^{(f)^t}) = f^{\lfloor t \rfloor} M$ for all $t > 0$.
\end{Ko}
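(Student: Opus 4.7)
The plan is to reduce the statement to Theorem \ref{TestIdealLocallyConstant} via the description of locally constant $\kappa$-crystals recalled in Remark~(a) following Proposition \ref{FunitCrysIsFRegular}. Since $Gr^{[0,1]}$ descends to a functor on $\kappa$-crystals by Theorem \ref{GrFunctoronCrys}, I would first replace $M$ by its minimal representative $M_{\min}$ without loss of generality (this is in fact the case assumed in the second half of the statement). The cited remark then produces a finite surjective \'etale morphism $\varphi : \Spec S \to \Spec R$ such that $\varphi^! M_{\min} \cong \omega_S^n$ equipped with the natural Cartier structure on each summand, and taking the trivial open cover $\{\Spec R\}$ of $\Spec R$ fulfils the hypothesis of Theorem \ref{TestIdealLocallyConstant}. (Note $R$ is regular, hence Gorenstein, and $F$-regular by Lemma \ref{RegularimpliesFRegular}.) We may thus conclude $\tau(M_{\min}, \kappa^{(f)^t}) = \tau(R, \mathcal{C}_R^{(f)^t}) \, M_{\min}$ for every $t > 0$.

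Next I would substitute the known value of the test ideal of a smooth hypersurface in a regular ambient ring, namely $\tau(R, \mathcal{C}_R^{(f)^t}) = (f^{\lfloor t \rfloor})$; this is precisely the input quoted (with footnote) in Corollary \ref{TestIdealLocallyConstantSmooth} and is proved by reducing to the completion at a point of $V(f)$, where the situation becomes $R = k[[x_1, \ldots, x_n]]$ and $f = x_1$ and the claim is an immediate calculation. Combining these two equalities yields the second assertion, $\tau(M_{\min}, \kappa^{(f)^t}) = f^{\lfloor t \rfloor} M_{\min}$ for all $t > 0$.

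It then remains to read off $Gr^{[0,1]}$ from this explicit test module filtration. For $0 < t < 1$ one has $\lfloor t \rfloor = \lfloor t - \eps \rfloor = 0$, giving $V^{t-\eps} M_{\min} = V^t M_{\min} = M_{\min}$, so $Gr^t M_{\min} = 0$ throughout that range; continuity of the filtration at zero handles $t = 0$. The unique non-vanishing graded piece in $[0,1]$ is at $t = 1$, where $V^{1-\eps} M_{\min}/V^1 M_{\min} = M_{\min}/fM_{\min}$, and Proposition \ref{GrSupportedonIdeal} identifies its Cartier structure as $\kappa f^{\lceil 1 \cdot (p-1)\rceil} = \kappa f^{p-1}$. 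Passing back to the crystal level (where $M$ and $M_{\min}$ are identified) this is exactly the identification $Gr^{[0,1]} M \cong M/fM$ asserted.

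There is no serious obstacle; the only minor subtlety is being careful to distinguish the module-level statement (which needs $M = M_{\min}$ so that $\varphi^! M$ literally equals $\omega_S^n$ and the test module formula is an equality of actual submodules of $M$) from the crystal-level statement for $Gr^{[0,1]}$, which is insensitive to the choice of representative.
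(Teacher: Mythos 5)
Your proposal is correct and follows essentially the same route as the paper, whose proof is simply to apply Theorem \ref{TestIdealLocallyConstant} and Corollary \ref{TestIdealLocallyConstantSmooth} to $M_{\min}$; you have just spelled out the ingredients (the characterization of locally constant crystals via a finite \'etale trivialization, $\tau(R,\mathcal{C}_R^{(f)^t})=(f^{\lfloor t\rfloor})$ for smooth $f$, and reading off the graded pieces), together with the correct observation that the $Gr^{[0,1]}$ assertion is insensitive to replacing $M$ by $M_{\min}$ at the crystal level.
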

\begin{proof}
Apply Theorem \ref{TestIdealLocallyConstant} and Corollary \ref{TestIdealLocallyConstantSmooth} to $M_{\min}$.
\end{proof}

\begin{Bsp}
\label{ExCrysEssential}
Let $p \geq 3$ be a prime.
Consider $R = \mathbb{F}_p[x] = M$ with Cartier structure $\kappa x$, where $\kappa$ is a generator of $\Hom(F_\ast R, R)$ and $f = x$.

Then we claim that $\tau(R, (x)^t) = (x)$ for $\frac{p-2}{p-1} \leq t \leq 1$. Note that $(\kappa x)^e = \kappa^e x^{\frac{p^e-1}{p-1}}$ and that $M$ is $F$-regular (use \cite[Proposition 3.7]{blicklep-etestideale} -- here we need $p \geq 3$, otherwise $M$ is $F$-pure but not $F$-regular). In order to compute the test ideal we may localize at $x$ so that $R_x$ is $F$-regular with respect to $\kappa x$. Then using \cite[Theorem 3.11]{blicklep-etestideale} we obtain that \[\tau(R, (x)^{t}) = \sum_{e \geq 1} (\kappa x)^e x^{\lceil t(p^e -1)\rceil} x R = \sum_{e \geq 1} \kappa^e x^{\frac{p^e-1}{p-1}} x^{\lceil t(p^e-1) \rceil}x R = \tau(R, x^{t + \frac{1}{p-1}}),\] if in the last step we endow $R$ again with its ordinary Cartier structure. Thus the claim follows.

Also note that $M = M_{\min}$. It is $F$-pure as we have seen above. Moreover, $M$ has no nontrivial (nilpotent) $\kappa$-submodules. Indeed, since $M = R$ any nonzero $\kappa$-submodule has to contain $\tau(M, \kappa x) = R$ and $R$ is not nilpotent. However, the Cartier structure on $Gr^{[0,1]} M$ is nilpotent, so that the associated crystal is zero.

Finally, we are interested in the constructible sheaf $\mathcal{F}$ to which $M$ corresponds on $X_{\acute{e}t}$, where $X = \Spec R$. Since $R = k[x]$ the dualizing sheaf is just $\omega_R$ (which we may identify with $R$). A computation shows that the corresponding lfgu $F$-module is isomorphic to $k[x,x^{-1}]$. Then one computes (using \cite[1.3.3 and 3.2]{emertonkisinintrorhunitfcrys}) that the corresponding constructible sheaf $\mathcal{F}$ is given by $\mathcal{F}\vert_{V(x)} = 0$ and $\mathcal{F}\vert_{D(x)} = \mathbb{F}_p$.
\end{Bsp}

In order to properly state our next result we need to work with derived categories. Recall that for a finite morphism $f: Y \to X$ the twisted inverse image functor $f^!(-)$ in $D(X)$ is naturally isomorphic to $f^{-1} R\mathcal{H}om(f_\ast \mathcal{O}_Y, -)$. In particular, if $i: \Spec R /(f) \to \Spec R$ is a closed immersion then $i^!(-) = RHom(R/(f), -)$ and if $f$ is a regular element of $R$ then $i^! M = M/fM[-1]$.

\begin{Le}
\label{CartierStructureShriek}
Let $R$ be an $F$-finite regular noetherian ring and $f \in R$ a regular element. Let $M$ be a coherent $\kappa$-module and denote $i: \Spec R/(f) \to \Spec R$. Then the Cartier module structure on $(i^! M)[1]$ as defined in \cite[Remark 3.3.11]{blickleboecklecartiercrystals} is given by \begin{align*}F_\ast (M/fM) &\longrightarrow M/fM,\\m + fM &\longmapsto \kappa(f^{p-1} m) + fM.\end{align*}
\end{Le}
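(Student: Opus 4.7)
The plan is to resolve $R/(f)$ by the Koszul complex and trace the definition of the Cartier structure on $i^!M$ through the resolution. Since $f$ is a non-zero-divisor in $R$, the complex $K^{\bullet} = [R \xrightarrow{\cdot f} R]$ placed in degrees $-1, 0$ is a free resolution of $R/(f)$. Applying $\Hom_R(-,M)$ yields $C^{\bullet} = [M \xrightarrow{\cdot f} M]$ in degrees $0, 1$, a representative of $i^!M$. Under the assumption (implicit in the statement, and needed for the formula to make sense) that $f$ is $M$-regular, the only nonzero cohomology is $H^1(C^{\bullet}) = M/fM$, so $(i^!M)[1]$ is quasi-isomorphic to $M/fM$ concentrated in degree $0$.

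The next step is to unpack the Blickle--B\"ockle definition recalled in Section \ref{SectionCartierModules}: the Cartier structure on $i^!M = R\Hom_R(R/(f),M)$ is computed on the resolution by the formula $\varphi \mapsto \kappa \circ F_\ast\varphi \circ \tilde{F}$, where $\tilde{F}\colon K^{\bullet} \to F_\ast K^{\bullet}$ is a chain map lifting the Frobenius $R/(f) \to F_\ast R/(f)$. In degree $0$ the canonical choice is $\tilde{F}^{0}(r) = r^p$, and then the commutativity requirement $F_\ast(\cdot f) \circ \tilde{F}^{-1} = \tilde{F}^{0} \circ (\cdot f)$ uniquely forces $\tilde{F}^{-1}(r) = f^{p-1}r^p$, using that $f$ is a non-zero-divisor. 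Here the key (and sole bookkeeping) point is that $F_\ast(\cdot f)$, the $R$-linear map between the $F_\ast R$'s induced by multiplication by $f$ on $R$, acts as multiplication by $f$ on the underlying abelian group of $F_\ast R$, whereas multiplication by $f$ in the $R$-module structure on $F_\ast R$ is multiplication by $f^p$; one checks directly that the formula for $\tilde{F}^{-1}$ is $R$-linear.

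Finally I would read off the induced map on $H^1(C^{\bullet}) = M/fM$: the class $[m]$ is represented by $\varphi\colon R \to M$, $r \mapsto rm$ (sitting in the slot $C^{1} = \Hom_R(K^{-1}, M) = M$), and
\[
[m] \longmapsto \kappa\bigl((F_\ast\varphi)(\tilde{F}^{-1}(1))\bigr) = \kappa(\varphi(f^{p-1})) = \kappa(f^{p-1}m) \pmod{fM},
\]
which is precisely the asserted formula. The main obstacle is really just the bookkeeping involved in correctly writing down $\tilde{F}$ and distinguishing the two notions of ``multiplication by $f$'' on $F_\ast R$; once that is in hand the computation is immediate.
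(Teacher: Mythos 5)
Your computation is correct and is essentially the paper's argument: the paper gives no independent proof, citing only Blickle--B\"ockle, Example 3.3.12, where the same Koszul-resolution calculation with the Frobenius lift $\tilde F^{0}(r)=r^p$, $\tilde F^{-1}(r)=f^{p-1}r^p$ on $[R\xrightarrow{\cdot f}R]$ is carried out. (Your aside is also accurate: $f$ being $M$-regular is what makes $(i^!M)[1]$ a module concentrated in degree zero, while the formula on $H^{1}=M/fM$ holds regardless.)
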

\begin{proof}
See \cite[Example 3.3.12]{blickleboecklecartiercrystals}.
\end{proof}

\begin{Ko}
\label{MainResult?}
Assume that $R$ is regular, essentially of finite type over an $F$-finite field and that $f \in R$ defines a smooth hypersurface.
Consider the functor $Gr^{[0,1]}$ from the category $\mathcal{A}$ of $F$-regular $\kappa$-crystals for which $f$ is a regular element to the category of $\kappa$-crystals on $\Spec R/(f)$. Then $Gr^{[0,1]}$ restricted to the full subcategory $\mathcal{A}_{lc}$ of $\mathcal{A}$ whose objects correspond to crystals that are trivialized by a finite \'etale morphism is naturally isomorphic to the restriction of $i^![1]$ to $\mathcal{A}_{lc}$, where $i: \Spec R/(f) \to \Spec(R)$ is the natural closed immersion.
\end{Ko}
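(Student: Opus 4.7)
My plan is to combine Corollary~\ref{LocallyConstantAssocGraded} and Lemma~\ref{CartierStructureShriek} to exhibit both $Gr^{[0,1]}M$ and $i^!M[1]$ as literally the same $\kappa$-module $M/fM$ equipped with the same Cartier operator $\kappa f^{p-1}$, and then to observe that naturality is automatic.

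First I would fix $M \in \mathcal{A}_{lc}$ and pass to its minimal $\kappa$-module representative (using the unlabeled Lemma preceding Theorem~\ref{GrFunctoronCrys} to ensure that $f$ remains $M$-regular after passing to $M_{\min}$). By Corollary~\ref{LocallyConstantAssocGraded}, the test module filtration takes the explicit form $V^t M = \tau(M,\kappa^{(f)^t}) = f^{\lfloor t\rfloor} M$ for $t > 0$. Hence $V^t M = M$ for $-\infty < t < 1$ and $V^1 M = fM$, which collapses $Gr^{[0,1]}M$ to its single nonzero summand $Gr^1 M = M/fM$; by Proposition~\ref{GrSupportedonIdeal} the induced Cartier structure is $\kappa f^{\lceil 1\cdot(p-1)\rceil} = \kappa f^{p-1}$, acting as $\overline{m}\mapsto \overline{\kappa(f^{p-1}m)}$.

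Next, since $f$ is $M$-regular, $R\Hom_R(R/(f),M)$ is concentrated in cohomological degree one with value $M/fM$, so $i^!M[1]$ is the honest $\kappa$-module $M/fM$. Lemma~\ref{CartierStructureShriek} computes its Cartier operator to be precisely $\overline{m}\mapsto \overline{\kappa(f^{p-1}m)}$, which agrees term-for-term with the operator on $Gr^{[0,1]}M$. I would therefore define $\eta_M : Gr^{[0,1]}M \to i^!M[1]$ to be the identity map on $M/fM$ under these identifications; it is tautologically an isomorphism of $\kappa$-crystals on $\Spec R/(f)$.

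For naturality, given a crystal morphism $\varphi:M\to N$ in $\mathcal{A}_{lc}$ I would note that $Gr^{[0,1]}(\varphi)$ is, by Theorem~\ref{GrModuleFunctor}, the reduction $\overline{\varphi}:M/fM\to N/fN$ induced from $\varphi$, while $i^!(\varphi)[1]$ is the same reduction coming from the functoriality of $R\Hom_R(R/(f),-)$; hence the naturality square for $\eta$ commutes on the nose. The only mild obstacle is the standard bookkeeping when replacing a crystal by its minimal representative, which is harmless because both $Gr^{[0,1]}$ and $i^![1]$ descend from $\kappa$-modules to crystals (Theorem~\ref{GrFunctoronCrys} and the derived-category formalism of \cite{blickleboecklecartiercrystals} respectively), so the choice of representative does not affect the induced natural isomorphism at the level of crystals.
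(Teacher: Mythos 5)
Your proposal is correct and follows essentially the same route as the paper, whose proof is just the citation of Lemma \ref{CartierStructureShriek} together with Corollary \ref{TestIdealLocallyConstantSmooth} (equivalently, Corollary \ref{LocallyConstantAssocGraded} applied to $M_{\min}$): both sides are identified with $M/fM$ carrying the Cartier operator $\kappa f^{p-1}$. Your added bookkeeping on minimal representatives and naturality is exactly the routine verification the paper leaves implicit.
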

\begin{proof}
This follows from Lemma \ref{CartierStructureShriek} and Corollary \ref{TestIdealLocallyConstantSmooth}.
\end{proof}

We can rephrase the above corollary using the equivalence with constructible sheaves on $X_{\acute{e}t}$ if we assume that $R$ is smooth. Namely, the crystals in $\mathcal{A}_{lc}$ are precisely those crystals in $\mathcal{A}$ which correspond to a locally constant sheaf on $X_{\acute{e}t}$. As such Corollary \ref{MainResult?} may be seen as an analog of \cite[2.1.5]{SGA7II} -- see however Remark \ref{GrNotNearbyCycles} below.

\begin{Que}
Let $R$ be a regular ring essentially of finite type over an $F$-finite field and let $\mathfrak{a} = (f)$ be a \emph{smooth} hypersurface. Let $M$ be an $F$-regular non-degenerate $\kappa$-module. Is it then true that the crystals $Gr^{[0,1]} M$ and $i^! M$ are isomorphic?
\end{Que}

\begin{Bem}
\begin{enumerate}[(a)]
 \item{
Since the modules for which we constructed a $V$-filtration always satisfy $Gr^0(M) = 0$ this question may be seen as an analog of \cite[III, Proposition 4.5.3]{mebkhoutdxmodules}. Also note that Example \ref{ExCrysEssential} shows the passage to crystals is essential.}
\item{If $f$ is not smooth the functor $Gr$ is much more complicated, cf.\ \cite[Proposition 4.2]{mustatawatanabetestvsmultiplier} where the test ideal filtration is worked out for certain singular hypersurfaces. Despite the result above, we cannot expect to have an exact triangle $i^! M[1] \to Gr^{[0,1]}M \to Gr^{[0,1)}M$ as is the case in characteristic zero when constructing $V$-filtrations.}
\end{enumerate}

\end{Bem}

\begin{Bem}
\label{GrNotNearbyCycles}
In closing we would like to point out that the functor $Gr$ constructed in Theorem \ref{GrFunctoronCrys} does \emph{not} correspond to nearby cycles on $X_{\acute{e}t}$. This is due to the fact that the (derived) nearby cycles functor is not a functor on $D_c(X_{\acute{e}t})$ in the case where one considers a scheme $X$ over $\mathbb{F}_p$ and $\mathbb{F}_p$ coefficients. Also note, that the functor $Gr$ behaves for locally constant sheaves and $f$ a smooth hypersurface much like the case $\ell \neq p$ -- cf.\ \cite[2.1.5]{SGA7II}

One may therefore still ask whether $Gr$ corresponds maybe to the unipotent part of the (tame) nearby cycles functor (cf.\ \cite[Expos\'e I, 2.7]{SGA7I} for a definition of tame nearby cycles) or whether the functor $Gr$ preserves the desirable properties of nearby cycles in the $\ell \neq p$ case while avoiding the pathologies of the $\ell = p$ case. We plan to pursue these and related questions in future work.
\end{Bem}

\bibliography{bibliothek.bib}

\providecommand{\bysame}{\leavevmode\hbox to3em{\hrulefill}\thinspace}
\providecommand{\MR}{\relax\ifhmode\unskip\space\fi MR }
% \MRhref is called by the amsart/book/proc definition of \MR.
\providecommand{\MRhref}[2]{%
  \href{http://www.ams.org/mathscinet-getitem?mr=#1}{#2}
}
\providecommand{\href}[2]{#2}
\begin{thebibliography}{10}

\bibitem{blicklegammasheaves}
M.~Blickle, \emph{Minimal $\gamma$-sheaves}, Algebra \& Number Theory
  \textbf{2} (2008), no.~3, 347--368.

\bibitem{blicklep-etestideale}
\bysame, \emph{Test ideals via $p^{-e}$-linear maps}, J. Algebraic Geom.
  \textbf{22} (2013), no.~1, 49--83.

\bibitem{blickleboecklecartierfiniteness}
M.~Blickle and G.~B\"ockle, \emph{Cartier modules: {F}initeness results}, J.
  reine angew. Math. \textbf{661} (2011), 85--123.

\bibitem{blickleboecklecartierekequivalence}
\bysame, \emph{A corresondence between crystals over function fields and
  {E}merton-{K}isin's unit $\mathcal{O}_{X}[{F}]$-modules}, preprint (2012).

\bibitem{blickleboecklecartiercrystals}
\bysame, \emph{Cartier crystals}, arXiv:1309.1035 (2013).

\bibitem{blickleboecklecartierduality}
\bysame, \emph{Cartier crystals: Duality}, preprint (2013).

\bibitem{blicklemustatasmithdiscretenessrationality}
M.~Blickle, M.~Musta\c{t}\u{a}, and K.~Smith, \emph{Discreteness and
  rationality of ${F}$-thresholds}, Michigan Math. J. \textbf{57} (2008),
  43--61.

\bibitem{blicklestaeblerbernsteinsatocartier}
M.~Blickle and A.~St\"abler, \emph{Bernstein-{S}ato polynomials and test
  modules in positive characteristic}, arXiv:1402.1333 (2014).

\bibitem{boecklepinktaucrystals}
G.~B\"ockle and R.~Pink, \emph{Cohomological {T}heory of {C}rystals over
  function fields}, EMS Tracts in Mathematics, no.~9, European Mathematical
  Society, 2009.

\bibitem{budurvfiltrationdmodules}
N.~Budur, \emph{On the {V}-filtration of {D}-modules.}, Geometric methods in
  algebra and number theory (F.~Bogomolov and Yu. Tschinkel, eds.), Progress in
  Mathematics, vol. 235, Birkh\"auser, 2005.

\bibitem{budursaitomultiplieridealvfiltration}
N.~Budur and M.~Saito, \emph{Multiplier ideals, {V}-filtration and spectrum},
  J. Algebraic Geom. \textbf{14} (2005), no.~2, 269--282.

\bibitem{SGA7II}
P.~Deligne and N.~Katz (eds.), \emph{Groupes de monodromie en g\'eom\'etrie
  alg\'ebrique {\rm(}={SGA7-II}{\rm)}}, Lecture notes in Mathematics, vol. 340,
  Springer, 1973.

\bibitem{eisenbud}
D.~Eisenbud, \emph{Commutative algebra with a view toward algebraic geometry},
  Springer-Verlag, 1995.

\bibitem{emertonkisinintrorhunitfcrys}
M.~Emerton and M.~Kisin, \emph{An introduction to the {R}iemann-{H}ilbert
  correspondence for unit ${F}$-crystals}, Geometric aspects of {D}work theory,
  vol.~2, Walter de Gruyter, 2004, pp.~677--700.

\bibitem{emertonkisinrhunitfcrys}
\bysame, \emph{The {R}iemann-{H}ilbert correspondence for unit ${F}$-crystals},
  Asterisque \textbf{293} (2004).

\bibitem{gabbertstructures}
O.~Gabber, \emph{Notes on some $t$-structures}, Geometric aspects of Dwork
  theory, Walter de Gruyter, 2004, pp.~711--734.

\bibitem{SGA7I}
A.~Grothendieck (ed.), \emph{Groupes de monodromie en g\'eom\'etrie
  alg\'ebrique {\rm(}={SGA7-I}{\rm)}}, Lecture notes in Mathematics, vol. 288,
  Springer, 1972.

\bibitem{EGAISpringer}
A.~Grothendieck and J.~Dieudonn\'{e}, \emph{El\'{e}ments de {G}\'{e}om\'{e}trie
  alg\'{e}brique {I}}, Springer, 1971.

\bibitem{hochsterlecturenotes}
M.~Hochster, \emph{Foundations of tight closure theory},
  \url{math.lsa.umich.edu/~hochster/711F07/fndtc.pdf} (accessed: 2013-09-24).

\bibitem{hochsterhunekebriancon}
M.~Hochster and C.~Huneke, \emph{Tight closure, {I}nvariant theory, and the
  {B}rian\c{c}on-{S}koda {T}heorem}, J. Amer. Math. Soc. \textbf{3} (1990),
  31--116.

\bibitem{hochsterhunekesplitting}
\bysame, \emph{Tight closure of parameter ideals and splittings in module
  finite extensions}, J. Alg. Geom. \textbf{3} (1994), 599--670.

\bibitem{kashiwaravfiltration}
M.~Kashiwara, \emph{Vanishing cycle sheaves and holonomic systems of
  differential equations}, Lecture Notes in Mathematics, vol. 1016, Springer,
  1983.

\bibitem{kunznoetherian}
E.~Kunz, \emph{On {N}oetherian rings of characteristic $p$}, Amer. J. Math.
  \textbf{98} (1976), 999--1013.

\bibitem{lipmangsduality}
J.~Lipman and M.~Hashimoto, \emph{Foundations of {G}rothendieck duality for
  diagrams of schemes}, Lecture Notes in Mathematics, vol. 1960, Springer,
  2009.

\bibitem{malgrangevanishingdmodule}
B.~Malgrange, \emph{Polyn\^{o}me de {B}ernstein-{S}ato et cohomologie
  \'evanescente}, Ast\'erisque \textbf{101-102} (1983), no.~2-3, 243--267.

\bibitem{mebkhoutdxmodules}
Z.~Mebkhout, \emph{Le formalisme des six op\'erations de {G}rothendieck pour
  les $\mathcal{D}_{X}$-modules coh\'erents}, Travaux en cours, vol.~35,
  Hermann, 1989.

\bibitem{milne}
J.~S. Milne, \emph{\'{E}tale cohomology}, Princeton University Press, 1980.

\bibitem{mustatabernsteinsatopolynomialspositivechar}
M.~Musta\c{t}\u{a}, \emph{Bernstein-{S}ato polynomials in positive
  characteristic}, J. Algebra \textbf{321} (2009), no.~1, 128--151.

\bibitem{mustatawatanabetestvsmultiplier}
M.~Musta\c{t}\u{a} and K.-I. Yoshida, \emph{Test ideals vs. multiplier ideals},
  Nagoya Math. J. \textbf{193} (2009), 111--128.

\bibitem{schwedenonqgorenstein}
K.~Schwede, \emph{Test ideals in non-$\mathbb{Q}$-{G}orenstein rings}, Trans.
  Amer. Math. Soc. \textbf{363} (2011), no.~11, 5925--5941.

\bibitem{schwedetuckertestidealsfinitemaps}
K.~Schwede and K.~Tucker, \emph{On the behavior of test ideals under finite
  morphisms}, J. Algebraic Geom. (to appear).

\bibitem{MO124960}
A.~St\"abler\phantom{x}(mathoverflow.net/users/31051), \emph{{V}anishing cycles
  of a locally constant sheaf for a smooth morphism in the $l = p$-case},
  MathOverflow, \url{http://mathoverflow.net/questions/124960} (version:
  2013-03-19).

\bibitem{stadnikvfiltrationfcrystal}
Th. Stadnik, \emph{The ${V}$-filtration for tame unit ${F}$-crystals}, Sel.
  Math. New Ser. \textbf{20} (2014), no.~3, 855--883.

\bibitem{takagicharpadjointideals}
S.~Takagi, \emph{A characteristic {$p$} analogue of plt singularities and
  adjoint ideals}, Math. Z. \textbf{259} (2008), no.~2, 321--341.

\end{thebibliography}
\bibliographystyle{amsplain}
\end{document}